\tikzset{shorten <>/.style={shorten >=#1,shorten <=#1}}
\newcounter{nodemaker}
\tikzset{Rightarrow/.style={double equal sign distance,>={Implies},->},
triple/.style={-,preaction={draw,Rightarrow}},
quadruple/.style={preaction={draw,Rightarrow,shorten >=0pt},shorten >=1pt,-,double,double
distance=0.2pt}}
\tikzset{%
    symbol/.style={%
        draw=none,
        every to/.append style={%
            edge node={node [sloped, allow upside down, auto=false]{$#1$}}}
    }
}
\newtheorem{theorem}{Theorem}[subsection]
\theoremstyle{proposition}
\newtheorem{proposition}[theorem]{Proposition}
\newtheorem{corollary}[theorem]{Corollary}
\newtheorem{corollary'}[theorem]{Corollary}
\newtheorem{lemma}[theorem]{Lemma}
\theoremstyle{definition}
\newtheorem{definition}[theorem]{Definition}
\newtheorem{example}[theorem]{Example}
\theoremstyle{definition}
\newtheorem{remark}[theorem]{Remark}
\theoremstyle{definition}
\newtheorem{division}[theorem]{}
\theoremstyle{conjecture}
\title{On a (terminally connected, pro-etale) factorization of geometric morphisms}
\author{Olivia Caramello \and Axel Osmond}
\date{}
\begin{document}

\maketitle

\begin{abstract}
    We extend the classical (connected, etale) factorization of locally connected geometric morphisms into a (terminally connected, pro-etale) factorization for all geometric morphisms between Grothendieck topoi. We discuss properties of both classes of morphisms, particularly the relation between pro-etale geometric morphisms and the category of global elements of their inverse image; we also discuss their stability properties as well as some fibrational aspects. 
\end{abstract}

\tableofcontents


\section*{Introduction}


A classical result of topos theory states that any locally connected geometric morphism can be factored in a unique way as a connected geometric morphism followed by the etale geometric morphism over its object of connected components. There are two possible generalizations of this result. On one hand, keeping connected morphisms as the left class gives a \emph{(connected, disconnected)} factorization, see \cite{topologie} and \cite{lurie2009higher} (where the left class is also called \emph{algebraic}. But there is another possible generalization.  

It was proved by the first author in \cite{caramello2020denseness} that essential geometric morphisms also admit a canonical factorization through the essential image of the terminal object -- which still mimics the behavior of the functor of connected components to some degree. In general the left part of this factorization is no longer connected; however, it still is \emph{terminally connected}. In the essential world, this condition states that the essential image preserves the terminal object -- which still can be read as a ``global" form of connectedness; but a more general formulation of this condition, which makes sense for any geometric morphism, is that the inverse image lifts uniquely global elements -- which amounts in some sense as being connected from the point of view of the terminal object. Terminally connected geometric morphisms happen to be exactly those that are left orthogonal to etale geometric morphisms, which suggests they form the left class of a factorization system for all geometric morphisms.

However, extending the (terminally connected, etale) factorization beyond essential geometric morphisms requires to relax in turn the etaleness condition of the right class: we have to consider \emph{cofiltered bilimits} of etale geometric morphisms. Indeed, in full generality a geometric morphism may lack an essential image part displaying connected components; yet, its inverse image part, as a lex functor, actually always possesses a \emph{left pro-adjoint}. In some sense, although the connected components of an arbitrary geometric morphism may not be indexed by an discrete set internal to the codomain topos, they will nevertheless form a pro-discrete internal locale.

Factorizing a geometric morphism through the etale geometric morphism at a given object of its codomain amounts to providing a global element of the inverse image of this given object; similarly factorizations through pro-etale geometric morphisms correspond to cofiltered diagrams of such global elements. In particular, there is a best such factorization through the pro-etale morphism indexed by the cofiltered category of all global elements of the inverse image part; although this category is large, an accessibility argument shows that it admits a small initial subcategory, which ensures the bilimit to be well defined. In particular, any pro-etale geometric morphism can be reindexed by the category of elements of its own inverse image, which provides a canonical presentation.

If now one factorizes a geometric morphism through the pro-etale morphism indexed by all the possible global elements of the inverse image, the residual left part is then always terminally connected, for in some sense all global elements are displayed in a faithful way in the cofiltered bilimit. This begets a \emph{(terminally connected, pro-etale)} factorization for all geometric morphisms. Moreover, while it was known that locally connected and terminally connected geometric morphisms are automatically connected, we dually have that essential pro-etale geometric morphisms are actually etale as their cofiltered diagram of presentation admits an initial object, namely the unit of the adjunction between the inverse image and the essential image. Hence this factorization restricts correctly to the known ones. A peculiar manifestation of this factorization is already known in the instance of the so-called \emph{Grothendieck-Verdier localization} (or at least, a generalization of it given in \cite{localmap}), which is the (terminally connected, pro-etale) factorization at a point, and is reminiscent of the construction of the \emph{germ} at this point, that is, the cofiltered intersections of its neighborhoods -- except that now one may consider etale neighborhoods rather than merely open ones.

Remarkably, this factorization allows for a canonical factorization of lax cells. In general 2-dimensional factorization systems do not come with a canonical way to relate the factorizations of two 1-cells related by a 2-cell; equivalently, the orthogonality condition for (pseudo)squares does not extend to either lax or oplax squares. But in this very case, it happens to be so. Additionally, terminally connected morphisms enjoy a special stability property along bicomma squares.

Those properties are reminiscent of those of one of the \emph{comprehensive factorization systems} for functors -- the \emph{(initial, discrete opfibration)} factorization, to which it is indeed related in several ways. First, in \cite{caramello2020denseness} the factorization of essential morphisms involved an analog of this comprehensive factorization for comorphisms of sites -- although it is unclear whether this can generalize here. Secondly, it can be shown that pro-etale morphisms are internal discrete opfibration, a property preserved at the level of points. Although it is not true that terminally connected morphisms induce initial functors between categories of points, we will see they still behave in some sense as ``topologically initial" morphisms, for one can show that the comma topos of a terminally connected geometric morphism over a point is a connected topos, which is a form of initialness. 


This paper is articulated as follows: we first recall generalities about the (connected, etale) factorization and introduce terminally connected morphisms. In the second section, after a digression on cofiltered bilimits of Grothendieck topoi, we introduce pro-etale geometric morphisms and discuss their canonical presentation through global elements. We then show the main theorem of this paper, the promised factorization -- and also its laxness aspects. Next we establish a more intrinsic characterization of pro-etale geometric morphisms as those that generate their domain through fibers of global elements. We dually investigate properties of terminally connected geometric morphisms, in particular their relation with cardinals, and discuss the quite delicate matter of their site presentation. We end with some stability properties, and a discussion on their possible relation with initial functors. 

\section{(Connected, etale) factorization and its generalization}

In this section, we recall established facts about the connected-etale factorization for locally connected geometric morphisms and its more recent generalization to essential geometric morphisms provided in \cite{caramello2020denseness}; we also give a precise account of the relation between etale geometric morphisms and global elements, which shall play a key role throughout this paper. 

In the following, we work in the 2-category $ \GTop$ of Grothendieck topoi, geometric morphisms and geometric transformations. We shall denote topoi as $ \mathcal{E}, \mathcal{F}$... and geometric as $ f, g$... We shall denote as $\GTop/\mathcal{E}$ the pseudoslice of $\GTop$ over $ \mathcal{E}$.  

\subsection{Orthogonality and factorization for geometric morphisms}

In this subsection, we first recall some generalities on 2-dimensional orthogonality structures and factorization systems. We then discuss some of their particularities in $\GTop$. In most sources, the different (2-dimensional) factorization systems of $\GTop$ are manipulated as ordinary factorization systems of the underlying category, which is indeed innocuous in practice: for this reason, the hurry reader who is not interested in the 2-dimensional details of those factorizations systems may safely skip those prerequisites. However, because most of the structure of $\GTop$ is actually bicategorical and universal constructions in geometric contexts are often weak rather than strict, we think preferable to state exactly the bicategorical versions of orthogonality and factorization, especially considering that our factorization system shall enjoy special additional 2-dimensional properties. The main references for strict 2-dimensional factorization systems are \cite{DUPONT200365}, and \cite{street2021variation} for the bicategorical ones. \\

Let us fix a 2-category $\mathcal{K}$. Denote as $2$ the category consisting of a single arrow. We denote as $ \ps[2,\mathcal{K}]$ the 2-category of 2-functors $ f : 2 \rightarrow \mathcal{K}$, pseudonatural transformations $ \alpha : f \rightarrow g$, and natural modifications between those. A close examination shows that objects of this 2-categories are exactly 1-cells of $ \mathcal{K}$. For $ f : A \rightarrow B$ and $g : C \rightarrow D$ two 1-cells in $\mathcal{K}$, a morphism $ f \rightarrow g$ in $ \ps[2,\mathcal{K}]$ consists exactly of a \emph{pseudocommutative square}, or \emph{pseudosquare} from $f$ to $g$ is a 2-functor the data of a triple $ (u,v,\alpha) : f \Rightarrow g$ with $ \alpha : gu \stackrel{\simeq }{\Rightarrow} vf$ an invertible 2-cell as below, 
\[ 
\begin{tikzcd}[row sep=small]
A \arrow[d, "f"'] \arrow[r, "u"] \arrow[rd, "\alpha \atop \simeq", phantom] & C \arrow[d, "g"] \\
B \arrow[r, "v"']                                                           & D         
\end{tikzcd} \]

Now, a 2-cell corresponds to a morphism of pseudosquares $ (\phi, \psi) : (u, v, \alpha) \Rightarrow (u', v', \alpha')  $, which is the data of $ \phi : u \Rightarrow u' $ and $\psi : v \Rightarrow v'$ such that $ \alpha' g^*\phi = \psi^*f \alpha  $ as depicted below
\[\begin{tikzcd}[row sep=small]
	A && C \\
	B && D
	\arrow[""{name=0, anchor=center, inner sep=0}, "f"', from=1-1, to=2-1]
	\arrow[""{name=1, anchor=center, inner sep=0}, "{u'}"{description}, from=1-1, to=1-3]
	\arrow[""{name=2, anchor=center, inner sep=0}, "g", from=1-3, to=2-3]
	\arrow["{v'}"', from=2-1, to=2-3]
	\arrow[""{name=3, anchor=center, inner sep=0}, "u", curve={height=-18pt}, from=1-1, to=1-3]
	\arrow["{\alpha' \atop \simeq}"{description}, draw=none, from=0, to=2]
	\arrow["\phi", shorten <=2pt, shorten >=2pt, Rightarrow, from=3, to=1]
\end{tikzcd} = 
\begin{tikzcd}
	A && C \\
	B && D
	\arrow[""{name=0, anchor=center, inner sep=0}, "f"', from=1-1, to=2-1]
	\arrow[""{name=1, anchor=center, inner sep=0}, "g", from=1-3, to=2-3]
	\arrow[""{name=2, anchor=center, inner sep=0}, "{v'}"', curve={height=18pt}, from=2-1, to=2-3]
	\arrow["u", from=1-1, to=1-3]
	\arrow[""{name=3, anchor=center, inner sep=0}, "v"{description}, from=2-1, to=2-3]
	\arrow["{\alpha \atop \simeq}"{description}, draw=none, from=0, to=1]
	\arrow["\psi", shorten <=2pt, shorten >=2pt, Rightarrow, from=3, to=2]
\end{tikzcd}\]

One may recognize that the category of pseudosquares from $f$ to $g$ as the pseudopullback
\[\begin{tikzcd}
	{\ps[2,\mathcal{K}](f,g)} & {\mathcal{K}[A,C]} \\
	{\mathcal{K}[B,D]} & {\mathcal{K}[A,D]}
	\arrow["{\mathcal{K}[f,D]}"', from=2-1, to=2-2]
	\arrow[""{name=0, anchor=center, inner sep=0}, "{\mathcal{K}[A,g]}", from=1-2, to=2-2]
	\arrow[""{name=1, anchor=center, inner sep=0}, from=1-1, to=2-1]
	\arrow[from=1-1, to=1-2]
	\arrow["{\alpha_{f,g} \atop \simeq}"{description}, draw=none, from=0, to=1]
\end{tikzcd}\]

On the other hand, for a pair of maps $f: A \rightarrow B$ and $ g : C \rightarrow D$ in a 2-category $\mathcal{K}$, precomposition by $f$ and postcomposition by $g$ induce altogether a strictly commutative square in $\Cat$
\[\begin{tikzcd}
	{\mathcal{K}[B,C]} & {\mathcal{K}[A,C]} \\
	{\mathcal{K}[B,D]} & {\mathcal{K}[A,D]}
	\arrow["{\mathcal{K}[f,D]}"', from=2-1, to=2-2]
	\arrow[""{name=0, anchor=center, inner sep=0}, "{\mathcal{K}[A,g]}", from=1-2, to=2-2]
	\arrow[""{name=1, anchor=center, inner sep=0}, "{\mathcal{K}[B,g]}"', from=1-1, to=2-1]
	\arrow["{\mathcal{K}[f,C]}", from=1-1, to=1-2]
	\arrow["{=}"{description}, draw=none, from=0, to=1]
\end{tikzcd}\]

\begin{definition}
 We say $f$ and $g$ are respectively \emph{left and right bi-orthogonal to each other}, and denote as $ f \perp g$, if one has an equivalence of categories:
 \[ \mathcal{K}[B,C] \simeq \ps[2,\mathcal{K}](f,g) \]
 \end{definition}

This condition exactly means that for any pseudosquare $ (u, v, \alpha)$ there is an arrow $s_\alpha : B \rightarrow C$, unique up to a unique invertible 2-cell, together with a pair of invertible 2-cells $(\lambda_\alpha, \rho_\alpha)$ as depicted below:
\[\begin{tikzcd}[sep=large]
	A & C \\
	B & D
	\arrow["u", from=1-1, to=1-2]
	\arrow["f"', from=1-1, to=2-1]
	\arrow["v"', from=2-1, to=2-2]
	\arrow["g", from=1-2, to=2-2]
	\arrow[""{name=0, anchor=center, inner sep=0}, "s_\alpha"{description}, from=2-1, to=1-2]
	\arrow["{\lambda_\alpha\atop \simeq}"{description, pos=0.3}, Rightarrow, draw=none, from=1-1, to=0]
	\arrow["{\rho_\alpha \atop \simeq}"{description, pos=0.3}, Rightarrow, draw=none, from=2-2, to=0]
\end{tikzcd}\]
while for a morphism of pseudosquares $ (\phi, \psi) : (u, v, \alpha) \Rightarrow (u', v', \alpha')  $ there exists a unique 2-cell $ \delta_{(\phi,\psi)} : s_{\alpha} \Rightarrow s_{\alpha'} $ such that 
$ \alpha' \phi = \sigma^*f \alpha $ and $ \alpha' g^*\sigma = \psi \alpha $ as depicted below:
\[\begin{tikzcd}
	A && C \\
	B
	\arrow["f"', from=1-1, to=2-1]
	\arrow[""{name=0, anchor=center, inner sep=0}, "{u'}"{description}, from=1-1, to=1-3]
	\arrow[""{name=1, anchor=center, inner sep=0}, "{s_{\alpha'}}"', from=2-1, to=1-3]
	\arrow[""{name=2, anchor=center, inner sep=0}, "u", curve={height=-18pt}, from=1-1, to=1-3]
	\arrow["{\lambda_{\alpha'} \atop \simeq}"{description}, draw=none, from=1-1, to=1]
	\arrow["\phi", shorten <=2pt, shorten >=2pt, Rightarrow, from=2, to=0]
\end{tikzcd} =
\begin{tikzcd}
	A && C \\
	B
	\arrow["f"', from=1-1, to=2-1]
	\arrow["u", from=1-1, to=1-3]
	\arrow[""{name=0, anchor=center, inner sep=0}, "s_\alpha"{description}, from=2-1, to=1-3]
	\arrow[""{name=1, anchor=center, inner sep=0}, "{s_{\alpha'}}"', curve={height=22pt}, from=2-1, to=1-3]
	\arrow["\delta_{(\psi,\psi)}"', shift left=2, shorten <=3pt, shorten >=3pt, Rightarrow, from=0, to=1]
	\arrow["{\lambda_\alpha \atop \simeq}"{description}, draw=none, from=1-1, to=0]
\end{tikzcd} 
\hskip0.5cm \begin{tikzcd}
	&& C \\
	B && D
	\arrow["g", from=1-3, to=2-3]
	\arrow["{v'}"', from=2-1, to=2-3]
	\arrow[""{name=0, anchor=center, inner sep=0}, "{s_{\alpha'}}"{description}, from=2-1, to=1-3]
	\arrow[""{name=1, anchor=center, inner sep=0}, "s_\alpha", curve={height=-22pt}, from=2-1, to=1-3]
	\arrow["{\rho_{\alpha'} \atop \simeq}"{description}, draw=none, from=0, to=2-3]
	\arrow["\delta_{(\phi, \psi)}", shorten <=3pt, shorten >=3pt, Rightarrow, from=1, to=0]
\end{tikzcd} =
\begin{tikzcd}
	&& C \\
	B && D
	\arrow["g", from=1-3, to=2-3]
	\arrow[""{name=0, anchor=center, inner sep=0}, "{v'}"', curve={height=18pt}, from=2-1, to=2-3]
	\arrow[""{name=1, anchor=center, inner sep=0}, "v"{description}, from=2-1, to=2-3]
	\arrow[""{name=2, anchor=center, inner sep=0}, "s_\alpha", from=2-1, to=1-3]
	\arrow["\psi", shorten <=2pt, shorten >=2pt, Rightarrow, from=1, to=0]
	\arrow["{\rho_\alpha \atop \simeq}"{description}, draw=none, from=2, to=2-3]
\end{tikzcd}\]

\begin{definition}
    A \emph{bi-orthogonality structure} on a 2-category $ \mathcal{K}$ consists of a pair of classes of 1-cells $ (\mathcal{L},\mathcal{R})$ with $ \mathcal{L} = \, ^\perp \mathcal{R}$ and $ \mathcal{R} = \mathcal{L}^{\perp}$.
\end{definition}

\begin{proposition}
Let $\mathcal{K}$ is a 2-category endowed with a bi-orthogonality structure $ (\mathcal{L}, \mathcal{R})$. Then the left and right classes enjoy the following properties :
 \begin{multicols}{2}
 \begin{itemize}
    \item $\mathcal{L} $ is closed under composition
    \item $\mathcal{L}$ is closed under invertible 2-cell
    \item $\mathcal{L}$ contains all equivalences
    \item $\mathcal{L}$ is right-pseudocancellative\index{right-pseudocancellative}: for any invertible 2-cell
\[\begin{tikzcd}
	{C_1} & {C_2} \\
	& {C_3}
	\arrow["{l_2}"', from=1-1, to=2-2]
	\arrow["{l_1}", from=1-1, to=1-2]
	\arrow[""{name=0, anchor=center, inner sep=0}, "f", from=1-2, to=2-2]
	\arrow["\simeq"{description}, Rightarrow, draw=none, from=0, to=1-1]
\end{tikzcd}\]
    with $l_1, \; l_2 $ in $\mathcal{L}$, then $f$ also is in $\mathcal{L}$
    \item  $\mathcal{L}$ is closed under weighted bicolimits in $ \ps[{2, \mathcal{K}}]$
    \item $\mathcal{L}$ is closed under bipushout along arbitrary maps
 \end{itemize} 
      
 \columnbreak
  \begin{itemize}
      \item $\mathcal{R} $ is closed under composition
      \item $\mathcal{R}$ is closed under invertible 2-cell
      \item $\mathcal{R}$ contains all equivalences
      \item $\mathcal{R} $ is left-pseudocancellative\index{left-pseudocancellative}: for any invertible 2-cell
\[\begin{tikzcd}
	{C_1} & {C_3} \\
	{C_2}
	\arrow["{r_1}", from=1-1, to=1-2]
	\arrow["f"', from=1-1, to=2-1]
	\arrow[""{name=0, anchor=center, inner sep=0}, "{r_2}"', from=2-1, to=1-2]
	\arrow["\simeq"{description}, Rightarrow, draw=none, from=0, to=1-1]
\end{tikzcd}\]
      with $r_1, \; r_2 $ in $\mathcal{R}$, then $f$ also is in $\mathcal{R}$
      \item $\mathcal{R} $ is closed under weighted bilimits in $ \ps[{2, \mathcal{K}}]$
      \item $ \mathcal{R}$ is closed under bipullback along arbitrary maps
  \end{itemize}       
 \end{multicols}
\end{proposition}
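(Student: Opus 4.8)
The plan is to reduce every clause to the defining equivalence of bi-orthogonality together with the single structural fact that equivalences of categories are stable under the weighted bilimits occurring in $\Cat$. By symmetry it suffices to treat the left column: the properties of $\mathcal{R}$ in $\mathcal{K}$ are precisely those of the left class in the dual $2$-category $\mathcal{K}^{\mathrm{op}}$ obtained by reversing $1$-cells, under which a pseudosquare from $f$ to $g$ becomes a pseudosquare from $g$ to $f$, left orthogonality is exchanged with right orthogonality, and weighted bilimits and bipullbacks in $\ps[2,\mathcal{K}]$ turn into weighted bicolimits and bipushouts. So I would prove the six items for $\mathcal{L} = {}^{\perp}\mathcal{R}$ and invoke this duality for $\mathcal{R}$.

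The device underlying the whole argument is the comparison functor. For $f : A \to B$ and $g : C \to D$, let $\Phi_{f,g} : \mathcal{K}[B,C] \to \ps[2,\mathcal{K}](f,g)$ send $s$ to the strictly commuting pseudosquare $(sf, gs, \mathrm{id})$, so that $f \perp g$ means exactly that $\Phi_{f,g}$ is an equivalence. Using the pseudopullback presentation of $\ps[2,\mathcal{K}](f,g)$ recalled above, $\Phi$ is pseudonatural in each variable: contravariantly in $f$ it is a transformation from $\mathcal{K}[\mathrm{cod}(-), C]$ to $\ps[2,\mathcal{K}](-,g)$, and covariantly in $g$ from $\mathcal{K}[B, \mathrm{dom}(-)]$ to $\ps[2,\mathcal{K}](f,-)$.

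I would then dispatch the elementary clauses by direct lifting, adapting the $1$-categorical arguments (cf. \cite{DUPONT200365,street2021variation}). Closure under invertible $2$-cells and containment of equivalences read off the pseudopullback description: an invertible $l \simeq l'$ whiskers to an equivalence $\ps[2,\mathcal{K}](l,g) \simeq \ps[2,\mathcal{K}](l',g)$ over $\Phi$, and if $l$ is an equivalence then the leg $\mathcal{K}[l,D]$ of the pseudopullback is an equivalence, so $\ps[2,\mathcal{K}](l,g) \simeq \mathcal{K}[A,C]$ and $\Phi_{l,g}$ reduces to the equivalence $\mathcal{K}[l,C]$. For composition of $l_1, l_2 \in \mathcal{L}$ against $g \in \mathcal{R}$ I would lift a given pseudosquare in two stages, first across $l_1$ and then across $l_2$, the two orthogonality equivalences assembling existence and essential uniqueness of the diagonal. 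Right-pseudocancellation is the subtle elementary case: given an invertible $fl_1 \simeq l_2$ with $l_1, l_2 \in \mathcal{L}$ and a pseudosquare $(u,v,\beta)$ from $f$ to $g$, I would transport it along $l_1$ to a pseudosquare from $l_2$ to $g$, use $l_2 \perp g$ to obtain a diagonal $d$ with $gd \simeq v$, and then apply the uniqueness clause of $l_1 \perp g$ to the two fill-ins $u$ and $df$ of the induced pseudosquare from $l_1$ to $g$ to manufacture the invertible $df \simeq u$ that exhibits $d$ as a diagonal for the original square.

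Finally, the bicolimit and bipushout clauses follow from the naturality above: being hom-like, $\ps[2,\mathcal{K}](-,g)$ carries a weighted bicolimit $f = \mathrm{colim}^{W} f_i$ in $\ps[2,\mathcal{K}]$ to the weighted bilimit $\lim^{W} \ps[2,\mathcal{K}](f_i,g)$ in $\Cat$, and $\mathcal{K}[\mathrm{cod}(-),C]$ does the same; since $\Phi_{-,g}$ is a pseudonatural transformation whose components $\Phi_{f_i,g}$ are equivalences by hypothesis, its value on the bilimit, which is $\Phi_{f,g}$, is again an equivalence, so $f \in \mathcal{L}$. Closure under bipushout along an arbitrary map is the instance for the pushout weight, or equivalently a direct cobase-change lift using the universal property of the bipushout. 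I expect the one genuine obstacle to be coherence-theoretic rather than conceptual: pinning down that $\ps[2,\mathcal{K}](-,g)$ really sends weighted bicolimits to weighted bilimits (this rests on the bicategorical co-Yoneda lemma and the pseudopullback computation of the hom-categories), and, in the composition and cancellation arguments, replacing the strict equalities of the classical proofs by coherent families of the invertible $2$-cells $\lambda_\alpha, \rho_\alpha$ and verifying the modification identities $\alpha'\phi = \sigma^{*}f\,\alpha$ and $\alpha' g^{*}\sigma = \psi\,\alpha$ recorded in the definition of bi-orthogonality.
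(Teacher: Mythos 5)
The paper never proves this proposition: it is recalled as standard background, with \cite{DUPONT200365} and \cite{street2021variation} as the references, so there is no in-text argument to compare yours against. Judged on its own terms, your proposal is a correct reconstruction of the standard proof, and it matches how these closure properties are established in the cited sources: the reduction of the right column to the left via the $1$-cell dual $\mathcal{K}^{\mathrm{op}}$ (legitimate because the invertibility of the structure $2$-cell makes $\ps[2,\mathcal{K}](f,g)\simeq \ps[2,\mathcal{K}^{\mathrm{op}}](g,f)$, swapping ${}^{\perp}$ and $\perp$ and turning bilimits into bicolimits), the rephrasing of $f\perp g$ as ``the canonical comparison $\Phi_{f,g}:\mathcal{K}[B,C]\to \ps[2,\mathcal{K}](f,g)$ is an equivalence'' (which usefully sharpens the paper's bare statement of an equivalence of categories), the two-stage lifting for composition, the three-squares argument for right-pseudocancellation, and the representability argument for the bicolimit clause, with bipushouts recovered as the bicolimit in $\ps[2,\mathcal{K}]$ of the span $l \leftarrow 1_A \rightarrow 1_C$, using that equivalences already lie in $\mathcal{L}$.

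Two points deserve more explicit care than your sketch gives them, though neither is a fatal gap. First, in the cancellation clause your argument as written only establishes essential surjectivity of $\Phi_{f,g}$: orthogonality also demands full faithfulness, i.e.\ that morphisms of pseudosquares $(\phi,\psi):(u,v,\alpha)\Rightarrow(u',v',\alpha')$ from $f$ to $g$ correspond bijectively to $2$-cells between the chosen diagonals satisfying the two displayed compatibilities; this is obtained by whiskering the morphism of squares with $l_1$ and invoking the hom-level parts of both $l_2\perp g$ and $l_1\perp g$, and it is a separate verification rather than an instance of ``the uniqueness clause'' you appeal to for objects. Second, your bicolimit argument tacitly uses that weighted bicolimits in $\ps[2,\mathcal{K}]$ are computed componentwise, so that the codomain projection preserves them and $\mathcal{K}[\mathrm{cod}(-),\mathrm{dom}(g)]$ sends the bicolimit to a bilimit in $\Cat$; the pseudonaturality of $\Phi_{(-),g}$ alone does not yield this, so it should be stated (for the hom $2$-functor $\ps[2,\mathcal{K}](-,g)$ itself no co-Yoneda is needed: turning bicolimits into bilimits is just the defining universal property). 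With these two elaborations supplied, your proof is complete and is, as far as one can tell, the intended one.
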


\begin{lemma}
    In a bi-orthogonality structure one automatically has the following, then $ \mathcal{L} \cap \mathcal{R}$ consists exactly of all equivalences. 
\end{lemma}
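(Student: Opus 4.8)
The plan is to prove the two inclusions separately: that every equivalence lies in $\mathcal{L} \cap \mathcal{R}$, and conversely that every member of $\mathcal{L} \cap \mathcal{R}$ is an equivalence. For the first inclusion I would simply invoke the previous Proposition, which records that $\mathcal{L}$ contains all equivalences and, symmetrically, that $\mathcal{R}$ contains all equivalences; hence any equivalence lies in their intersection and nothing further is needed.

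The substance is in the reverse inclusion. Suppose $f : A \to B$ lies in $\mathcal{L} \cap \mathcal{R}$. Because $f \in \mathcal{L} = {}^{\perp}\mathcal{R}$ is left orthogonal to every member of $\mathcal{R}$, and $f$ itself belongs to $\mathcal{R}$, we obtain the self-orthogonality $f \perp f$. I would then feed this orthogonality the tautological pseudosquare from $f$ to itself, namely the one with $u = \mathrm{id}_A$, $v = \mathrm{id}_B$ and identity comparison $2$-cell $\alpha = \mathrm{id}_f$. Through the orthogonality equivalence $\mathcal{K}[B,A] \simeq \ps[2,\mathcal{K}](f,f)$, this square admits a diagonal filler $s : B \to A$, unique up to unique invertible $2$-cell, together with invertible $2$-cells $\lambda : \mathrm{id}_A \stackrel{\simeq}{\Rightarrow} s f$ and $\rho : \mathrm{id}_B \stackrel{\simeq}{\Rightarrow} f s$.

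These invertible $2$-cells $\lambda$ and $\rho$ exhibit $s$ as a pseudo-inverse of $f$, which is precisely the data of an equivalence $f : A \simeq B$ in $\mathcal{K}$; so $f$ is an equivalence and the inclusion is proved. The only point requiring a word of care is that the orthogonality lift delivers a genuine equivalence: but an equivalence in a $2$-category asks only for a $1$-cell $s$ together with invertible $2$-cells $s f \simeq \mathrm{id}_A$ and $f s \simeq \mathrm{id}_B$, with no triangle identities imposed, so the bare output of the lifting already suffices. (Should an adjoint equivalence be desired, one rectifies $(\lambda,\rho)$ by the standard $2$-categorical argument, but this is unnecessary here.) I do not anticipate any serious obstacle; the entire proof reduces to the single observation that a map orthogonal to itself splits its identity squares on both sides.
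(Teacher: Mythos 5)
Your proof is correct. The paper states this lemma without proof, and your argument---deriving $f \perp f$ from $f \in \mathcal{L} \cap \mathcal{R}$, filling the identity pseudosquare to obtain $s$ with invertible $2$-cells $\mathrm{id}_A \simeq sf$ and $fs \simeq \mathrm{id}_B$, and observing that a pseudo-inverse (no triangle identities needed) is exactly an equivalence---is the standard one the authors evidently had in mind, correctly adapted to the bicategorical setting of the paper's orthogonality definition.
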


\begin{definition}
A \emph{bifactorization system}\index{bifactorization system} on $ \mathcal{K}$ is the data of a bi-orthogonality structure $ (\mathcal{L}, \mathcal{R})$ such that for any $ f : A \rightarrow B$ in $\mathcal{K}$, there exists a pair $(l_f, r_f)$ with $ l_f \in \mathcal{L}$ and $ r_f \in \mathcal{R}$ equipped with an invertible 2-cell 
        \[ 
\begin{tikzcd}
A \arrow[rr, "f"] \arrow[rd, "l_f"'] & {} \arrow[d, "\alpha_f \atop \simeq", phantom, near start] & B \\
                                     & C_f \arrow[ru, "r_f"']                         &  
\end{tikzcd} \] 

\end{definition}

\begin{remark}
By the orthogonality of left and right maps, any such factorization shall be universal in the sense that, for any other factorization $ \alpha : rl \simeq f$ with $ l_f \in \mathcal{L}$ and $ r_f \in \mathcal{R}$, there exist a unique equivalence $i$ and a pair of invertible 2-cells $ \lambda : il \simeq l_f $ and $ \rho : r \simeq r_f i$ such that $ \alpha = \alpha_f \rho^*l r^*\lambda$ as below 
\[ 
\begin{tikzcd}[row sep= small, column sep=large]
A \arrow[rr, "f"] \arrow[rd, "l_f" description] \arrow[rdd, "l"', bend right=20, "\lambda \atop \simeq"] & {} \arrow[d, "\alpha_f \atop \simeq", phantom]         & B \\
                                                                  & C_f \arrow[ru, "r_f" description]                      &   \\
                                                                  & C \arrow[ruu, "r"', bend right=20, "\rho \atop \simeq"] \arrow[u, "i"] &  
\end{tikzcd} \]
    \end{remark}

\begin{division}\label{glosis on fact of geommorph}
  The 2-category $\GTop$ of Grothendieck topoi and geometric morphisms admits several well known orthogonal factorization systems:

\begin{itemize}
    \item the (surjection, embedding) factorization;
    \item the (hyperconnected, localic) factorization;
    \item the (dense, closed) factorization;
    \item the (pure, entire) factorization...
\end{itemize}

One can show that all those factorization systems are actually bifactorization systems, with the orthogonality property extending without problem to pseudosquares. Now, recall that geometric morphisms can be read in two ways: in the way of the direct image, they convey geometric information, while in the way of the inverse image, they convey algebraic information. Hence, there are always two ways to describe the dual properties of left and right maps in a factorization system for geometric morphisms: \begin{itemize}
    \item  from a geometric point of view, left maps are connected in some way; right maps are truncated in some way;
    \item  from an algebraic point of view, inverse image parts of right maps are generating under some operations, while inverse image parts of left maps are closed under those operations.
\end{itemize}

\end{division}

In this work, we are going to exhibit a factorization system where both those complementary aspects are particularly manifest. It shall arise as a generalization of a conceptually important factorization system that however only holds for restricted classes of geometric morphisms, the \emph{(connected, etale)} factorization of \emph{locally connected} geometric morphisms. The remaining of this section shall be devoted to the exposition of this factorization system.

\subsection{Etale geometric morphisms and (connected, etale) factorization}

It is a fondamental result of topos theory that for any topos $ \mathcal{E}$ and any object $E$ in $\mathcal{E}$ the slice $ \mathcal{E}/E$ is a topos and the pullback functor $ E^* : \mathcal{E} \rightarrow \mathcal{E}/E $ is the inverse image part of a geometric morphism $ \pi_E : \mathcal{E}/E \rightarrow \mathcal{E}$, the \emph{etale geometric morphism at $E$}.

\begin{definition}
A geometric morphism $f : \mathcal{F} \rightarrow \mathcal{E}$ is said to be \emph{etale} if there exists an object $ E$ in $\mathcal{E} $ such that one has an invertible 2-cell
\[\begin{tikzcd}
	{\mathcal{F}} && {\mathcal{E}/E} \\
	& {\mathcal{E}}
	\arrow["f"', from=1-1, to=2-2]
	\arrow[""{name=0, anchor=center, inner sep=0}, "\simeq", from=1-1, to=1-3]
	\arrow["{\pi_E}", from=1-3, to=2-2]
	\arrow["\simeq"{description}, Rightarrow, draw=none, from=0, to=2-2]
\end{tikzcd}\]
In the following, we shall denote as $ \Et$ the class of etale geometric morphisms, and $ \Et/\mathcal{E}$ the 2-category of etale geometric morphisms with fixed codomain $\mathcal{E}$ - for which we shall often say, \emph{etale over $\mathcal{E}$}. 
\end{definition}

\begin{remark}
    In practice, we shall innocently consider any etale morphism as exactly being of the form $ \mathcal{E}/E \rightarrow \mathcal{E}$, ignoring the eventual equivalence through which it is presented. 
\end{remark}

Any object $E$ of a topos $\mathcal{E}$ induces an etale geometric morphism $ \mathcal{E}/E \rightarrow \mathcal{E}$. Any $ u : E \rightarrow E'$ in a topos $\mathcal{E}$ induces a canonical invertible 2-cell in $\GTop$:
\[\begin{tikzcd}
	{\mathcal{E}/E} \\
	& {\mathcal{E}} \\
	{\mathcal{E}/E'}
	\arrow[""{name=0, anchor=center, inner sep=0}, "{\mathcal{E}/u}"', from=1-1, to=3-1]
	\arrow["{\pi_{E}}", from=1-1, to=2-2]
	\arrow["{\pi_{E'}}"', from=3-1, to=2-2]
	\arrow["\alpha_u \atop \simeq"{description}, draw=none, from=0, to=2-2]
\end{tikzcd}\]
where the invertible 2-cell $ \alpha_u$ has as inverse image part $\alpha_u^\flat : u^* E^{'*} \simeq E^*$ the canonical isomorphism provided by composition of pullbacks at any object $D$ of $\mathcal{E}$
\[  E \times^u_{E'} (E' \times D) \simeq E \times D \]

\begin{remark}
    The fact that limits are defined only up to a canonical isomorphism makes this a natural isomorphism rather than a pointwise equality. As the 2-cells of $\GTop$ should be taken as being determined by their inverse image component -- this shall be our convention -- the triangle above really is an invertible 2-cell in $\GTop$, not a strict 2-cell.  

\end{remark}




\begin{definition}
For an object $E$ in $\mathcal{E}$, a \emph{global element} of $E$ is a morphism $a : 1_\mathcal{E} \rightarrow E$ in $\mathcal{E}$, where $ 1_\mathcal{E}$ is the terminal object of $\mathcal{E}$. A \emph{morphism of global elements} between two global elements $ a : 1_\mathcal{E} \rightarrow E$ and $a' : 1_\mathcal{E} \rightarrow E'$ is a morphism $ u : E \rightarrow E'$ inducing a commutative triangle 
\[\begin{tikzcd}
	& {1_\mathcal{E}} \\
	E && {E'}
	\arrow["u"', from=2-1, to=2-3]
	\arrow["a"', from=1-2, to=2-1]
	\arrow["{a'}", from=1-2, to=2-3]
\end{tikzcd}\]
\end{definition}

\begin{remark}
    Any global element $ a : 1_\mathcal{E} \rightarrow E$ is a split-monomorphism with section the terminal map $ E \rightarrow 1_\mathcal{E}$.
\end{remark}

There is a correspondence between global sections of an etale geometric morphisms over $\mathcal{E}$ and global elements of the corresponding object. Denote as $ \GTop/\mathcal{E}$ the pseudoslice of $\GTop$ over $\mathcal{E}$, that is, the slice whose 1-cells $ f \rightarrow g$ are invertible 2-cells of $\GTop$ of the following form:
\[\begin{tikzcd}
	{\mathcal{G}} && {\mathcal{F}} \\
	& {\mathcal{E}}
	\arrow[""{name=0, anchor=center, inner sep=0}, "h", from=1-1, to=1-3]
	\arrow["g"', from=1-1, to=2-2]
	\arrow["f", from=1-3, to=2-2]
	\arrow["{\alpha \atop \simeq}"{description}, draw=none, from=0, to=2-2]
\end{tikzcd}\]

We are going to prove that factorization through etale morphisms are in correspondence with global elements:

\begin{proposition}
    For $E$ an object of $\mathcal{E}$, the homcategory $\GTop/\mathcal{E}[1_\mathcal{E}, \pi_E]$ is equivalent to the discrete category given by the set of global elements of $E$:
    \[  \GTop/\mathcal{E}[1_\mathcal{E}, \pi_E] \simeq \mathcal{E}[1_\mathcal{E}, E] \]
\end{proposition}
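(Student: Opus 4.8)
The plan is to establish the equivalence through the \emph{generic global element} of $\pi_E$, by taking the pseudo-fibre over $1_\mathcal{E}$ of the standard universal property of the slice topos. First I would unpack the hom-category: an object of $\GTop/\mathcal{E}[1_\mathcal{E},\pi_E]$ is a geometric morphism $s : \mathcal{E}\to\mathcal{E}/E$ equipped with an invertible structural 2-cell $\alpha : \pi_E s \simeq 1_\mathcal{E}$, and a morphism $(s,\alpha)\Rightarrow(s',\alpha')$ is a geometric transformation $\theta : s\Rightarrow s'$ whose whiskering is compatible with the trivialisations, that is $\alpha'\cdot(\pi_E\ast\theta)=\alpha$.

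The engine is the universal property of the slice. Recall that $\pi_E^*E = E^*E$ is the object $(\pi_1 : E\times E\to E)$ of $\mathcal{E}/E$ and that the terminal object of $\mathcal{E}/E$ is $(E,\mathrm{id}_E)$; hence the diagonal $\delta=\langle\mathrm{id}_E,\mathrm{id}_E\rangle$ is a canonical global element $\delta : 1_{\mathcal{E}/E}\to\pi_E^*E$. I would then use (citing the standard classification of geometric morphisms into a slice) that $\GTop[\mathcal{E},\mathcal{E}/E]$ is equivalent to the category of pairs $(f,x)$ with $f : \mathcal{E}\to\mathcal{E}$ and $x : 1_\mathcal{E}\to f^*E$, the equivalence sending $s$ to $(\pi_E s,\,s^*\delta)$ and a transformation $\theta$ to the pair consisting of $\pi_E\ast\theta$ and the induced identification of global elements. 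For a self-contained argument I would build the pseudo-inverse by hand: to $a : 1_\mathcal{E}\to E$ assign the section $s_a$ defined as the composite $\mathcal{E}\xrightarrow{\simeq}\mathcal{E}/1_\mathcal{E}\xrightarrow{\mathcal{E}/a}\mathcal{E}/E$, whose structural 2-cell is the canonical $\alpha_a$ (the case $u=a$ of the $\alpha_u$ recalled above) pasted with the equivalence $\pi_1 : \mathcal{E}/1_\mathcal{E}\simeq\mathcal{E}$; a fibrewise computation shows that pulling $\delta$ back along $a$ returns $a$, so the two assignments are mutually quasi-inverse on objects.

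Taking the pseudo-fibre over $1_\mathcal{E}$ then identifies objects of the hom-category, up to isomorphism, with the global element
\[ \bar{x}(s,\alpha)\;=\;\Big(1_\mathcal{E}\simeq s^*1_{\mathcal{E}/E}\xrightarrow{\,s^*\delta\,}s^*\pi_E^*E=(\pi_E s)^*E\xrightarrow{\,\alpha^\flat\,}1_\mathcal{E}^*E=E\Big), \]
using that $s^*$ preserves the terminal object and that $\alpha$ induces $(\pi_E s)^*\simeq 1_\mathcal{E}^*$. For discreteness, the compatibility $\alpha'\cdot(\pi_E\ast\theta)=\alpha$ forces the whiskering $\pi_E\ast\theta$ to be the invertible 2-cell $\alpha'^{-1}\cdot\alpha$ of $1_\mathcal{E}$; through the two-dimensional part of the universal property this pins down $\theta$ uniquely and shows that such a $\theta$ exists precisely when $\bar{x}(s,\alpha)=\bar{x}(s',\alpha')$. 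Hence between any two objects there is at most one morphism, it is invertible, and the connected components are classified by $\bar x\in\mathcal{E}[1_\mathcal{E},E]$ — which is exactly the asserted equivalence with the discrete category.

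I expect the main obstacle to be the two-dimensional bookkeeping: fixing the direction and precise form of the universal property of the slice on 2-cells (how a transformation $s\Rightarrow s'$ corresponds to a transformation $\pi_E s\Rightarrow\pi_E s'$ together with an equation between transported global elements), and checking that $s\mapsto s^*\delta$ is pseudofunctorial enough that $\bar x$ is a well-defined functor respecting the invertible structural 2-cells.
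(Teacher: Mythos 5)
Your proposal is correct, but it takes a genuinely different route from the paper. The paper's forward direction goes through Diaconescu's theorem in torsor form (citing \cite{elephant}[Proposition B3.2.4(a)]): a section of $\pi_E$ corresponds to a discrete fibration $X \to E$ from a filtered internal category, and filteredness forces $X \cong 1_\mathcal{E}$, which immediately produces the global element; you instead extract the element by transporting the generic element $\delta : 1_{\mathcal{E}/E} \to \pi_E^*E$ along $s^*$ and the trivialisation $\alpha^\flat$, i.e.\ you take the pseudo-fibre over $1_\mathcal{E}$ of the classification $\GTop/\mathcal{E}[f,\pi_E] \simeq \mathcal{F}[1_\mathcal{F}, f^*E]$ — which is precisely the generalization the paper only states \emph{after} this proposition, in its discussion of etale morphisms and global elements. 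The converse constructions agree in substance: your $s_a = (\mathcal{E}/a) \circ (\pi_{1_\mathcal{E}})^{-1}$ has pullback along $a$ as inverse image, which is exactly the paper's pullback-functor construction, and your fibrewise check $s_a^*\delta = a$ is sound. Your treatment buys two things the paper's terse proof does not make explicit: it handles the 2-cells (your argument that $\pi_E * \theta$ is forced to be $\alpha'^{-1}\alpha$, pinning $\theta$ down uniquely and yielding discreteness) and it generalizes verbatim to arbitrary $f$; the paper's torsor route buys a shorter argument and a conceptual explanation of \emph{why} the classifying data collapses to a point. One caveat: your ``mutually quasi-inverse on objects'' claim only verifies $\bar{x}(s_a) = a$; the other composite (that any $(s,\alpha)$ is isomorphic over $\mathcal{E}$ to $s_{\bar{x}(s,\alpha)}$) genuinely needs the essential surjectivity of the cited classification, or the standard decomposition $(D,h) \cong 1_{\mathcal{E}/E} \times_{\pi_E^*E} \pi_E^*D$ in $\mathcal{E}/E$, which forces $s^*$ to be pullback along $\bar{x}(s,\alpha)$; since you do invoke the classification, this is a presentational gap rather than a mathematical one, and it places your proof at the same level of reliance on standard results as the paper's own.
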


\begin{proof}
    The process from which one constructs a global element from a factorization is described in \cite{elephant}[Proposition B3.2.4(a)] and relies on the formulation of Diaconescu theorem in term of torsors: a geometric morphism $ a : 1_\mathcal{E} \rightarrow \pi_E$ has to be defined by a discrete fibration $ X \rightarrow E$ from a filtered internal category $X$ in $\mathcal{E}$, which must hence be isomorphic to the terminal object $1_\mathcal{E}$. Conversely for a global element $ a : 1_\mathcal{E} \rightarrow E$ one can take the pullback functor along $ a$ sending $ h : D \rightarrow E$ to $ a^*D$; stability of colimits in topoi ensures that this functors is left adjoint, while commutation of limits ensures that it preserves finite limits, hence defines the inverse image part of a geometric morphism. Moreover, cancellation of pullbacks gives a natural isomorphism between $a^*E^*D$ and $ D$ itself as the latter can be chosen as the pullback as depicted below: 
\[\begin{tikzcd}
	D & {E \times D} \\
	{1_\mathcal{E}} & E & D \\
	&& {1_\mathcal{E}}
	\arrow[from=1-1, to=1-2]
	\arrow["{!_D}"', from=1-1, to=2-1]	
	\arrow["{E^*D}"', pos=0.8, from=1-2, to=2-2]
    \arrow[Rightarrow, no head, crossing over, from=1-1, to=2-3]
	\arrow["{}", from=1-2, to=2-3]
	\arrow["\lrcorner"{anchor=center, pos=0.125}, draw=none, from=1-2, to=3-3]
	\arrow["a"{description}, from=2-1, to=2-2]
	\arrow[""{name=0, anchor=center, inner sep=0}, Rightarrow, crossing over, no head, from=2-1, to=3-3]
	\arrow["{!_E}"{description}, from=2-2, to=3-3]
	\arrow["{!_D}", from=2-3, to=3-3]
	\arrow["\lrcorner"{anchor=center, pos=0.125}, draw=none, from=1-1, to=0]
\end{tikzcd}\]
\end{proof}

\begin{remark}
    Beware that the process above does not beget strictly commuting 2-cells; more generally, one can still classify global elements with geometric morphisms that are only equivalent to etale geometric morphisms through a 2-cell of the form
\[\begin{tikzcd}
	{\mathcal{F}} && {\mathcal{E}/E} \\
	& {\mathcal{E}}
	\arrow[""{name=0, anchor=center, inner sep=0}, "{e \atop \simeq}", from=1-1, to=1-3]
	\arrow["f"', from=1-1, to=2-2]
	\arrow["{\pi_E}", from=1-3, to=2-2]
	\arrow["{\alpha\atop \simeq}"{description}, draw=none, from=0, to=2-2]
\end{tikzcd}\]
\end{remark}

\begin{division}\label{etale morphisms and global elements}
This correspondence generalizes into a correspondence between factorization of geometric morphisms through etale morphisms over $\mathcal{E}$ and {global elements} of their inverse image. Let indeed $ f : \mathcal{F} \rightarrow \mathcal{E}$ be a geometric morphism. Suppose one has an invertible 2-cell as below:
\[\begin{tikzcd}
	{\mathcal{F}} && {\mathcal{E}} \\
	& {\mathcal{E}/E}
	\arrow["a"', from=1-1, to=2-2]
	\arrow["{\pi_E}"', from=2-2, to=1-3]
	\arrow[""{name=0, anchor=center, inner sep=0}, "f", from=1-1, to=1-3]
	\arrow["\simeq"{description}, Rightarrow, draw=none, from=0, to=2-2]
\end{tikzcd}\]
Then we have a factorization of $a$ through the base change 
\[\begin{tikzcd}
	{\mathcal{F}} \\
	& {\mathcal{F}/f^*E} & {\mathcal{E}/E} \\
	& {\mathcal{F}} & {\mathcal{E}}
	\arrow["{\pi_E}", from=2-3, to=3-3]
	\arrow["f"', from=3-2, to=3-3]
	\arrow[from=2-2, to=2-3]
	\arrow[curve={height=12pt}, Rightarrow, no head, from=1-1, to=3-2]
	\arrow["{\pi_{f^*E}}"{description}, from=2-2, to=3-2]
	\arrow["\lrcorner"{anchor=center, pos=0.125}, draw=none, from=2-2, to=3-3]
	\arrow["a"{description}, curve={height=-12pt}, from=1-1, to=2-3]
	\arrow[from=1-1, to=2-2]
\end{tikzcd}\]
which also defines a global section of $ \pi_{f^*E}$, pointing a global element $ a : 1_\mathcal{F} \rightarrow f^*E$. Moreover, the inverse image of the geometric morphism $ a : \mathcal{F} \rightarrow \mathcal{E}/E$ can be explicitly described in terms of the corresponding global element: it sends any arrow $ h : D \rightarrow E$ to the pullback in $\mathcal{F}$
\[\begin{tikzcd}
	{a^*f^*D} & {f^*D} \\
	{1_\mathcal{F}} & {f^*E}
	\arrow["{f^*h}", from=1-2, to=2-2]
	\arrow["a"', from=2-1, to=2-2]
	\arrow["{a^*f^*h}"', from=1-1, to=2-1]
	\arrow[from=1-1, to=1-2]
	\arrow["\lrcorner"{anchor=center, pos=0.125}, draw=none, from=1-1, to=2-2]
\end{tikzcd}\] 
\end{division}

\begin{remark}\label{Equivalence over E between etale fact of f and element}
This correspondence manifests for each $ E$ as an isomorphism
\[ \GTop/\mathcal{E}[f,\pi_E] \simeq \mathcal{F}[1_\mathcal{F},f^*E] \]
and those isomorphisms jointly yield an equivalence between the following comma-categories:
\[  1_\mathcal{F} \downarrow f^* \simeq f \downarrow \textbf{Et}/\mathcal{E}  \]
\end{remark}

\begin{remark}
    An object $D$ in a 2-category $ \mathcal{K}$ is called \emph{discrete} if for any object $C$ the homcategory $ \mathcal{K}[C,D]$ is equivalent to a discrete category, that is, a set. Here, one can see that etale geometric morphisms are discrete objects in the pseudoslice $ \GTop/\mathcal{E}$. From their definition, etale geometric morphisms over $\mathcal{E}$ are indexed by objects of $E$; in fact, this correspondence is natural and defines an equivalence of categories 
\[ \textbf{Et}/\mathcal{E} \simeq \mathcal{E} \]
This ensures that one can manipulate $\Et/\mathcal{E}$ as an ordinary category. There is an intuitive reason for this. Objects in a topos are internal sets: but sets are the same as discrete spaces. This intuition is formalized by the fact that etale geometric morphisms over $\mathcal{E}$ are \emph{localic} and correspond to \emph{discrete internal locales} in $\mathcal{E}$, which are nothing but objects of $\mathcal{E}$. 
\end{remark}

Discreteness is the finest kind of truncation: following the geometric interpretation of factorization systems given at \cref{glosis on fact of geommorph}, this suggests that etale geometric morphisms are right orthogonal to a class of morphisms expressing a dual connectedness property. They are indeed right orthogonal to the following class of geometric morphisms, which are in some sense ``as far as possible to discrete":

\begin{definition}
    A geometric morphism $f : \mathcal{F} \rightarrow \mathcal{E}$ is said to be \emph{connected} if $ f^*$ is full and faithful.
\end{definition}

\begin{proposition}[{\cite{elephant}[Proposition C3.3.4]}]
Connected geometric morphisms are left biorthogonal to etale geometric morphisms.
\end{proposition}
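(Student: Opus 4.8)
The plan is to reduce the claimed bi-orthogonality to the correspondence between etale factorizations and global elements recorded in \cref{Equivalence over E between etale fact of f and element}, and to observe that under this correspondence the restriction functor appearing in the orthogonality condition becomes, fibrewise, nothing but the action of $f^*$ on global elements. Write the left (connected) map as $f : \mathcal{F} \to \mathcal{E}$ and the right (etale) map as $\pi_E : \mathcal{D}/E \to \mathcal{D}$ for an object $E$ of $\mathcal{D}$, so that a pseudosquare from $f$ to $\pi_E$ is a triple $(u,v,\alpha)$ with $\alpha : \pi_E u \simeq vf$,
\[
\begin{tikzcd}[row sep=small]
\mathcal{F} \arrow[d,"f"'] \arrow[r,"u"] \arrow[rd,"\alpha \atop \simeq", phantom] & \mathcal{D}/E \arrow[d,"\pi_E"] \\
\mathcal{E} \arrow[r,"v"'] & \mathcal{D}
\end{tikzcd}
\]
and we must exhibit an equivalence $\GTop[\mathcal{E},\mathcal{D}/E] \simeq \ps[2,\GTop](f,\pi_E)$. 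Since $\pi_E$ is a discrete object of $\GTop/\mathcal{D}$, for each fixed bottom edge $v$ the categories of diagonals over $v$ and of squares over $v$ are discrete; this forces every comparison 2-cell to be a unique invertible one, so the entire statement collapses to a bijection of global-element sets, natural in $v$. The genuine content is therefore one-dimensional.

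First I would fix the bottom edge $v : \mathcal{E} \to \mathcal{D}$ and identify both sides through \cref{Equivalence over E between etale fact of f and element}, applied once to the base map $v$ and once to $vf$. A diagonal $s : \mathcal{E} \to \mathcal{D}/E$ lying over $v$, i.e.\ an element of $\GTop/\mathcal{D}[v,\pi_E]$, corresponds to a global element $1_\mathcal{E} \to v^*E$ in $\mathcal{E}$; the top datum $(u,\alpha)$, being an element of $\GTop/\mathcal{D}[vf,\pi_E]$, corresponds to a global element $1_\mathcal{F} \to (vf)^*E = f^*v^*E$ in $\mathcal{F}$. Under these identifications the restriction functor $s \mapsto (sf,\ v,\ \mathrm{can})$ becomes precisely postcomposition of global elements by $f^*$ (using $(vf)^* = f^*v^*$), so that establishing orthogonality over $v$ amounts to showing that
\[ f^* : \mathcal{E}[1_\mathcal{E}, v^*E] \longrightarrow \mathcal{F}[f^*1_\mathcal{E}, f^*v^*E] \]
is a bijection onto $\mathcal{F}[1_\mathcal{F}, f^*v^*E]$.

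This is the heart of the matter and is exactly where connectedness enters. Because $f^*$ is left exact it preserves the terminal object, giving a canonical $f^*1_\mathcal{E} \simeq 1_\mathcal{F}$, so a global element of $f^*v^*E$ is the same datum as an arrow $f^*1_\mathcal{E} \to f^*v^*E$; and because $f$ is connected, $f^*$ is fully faithful, whence the displayed map is a bijection. This transports the global element extracted from $(u,\alpha)$ to a unique global element of $v^*E$, which is precisely the diagonal $s$, delivered together with the required invertible comparison 2-cells $\lambda : sf \simeq u$ and $\rho : \pi_E s \simeq v$. Thus the entire mathematical substance of the proposition is the two-property observation ``lex $+$ fully faithful'' on a single hom-set.

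What remains is bookkeeping, and this is where I expect the only real difficulty, albeit coherential rather than conceptual. One must check that the fibrewise bijections assemble into mutually inverse functors realizing $\GTop[\mathcal{E},\mathcal{D}/E] \simeq \ps[2,\GTop](f,\pi_E)$: concretely, that the element–factorization correspondence of \cref{Equivalence over E between etale fact of f and element} is natural as the base edge $v$ varies through the invertible 2-cells of a morphism of pseudosquares, and that the uniqueness clause of bi-orthogonality (a unique $\delta_{(\phi,\psi)}$ compatible with $\lambda$ and $\rho$) is respected. Here the discreteness of etale objects in $\GTop/\mathcal{D}$ again does the heavy lifting, since it makes every diagonal unique up to a unique invertible 2-cell automatically; the only step demanding genuine care is threading the slice and base-change equivalences coherently so that precomposition with $f$ really is identified with $f^*$ on the nose at the level of the correspondences, and not merely up to an untracked isomorphism.
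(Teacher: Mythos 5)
Your proof is correct and follows essentially the same route as the paper's own argument for the terminally connected generalization of this statement (for the connected case itself the paper merely cites the Elephant): identify pseudosquares and diagonals with global elements via the correspondence of \cref{Equivalence over E between etale fact of f and element}, then use that $f^*$ is lex and fully faithful --- which is precisely the observation that connected implies terminally connected --- to obtain the bijection, with functoriality of the lifts handling morphisms of pseudosquares. One small correction: in a morphism of pseudosquares $(\phi,\psi)$ neither 2-cell need be invertible, so discreteness of $\pi_E$ in $\GTop/\mathcal{D}$ cannot do all the 2-dimensional work you assign to it (it only governs hom-categories \emph{in the slice}); what actually supplies the unique $\delta_{(\phi,\psi)}$ is the naturality in the object variable of the bijection $\mathcal{E}[1_\mathcal{E}, -] \simeq \mathcal{F}[1_\mathcal{F}, f^*(-)]$, applied along $\psi^\flat_E : v^*E \rightarrow v'^*E$, exactly as in the paper's proof for the terminally connected case.
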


Beware that not any geometric morphism that is left bi-orthogonal to etale geometric morphisms has to be connected; symmetrically, not any geometric morphism that is right bi-orthogonal to connected geometric morphism has to be etale: for instance, 2-cofiltered bilimits of etale geometric morphisms are so, though they are not etale. The complete bi-orthogonality structure involved here shall be described in the next section. However, there is a class of geometric morphisms for which connected and etale morphisms form altogether an bi-orthogonality structure:

\begin{definition}
A geometric morphism $ f : \mathcal{E} \rightarrow \mathcal{F}$ is said to be \emph{locally connected} if its inverse image part $ f^*$ admits an \emph{indexed left adjoint} $ f_!$ -- see \cite{elephant}[C3.3]. A Grothendieck topos $\mathcal{E}$ is \emph{locally connected} if its terminal map is locally connected. 

\end{definition}

\begin{remark}\label{Frobenius}
A geometric morphism is locally connected if and only if the inverse image part $f^*$ admits a left adjoint $f_!$ satisfying the \emph{Frobenius identity} $ f_!(F \times_{f^*E} f^*E') \simeq f_!F \times_E E'$ for any $ a : F \rightarrow f^*E$ in $\mathcal{F}$, $b : E' \rightarrow E$ in $\mathcal{E}$. 
\end{remark}

\begin{remark}
Any etale geometric morphism $ \pi_E : \mathcal{E}/E \rightarrow \mathcal{E}$ is locally connected, with the further left adjoint of the projection $ \pi_{E} : \mathcal{E}/E \rightarrow \mathcal{E}$ given by postcomposition with the terminal map $ !_E : E \rightarrow 1_\mathcal{E}$. However, not all connected geometric morphisms are locally connected. 
\end{remark}

In some sense, locally connected geometric morphisms are those that have a well behaved notion of object of connected components, represented by the essential image. In particular, the object $ f_!(1_\mathcal{F})$ can be seen as an object of connected components of $\mathcal{F}$ seen as a topos over $\mathcal{E}$. Then the correspondence between global elements and etale factorization yields an initial factorization corresponding to the unit $ 1_\mathcal{F} \rightarrow f^*f_!(1_\mathcal{F})$ of the terminal object:

\begin{proposition}[{\cite{elephant}[Proposition C3.3.5(i)]}]
Any locally connected geometric morphism factorizes uniquely as a connected, locally connected geometric morphism followed by an etale geometric morphism:
\[\begin{tikzcd}
	{\mathcal{F}} && {\mathcal{E}} \\
	& {\mathcal{E}/f_!(1_\mathcal{F})}
	\arrow["{c_f}"', from=1-1, to=2-2]
	\arrow[""{name=0, anchor=center, inner sep=0}, "f", from=1-1, to=1-3]
	\arrow["{\pi_{f!(1)}}"', from=2-2, to=1-3]
	\arrow["\simeq"{description}, Rightarrow, draw=none, from=0, to=2-2]
\end{tikzcd}\]
where $ (c_f)_!$ sends $ F $ to the image of the terminal map $ f_!(!_F) : f_!(F) \rightarrow f_!(1_\mathcal{F})$.
\end{proposition}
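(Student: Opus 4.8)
The plan is to obtain the factorization from the dictionary between global elements and étale factorizations recorded above, and then to read off both the connectedness and the local connectedness of the left factor from the Frobenius identity of $f_!$. First I would set $C := f_!(1_{\mathcal{F}})$ and consider the unit $\eta \colon 1_{\mathcal{F}} \to f^* f_!(1_{\mathcal{F}}) = f^* C$ of the adjunction $f_! \dashv f^*$ at the terminal object, which is a global element of $f^* C$. By the correspondence of \cref{etale morphisms and global elements} (and the equivalence of \cref{Equivalence over E between etale fact of f and element}), $\eta$ names a factorization $f \simeq \pi_C \circ c_f$ through the étale morphism $\pi_C \colon \mathcal{E}/C \to \mathcal{E}$, where $c_f \colon \mathcal{F} \to \mathcal{E}/C$ has inverse image given by the pullback formula $(c_f)^*(h \colon D \to C) = \eta^* f^* D$, i.e. the pullback of $f^* h$ along $\eta$.

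To establish that $c_f$ is locally connected I would factor its inverse image as $(c_f)^* = \eta^* \circ \widetilde{f}^*$, where $\widetilde{f}^*(h\colon D \to C) = (f^* h \colon f^* D \to f^* C)$ and $\eta^*$ denotes pullback along $\eta$. This displays $c_f$ as the composite of the étale morphism $s_\eta \colon \mathcal{F} \to \mathcal{F}/f^* C$ at the object $(\eta\colon 1_{\mathcal{F}} \to f^* C)$ with the sliced morphism $\widetilde{f} \colon \mathcal{F}/f^* C \to \mathcal{E}/C$ induced by $f$. The former is étale, hence locally connected; the latter is locally connected because $f$ is and local connectedness is stable under slicing (equivalently, under bipullback along $\pi_C$; see \cite{elephant}[C3.3]). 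Since locally connected morphisms are closed under composition, $c_f$ is locally connected. Composing the two left adjoints $\Sigma_\eta$ (post-composition with $\eta$) and $\widetilde{f}_! \colon (g \colon G \to f^* C) \mapsto (g^{\flat} \colon f_!(G) \to C)$, and simplifying with the naturality of the unit, then recovers the announced description: $(c_f)_!(F)$ is the terminal map $f_!(!_F) \colon f_!(F) \to C$, regarded as an object of $\mathcal{E}/C$.

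Connectedness of $c_f$ --- that is, full faithfulness of $(c_f)^*$ --- I would deduce from the invertibility of the counit of $(c_f)_! \dashv (c_f)^*$. Applying $f_!$ to the defining pullback $(c_f)^*(h) = 1_{\mathcal{F}} \times_{f^* C} f^* D$ and invoking the Frobenius identity of \cref{Frobenius} with $F = 1_{\mathcal{F}}$, $E = C$ and $E' = D$ gives $f_!\big((c_f)^*(h)\big) \cong f_!(1_{\mathcal{F}}) \times_C D = C \times_C D \cong D$, naturally in $h$ and over $C$; hence $(c_f)_!(c_f)^*(h) \cong (h \colon D \to C)$, the counit is invertible, and $(c_f)^*$ is fully faithful. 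Taking $h = \mathrm{id}_C$ specializes this to $(c_f)_!(1_{\mathcal{F}}) \cong 1_{\mathcal{E}/C}$, the expected collapse of connected components. Finally, uniqueness up to canonical equivalence is automatic from the bi-orthogonality of connected and étale morphisms (\cite{elephant}[Proposition C3.3.4]) together with the universality of bifactorizations noted in the remark following the definition of bifactorization system.

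The main obstacle is precisely the local connectedness of $c_f$: what must be checked is not merely that $(c_f)^*$ admits a left adjoint, but that this left adjoint is \emph{indexed}, equivalently that it satisfies Frobenius reciprocity. The slicing argument outsources this to the corresponding stability property of $f$, whereas a direct verification would reduce the Frobenius identity for $(c_f)_!$ to that of $f_!$ together with the stability of pullbacks in $\mathcal{F}$; it is this same Frobenius input for $f_!$ that simultaneously powers the connectedness computation above.
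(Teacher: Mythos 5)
Your proposal is correct, but there is nothing in the paper to compare it against: the paper states this proposition as a citation of \cite{elephant}[Proposition C3.3.5(i)] and gives no proof of its own. What you have written is essentially Johnstone's argument, assembled neatly from ingredients the paper does set up: the unit $\eta\colon 1_{\mathcal{F}} \to f^*f_!(1_{\mathcal{F}})$ names the factorization via the global-element dictionary of \cref{etale morphisms and global elements}, and your decomposition $c_f \simeq \widetilde{f} \circ s_\eta$ is literally the base-change triangle displayed there, with $s_\eta$ the etale morphism at the object $\eta$ of $\mathcal{F}/f^*C$ (using $(\mathcal{F}/f^*C)/\eta \simeq \mathcal{F}$) and $\widetilde{f}$ the bipullback of $f$ along $\pi_C$; local connectedness of $c_f$ then follows from stability of local connectedness under slicing and composition, both in \cite{elephant}[C3.3]. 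Your adjoint computation is right: the transpose of $\eta \circ {!_F}$ is $f_!(!_F)$ by a triangle identity, which recovers the description in the statement --- note that ``the image of the terminal map'' there means the image \emph{under $f_!$} of $!_F$, i.e.\ the morphism $f_!(!_F)$ regarded as an object of the slice, not an epi-mono image, and your reading agrees with this. Two small points deserve a word. First, in the connectedness step you should check that the Frobenius isomorphism $f_!\bigl(1_{\mathcal{F}} \times_{f^*C} f^*D\bigr) \simeq f_!(1_{\mathcal{F}}) \times_C D \simeq D$ (using that the transpose of $\eta$ is $\mathrm{id}_C$), traced through, \emph{is} the canonical counit of $(c_f)_! \dashv (c_f)^*$, rather than merely some natural isomorphism; this identification is standard, and it is exactly the computation the paper itself performs later when proving that a locally connected morphism is terminally connected if and only if it is connected. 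Second, the Frobenius isomorphism must be taken over $C$, i.e.\ compatibly with the structure maps to $C$, which you assert and which holds by the construction of the comparison map. With these glosses your proof is complete and is the expected one.
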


\subsection{(Terminally connected, etale) factorization of essential geometric morphisms}

This factorization still makes sense if $f_!$ only defines an ordinary, not necessarily indexed left adjoint to $f^*$, that is, when $f$ is simply an \emph{essential} geometric morphism, although the left part is no longer connected: it shall lie in a more general class of morphisms that are, intuitively, connected only from the ``point of view" of the terminal object:

\begin{definition}
A geometric morphism $f : \mathcal{F} \rightarrow \mathcal{E}$ is said to be \emph{terminally connected} if its inverse image $ f^*$ lifts uniquely global elements, that is, if one has a natural isomorphism 
\[  \mathcal{F}[1_\mathcal{F}, f^*(-)] \simeq \mathcal{E}[1_\mathcal{E}, -]  \]
\end{definition}

\begin{remark}
This condition states that not only any global element $ a : 1_\mathcal{E} \rightarrow f^*(E)$ comes from a unique global element $ \overline{a} : 1_\mathcal{E} \rightarrow E$ in $\mathcal{E}$, but also that these lifts can be done functorially, in the sense that, for any $ s : F_1 \rightarrow F_2$ and any equality of global elements $ a_2 = f^*(s)a_1$ as encoded in a triangle as below
\[\begin{tikzcd}
	& {1_\mathcal{E}} \\
	{f^*(F_1)} && {f^*(F_2)}
	\arrow["{a_1}"', from=1-2, to=2-1]
	\arrow["{f^*(s)}"', from=2-1, to=2-3]
	\arrow["{a_2}", from=1-2, to=2-3]
\end{tikzcd}\] 
then the lifts are related by the following commutative triangle:
\[\begin{tikzcd}
	& {1_\mathcal{F}} \\
	{F_1} && {F_2}
	\arrow["{\overline{a_1}}"', from=1-2, to=2-1]
	\arrow["s"', from=2-1, to=2-3]
	\arrow["{\overline{a_2}}", from=1-2, to=2-3]
\end{tikzcd}\]
In other words, we have an invertible 2-cell in the 2-category of categories

\[\begin{tikzcd}
	{\mathcal{E}} && {\mathcal{F}} \\
	& \Set
	\arrow[""{name=0, anchor=center, inner sep=0}, "{f^*}"', from=1-3, to=1-1]
	\arrow["{\Gamma_\mathcal{E}}"', from=1-1, to=2-2]
	\arrow["{\Gamma_\mathcal{F}}", from=1-3, to=2-2]
	\arrow["\simeq"{description}, Rightarrow, draw=none, from=0, to=2-2]
\end{tikzcd}\]
\end{remark}

\begin{remark}\label{relation with units}
    In the setting above, the unique lift is moreover related through the unit as in the commutative triangles: 
\[\begin{tikzcd}
	{1_\mathcal{E}} & E \\
	& {f_*f^*E}
	\arrow["\overline{a}", from=1-1, to=1-2]
	\arrow["{f_*a}"', from=1-1, to=2-2]
	\arrow["{\eta_{E}}", from=1-2, to=2-2]
\end{tikzcd} \hskip1cm 
\begin{tikzcd}
	{1_\mathcal{F}} & {f^*f_*f^*E} \\
	& {f^*E}
	\arrow["{f^*f_*f^*\overline{a}}", from=1-1, to=1-2]
	\arrow["a"', from=1-1, to=2-2]
	\arrow["{\varepsilon_{E}}", from=1-2, to=2-2]
\end{tikzcd}\] 
\end{remark}

The following observation is immediate by adjunction:

\begin{proposition}
An essential geometric morphism is terminally connected if and only if one has $ f_!(1_\mathcal{E}) \simeq 1_\mathcal{F}$. 
\end{proposition}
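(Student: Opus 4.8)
The plan is to reduce both sides of the equivalence to a statement about representable functors on $\mathcal{E}$ and then apply the Yoneda lemma; since $f$ is assumed essential, the adjunction $f_! \dashv f^*$ does all the work, which is exactly why the result is ``immediate''. First I would record that, $f$ being essential, for every object $E$ of $\mathcal{E}$ the transpose of the adjunction $f_! \dashv f^*$ supplies a bijection
\[ \mathcal{F}[1_\mathcal{F}, f^*E] \;\cong\; \mathcal{E}[f_!(1_\mathcal{F}), E] \]
natural in $E$. Both sides are covariant functors $\mathcal{E} \to \Set$, and this identifies the functor $\mathcal{F}[1_\mathcal{F}, f^*(-)]$ appearing in the definition of terminal connectedness with the representable functor $\mathcal{E}[f_!(1_\mathcal{F}), -]$.

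Substituting this identification into the defining condition, terminal connectedness of $f$ becomes precisely the existence of a natural isomorphism of representable functors
\[ \mathcal{E}[f_!(1_\mathcal{F}), -] \;\simeq\; \mathcal{E}[1_\mathcal{E}, -], \]
where $\mathcal{E}[1_\mathcal{E}, -]$ is the functor represented by the terminal object $1_\mathcal{E}$. By the Yoneda lemma the representing objects are determined up to unique isomorphism, so such a natural isomorphism exists if and only if $f_!(1_\mathcal{F}) \simeq 1_\mathcal{E}$ in $\mathcal{E}$. This yields both implications at once: from a terminally connected $f$ one reads off the isomorphism of representing objects, and conversely an isomorphism $f_!(1_\mathcal{F}) \simeq 1_\mathcal{E}$ induces, by postcomposition, the displayed natural isomorphism, which transposes back through $f_! \dashv f^*$ to the lifting-of-global-elements condition.

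There is essentially no obstacle here: the only care needed is bookkeeping—verifying that the adjunction transpose is natural in $E$ (a standard property of adjunctions) and that the isomorphism produced by Yoneda is exactly the one witnessing that global elements of $f^*E$ lift uniquely to global elements of $E$. It is worth remarking only that the essentialness hypothesis is used in an essential way: without a left adjoint $f_!$ one cannot rewrite $\mathcal{F}[1_\mathcal{F}, f^*(-)]$ as a representable functor on $\mathcal{E}$, and indeed for a general terminally connected morphism the ``object of global connected components'' $f_!(1_\mathcal{F})$ need not exist—precisely the phenomenon that motivates the pro-etale generalization developed in the sequel.
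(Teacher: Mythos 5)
Your proof is correct and is exactly the argument the paper has in mind when it says the observation is ``immediate by adjunction'': transpose along $f_! \dashv f^*$ to identify $\mathcal{F}[1_\mathcal{F}, f^*(-)]$ with $\mathcal{E}[f_!(1_\mathcal{F}), -]$ and conclude by Yoneda, with naturality handled just as you describe. Note also that you have silently (and rightly) corrected the statement's typo: since $f_! : \mathcal{F} \to \mathcal{E}$, the condition should read $f_!(1_\mathcal{F}) \simeq 1_\mathcal{E}$, as in your write-up.
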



\begin{proposition}
A locally connected geometric morphism is terminally connected if and only if it is connected.
\end{proposition}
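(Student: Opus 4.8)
The plan is to hinge both implications on the standard adjunction fact that, for $f_! \dashv f^*$, the inverse image $f^*$ is full and faithful---equivalently $f$ is connected---exactly when the counit $\varepsilon \colon f_! f^* \Rightarrow \mathrm{id}_{\mathcal{E}}$ is invertible. A locally connected morphism is in particular essential, so the preceding proposition applies and rephrases terminal connectedness as the single condition $f_!(1_{\mathcal{F}}) \simeq 1_{\mathcal{E}}$. Thus it suffices to show that, for locally connected $f$, invertibility of $\varepsilon$ is equivalent to $f_!(1_{\mathcal{F}}) \simeq 1_{\mathcal{E}}$.

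For the implication from connectedness I would argue directly, using only that $f$ is essential. If $f^*$ is full and faithful then $\varepsilon$ is invertible; since $f^*$ is lex we have $f^*(1_{\mathcal{E}}) \simeq 1_{\mathcal{F}}$, so evaluating the counit at the terminal object yields $f_!(1_{\mathcal{F}}) \simeq f_! f^*(1_{\mathcal{E}}) \simeq 1_{\mathcal{E}}$. By the preceding proposition this is exactly terminal connectedness.

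The reverse implication is where local connectedness genuinely enters, through the Frobenius identity of \Cref{Frobenius}. I would read that identity not merely as an abstract isomorphism but as the statement that the canonical projection-formula comparison $\psi_{E,F} \colon f_!(f^* E \times F) \to E \times f_! F$---whose component onto $E$ is $f_!$ of the projection followed by $\varepsilon_E$, and whose component onto $f_! F$ is $f_!$ of the other projection---is invertible; this arises from \Cref{Frobenius} by taking the base object to be $1_{\mathcal{E}}$, so that the pullback over $f^*(1_{\mathcal{E}}) \simeq 1_{\mathcal{F}}$ becomes a binary product. Specializing to $F = 1_{\mathcal{F}}$ gives $\psi_{E,1_{\mathcal{F}}} \colon f_! f^* E \to E \times f_!(1_{\mathcal{F}})$. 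Under the hypothesis $f_!(1_{\mathcal{F}}) \simeq 1_{\mathcal{E}}$ the codomain collapses to $E$, and $\psi_{E,1_{\mathcal{F}}}$ is identified with $\varepsilon_E$; since $\psi$ is invertible, so is $\varepsilon_E$ for every $E$, and hence $f$ is connected.

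The delicate point---and the step I expect to be the main obstacle---is precisely this last identification: one must recover the counit itself rather than settle for an abstract isomorphism $f_! f^* E \simeq E$, which on its own would not establish full faithfulness of $f^*$. Making this rigorous requires unwinding how the isomorphism of \Cref{Frobenius} is assembled from $f_!$ applied to the two projections together with the counit, and checking that under $E \times 1_{\mathcal{E}} \simeq E$ its restriction at $F = 1_{\mathcal{F}}$ is literally $\varepsilon_E$ and is natural in $E$. Everything else is formal manipulation of the adjunction $f_! \dashv f^*$ and the lex-ness of $f^*$.
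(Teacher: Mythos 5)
Your proposal is correct and follows essentially the same route as the paper: the paper's proof likewise specializes the Frobenius identity of \cref{Frobenius} at the terminal object to obtain $f_!f^*E \simeq f_!1_{\mathcal{F}} \times E$, identifies this composite with the counit of $f_! \dashv f^*$, and concludes full faithfulness of $f^*$ from $f_!(1_{\mathcal{F}}) \simeq 1_{\mathcal{E}}$, the converse direction being the formal evaluation-at-$1$ argument you give. Your explicit flagging that the isomorphism must be recognized as the counit itself (not merely an abstract isomorphism $f_!f^*E \simeq E$) is a point the paper asserts without comment, so your more careful phrasing is a welcome refinement rather than a divergence.
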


\begin{proof}
    The Frobenius identity of \cref{Frobenius} gives that for any $E$ in $\mathcal{E}$ one can retrieve the counit of the $f_! \dashv f^*$ adjunctions as the composite $ f_! f^*E \simeq f_! (1_\mathcal{F} \times f_*E) \simeq f_!1_\mathcal{F} \times E$. Hence, as soon as $f$ is terminally connected, this gives $f_!f^*E \simeq E$, which exhibits $f^*$ as fully faithful. 
\end{proof}

A connected geometric morphism is terminally connected; beware that terminal connectedness is a weaker condition that connectedness, as it requires only full faithfulness relatively to global elements and not arbitrary generalized elements. Hence, not all terminally connected morphisms are connected: in a sense, the terminal object ``believes" they are connected, although they might not be so from the point of view of other objects. 

\begin{proposition}
Terminally connected geometric morphisms are left biorthogonal to etale geometric morphisms.
\end{proposition}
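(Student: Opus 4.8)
The plan is to verify the defining equivalence of bi-orthogonality directly, by chaining the correspondence between etale factorizations and global elements recorded in \cref{etale morphisms and global elements} with the defining isomorphism of terminal connectedness. Fix a terminally connected $f : \mathcal{F} \rightarrow \mathcal{E}$ and an etale morphism, which by the identification preceding we may take to be $\pi_E : \mathcal{D}/E \rightarrow \mathcal{D}$ for an object $E$ of some topos $\mathcal{D}$. I must show that the comparison functor
\[ \GTop[\mathcal{E}, \mathcal{D}/E] \longrightarrow \ps[2,\GTop](f, \pi_E), \qquad s \longmapsto (sf, \pi_E s, \mathrm{id}), \]
sending a geometric morphism to the pseudosquare it fills, is an equivalence of categories.

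First I would note that both sides project onto $\GTop[\mathcal{E}, \mathcal{D}]$ --- on the right by $(u,v,\alpha) \mapsto v$, on the left by $s \mapsto \pi_E s$ --- and that the comparison commutes with these projections. By the very definition of the pseudoslice, the fibre of $\ps[2,\GTop](f,\pi_E)$ over a fixed base $v$ is the hom-category $\GTop/\mathcal{D}[vf, \pi_E]$, an object of which is a factorization $u$ of $vf$ through $\pi_E$ together with its coherence isomorphism $\alpha$; likewise the fibre of the left-hand side over $v$ is $\GTop/\mathcal{D}[v, \pi_E]$. Thus it suffices to produce an equivalence $\GTop/\mathcal{D}[v, \pi_E] \simeq \GTop/\mathcal{D}[vf, \pi_E]$, natural in $v$, and to check that the comparison realises it.

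The core of the argument is the chain of equivalences
\[ \GTop/\mathcal{D}[vf, \pi_E] \;\simeq\; \mathcal{F}[1_\mathcal{F}, f^*(v^*E)] \;\simeq\; \mathcal{E}[1_\mathcal{E}, v^*E] \;\simeq\; \GTop/\mathcal{D}[v, \pi_E], \]
in which the two outer equivalences are instances of the etale--global-element correspondence of \cref{Equivalence over E between etale fact of f and element}, applied to the geometric morphisms $vf$ and $v$ and the object $E$ of $\mathcal{D}$, while the middle one is the defining isomorphism of terminal connectedness evaluated at the object $v^*E$. Concretely, the factorization $u$ names a global element $a : 1_\mathcal{F} \rightarrow f^*v^*E$; terminal connectedness lifts it uniquely to $\overline{a} : 1_\mathcal{E} \rightarrow v^*E$, which in turn names the diagonal $s : \mathcal{E} \rightarrow \mathcal{D}/E$ with $\pi_E s \simeq v$. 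Since the terminal-connectedness comparison is induced by applying $f^*$ along $f^*1_\mathcal{E} \simeq 1_\mathcal{F}$, one has $a \simeq f^*\overline{a}$, and functoriality of the etale correspondence under precomposition with $f$ then identifies $sf$ with $u$ as factorizations of $vf$; hence the diagonal genuinely fills the square, which gives essential surjectivity.

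The hard part, and the only place where real work is hidden, is the assembly of these fibrewise bijections into a single equivalence compatible with the non-identity horizontal $2$-cells $\psi : v \Rightarrow v'$ and the full morphism structure of $\ps[2,\GTop](f,\pi_E)$; note that, the fibres being essentially discrete, all of the genuine $2$-dimensional content sits in this lifting of $\psi$. Two compatibilities must be discharged: that the etale--global-element correspondence is pseudonatural in its base morphism --- so the chain is natural in $v$ only up to the canonical isomorphisms $\alpha_u$ of \cref{etale morphisms and global elements}, which are invertible but not strict --- and that the \emph{functorial} half of terminal connectedness, namely the $2$-dimensional naturality of $\mathcal{F}[1_\mathcal{F}, f^*(-)] \simeq \mathcal{E}[1_\mathcal{E}, -]$, makes the assignment $a \leftrightarrow \overline{a}$ respect morphisms of pseudosquares $(\phi, \psi)$; this is exactly what will yield fullness and faithfulness, as a $2$-cell $\sigma : s \Rightarrow s'$ is then recovered uniquely from the compatible pair $(\phi, \psi) = (\sigma f, \pi_E \sigma)$. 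Granting these, the comparison is a fibrewise equivalence over $\GTop[\mathcal{E}, \mathcal{D}]$ and therefore an equivalence, which is precisely the asserted bi-orthogonality $f \perp \pi_E$.
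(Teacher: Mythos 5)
Your proposal is correct and takes essentially the same route as the paper: both reduce pseudosquares against $\pi_E$ to global elements via the correspondence of \cref{etale morphisms and global elements} and then apply the defining isomorphism $\mathcal{F}[1_\mathcal{F}, f^*(-)] \simeq \mathcal{E}[1_\mathcal{E}, -]$, with its naturality discharging exactly the two-cell compatibilities you flag. The only difference is presentational: you package the argument as a fibrewise equivalence of hom-categories over $\GTop[\mathcal{E},\mathcal{D}]$, whereas the paper directly constructs the essentially unique diagonal filler and verifies functoriality on morphisms of pseudosquares by lifting triangles of global elements.
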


\begin{proof}
Let be an invertible square as below
\[\begin{tikzcd}
	{\mathcal{G}} & {\mathcal{E}/E} \\
	{\mathcal{F}} & {\mathcal{E}}
	\arrow["t"', from=1-1, to=2-1]
	\arrow["f"', from=2-1, to=2-2]
	\arrow["a", from=1-1, to=1-2]
	\arrow["{\pi_{E}}", from=1-2, to=2-2]
	\arrow["\simeq"{description}, draw=none, from=1-1, to=2-2]
\end{tikzcd}\]
corresponding to a global element $ a : 1_\mathcal{G} \rightarrow t^*f^*E  $ in $\mathcal{G}$. If $ t$ is terminally connected, then this element comes uniquely from a global element $ \overline{a} : 1_\mathcal{F} \rightarrow f^*E$ in $\mathcal{F}$, whose name is a diagonalization of the pseudosquare above $ \mathcal{F} \rightarrow \mathcal{E}/E$. As any other possible diagonalization would name another antecedent of $a$, this diagonalization has to be unique. Now, let us examine the functoriality of the lift relative to morphisms of pseudosquares; a situation as below 
\[\begin{tikzcd}
	{\mathcal{G}\quad} && {\mathcal{E}/E} \\
	{\mathcal{F}} && {\mathcal{E}}
	\arrow[""{name=0, anchor=center, inner sep=0}, "a", curve={height=-18pt}, from=1-1, to=1-3]
	\arrow[""{name=1, anchor=center, inner sep=0}, "{a'}"{description}, from=1-1, to=1-3]
	\arrow["t"', from=1-1, to=2-1]
	\arrow["{\alpha' \atop\simeq}"{description}, draw=none, from=1-1, to=2-3]
	\arrow["{\pi_{E}}", from=1-3, to=2-3]
	\arrow["f'"', from=2-1, to=2-3]
	\arrow["\phi", shorten <=2pt, shorten >=2pt, Rightarrow, from=0, to=1]
\end{tikzcd} = 
\begin{tikzcd}
	{\mathcal{G}} && {\mathcal{E}/E} \\
	{\mathcal{F}} && {\mathcal{E}}
	\arrow["a", from=1-1, to=1-3]
	\arrow["t"', from=1-1, to=2-1]
	\arrow["{\alpha \atop\simeq}"{description}, draw=none, from=1-1, to=2-3]
	\arrow["{\pi_{E}}", from=1-3, to=2-3]
	\arrow[""{name=0, anchor=center, inner sep=0}, "f"{description}, from=2-1, to=2-3]
	\arrow[""{name=1, anchor=center, inner sep=0}, "{f'}"', curve={height=18pt}, from=2-1, to=2-3]
	\arrow["\psi", shorten <=2pt, shorten >=2pt, Rightarrow, from=0, to=1]
\end{tikzcd}\]
defines a triangle 
\[\begin{tikzcd}
	& {t^*f^*E} \\
	{1_\mathcal{G}} \\
	& {t^*f'^* E}
	\arrow["{t^*\psi_E^\flat}", from=1-2, to=3-2]
	\arrow["a", from=2-1, to=1-2]
	\arrow["{a'}"', from=2-1, to=3-2]
\end{tikzcd}\]
which, by functoriality of the lifting property of $t$, must come uniquely from a morphism between global elements
\[\begin{tikzcd}
	& {f^*E} \\
	{1_\mathcal{F}} \\
	& {f'^* E}
	\arrow["{\psi_E^\flat}", from=1-2, to=3-2]
	\arrow["{\overline{a}}", from=2-1, to=1-2]
	\arrow["{\overline{a'}}"', from=2-1, to=3-2]
\end{tikzcd}\]
which, in turn, exactly defines a morphism between the diagonalizations provided respectively by $\overline{a}$ and $\overline{a'}$. 
\end{proof}

\begin{proposition}[{\cite{caramello2020denseness}[Proposition 4.62]}]
Any essential geometric morphism factorizes uniquely as a terminally connected geometric morphism followed by an etale geometric morphism.
\end{proposition}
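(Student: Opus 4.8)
The plan is to mimic the (connected, etale) factorization of locally connected morphisms, replacing the rôle of the indexed left adjoint by a mere left adjoint and relaxing connectedness to terminal connectedness. Let $f : \mathcal{F} \to \mathcal{E}$ be essential, with $f_! \dashv f^*$, and set $E := f_!(1_\mathcal{F})$. The unit of this adjunction at the terminal object provides a distinguished global element $\eta_{1_\mathcal{F}} : 1_\mathcal{F} \to f^*f_!(1_\mathcal{F}) = f^*E$. By the correspondence of \cref{etale morphisms and global elements} (see also \cref{Equivalence over E between etale fact of f and element}), this global element names a factorization of $f$ through the etale morphism $\pi_E$, i.e. an invertible $2$-cell $f \simeq \pi_E \circ l_f$ with $l_f : \mathcal{F} \to \mathcal{E}/E$ the geometric morphism whose inverse image sends $h : D \to E$ to the pullback of $f^*h$ along $\eta_{1_\mathcal{F}}$. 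This produces the candidate left part; it then remains to check that $l_f$ is terminally connected and that the factorization is unique.

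To see that $l_f$ is terminally connected, I would first observe that it is again essential, with left adjoint $(l_f)_!$ sending $F \in \mathcal{F}$ to the object $f_!(!_F) : f_!(F) \to f_!(1_\mathcal{F}) = E$ of $\mathcal{E}/E$. The adjunction $(l_f)_! \dashv l_f^*$ follows by transposing along $f_! \dashv f^*$: a morphism $f_!(F) \to D$ over $E$ transposes to a morphism $F \to f^*D$ whose composite with $f^*h$ is forced to be $\eta_{1_\mathcal{F}} \circ \, !_F$, which is exactly the datum of a morphism $F \to l_f^*(h)$ into the defining pullback. Granting this, terminal connectedness is immediate from the proposition characterizing essential terminally connected morphisms: one computes $(l_f)_!(1_\mathcal{F}) = (f_!(!_{1_\mathcal{F}}) : f_!(1_\mathcal{F}) \to E)$, and since $!_{1_\mathcal{F}} = \mathrm{id}_{1_\mathcal{F}}$ this is $\mathrm{id}_E : E \to E$, namely the terminal object $1_{\mathcal{E}/E}$ of the slice. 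Alternatively, and avoiding the explicit construction of $(l_f)_!$, one may verify the unique lifting of global elements directly: a global element of $l_f^*(h)$ is a map $b : 1_\mathcal{F} \to f^*D$ with $f^*h \circ b = \eta_{1_\mathcal{F}}$, whose transpose $\hat b : E \to D$ satisfies $h \circ \hat b = \mathrm{id}_E$ and hence is a section of $h$, that is, a global element of $(h : D \to E)$ in $\mathcal{E}/E$; this assignment is a natural bijection, which is precisely terminal connectedness.

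Uniqueness then follows formally. We have already established that terminally connected morphisms are left bi-orthogonal to etale morphisms, and the universality recorded in the remark following the definition of a bifactorization system promotes this orthogonality to uniqueness: given any second factorization $f \simeq \pi_{E'} \circ l'$ with $l'$ terminally connected and $\pi_{E'}$ etale, the two intermediate slices are related by an essentially unique equivalence compatible with both legs, obtained as the mutually inverse diagonal fillers of the orthogonality squares $l' \perp \pi_E$ and $l_f \perp \pi_{E'}$.

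The main obstacle is the verification in the second step. Establishing that $l_f$ is essential, or equivalently verifying directly the unique lifting of global elements, is where the content lies, and it requires keeping careful track of the transposition along $f_! \dashv f^*$ together with the triangle identities, in particular the fact that the transpose of the unit $\eta_{1_\mathcal{F}}$ is the identity of $E$. Everything else is either a direct invocation of the global-element/etale correspondence or a formal consequence of orthogonality; and, as always in $\GTop$, one must remember that the factorizing isomorphism $f \simeq \pi_E \circ l_f$ and the other relevant $2$-cells are invertible only up to the coherence built into the pseudoslice, rather than strict equalities.
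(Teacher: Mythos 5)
Your proof is correct and takes essentially the same route the paper indicates (the remark immediately following the proposition): factor $f$ through $\pi_{f_!1_\mathcal{F}}$ via the name of the unit $\eta_{1_\mathcal{F}} : 1_\mathcal{F} \to f^*f_!1_\mathcal{F}$, verify terminal connectedness of the left part --- your transposition argument, with the triangle identity giving that the transpose of $\eta_{1_\mathcal{F}}$ is $\mathrm{id}_{f_!1_\mathcal{F}}$, is precisely the verification the paper leaves to the cited reference --- and obtain uniqueness from the already-established bi-orthogonality of terminally connected against etale morphisms. Nothing to correct.
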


\begin{remark}
    In this case, terminal connectedness amounts for the essential image $f_!$ to preserve the terminal object. Then the factorization is obtained as in the locally connected case as 
\[\begin{tikzcd}
	{\mathcal{F}} && {\mathcal{E}} \\
	& {\mathcal{E}/f_!1_{\mathcal{F}}}
	\arrow["f", from=1-1, to=1-3]
	\arrow["{\eta_{1_{\mathcal{F}}}}"', from=1-1, to=2-2]
	\arrow["{\pi_{f_!1_{\mathcal{F}}}}"', from=2-2, to=1-3]
\end{tikzcd}\]
where the left part is the name of the unit $ \eta_{1_{\mathcal{F}}} : 1_\mathcal{F} \rightarrow f^*f_!1_\mathcal{F}$.
\end{remark}

The aim of this work is to generalize this latter result to arbitrary geometric morphisms. In absence of the essential image, the factorization shall not concentrate on a specific slice of the codomain topos; as pointed out in the above discussion about global elements, we shall have to consider rather a diagram consisting of possible factorizations through etale geometric morphisms. 

\section{Pro-etale geometric morphisms}



Generalizing the (terminally connected, etale) factorization to arbitrary geometric morphisms will involve taking cofiltered bilimit of etale geometric morphisms indexed by categories of global elements. This implies some prerequisites on cofiltered bilimits of Grothendieck topoi. 

\subsection{Cofiltered bilimits of topoi and pro-etale geometric morphisms}

Let us begin with a few recalls on cofilteredness and the associated notion of initialness. 

\begin{definition}
    Recall that a category $I$ is said to be \emph{cofiltered} if \begin{itemize}
    \item it is not empty
    \item for any to $i,j $ in $I$ there is a span $ u : k \rightarrow i$, $v : k \rightarrow j$
    \item for any parallel pair $ u,v : i \rightrightarrows j$ there is $ w : k \rightarrow i$ such that $ uw = vw$.
\end{itemize}
\end{definition}

\begin{remark}
    In particular, combining those two axioms, in a cofiltered category any cospan can be closed by a span into a commutative square. 
\end{remark}

\begin{example}
    It is a standard fact that for any geometric morphism $ f : \mathcal{F} \rightarrow \mathcal{E}$ and any object $F$ of $\mathcal{F}$, the category $F \downarrow f^*$ of $F$-indexed elements of $f^*$ is cofiltered. In particular the category of global element $ 1_\mathcal{F}\downarrow f^*$ is cofiltered.
\end{example}

Cofiltered categories have a special relation with \emph{initial} functors, which are those that left limits unchanged when precomposing diagrams along them:

\begin{definition}\label{initial functors}
    A functor $F : J \rightarrow I$ is \emph{initial} if for any $i$ in $I$, the comma category $ F \downarrow i$ is non empty and connected. If $ J$ is a full subcategory $ I$ and its inclusion is initial, we say that $J$ is initial in $I$.
\end{definition}

Connectedness amounts to the possibility to exhibit a zig-zag in $J$ connecting any two arrows from $F$ to an object $i$ in $I$. In the case where $ I$ is cofiltered, this condition simplifies:

\begin{lemma}
    Let $ I$ be cofiltered. Then a functor $ F : J \rightarrow I$ is initial if and only if for any $i$ in $J$, there is an arrow $ F(j) \rightarrow i$ and for any $ d: F(j) \rightarrow i$, $d' : F(j') \rightarrow i$ there is $t : j'' \rightarrow j $, $t':j' \rightarrow j$ such that $ dF(t) = d'F(t')$. 
\end{lemma}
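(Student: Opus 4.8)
The plan is to run everything through the comma‑category criterion of \cref{initial functors}: $F$ is initial precisely when, for every $i \in I$, the comma category $F \downarrow i$ is nonempty and connected. The first clause of the statement — the existence of some arrow $F(j) \to i$ — is literally nonemptiness of $F \downarrow i$, so the whole content is to match \emph{connectedness} of $F \downarrow i$ with the second (span) clause, under the standing hypothesis that $I$ is cofiltered.

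For the easy implication, assume the span clause. Given two objects $(j,d)$ and $(j',d')$ of $F \downarrow i$, the clause yields $j''$ together with $t : j'' \to j$ and $t' : j'' \to j'$ such that $dF(t) = d'F(t')$. Writing $e := dF(t) = d'F(t')$, the object $(j'',e)$ maps to both $(j,d)$ and $(j',d')$ in $F \downarrow i$, via $t$ and $t'$ respectively. Hence any two objects are joined by a span, so $F \downarrow i$ is connected; with nonemptiness this gives initiality. Note that this direction uses neither cofilteredness nor any property of $F$.

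For the converse, assume $F$ initial and fix $d : F(j) \to i$ and $d' : F(j') \to i$. These form a cospan $F(j) \to i \leftarrow F(j')$ in $I$, and since $I$ is cofiltered this cospan closes into a commutative square (as recalled just after the definition of cofilteredness): there is an object $m$ with $\mu : m \to F(j)$, $\nu : m \to F(j')$ and $d\mu = d'\nu$. By initiality the comma category $F \downarrow m$ is nonempty, so I may pick $j'' \in J$ and $c : F(j'') \to m$. The composites $\mu c : F(j'') \to F(j)$ and $\nu c : F(j'') \to F(j')$ then satisfy $d(\mu c) = d'(\nu c)$, which is exactly the required identity once these two $I$‑morphisms are exhibited as $F(t)$ and $F(t')$ for morphisms $t : j'' \to j$ and $t' : j'' \to j'$ of $J$.

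I expect this last lifting step to be the only delicate point, and it is where some hypothesis on $F$ is genuinely needed: one must realize the $I$‑morphisms $\mu c$ and $\nu c$ between objects in the image of $F$ as images of morphisms of $J$. For the full‑subcategory inclusions singled out in \cref{initial functors} — the case to which the lemma is applied — this is automatic, since there the morphisms of $J$ are exactly the morphisms of $I$ between its objects; one then simply sets $t = \mu c$ and $t' = \nu c$. The role of cofilteredness is precisely to let a single refining object $m$ absorb the cospan, collapsing the a priori long zig‑zag that witnesses connectedness into the two‑step span demanded by the statement; without it, connectedness need not reduce to the span clause.
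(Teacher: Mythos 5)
Your reduction to the comma-category criterion is the right frame, and your forward direction (span clause implies initiality) is correct; as you observe, it uses neither cofilteredness of $I$ nor any property of $F$. But the converse, as you half-concede yourself, has a genuine gap: the final lifting step --- realizing $\mu c$ and $\nu c$ as $F(t)$ and $F(t')$ for morphisms $t, t'$ of $J$ --- is not merely delicate, it is impossible for a general functor $F$, and in fact the statement as written is false without a further hypothesis. Take $I = \mathbf{1}$, the terminal category (which is cofiltered), let $J$ be the cospan category $a \rightarrow c \leftarrow b$, and let $F : J \rightarrow \mathbf{1}$ be the unique functor. Then $F \downarrow \ast \cong J$ is nonempty and connected, so $F$ is initial in the sense of \cref{initial functors}; yet no object of $J$ admits arrows to both $a$ and $b$, so the span clause fails (the required equation in $I$ being vacuous here). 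The point is that initiality only yields a zig-zag in $F \downarrow i$; cofilteredness of $I$ does let you close the cospan into a square in $I$ and nonemptiness does give you $c : F(j'') \rightarrow m$, exactly as you do, but nothing transports the resulting span in $I$ back along $F$ unless morphisms of $I$ between objects in the image of $F$ lift to $J$ --- for instance when $F$ is full, and in particular when it is a full-subcategory inclusion.

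That said, your diagnosis of \emph{where} the hypothesis is needed is exactly right, and under fullness of $F$ your square-closing argument is complete and correct. For comparison: the paper states this lemma without any proof at all, and the only place the criterion is actually exploited is \cref{initial subcategory lemma}, whose proof runs precisely your construction --- close the cospan $d, d'$ into a square using cofilteredness of $I$, pick an arrow $t : j'' \rightarrow i'$ into the square's vertex by the nonemptiness hypothesis, and invoke fullness of $J$ to recognize the two composites as morphisms of $J$. So your argument coincides with the paper's actual use of the statement, and your counterexample-shaped worry is a genuine correction: the lemma should carry a fullness hypothesis on $F$ (or be stated only for full subcategories), and what you have written proves exactly that corrected statement, not the one displayed.
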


\begin{lemma}\label{initial subcategory lemma}
  A full subcategory $ J$ of a cofiltered category $I$ is both initial and cofiltered as soon as any $i$ in $I$ admits a map $j \rightarrow i$.
\end{lemma}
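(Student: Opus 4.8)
The plan is to verify the two conclusions---cofilteredness of $J$ and initiality of the inclusion $J \hookrightarrow I$---separately, both resting on a single recurring maneuver. Whenever cofilteredness of $I$ produces a span or an equalizing arrow, the witnessing object typically lies outside $J$, so I use the hypothesis to choose an object of $J$ mapping into it, and then invoke fullness of $J$ to guarantee that the resulting composites, being $I$-morphisms between objects of $J$, actually belong to $J$. This ``refine into $J$, then apply fullness'' step is the engine of the whole argument.

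First I would check that $J$ is cofiltered. Nonemptiness is immediate: $I$ is nonempty, so picking any $i \in I$ and applying the hypothesis yields $j \in J$ with a map $j \to i$. For the span axiom, given $j_1, j_2 \in J$, cofilteredness of $I$ gives a span $j_1 \leftarrow i \rightarrow j_2$; the hypothesis provides $j \to i$ with $j \in J$, and composing produces $j_1 \leftarrow j \rightarrow j_2$ whose two legs lie in $J$ by fullness. For the parallel-pair axiom, given $u, v : j_1 \rightrightarrows j_2$ in $J$, cofilteredness of $I$ yields $w : i \to j_1$ with $uw = vw$; refining $w$ through some $j \to i$ with $j \in J$ and using fullness gives the required equalizing arrow inside $J$.

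Next I would establish initiality via the simplified criterion for cofiltered $I$ recorded in the preceding lemma; equivalently, I show that for each $i \in I$ the comma category $J \downarrow i$ is nonempty and connected. Nonemptiness is exactly the hypothesis. For connectedness, take two objects $d : j \to i$ and $d' : j' \to i$ of $J \downarrow i$. The span axiom in $I$ gives $k$ with $p : k \to j$ and $q : k \to j'$, and then $dp, d'q : k \rightrightarrows i$; the parallel-pair axiom gives $w : l \to k$ with $dpw = d'qw$. Finally the hypothesis supplies $h : j'' \to l$ with $j'' \in J$, and setting $t = pwh$ and $t' = qwh$ we obtain, by fullness, morphisms $t : j'' \to j$ and $t' : j'' \to j'$ in $J$ satisfying $dt = d't'$. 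Thus $(j'', dt)$ is an object of $J \downarrow i$ mapping to both given objects, which are therefore connected.

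The only genuine subtlety---and hence the step to treat with care---is the connectedness clause, which must chain together \emph{both} cofiltered axioms of $I$ (span followed by equalizer) before refining down into $J$; the remaining verifications are direct transcriptions of the axioms. In the write-up I would take care to confirm that each constructed object indeed lies in $J$, that fullness is genuinely what places the connecting maps $t, t'$ in $J$, and that the identity $dt = d't'$ holds strictly rather than merely up to the span maps.
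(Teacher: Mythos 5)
Your proof is correct and follows essentially the same route as the paper's: close the cospan $d, d'$ into a commutative square using cofilteredness of $I$ (you spell out the span-then-equalize steps that the paper bundles into its earlier remark), then refine the apex into $J$ via the hypothesis, with fullness placing the connecting composites in $J$. The paper dismisses cofilteredness of $J$ as immediate where you verify it explicitly, but the argument is the same.
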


\begin{proof}
    Suppose that any $i$ admits a map $j \rightarrow i$ with $j$ in $J$. We show that $ J \downarrow i$ is connected. Take a cospan $d: j  \rightarrow i$, $d': j' \rightarrow i$: as $I$ is cofiltered there exists a span $b: i' \rightarrow j$, $b':i' \rightarrow j'$ in $I$ such that $ db=d'b'$; now there is by hypothesis an arrow $t : j'' \rightarrow i'$, and by fullness of $J$, the composites $ bt$, $b't$ exhibit $ J \downarrow i$ as connected. Cofilteredness of $J$ is immediate.
    \end{proof}

It is well known that the bicategory of Grothendieck topoi admits small cofiltered bilimits. In \cite{dubuc2011construction} is provided the following site-theoretic description: suppose one has a cofilftered diagram of Grothendieck topoi $\mathbb{E} : I \rightarrow \GTop$ (denoting $\mathbb{E}(i)$ as $ \mathcal{E}_i$ and $ \mathbb{E}(u)$ as $ f_u : \mathcal{E}_i \rightarrow \mathcal{E}_j$ for $ u : i \rightarrow j$) with $I$ a small cofiltered category. Suppose moreover we have for each $i$ a lex site $ (\mathcal{C}_i, J_i)$ for $\mathcal{E}_i$ and for each $u$ a morphism of sites $ \overline{f}_u : \mathcal{C}_j \rightarrow \mathcal{C}_i $ such that $ f_u$ is induced by $ \overline{f}_u$: in other words, require that the diagram $\mathcal{E}$ is induced from a filtered diagram of lex sites $ \mathbb{C} : I^{\op} \rightarrow \textbf{LexSites}$ such that $ \mathbb{E} \simeq \Sh \circ \mathbb{C}$. It is known that the category of $\textbf{LexSites}$ admits filtered pseudocolimits: the pseudocolimit $ \pscolim_{i \in I^{\op}} \mathcal{C}_i$ in $\Lex$ can be endowed with a coarsest topology $  \bigvee_{i \in I} q_i(J_i) $ generated by the image of the covering families in all the $J_i$ along the pseudocolimits inclusions $q_i$; this latter makes those inclusions morphisms of lex sites, and exhibits the pseudocolimit as a pseudocolimit in $\textbf{LexSites}$. Then from \cite{dubuc2011construction} we know the following:

\begin{proposition}[{\cite{dubuc2011construction} Theorem 2.4}]\label{site for bilimit topos}
In the situation above, the bilimit in $\GTop$ of the cofiltered diagram $\mathbb{E}$ is the sheaf topos
\[ \underset{i \in I}{\bilim} \; \mathcal{E}_i \simeq \Sh(\underset{i \in I^{\op}}{\pscolim} \, \mathcal{C}_i,  \bigvee_{i \in I} q_i (J_{i})  )  \]
with the bilimit projections having as inverse image part the pseudocolimit inclusions $ q_i$. 
\end{proposition}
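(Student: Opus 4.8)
The plan is to verify directly that $\Sh(\pscolim_{i \in I^{\op}} \mathcal{C}_i, \bigvee_{i} q_i(J_i))$ satisfies the universal property of the bilimit, translating everything through Diaconescu's theorem into statements about flat continuous functors out of sites. Recall that for any Grothendieck topos $\mathcal{F}$ and any site $(\mathcal{C},J)$ there is an equivalence, pseudonatural in $\mathcal{F}$, between $\GTop[\mathcal{F}, \Sh(\mathcal{C},J)]$ and the category $\mathrm{FlatCont}_J(\mathcal{C},\mathcal{F})$ of flat $J$-continuous functors $\mathcal{C} \to \mathcal{F}$; since each $\mathcal{C}_i$ is lex, flatness there coincides with left exactness, and the same holds for the pseudocolimit because filtered pseudocolimits in $\Lex$ are again lex categories. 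Thus the whole problem reduces to a computation inside $\Lex$ equipped with topologies.

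First I would make the universal property explicit: for every $\mathcal{F}$ one wants a pseudonatural equivalence
\[ \GTop[\mathcal{F}, \Sh(\pscolim_{i} \mathcal{C}_i, \bigvee_{i} q_i(J_i))] \;\simeq\; \bilim_{i \in I} \GTop[\mathcal{F}, \mathcal{E}_i], \]
the right-hand side being the pseudolimit in $\Cat$ of the hom-categories along $\mathbb{E}$, i.e.\ the category of pseudocones from $\mathcal{F}$ to $\mathbb{E}$. Applying the Diaconescu equivalence termwise --- and using that postcomposition with $f_u$ corresponds to precomposition with $\overline{f}_u$ --- identifies this right-hand side with $\bilim_{i \in I} \mathrm{FlatCont}_{J_i}(\mathcal{C}_i,\mathcal{F})$, that is, with pseudococones of flat $J_i$-continuous functors from the filtered diagram $\mathbb{C}$ to $\mathcal{F}$.

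The heart of the argument is then to collapse such pseudococones to single morphisms out of the colimit. By the universal property of the filtered pseudocolimit in $\Lex$, a pseudococone of lex functors $(\mathcal{C}_i \to \mathcal{F})_i$ corresponds, up to coherent isomorphism, to a unique lex functor $G : \pscolim_i \mathcal{C}_i \to \mathcal{F}$ whose legs are recovered as the composites $G \circ q_i$. It remains to match the continuity conditions, and this is exactly where the choice of topology enters: since $\bigvee_i q_i(J_i)$ is by construction the coarsest topology containing every image family $q_i(J_i)$, and continuity against a generated topology is equivalent to continuity against a generating set of covers, $G$ is $\bigvee_i q_i(J_i)$-continuous precisely when each restriction $G \circ q_i$ is $J_i$-continuous. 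Composing the two equivalences yields the desired universal property. The bilimit projections are then the geometric morphisms induced by the coprojections $q_i$, which are morphisms of sites precisely because the join topology was designed to make them continuous, and whose inverse image parts are the $q_i$ as asserted; smallness of $I$ guarantees that $\pscolim_i \mathcal{C}_i$ is again a small site, so that $\Sh$ of it is a genuine Grothendieck topos.

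The main obstacle I expect is not any individual step but the 2-categorical coherence that binds them: one must check that the Diaconescu equivalence is pseudonatural simultaneously in $\mathcal{F}$ and in the index $i$ (so that pseudocones on the topos side really do correspond to pseudococones on the site side), and that the universal property of the pseudocolimit in $\Lex$ is itself a pseudonatural equivalence compatible with these identifications. A secondary delicate point is checking that the generated topology $\bigvee_i q_i(J_i)$ is well behaved with respect to left exactness, so that the resulting sheaf topos is the bilimit rather than merely a topos admitting a compatible cone. Since the statement is due to Dubuc, one may alternatively simply invoke \cite{dubuc2011construction}, the sketch above serving to render the underlying universal property transparent.
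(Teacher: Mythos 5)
The paper contains no proof of this proposition: it is imported as Theorem 2.4 of \cite{dubuc2011construction}, and the argument given there is essentially the one you sketch --- termwise Diaconescu, the universal property of the filtered pseudocolimit of lex sites, and detection of continuity for the generated topology $\bigvee_{i \in I} q_i(J_i)$ on the generating covers (valid precisely because the comparison functor is flat). Your outline is therefore correct and follows the same route as the cited source; the only step worth spelling out is why the induced functor on $\pscolim_{i \in I^{\op}} \mathcal{C}_i$ is itself lex, which holds because any finite diagram in a filtered pseudocolimit lifts to some fiber $\mathcal{C}_i$, where lexness of the leg and of the transition functors applies.
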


\begin{remark}\label{Dubuc formula with the canonical site}
As pointed out in \cite{dubuc2011construction}[Theorem 2.5], one can actually dispense oneself from choosing a \emph{small} site for the topoi and take directly the pseudocolimit of the topoi themselves equiped with their canonical topology together with the inverse image part of the diagram. In the case where the specified topology are subcanonical, we have the following:
\end{remark}

\begin{lemma}
In the situation above, if the toplogies $J_i$ are all subcanonical, then the induced topology $ \bigvee_{i \in I} q_i (J_{i})$ is subcanonical also. 
\end{lemma}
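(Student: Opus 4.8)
The plan is to establish subcanonicity in the usual way, by showing that $K := \bigvee_{i} q_i(J_i)$ is contained in the canonical topology $J_{\mathrm{can}}$ of $\mathcal{C}$. Recall that $J_{\mathrm{can}}$ is itself a Grothendieck topology --- the largest subcanonical one --- whose covering families are exactly the universally effective-epimorphic ones, and that a topology is subcanonical precisely when it is contained in $J_{\mathrm{can}}$. Since $K$ is by construction the smallest topology containing all the families $q_i(R)$ for $R$ a $J_i$-covering family, and since $J_{\mathrm{can}}$ is a topology, it suffices to prove that each generating family $q_i(R)$ is universally effective-epimorphic in $\mathcal{C}$; the inclusion $K \subseteq J_{\mathrm{can}}$, and hence subcanonicity, then follows formally.

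First I would reduce the \emph{universal} condition to a plain one, exploiting the finite-stage description of the filtered pseudocolimit. Any morphism $g \colon Y \to q_i(C)$ in $\mathcal{C}$ is represented at some stage $n$ by a morphism $g_n$ of $\mathcal{C}_n$; since the cocone relation gives $q_i \simeq q_n \circ \overline{f}$ and $q_n$ is lex, the pullback of the sieve generated by $q_i(R)$ along $g$ is canonically the image $q_n(g_n^{*}\,\overline{f}(R))$ of the pullback sieve. As $J_n$ is stable under pullback, this is again the $q_n$-image of a $J_n$-cover. Thus the class of generating families is, up to the identifications of the pseudocolimit, pullback-stable, and it is enough to show that each $q_i(R)$ is (plainly) effective-epimorphic.

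To verify effective-epimorphy of $q_i(R)$ I would test it against an arbitrary object, which may be taken of the form $q_j(D)$. Using that hom-sets in the pseudocolimit are filtered colimits of hom-sets, $\mathcal{C}(q_i C, q_j D) \simeq \operatorname{colim} \mathcal{C}_k(\overline{f}C, \overline{f}D)$ over the (filtered) category of spans from $i$ and $j$, I would compute both $\mathcal{C}(q_i C, q_j D)$ and the set of matching families for $q_i(R)$ stagewise. At each stage the transition functor $\overline{f}$ is a morphism of sites, so it sends $R$ to a $J_k$-cover and preserves the fibre products $\overline{f}(C_\alpha) \times_{\overline{f}C} \overline{f}(C_\beta)$; subcanonicity of $J_k$ then makes $\overline{f}(R)$ effective-epimorphic in $\mathcal{C}_k$, which is precisely the bijection between $\mathcal{C}_k(\overline{f}C, \overline{f}D)$ and the stagewise matching families. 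One would then try to pass to the filtered colimit, invoking that filtered colimits commute with finite limits to transport the bijection to $\mathcal{C}(q_i C, q_j D)$.

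The hard part is exactly this last interchange. The matching-family object is a finite limit only when the cover $R$ is finite (equivalently, when the sieve it generates is finitely generated); for an infinite cover it is an equalizer of infinite products, and filtered colimits do \emph{not} commute with such --- equivalently, a filtered colimit of sheaves need not be a sheaf --- so the stagewise bijections do not assemble automatically. Overcoming this is the crux, and it genuinely requires the lex structure of $\mathcal{C}$ rather than a formal property of filtered colimits. Concretely, for separatedness I would take two morphisms $q_i C \rightrightarrows q_j D$ agreeing on $q_i(R)$, represent them at a stage $n$, form there the equalizer $E \hookrightarrow \overline{f}C$, note that every member of $q_i(R)$ factors through the mono $q_n(E) \hookrightarrow q_i C$ in $\mathcal{C}$, so that this mono generates a $K$-covering sieve, and then argue that a covering monomorphism arising from a finite stage is forced to become invertible at a finite stage by subcanonicity of the $J_k$'s together with a cofiltered refinement of the cover; the existence (gluing) half is treated dually. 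Managing the passage to a common stage for an infinite family without circularly presupposing subcanonicity of $K$ is the delicate point on which the proof turns.
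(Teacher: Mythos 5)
There is a genuine gap here, and you name it yourself: your strategy requires showing that each generating family $q_i(R)$ is (universally) effective-epimorphic in $\pscolim_{i \in I^{\op}} \mathcal{C}_i$, and for an infinite cover $R$ the matching-family object is an infinite limit, which does not commute with the filtered colimits computing hom-sets in the pseudocolimit. None of your proposed repairs closes this. For separatedness, the assertion that ``every member of $q_i(R)$ factors through the mono $q_n(E) \hookrightarrow q_i C$'' already requires a \emph{single} stage $n$ valid simultaneously for all the (possibly infinitely many) members of the cover --- exactly the common-stage problem you concede you cannot manage; and the claim that a $K$-covering monomorphism ``is forced to become invertible at a finite stage'' is unargued and comes dangerously close to presupposing the subcanonicity of $K$ that is to be proved. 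Worse, the gluing half is not ``treated dually'': producing an amalgamation for an infinite matching family is precisely the failed interchange of a filtered colimit with an infinite product/equalizer, so deferring it to duality leaves the central difficulty untouched. As written, your text is an accurate diagnosis of why the naive stagewise argument fails, not a proof.

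For comparison, the paper's proof never descends to individual covering families at all: it works in the lattice of Grothendieck topologies on the pseudocolimit. Subcanonicity means containment in the canonical topology, and $\bigvee_{i \in I} q_i(J_i)$ is by construction the \emph{smallest} topology containing the topologies $q_i(J_i)$; since the canonical topology is itself a topology containing them, the join is contained in it, and the lemma follows in one line. The entire burden is thus shifted onto the subcanonicity of each single generating topology $q_i(J_i)$ on the pseudocolimit, which the paper's argument takes as the content of the hypothesis, whereas you set out to establish that stronger input from scratch --- namely, that the lex pseudocolimit inclusions $q_i$ send $J_i$-covers to universally effective-epimorphic families. Your own analysis shows this transfer cannot be obtained by a formal filtered-colimit interchange, so if you want to salvage your route, that transfer statement is the precise lemma you still owe, and it is where any honest version of your argument must do real work.
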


\begin{proof}
    A topology is subcanonical if it is contained in the canonical topology; hence so is the join of subcanonical topologies, as their join is the smallest one containing them. 
\end{proof}

We can now introduce the class of geometric morphisms that will play the right class in our factorization theorem:


\begin{definition}
    A geometric morphism will be called \emph{pro-etale} if if it is equivalent in $\GTop/\mathcal{E}$ to a cofiltered bilimit of etale morphisms over $\mathcal{E}$  
\[\begin{tikzcd}
	{\underset{i \in I}{\bilim} \, \mathcal{E}/E_i} && {\mathcal{E}}
	\arrow["{\underset{i \in I}{\bilim} \, \pi_{E_i}}", from=1-1, to=1-3]
\end{tikzcd}\]
for a cofiltered diagram $\mathbb{E}: I  \rightarrow \mathcal{E} $. 
\end{definition}

 \begin{remark}
    A pro-etale can hence be presented (not necessarily in a unique way) by an invertible 2-cell in $\GTop/\mathcal{E}$
\[\begin{tikzcd}
	{\mathcal{F}} && {\underset{i \in I}{\bilim} \, \mathcal{E}/E_i} \\
	& {\mathcal{E}}
	\arrow[""{name=0, anchor=center, inner sep=0}, "{e \atop \simeq}", from=1-1, to=1-3]
	\arrow["{\underset{i \in I}{\bilim} \, \pi_{E_i}}", from=1-3, to=2-2]
	\arrow["p"', from=1-1, to=2-2]
	\arrow["{\alpha \atop \simeq}"{description}, draw=none, from=0, to=2-2]
\end{tikzcd}\]
where $ e$ is a geometric equivalence, and $ \alpha$ an invertible 2-cell.

Moreover, such a $p$ comes then equiped with a canonical pseudocone provided by the pasting of the presentation 2-cell $(e, \alpha)$ with the bilimit projections
\[\begin{tikzcd}
	{\underset{i \in I}{\bilim} \, \mathcal{E}/E_i} && {\mathcal{E}/E_i} \\
	& {\mathcal{E}}
	\arrow[""{name=0, anchor=center, inner sep=0}, "{p_i}", from=1-1, to=1-3]
	\arrow["{\underset{i \in I}{\bilim} \, \pi_{E_i}}"', from=1-1, to=2-2]
	\arrow["{\pi_{E_i}}", from=1-3, to=2-2]
	\arrow["{\gamma_i \atop \simeq}"{description}, Rightarrow, draw=none, from=0, to=2-2]
\end{tikzcd}\]
with $ p_d : \mathcal{E}/u p_i \simeq p_j$ the invertible transitions 2-cell at $ d : i \rightarrow j$. 
 \end{remark}

\begin{remark}
For any geometric morphism $ f : \mathcal{F} \rightarrow \mathcal{E}$ one has an equivalence of categories 
\[ \GTop/\mathcal{E} [f, \underset{i \in I}{\bilim} \; \pi_{E_i}] \simeq \underset{i \in I}{\bilim} \;\GTop/\mathcal{E} [f,  \pi_{E_i}] \]
This means that a morphism $ (x,\alpha) : f \Rightarrow \bilim_{i \in I} \pi_{E_i} $ in $\GTop/\mathcal{E}$ defines in an essentially unique way a family $(a_i)_{i \in I}$ of global elements $ a_i : 1_\mathcal{F} \rightarrow f^*E_i$ encoded by the composites $ (p_i, \gamma_i) (e, \alpha)$ in $\GTop/\mathcal{E}$,  satisfying the coherence condition that for any $d : i \rightarrow j$ one has a commutative triangle
\[\begin{tikzcd}
	& {f^*E_i} \\
	{1_\mathcal{F}} \\
	& {f^*E_j}
	\arrow["{a_i}", from=2-1, to=1-2]
	\arrow["{a_j}"', from=2-1, to=3-2]
	\arrow["{f^*u_d}", from=1-2, to=3-2]
\end{tikzcd}\]
encoded by the inverse image part of the invertible 2-cell obtained as the whiskering 
\[\begin{tikzcd}
	{a_i^* u^*} & {a_j^*}
	\arrow["{(p_d * f)^\flat \atop \simeq}", from=1-1, to=1-2]
\end{tikzcd}\]
Moreover, the cofilteredness condition ensures that for any $i, i'$ there is a span $d : j \rightarrow i $, $d' :j \rightarrow i'$ such that $ a_i = f^*u_d a_j $ and $ a_{i'} = f^*u_{d'} a_j$, and similar conditions about the equalization of parallel cells. In particular, in the cofiltered diagram $ 1_\mathcal{F} \downarrow f^*$ such solutions are provided by the inverse images of the product and the equalizers computed in $\mathcal{E}$.
\end{remark}

\begin{corollary}\label{proetale are discrete}
    Pro-etale geometric morphisms over $ \mathcal{E}$ are discrete objects in $\GTop/\mathcal{E}$.
\end{corollary}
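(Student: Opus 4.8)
The plan is to reduce the claim to two facts already in hand: that etale geometric morphisms over $\mathcal{E}$ are discrete objects of $\GTop/\mathcal{E}$, and that mapping into a cofiltered bilimit commutes with the bilimit. Recall that an object of a 2-category is discrete exactly when every homcategory into it is equivalent to a discrete category, i.e.\ to a set. Hence, given a pro-etale morphism presented by an equivalence $p \simeq \underset{i \in I}{\bilim}\, \pi_{E_i}$ over $\mathcal{E}$, it suffices to show that for every geometric morphism $f : \mathcal{F} \rightarrow \mathcal{E}$ the homcategory $\GTop/\mathcal{E}[f,p]$ is equivalent to a set.

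First I would transport the problem across the equivalence recorded just above, namely
\[ \GTop/\mathcal{E}[f, \underset{i \in I}{\bilim}\, \pi_{E_i}] \simeq \underset{i \in I}{\bilim}\; \GTop/\mathcal{E}[f, \pi_{E_i}], \]
so that it remains to analyze the right-hand bilimit in $\Cat$. By the correspondence between etale factorizations and global elements (\cref{Equivalence over E between etale fact of f and element}), each factor is itself a set, $\GTop/\mathcal{E}[f, \pi_{E_i}] \simeq \mathcal{F}[1_\mathcal{F}, f^*E_i]$; equivalently, this is the earlier observation that each $\pi_{E_i}$ is a discrete object. Thus the diagram $i \mapsto \GTop/\mathcal{E}[f, \pi_{E_i}]$ factors, up to equivalence, through discrete categories.

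The last step is to check that a cofiltered bilimit in $\Cat$ of discrete categories is again (equivalent to) a discrete category. I would argue this directly: a pseudocone over a cofiltered diagram of sets consists of a compatible family of objects together with coherence isomorphisms witnessing compatibility along the transition functors, but since each factor has only identity morphisms these isomorphisms are forced to be identities, so the bilimit collapses to the ordinary limit of the underlying sets; likewise every morphism of pseudocones is a family of identities, so there are no non-identity $2$-cells. Hence $\underset{i \in I}{\bilim}\; \GTop/\mathcal{E}[f, \pi_{E_i}]$ is a set, whence $\GTop/\mathcal{E}[f,p]$ is discrete, and $p$ is a discrete object.

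The main obstacle is precisely this final verification: confirming that passing to a \emph{bi}limit does not manufacture spurious non-identity $2$-cells, which is where the weak (rather than strict) nature of the constructions could a priori interfere. The point is that the coherence data of the bilimit lives entirely in the morphisms of the factors, and these are all identities, so the coherence constraints are vacuous and the bilimit reduces to the plain limit of sets.
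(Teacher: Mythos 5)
Your proof is correct and follows essentially the same route as the paper's: transport along the equivalence $\GTop/\mathcal{E}[f,\underset{i \in I}{\bilim}\,\pi_{E_i}] \simeq \underset{i \in I}{\bilim}\,\GTop/\mathcal{E}[f,\pi_{E_i}]$, identify each factor with the set $\mathcal{F}[1_\mathcal{F},f^*E_i]$ via the global-element correspondence, and observe that a bilimit of (categories equivalent to) sets indexed by a $1$-category is again equivalent to a set. The paper compresses your final verification into one sentence; your explicit check that the pseudocone coherence data collapses to identities is exactly the content it leaves implicit.
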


\begin{proof}
    We saw at \cref{Equivalence over E between etale fact of f and element} that each homcategory $ \GTop/\mathcal{E}[f, \pi_E]$ is in turn equivalent to the set $\mathcal{F}[1_\mathcal{F}, f^*E] $. But as the bilimit is indexed by a 1-category $I$, the bilimit of those homset is still equivalent to a set.
\end{proof}

\begin{remark}
Observe that in the definition of pro-etale morphisms, we work in the pseudoslice $\GTop/\mathcal{E}$, where in particular the cofiltered bilimit is computed; but as cofiltered bilimits are connected, they are preserved by the slice projection $ \GTop/\mathcal{E} \rightarrow \GTop$, hence the domain of a pro-etale geometric morphisms is the bilimit of the associated slice topoi. Moreover, its universal property is not very far from the one in the slice:  
\[ \GTop/\mathcal{E} [\mathcal{F}, \underset{i \in I}{\bilim} \; \mathcal{E}/{E_i}] \simeq \underset{i \in I}{\bilim} \;\GTop/\mathcal{E} [\mathcal{F},  \mathcal{E}/{E_i}] \]
As the bilimit cocone is a pseudococone with invertible transition 2-cells $ p_d : \mathcal{E}/u_d p_i \simeq p_j$ at $d : i \rightarrow j$ -- which are however not required to be identities -- the composite of a morphism $f : \mathcal{F} \rightarrow \bilim_{i \in I} \mathcal{E}/E_i$ with this pseudococone begets a family $(p_i f)_{i \in I}$ and each of the composites $ \pi_{E_i} p_i f$ is equivalent to $ \bilim_{i \in I} \pi_{E_i} f$ in $\GTop/\mathcal{E}$, so it defines a coherent family of element of $ \bilim_{i \in I} \pi_{E_i} f $. Conversely for a pseudocone $(f_i : \mathcal{F}\rightarrow \mathcal{E}/E_i$ each $ f_i$ code a global element of the corresponding composite $ p_i f_i$, although at first sight those composites do not define a unique morphism from $\mathcal{F}$ to $\mathcal{E}$, although they are equivalent to each other. In fact, they actually do: they are global elements of the composite $ \bilim_{i \in I} \pi_{E_i} (f_i)_{i \in I}$ where $ (f_i)_{i \in I} : \mathcal{F} \rightarrow \bilim_{i \in I}\mathcal{E}/E_i$ is the universal morphism induced by the bilimit of the slice topoi. All of this ensures that actually the pro-etale morphism $ \bilim_{i \in I} \pi_{E_i}$ and its domain $ \bilim_{i \in I} \mathcal{E}/E_i$ contains exactly the same information. 

\end{remark}

\begin{remark}\label{cofiltered bilimit of etale geometric morphisms}
Let us describe in all details what gives the expression of \cref{site for bilimit topos} in the case of the domain of a pro-etale geometric morphism. If $ \mathcal{E}$ has $(\mathcal{C},J)$ as a small lex, subcanonical site of definition, then for each object $ C$ of $\mathcal{C}$, a site for $\mathcal{E}/\hirayo_C$ can be constructed by $ (\mathcal{C}/C, J_C)$, where $ J_C$ is the relativised topology generated by pullbacks of $J$-covers along $C$ -- and this site is subcanonical since $ J$ was chosen so. Suppose now that the objects $E_i$ are chosen as arising from representable $ \hirayo_{C_i}$; for etale geometric morphisms enjoy left cancellation, all the transition morphisms in the diagram must be etale, but then they correspond to morphisms in $\mathcal{E}$ between the corresponding objects, which, by full faithfulness of the Yoneda embedding, come uniquely from morphisms $ C_i \rightarrow C_j$ in $\mathcal{C}$: we thus obtain a cofiltered diagram $(C_i)_{i \in I}$ in $\mathcal{C}$ with $u_d$ for $ d : i \rightarrow j$ in $I$ as transition morphisms.

This defines an indexed site $ I^{\op} \rightarrow \Cat$ sending $ i$ on $ (\mathcal{C}/C_i, J_{C_i})$ with the pullback functors $ u_d^* : \mathcal{C}/C_j \rightarrow \mathcal{C}/C_i$ as transition morphisms for $ d : i \rightarrow j$ - which are morphisms of lex sites by cancellation of pullbacks. Moreover, $I^{\op}$ is filtered. Now recall that we can compute the filtered pseudocolimit $ \pscolim_{i \in I^{\op}} \mathcal{C}/C_i$ in $\Cat$ as a localization of the oplax colimit $ \oplaxcolim_{i \in I} \mathcal{C}/C_i $ at the cartesian morphisms associated to the fibration $\pi : \oplaxcolim_{i \in I^{\op}} \mathcal{C}/C_i \rightarrow I$. This can be described as a category of fractions: its objects are pairs $ (i, h)$ with $ i $ in $I$ and $ h : D \rightarrow C_i$ in $\mathcal{C}/C_i$, while a morphism $ (i_0,h_0) \rightarrow (i_1,h_1)$ consists of a span of the form 
\[\begin{tikzcd}
	{(i',u_d^*h_0)} & {(i_1,h_1)} \\
	{(i_0,h_0)}
	\arrow["{(d_1,g)}", from=1-1, to=1-2]
	\arrow["{(d_0, 1_{u_{d_0}^*h_0})}"', from=1-1, to=2-1]
\end{tikzcd}\]
with $ d_0 : i' \rightarrow i_0$, $ d_1 : i' \rightarrow i_1$ in $I$ and $g$ satisfying the commutation 
\[\begin{tikzcd}
	{u_{d_0}^*D_0} & {D_1} \\
	{C_{i'}} & {C_{i_1}}
	\arrow["{u_{d_1}}"', from=2-1, to=2-2]
	\arrow["{u_{d_0}^*h_0}"', from=1-1, to=2-1]
	\arrow["{h_1}", from=1-2, to=2-2]
	\arrow["g", from=1-1, to=1-2]
\end{tikzcd}\]

The pseudocolimit $\pscolim_{i \in I^{\op}} \mathcal{C}/C_i $ is small and lex because $ I$ is cofiltered, and $\Lex$ has filtered pseudocolimits which are moreover computed in $\Cat$, see \cite{DLO}[Lemma 5.3.2] for instance. Moreover, the oplax colimit is itself equiped with a \emph{vertical topology} generated from the fiberwise topologies $\bigvee_{i \in I} \iota_i (J_{C_i})  $. One can show that the localization $ \oplaxcolim_{i \in I^{\op}} \mathcal{C}/C_i \rightarrow \pscolim_{i \in I^{\op}} \mathcal{C}/C_i$ is a lex-morphism of site which induces a topology on the pseudocolimit, which we will denote the same way. Moreover, the pseudocolimit inclusions $q_i:  \mathcal{C}/C_i \rightarrow \pscolim_{i \in I^{\op}} \mathcal{C}/C_i$ define morphisms of lex sites. 

Then the cofiltered bilimit $ \bilim_{i \in I} \mathcal{E}/E_i$ has the following presentation
\[ \underset{i \in I}{\bilim} \, \mathcal{E}/E_i \simeq \Sh(\underset{i \in I^{\op}}{\pscolim} \, \mathcal{C}/C_i, \bigvee_{i \in I} \iota_i (J_{C_i})  )  \]
where the bilimit projections are the geometric morphisms induced from the morphisms of lex site $ q_i$ at the underlying pseudocolimit inclusion. In particular, as the induced topologies on the slice $J_{C_i}$ are subcanonical, so is the generated topology.
\end{remark}

\begin{remark}
In the previous remark, we suppose that the objects $E_i$ and the morphisms between them in the diagram in $\mathcal{E}$ arise from objects and morphisms inside a small standard site for $\mathcal{E}$. For $I$ is small, it is always possible to find such a site of presentation once the diagram in $\mathcal{E}$ is given. Hence, any pro-etale geometric morphism can be presented in such a way.

However, this precaution is not necessary: we can also simply equip $\mathcal{E}$ with its canonical topology, and the induced topology $J_{E_i}$ induced on each slice by the pullback functor $ E_i^* : \mathcal{E} \rightarrow \mathcal{E}/E_i$ coincides with the canonical topology on $\mathcal{E}/E_i$ for colimits are stable in Grothendieck topoi. Now from \cref{Dubuc formula with the canonical site}, we can take as a site for the bilimit the \emph{large} site 
$\pscolim_{i \in I^{\op}} \mathcal{E}/E_i$ equiped with the smallest topology making the pseudocolimit inclusions continuous. 
\end{remark}

\begin{remark}
    One can check that the class of cartesian morphisms in $ \pscolim_{I^{\op}} \mathcal{C}/C_i$ possesses a \emph{right calculus of fractions}; let us examine carefully how it manifests. First, we know that the class of cartesian morphisms is closed under composition. For the right Ore condition: consider a situation as below (so that the right leg is cartesian and $ a$ is a morphism $ a : h_1 \rightarrow d_1^*h_0$ in $\mathcal{C}/C_{i_1}$)
\[\begin{tikzcd}
	& {(i_2, d_2^*h_0)} \\
	{(i_1, h_1)} & {(i_0, h_0)}
	\arrow["{(d_2, 1_{d_2^*h_0})}", from=1-2, to=2-2]
	\arrow["{(d_1, a)}"', from=2-1, to=2-2]
\end{tikzcd}\]
then for $I$ is cofiltered, one can close this span into a square 
\[\begin{tikzcd}
	j & {i_2} \\
	{i_1} & {i_0}
	\arrow["{d_1}"', from=2-1, to=2-2]
	\arrow["{d_2}", from=1-2, to=2-2]
	\arrow["{s_1}"', from=1-1, to=2-1]
	\arrow["{s_2}", from=1-1, to=1-2]
\end{tikzcd}\]
and then close the span above into a square 
\[\begin{tikzcd}
	{(j, s_1^*h_1)} & {(i_2, d_2^*h_0)} \\
	{(i_1, h_1)} & {(i_0, h_0)}
	\arrow["{(d_2, 1_{d_2^*h_0})}", from=1-2, to=2-2]
	\arrow["{(d_1, a)}"', from=2-1, to=2-2]
	\arrow["{(s_1, 1_{s_1^*h_1})}"', from=1-1, to=2-1]
	\arrow["{(s_2, s_1^*a)}", from=1-1, to=1-2]
\end{tikzcd}\]
where the upper leg is obtained from $ s_1^*a : s_1^*h_1 \rightarrow s_1^*d_1^*h_0 = s_2^*d_2^*h_0$ and the left leg is cartesian. 

The equalization condition is trivially fullfiled: indeed, in the case of a coequalizing fork 
\[\begin{tikzcd}
	{(i, h_0)} & {(j, d^*h_1)} & {(j', h_1)}
	\arrow["{(d_1, a_1)}", shift left=1, from=1-1, to=1-2]
	\arrow["{(d_2, a_2)}"', shift right=1, from=1-1, to=1-2]
	\arrow["{(d,1_{d^*h_1})}", from=1-2, to=1-3]
\end{tikzcd}\]
then $ dd_1=dd_2$ and $ a_1 = a_2 : h_0 \rightarrow d_1^*d^*h_1 = d_2^*d^*h_1$ so the identity of $ (i, h_0)$, which is cartesian, equalizes trivially $ (d_1,a_1) $ and $ (d_2, a_2)$. As a consequence, we can really see the pseudocolimit $ \pscolim_{I^{\op}} \mathcal{C}/C_i$ as a category of right fractions. This will allow for easier manipulation of the morphisms, and in particular global elements, in the pseudocolimit and in the bilimit topos it generates.  
\end{remark}

\subsection{Canonical indexing of a pro-etale geometric morphism}

Alhough pro-etale morphisms may be presented by a given cofiltered diagram of etale morphisms, there is always a canonical way to reindex them thanks to their category of global elements. In this subsection we explain the procedure.

For a pro-etale geometric morphism $ p : \bilim_{i \in I} \mathcal{E}/C_i$ associated to a diagram $ \mathbb{C}$ in a standard site $ (\mathcal{C},J)$ for $ \mathcal{E}$, the inverse image $p^*$ restricts to a morphism of site $ \overline{p} : \mathcal{C} \rightarrow \pscolim_{i \in I^{\op}} \mathcal{C}/C_i$ sending $ C$ to the equivalence class of the projection at any index $ [ (i, C_i^*C) ]_\sim$ and a morphism $ u: C_1 \rightarrow C_2$ to the morphism represented by any of the vertical morphisms of the form $(1_i, 1_{C_i} \times u) : (i, C_i^*C_1) \rightarrow (i, C_i^*C_2) $. Now let us explain how we can reindex the limit decomposition of a pro-etale geometric morphism on its own category of elements. 

This requires some sanity-check on the nature of global elements in $\pscolim_{i \in I^{\op}} \mathcal{C}/C_i $. There is a terminal object $ 1\pscolim_{i \in I^{\op}} \mathcal{C}/C_i$ given by the class of any terminal element in the fibers $ [ (i, 1_{C_i})]_{\sim}$ for any choice of $i$. Then the global elements are also easy to represent:

\begin{lemma}\label{presentation of global element in pseudocolim}
Any global element $1_{\pscolim_{i \in I^{\op}} \mathcal{C}/C_i} \rightarrow \overline{p}(C)$ can be represented by a vertical morphism $ (i, 1_{C_i}) \rightarrow (i, C_i^*C) $ in $ \oplaxcolim_{i \in I^{\op}} \mathcal{C}/C_i$. 
\end{lemma}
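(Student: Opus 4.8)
The plan is to exploit the \emph{right calculus of fractions} on the cartesian morphisms established in the preceding remark, which presents $\pscolim_{i \in I^{\op}} \mathcal{C}/C_i$ as a category of right fractions of the oplax colimit. A global element is a morphism $[(i,1_{C_i})]_\sim \to [(j, C_j^*C)]_\sim$ from the terminal object to $\overline{p}(C)$, and by the calculus of fractions it is represented by a span
\[
(i, 1_{C_i}) \xleftarrow{(d_0, 1)} (k, u_{d_0}^* 1_{C_i}) \xrightarrow{(d_1, g)} (j, C_j^* C)
\]
whose left (denominator) leg is cartesian, with $d_0 : k \to i$ and $d_1 : k \to j$ in $I$. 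The first step is to observe that the apex simplifies: since the pullback functors $u_{d_0}^*$ are right adjoints, they preserve the terminal object, so $u_{d_0}^* 1_{C_i} = 1_{C_k}$ and the apex is again the terminal object $(k, 1_{C_k})$ of its fibre.

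Next I would factor the numerator $(d_1, g)$ through the canonical vertical-then-cartesian factorization afforded by the fibration $\pi$: writing $u_{d_1}^*(C_j^* C) = C_k^* C$ (pasting of the pullbacks along $u_{d_1}$ and $!_{C_j}$), the morphism decomposes as
\[
(d_1, g) = (d_1, 1_{C_k^* C}) \circ (1_k, g),
\]
with $(1_k, g) : (k, 1_{C_k}) \to (k, C_k^* C)$ vertical and $(d_1, 1_{C_k^* C}) : (k, C_k^* C) \to (j, C_j^* C)$ cartesian. The vertical part $(1_k, g)$ is exactly a morphism $g : 1_{C_k} \to C_k^* C$ in the fibre $\mathcal{C}/C_k$, i.e. the desired vertical morphism $(k, 1_{C_k}) \to (k, C_k^* C)$.

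It then remains to check that, after inverting the two cartesian legs in the pseudocolimit, the global element is indeed represented by $(1_k, g)$. Both $(d_0, 1)$ and $(d_1, 1_{C_k^* C})$ become invertible; the left one is an automorphism of the terminal object $1_{\pscolim}$, hence the identity since a terminal object admits no nontrivial endomorphism, while the right one is precisely the coherence isomorphism identifying $q_k(C_k^* C)$ with $\overline{p}(C) = q_j(C_j^* C)$ built into the definition of $\overline{p}$. Thus the span collapses, up to these canonical identifications, to the image of $(1_k, g)$, which proves that the global element is represented by a vertical morphism. Equivalently, one may phrase the whole argument through the formula computing hom-sets in a filtered pseudocolimit of categories as the filtered colimit $\varinjlim_{d : k \to i} \mathcal{C}/C_k[1_{C_k}, C_k^* C]$, every element of which arises at some stage $k$ as a vertical morphism.

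The hard part will be the bookkeeping of the coherence isomorphisms in the last step: one must be careful that the cartesian legs really do reduce to the canonical identifications rather than to arbitrary automorphisms, which hinges on the rigidity of the terminal object on the source side and on the compatibility of the right cartesian leg with the chosen representative of $\overline{p}(C)$ on the target side.
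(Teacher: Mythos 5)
Your proof is correct and follows essentially the same route as the paper's: represent the global element as a right fraction with cartesian denominator, note the apex is fibre-terminal because the transition functors are lex, and then use the (vertical, cartesian) factorization of the numerator to absorb the cartesian part into the identification of representatives of $\overline{p}(C)$. The only difference is presentational: the coherence bookkeeping you carefully spell out in your last step is compressed in the paper into the single phrase that ``the right leg can be chosen to be vertical, that is, we can choose $k=i$'', so your version is if anything slightly more explicit than the original.
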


\begin{proof}
Recall that morphisms in the pseudocolimit are represented as fractions with a cartesian morphism on the left: that is, if we start with a representant $(i, C_i^*C)$ for $ \overline{p}(C)$ and a representant $(j, 1_{C_j})$ for the terminal object, then a morphism between them is a span
\[\begin{tikzcd}
	{(k, h)} & {(i, C_i^*C)} \\
	{(j, 1_{C_j})}
	\arrow[from=1-1, to=2-1]
	\arrow[from=1-1, to=1-2]
\end{tikzcd}\]
with the left leg cartesian: but then, we must have $ h = 1_{C_k}$ for transitions morphisms are lex; on the other hand, for we have a (vertical, cartesian) factorization system in the oplax colimit, the right leg can be chosen to be vertical, that is, we can chose $ k =i$. Then we have a representing morphism $(i, 1_{C_i}) \rightarrow (i, C_i^*C) $, that is, a section of the product projection $ C_i \times C \rightarrow C_i$, which is also the same as a morphism $ a : C_i \rightarrow C$. 
\end{proof}


Now, let us reindex the presenting bilimit of our pro-etale geometric morphism thanks to global elements. Define the functor $ \widetilde{p}: I \rightarrow 1_{\pscolim_{i \in I} \mathcal{C}/C_i} \downarrow \overline{p}$ as sending $ i$ to the class of the element corresponding to the diagonal
\[\begin{tikzcd}
	{[(i, 1_{C_i})]_\sim} && {[(i, C_i^*C_i)]_\sim}
	\arrow["{[(1_i, \Delta_{C_i})]_\sim}", from=1-1, to=1-3]
\end{tikzcd}\]

\begin{lemma}
The functor $ \widetilde{p}: I \rightarrow 1_{\pscolim_{i \in I} \mathcal{C}/C_i} \downarrow \overline{p}$ is initial.
\end{lemma}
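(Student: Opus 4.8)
The plan is to unwind the comma categories in the definition of initiality (\cref{initial functors}) and reduce everything to the filtered-colimit description of global elements. Write $\mathcal{G} = 1_{\pscolim_{i \in I^{\op}} \mathcal{C}/C_i} \downarrow \overline{p}$ for the target category. First I would record what its objects and morphisms amount to: by \cref{presentation of global element in pseudocolim} an object is a pair $(C, a)$ with $C$ in $\mathcal{C}$ and $a$ an element of the filtered colimit $\mathrm{colim}_{i \in I^{\op}} \mathcal{C}[C_i, C]$, where the transition attached to $d : i \rightarrow j$ in $I$ is precomposition $(-)\circ u_d : \mathcal{C}[C_j, C] \rightarrow \mathcal{C}[C_i, C]$; a morphism $(C, a) \rightarrow (C', a')$ is a $u : C \rightarrow C'$ with $\overline{p}(u)\circ a = a'$, and since $\overline{p}(u)$ acts on a representative by postcomposition this just says that $u$ carries a representative of $a$ to one of $a'$. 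On this description $\widetilde{p}$ sends $i$ to $(C_i, [1_{C_i}])$, the diagonal global element being represented by $1_{C_i}$, and sends $d : i \rightarrow j$ to $u_d$, which is a legitimate morphism because both $\overline{p}(u_d)\circ \delta_i$ and $\delta_j$ are represented by $u_d$ at index $i$.

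Next I would compute $\widetilde{p} \downarrow (C, a)$ for a fixed object $(C, a)$ of $\mathcal{G}$. An object is an index $i$ equipped with a morphism $\widetilde{p}(i) = (C_i, [1_{C_i}]) \rightarrow (C, a)$, which by the previous paragraph is exactly a morphism $v : C_i \rightarrow C$ whose class $[v]$ in the colimit equals $a$; a morphism $(i, v) \rightarrow (i', v')$ is a $d : i \rightarrow i'$ in $I$ with $v' \circ u_d = v$. In short, the objects are the representatives of $a$ and the morphisms are the $I$-arrows witnessing the colimit identifications. Nonemptiness is then immediate from \cref{presentation of global element in pseudocolim}: the class $a$ has at least one representative $v : C_i \rightarrow C$, yielding an object $(i, v)$.

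For connectedness I would take two objects $(i, v)$ and $(i', v')$, i.e. two representatives of the same class $a$. Because $I^{\op}$ is filtered (as $I$ is cofiltered), equality of classes produces a common index: there exist $k$ and arrows $e : k \rightarrow i$, $e' : k \rightarrow i'$ in $I$ with $v \circ u_e = v' \circ u_{e'} =: w$. Then $(k, w)$ is again an object of $\widetilde{p} \downarrow (C, a)$, and $e$, $e'$ define morphisms $(k, w) \rightarrow (i, v)$ and $(k, w) \rightarrow (i', v')$, producing a zig-zag between the two chosen objects. Hence each $\widetilde{p} \downarrow (C, a)$ is nonempty and connected, so $\widetilde{p}$ is initial.

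The only genuine obstacle is bookkeeping: one must keep straight that global elements are equivalence classes in a filtered colimit over $I^{\op}$ rather than honest morphisms at a fixed index, and that the relevant variance is postcomposition for $\overline{p}(u)$ but precomposition along $u_d$ for the indexing transitions; getting these directions right is what makes the identifications in the connectedness step land on actual morphisms of the comma category. As an alternative one could first verify, using that $\overline{p}$ and $\mathcal{C}$ are lex, that $\mathcal{G}$ is itself cofiltered — products and equalizers of global elements being computed in $\mathcal{C}$ — and then invoke the simplified initiality criterion for functors into cofiltered categories; but the direct verification above seems shorter and more transparent.
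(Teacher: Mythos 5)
Your proof is correct, but it takes a genuinely different route from the paper's. You unwind the comma categories $\widetilde{p} \downarrow (C,a)$ completely: identifying global elements of $\overline{p}(C)$ with elements of the filtered colimit $\colim_{i \in I^{\op}} \mathcal{C}[C_i,C]$, you recognize $\widetilde{p} \downarrow (C,a)$ as the category of representatives of the class $a$, so that nonemptiness is immediate from \cref{presentation of global element in pseudocolim} and connectedness follows from the standard fact that equality in a filtered colimit of sets is witnessed at a single common stage, which hands you a two-step span between any two representatives. The paper instead proves nonemptiness by an explicit diagonal triangle, then invokes cofilteredness of $1_{\pscolim_{i \in I} \mathcal{C}/C_i} \downarrow \overline{p}$ (via flatness of $\overline{p}$) to reduce initiality to equalizing parallel pairs out of objects $\widetilde{p}(j)$, and settles that by a hands-on computation with the span presentation of fractions, concluding that such parallel pairs are in fact \emph{equal}. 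Your argument is shorter and never needs the flatness reduction or the simplified criterion for cofiltered targets; what it quietly relies on is that hom-sets in the pseudocolimit really are filtered colimits of hom-sets with one-step witnesses for identification — this is exactly what the paper's preceding remark on the right calculus of fractions for cartesian morphisms provides, so you should cite that remark where you assert the colimit description of global elements and the identification of representatives. The paper's longer computation buys something you bypass: it makes explicit how a parallel pair of morphisms of elements collapses at the level of representatives (a faithfulness-type statement in the fraction calculus), which is in the spirit of its earlier lemmas; your route, by contrast, makes visible that each comma category $\widetilde{p} \downarrow (C,a)$ is not merely connected but span-connected, which is the sharper structural fact. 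Amusingly, the alternative you sketch in your last sentence — proving the target cofiltered and invoking the simplified criterion — is essentially the paper's own strategy.
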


\begin{proof}
Take a global element of some $\overline{p}(C)$ in the pseudocolimit, induced from a $ a : C_i \rightarrow C$ as chosen in \cref{presentation of global element in pseudocolim} as the corresponding $ [(1_i, (1_{C_i},a))]_\sim$. We must prove that the category $ \widetilde{p} \downarrow  (C,[(1_i, (1_{C_i},a))]_\sim ) $ is non-empty and connected. Non-emptiness is witnessed by the existence of the triangle over $C_i$ provided by property of the diagonal from the data given by $a$:
\[\begin{tikzcd}
	& {C_i \times C_i} \\
	{C_i} && {C_i \times C} \\
	& {C_i}
	\arrow["{C_i^*C}", from=2-3, to=3-2]
	\arrow["{1_{C_i} \times a}", from=1-2, to=2-3]
	\arrow["{C_i^*C_i}"{description, pos=0.2}, from=1-2, to=3-2]
	\arrow[Rightarrow, no head, from=2-1, to=3-2]
	\arrow["{(1_{C_i}, a)}"{description, pos=0.3}, from=2-1, to=2-3, crossing over]
	\arrow["{\Delta_{C_i}}", from=2-1, to=1-2]
\end{tikzcd}\]
For $1_{\pscolim_{i \in I} \mathcal{C}/C_i} \downarrow \overline{p}$ is cofiltered by flatness of $\overline{p}$, if suffices to prove that any two parallel morphisms of elements $ \widetilde{p}(j) \rightrightarrows (C,[(1_i, a)]_{\sim})$ are equalized by some morphism $ \widetilde{p}(d) : \widetilde{p}(j') \rightarrow \widetilde{p}(j) $. A parallel pair will be presented by a square of the following form, with the upper isomorphism relating two presentations of the terminal object suited to give vertical presentation of the corresponding global elements: 
\[\begin{tikzcd}
	{[(j, 1_{C_j})]_\sim} && {[(i, 1_{C_i})]_\sim} \\
	{[(j, C_j^*C_j)]_\sim} && {[(i, C_i^*C)]_\sim}
	\arrow["{[(1_j, \Delta_{C_j})]_{\sim}}"', from=1-1, to=2-1]
	\arrow["{[(d_1, 1_{C_j} \times b_1)]_\sim}", shift left=1, from=2-1, to=2-3]
	\arrow["{[(1_i,a)]_{\sim}}", from=1-3, to=2-3]
	\arrow["\simeq", from=1-1, to=1-3]
	\arrow["{[(d_2, 1_{C_j} \times b_2)]_\sim}"', shift right=1, from=2-1, to=2-3]
\end{tikzcd}\]
The lower parallel pair corresponds through the right calculus of fractions to a pair of spans as below:
\[\begin{tikzcd}
	& {(k_1, C_{k_1}^*C_{j})} \\
	{(j, C_j^*C_j)} && {(i, C_i^*C)} \\
	& {(k_2, C_{k_2}^*C_{j})}
	\arrow["{(d_1, 1_{C_{k_1}} \times b_1)}", shift left=1, from=1-2, to=2-3]
	\arrow["{(d_2, 1_{C_{k_2}} \times b_2)}"', shift right=1, from=3-2, to=2-3]
	\arrow["{(s_1, 1_{C_{k_1}^*C_{j}})}"', shift right=1, from=1-2, to=2-1]
	\arrow["{(s_2, 1_{C_{k_2}^*C_{j}})}", shift left=1, from=3-2, to=2-1]
\end{tikzcd}\]
with $ b_1, b_2 : C_j \rightarrow C$ a parallel pair. For $I$ is cofiltered, we know there exists a span $ t_1 :j' \rightarrow k_1$, $ t_2 : j' \rightarrow k_2$ such that $ s_1t_1 = s_2t_2$ and $ d_1t_1 = d_2t_2$. Then in $\mathcal{C}/C_{j'}$ both $ s_1$ and $ s_2$ induce the same map $ (1_{C_{j'}}, f_{s_1}f_{t_1}) = (1_{C_{j'}}, f_{s_2}f_{t_2}) : 1_{C_{j'}} \rightarrow C_{j'}^*C_j$ while the data of $(d_1, 1_{C_{k_1}} \times b_1), (d_2,1_{C_{k_2}} \times b_2)$ are transferred to a pair of morphisms in $\mathcal{C}/C_{j'}$
\[\begin{tikzcd}[sep=large]
	{C_{j'}^*C_j} & {C_{j'}^*C }
	\arrow["{f_{t_1}^*(1_{C_{k_1}} \times b_1)}", shift left=1, from=1-1, to=1-2]
	\arrow["{f_{t_2}^*(1_{C_{k_2}} \times b_2)}"', shift right=1, from=1-1, to=1-2]
\end{tikzcd}\]
But those later are actually nothing but $ 1_{C_{j'}} \times b_1$ and $1_{C_{j'}} \times b_2$ respectively, and the condition above states that one has an equality  
\[\begin{tikzcd}
	& {1_{C_{j'}}} \\
	{C_{j'}^*C_j} && {C_{j'}^*C }
	\arrow["{1_{C_{j'}} \times b_1}", shift left=1, from=2-1, to=2-3]
	\arrow["{1_{C_{j'}} \times b_2}"', shift right=1, from=2-1, to=2-3]
	\arrow["{(1_{C_{j'}}, f_{s_1}f_{t_1}) = (1_{C_{j'}}, f_{s_2}f_{t_2})}"', from=1-2, to=2-1]
	\arrow["{f_{t_1}^*f_{d_1}^*a = f_{t_2}^*f_{d_2}^*a}", from=1-2, to=2-3]
\end{tikzcd}\]
and as all those maps live in $\mathcal{C}/C_j$, $ a$ has to be uniquely induced from some $ b : C_j \rightarrow C$ and one has $ b_1 = b_2 =b$. This forces the corresponding parallel morphisms we started with to be actually equal. \end{proof}

As a consequence, though pro-etale may be presented as indexed by arbitrary small cofiltered diagrams, they always possess a canonical presentation indexed by the category of global elements of their inverse image:

\begin{corollary}\label{canonical presentation of proetale}
Any pro-etale geometric morphism $p : \mathcal{F} \rightarrow \mathcal{E}$ is equivalent in $\GTop/\mathcal{E}$ to the pro-etale geometric morphism indexed by the category of global elements of its inverse image: we have a geometric equivalence over $\mathcal{E}$
\[\begin{tikzcd}
	{\mathcal{F}} && {\underset{(E,a) \in  1_\mathcal{F} \downarrow p^*}{\bilim} \; \mathcal{E}/E} \\
	& {\mathcal{E}}
	\arrow["{(\ulcorner a \urcorner)_{(E,a) \in  1_\mathcal{F} \downarrow p^*} \atop \simeq}", from=1-1, to=1-3]
	\arrow[""{name=0, anchor=center, inner sep=0}, "p"', from=1-1, to=2-2]
	\arrow[""{name=1, anchor=center, inner sep=0}, "{\underset{ 1_\mathcal{F} \downarrow p^*}{\bilim} \; \pi_E}", from=1-3, to=2-2]
	\arrow["\simeq"', draw=none, from=1, to=0]
\end{tikzcd}\]

\end{corollary}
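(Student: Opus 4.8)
The plan is to deduce this directly from the initiality of $\widetilde{p}$ established in the preceding lemma, by testing both sides against an arbitrary object of $\GTop/\mathcal{E}$ and invoking the bicategorical Yoneda lemma. By definition $p$ comes with a presentation $p \simeq \bilim_{i \in I}\pi_{E_i}$ for a small cofiltered diagram $\mathbb{E} : I \rightarrow \mathcal{E}$, and the previous lemma exhibits the induced functor $\widetilde{p} : I \rightarrow 1_\mathcal{F}\downarrow p^*$ as initial in the sense of \cref{initial functors}. Throughout I identify, via \cref{presentation of global element in pseudocolim}, the category of global elements of $\overline{p}$ in the pseudocolimit with the category $1_\mathcal{F}\downarrow p^*$ of global elements of $p^*$.

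First I would fix an arbitrary $g : \mathcal{G}\rightarrow\mathcal{E}$ and compute the hom-category in $\GTop/\mathcal{E}$ against the candidate bilimit. Using the universal property of cofiltered bilimits recalled after the definition of pro-etale morphisms, together with the identification $\GTop/\mathcal{E}[g,\pi_E]\simeq\mathcal{G}[1_\mathcal{G},g^*E]$ of \cref{Equivalence over E between etale fact of f and element}, one obtains
\[ \GTop/\mathcal{E}\Big[g,\underset{(E,a)\in 1_\mathcal{F}\downarrow p^*}{\bilim}\pi_E\Big]\;\simeq\;\underset{(E,a)\in 1_\mathcal{F}\downarrow p^*}{\lim}\;\mathcal{G}[1_\mathcal{G},g^*E], \]
the right-hand side being an ordinary limit of \emph{sets}, since each such hom-category is discrete (this is exactly \cref{proetale are discrete}). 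The same computation applied to the presenting diagram gives $\GTop/\mathcal{E}[g,p]\simeq\lim_{i\in I}\mathcal{G}[1_\mathcal{G},g^*E_i]$.

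The decisive step is then purely $1$-categorical. The set-valued diagram $(E,a)\mapsto\mathcal{G}[1_\mathcal{G},g^*E]$ restricts along $\widetilde{p}$ to $i\mapsto\mathcal{G}[1_\mathcal{G},g^*E_i]$, because $\widetilde{p}(i)$ carries the object $E_i$ equipped with its tautological global element and the transition maps of $\mathbb{E}$ are sent to the corresponding morphisms of global elements — precisely what was unwound in the construction of $\widetilde{p}$. Since $\widetilde{p}$ is initial, limits of sets are invariant under precomposition by it, so the two limits above are canonically isomorphic, naturally in $g$. Hence $\GTop/\mathcal{E}[g,\bilim_{(E,a)}\pi_E]\simeq\GTop/\mathcal{E}[g,p]$ naturally in $g$, and the bicategorical Yoneda lemma yields the desired equivalence $p\simeq\bilim_{(E,a)}\pi_E$ over $\mathcal{E}$. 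Unwinding the correspondence of \cref{Equivalence over E between etale fact of f and element} identifies the component of this equivalence at $(E,a)$ with the name $\ulcorner a\urcorner$, so the equivalence is exactly the map $(\ulcorner a\urcorner)_{(E,a)}$ displayed in the statement.

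I expect the main obstacle to be twofold. First, a size check: the index $1_\mathcal{F}\downarrow p^*$ is large, so the bilimit on the right is not a priori defined; the initial functor $\widetilde{p}$ from the small cofiltered $I$ is what guarantees that this bilimit is computed by a small diagram and hence exists, as anticipated in the introduction. Second, the honest work lies in verifying that the restricted diagram genuinely coincides with the presenting one through $\widetilde{p}$ and that the resulting comparison is natural in $g$; both reduce to the coherences already made explicit in \cref{presentation of global element in pseudocolim} and in the construction of $\widetilde{p}$, so they are routine but not entirely formal.
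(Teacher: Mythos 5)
Your proof is correct and matches the paper's intended argument: the paper offers no separate proof, presenting the corollary as an immediate consequence of the initiality of $\widetilde{p}$, and your unwinding — discreteness of the hom-categories via \cref{proetale are discrete}, reduction of the bilimit universal property to ordinary limits of sets, invariance of such limits under precomposition with an initial functor, then bicategorical Yoneda in $\GTop/\mathcal{E}$ — is exactly the standard way to make that consequence precise. The one step you gloss, namely identifying the category of global elements of $\overline{p}$ over the presenting site with the full category $1_\mathcal{F}\downarrow p^*$ (which strictly requires subcanonicity of the site together with an initiality/accessibility argument of the kind only formalized later in \cref{cofinal cat of representable elements}, since \cref{presentation of global element in pseudocolim} only treats elements of representables), is a gloss the paper itself makes silently in stating the corollary.
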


\section{(Terminally connected, pro-etale) factorization of geometric morphisms}

In this section, we prove that pro-etale geometric morphisms are the correct generalization of etale geometric morphisms when extending the (terminally connected, etale) factorization to arbitrary geometric morphisms. This factorization shall also be a generalization of the construction of the \emph{germ} of a topos at a point, as being obtained as a cofiltered limit of all etale factorizations of this morphism. 

\subsection{Some preliminary observations}

The following fact is an immediate consequence of closure properties of right classes under limits in the pseudoslices, since terminally connected geometric morphisms are left bi-orthogonal to etale ones:

\begin{lemma}
Pro-etale geometric morphisms are right bi-orthogonal to terminally connected geometric morphisms. 
\end{lemma}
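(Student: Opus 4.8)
The plan is to deduce the statement from the already-established left bi-orthogonality of terminally connected morphisms to etale ones, by exhibiting a pro-etale morphism as a bilimit of etale morphisms and invoking the closure of right-orthogonal classes under bilimits. Fix a terminally connected $t : \mathcal{G} \rightarrow \mathcal{H}$ and a pro-etale $p : \mathcal{F} \rightarrow \mathcal{E}$; by definition $p$ is presented, up to equivalence in $\GTop/\mathcal{E}$, as a cofiltered bilimit $\bilim_{i \in I} \pi_{E_i}$ of etale morphisms over $\mathcal{E}$ for some cofiltered diagram $\mathbb{E} : I \rightarrow \mathcal{E}$. Since right bi-orthogonality is stable under invertible $2$-cells (closure of $\mathcal{R}$ under invertible $2$-cell in the Proposition on bi-orthogonality structures), it suffices to treat $p = \bilim_{i \in I} \pi_{E_i}$ itself.

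First I would recast this bilimit, which a priori lives in the pseudoslice $\GTop/\mathcal{E}$, as a bilimit in $\ps[2,\GTop]$ of the diagram $i \mapsto \pi_{E_i}$ regarded as a diagram of \emph{objects} of $\ps[2,\GTop]$. Bilimits in $\ps[2,\GTop]$ are computed componentwise: the codomain component is the bilimit of the constant diagram at $\mathcal{E}$, which is $\mathcal{E}$ itself because $I$ is cofiltered, hence connected, while the domain component is $\bilim_{i \in I} \mathcal{E}/E_i$, this bilimit being preserved by the domain projection $\GTop/\mathcal{E} \rightarrow \GTop$ (again because cofiltered bilimits are connected). Thus $p$, as an object of $\ps[2,\GTop]$, is exactly the bilimit of the $\pi_{E_i}$.

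Next I would use the fact that, for a fixed $1$-cell $t : \mathcal{G} \rightarrow \mathcal{H}$, the class $t^\perp$ of $1$-cells right bi-orthogonal to $t$ is closed under bilimits in $\ps[2,\GTop]$. This is the relevant instance of the closure of right classes from the Proposition on properties of bi-orthogonality structures, and it holds for the orthogonal complement of any single morphism (or class): the orthogonality $t \perp g$ asserts that the comparison $\GTop[\mathcal{H}, \mathrm{dom}(g)] \rightarrow \ps[2,\GTop](t, g)$ is an equivalence, and both sides send bilimits in the variable $g$ to bilimits — the left-hand side because $\mathrm{dom}$ preserves the pointwise bilimits of $\ps[2,\GTop]$ and the representable $\GTop[\mathcal{H}, -]$ preserves bilimits, the right-hand side because the representable $\ps[2,\GTop](t, -)$ does. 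Since each $\pi_{E_i}$ is etale, the Proposition that terminally connected morphisms are left bi-orthogonal to etale morphisms gives $t \perp \pi_{E_i}$, i.e. $\pi_{E_i} \in t^\perp$; closure under bilimits then yields $p = \bilim_{i \in I} \pi_{E_i} \in t^\perp$, that is $t \perp p$, as desired.

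The step I expect to require the most care is the identification of the pseudoslice bilimit with a bilimit in $\ps[2,\GTop]$, together with the verification that the closure-under-bilimits property genuinely applies to the right-orthogonal complement of the class of terminally connected morphisms, rather than only to the right class of an a priori given bifactorization system. Concretely, one must check that the bilimit cone witnessing $p = \bilim_{i \in I} \pi_{E_i}$ remains a bilimit cone after applying $\ps[2,\GTop](t, -)$, which amounts to verifying that the diagonalizations $d_i : \mathcal{H} \rightarrow \mathcal{E}/E_i$ produced by $t \perp \pi_{E_i}$ cohere — up to the invertible transition $2$-cells of the diagram — into a pseudocone, so that they assemble, through the universal property of $\bilim_{i \in I} \mathcal{E}/E_i$, into a diagonal $\mathcal{H} \rightarrow \mathcal{F}$; the essential uniqueness of each $d_i$ forces both this coherence and the essential uniqueness of the assembled diagonal, and also handles the functoriality with respect to morphisms of pseudosquares. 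Once this is in place the conclusion is immediate.
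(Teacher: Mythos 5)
Your proposal is correct and follows essentially the same route as the paper's own proof, which likewise combines the left bi-orthogonality of terminally connected morphisms to etale ones with the closure of right bi-orthogonal classes under bilimits in $\ps[2,\GTop]$. Your elaboration of the details the paper leaves implicit --- recasting the pseudoslice bilimit as a componentwise bilimit in $\ps[2,\GTop]$, and checking via representability that the orthogonal complement $t^{\perp}$ of a single morphism (not merely the right class of a given bifactorization system) is closed under bilimits --- is a faithful and careful filling-in of exactly the argument the paper invokes.
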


\begin{proof}
We saw that terminally connected geometric morphisms are left bi-orthogonal to etale geometric morphisms. Since a right class in a 2-orthogonality structure is closed under bilimits, pro-etale geometric morphisms are ensured to be also right bi-orthogonal to terminally connected geometric morphisms.  
\end{proof}


We are going to consider a bilimit indexed by the category of all global elements $ 1_\mathcal{F}\downarrow f^*$ of a geometric morphism $ f : \mathcal{F} \rightarrow \mathcal{E}$; as this category is large, we must exhibit a small initial category inside of it to ensure the existence of the bilimit. We process by the following accessibility argument.


It is known, see for instance \cite{borceux1994handbook3}[Proposition 3.4.16], that Grothendieck topoi are locally presentable and can be exhibited as a lex localization for the category of $\Set$-valued sheaves over a generator of $\mu$-compact objects for some sufficiently large $\mu$. Moreover, in a locally presentable category, any object becomes compact above a certain cardinal.  

\begin{lemma}\label{cofinal cat of representable elements}
Let be $ f : \mathcal{F} \rightarrow \mathcal{E}$ a geometric morphism. Then there exists a small site $ (\mathcal{C},J)$ such that the category $ 1_\mathcal{F} \downarrow f^* \hirayo$ of global elements of the form $ 1_\mathcal{F} \rightarrow f^*(c)$ for $c$ in $\mathcal{C}$ is cofiltered and initial in the category $ 1_\mathcal{F}\downarrow f^*$.
\end{lemma}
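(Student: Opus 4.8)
The plan is to reduce the statement to \cref{initial subcategory lemma}. The ambient category $1_\mathcal{F}\downarrow f^*$ is already cofiltered (recalled in the example above), and once the abstract site $\mathcal{C}$ is realized as a small \emph{full} subcategory of $\mathcal{E}$ the representable global elements $1_\mathcal{F}\downarrow f^*\hirayo$ form a full subcategory of $1_\mathcal{F}\downarrow f^*$, on the objects $(c,a)$ with $c\in\mathcal{C}$. By \cref{initial subcategory lemma} it then suffices to produce a small $(\mathcal{C},J)$ such that every object $(E,a)$ of $1_\mathcal{F}\downarrow f^*$ admits a morphism \emph{from} some representable global element, that is, a morphism $u : c\to E$ in $\mathcal{E}$ with $c\in\mathcal{C}$ together with a global element $b : 1_\mathcal{F}\to f^*c$ satisfying $f^*(u)\circ b = a$. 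Both initiality and cofilteredness of $1_\mathcal{F}\downarrow f^*\hirayo$ then follow at once.

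The site will be extracted by an accessibility argument. I would first fix a regular cardinal $\mu$ large enough that simultaneously: $\mathcal{E}$ is locally $\mu$-presentable, so that its small full subcategory $\mathcal{C} := \mathcal{E}_\mu$ of $\mu$-presentable objects is dense and generating (take $J$ the restriction of the canonical topology, which is subcanonical, so that $\hirayo_c$ is just $c$ and $\mathcal{E}\simeq\Sh(\mathcal{C},J)$); and the terminal object $1_\mathcal{F}$ is $\mu$-presentable in the locally presentable category $\mathcal{F}$, so that the global sections functor $\mathcal{F}[1_\mathcal{F},-]$ preserves $\mu$-filtered colimits. Such a $\mu$ exists because in a locally presentable category every object becomes compact above a certain cardinal, and one may take a regular cardinal dominating the finitely many cardinals involved.

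For an arbitrary $(E,a)$ I would then run the canonical colimit decomposition. In a locally $\mu$-presentable category the canonical diagram $\mathcal{C}/E\to\mathcal{E}$ is $\mu$-filtered with colimit $E$; since $f^*$ is an inverse image it preserves all colimits, so $f^*E=\varinjlim_{(c,u)\in\mathcal{C}/E} f^*c$ is again a $\mu$-filtered colimit in $\mathcal{F}$. Applying $\mathcal{F}[1_\mathcal{F},-]$, which commutes with this $\mu$-filtered colimit by the choice of $\mu$, yields
\[
\mathcal{F}[1_\mathcal{F}, f^*E] \;\simeq\; \varinjlim_{(c,u)\in\mathcal{C}/E} \mathcal{F}[1_\mathcal{F}, f^*c].
\]
As $\mathcal{C}/E$ is filtered and a filtered colimit of sets has the property that every element arises at some stage, the global element $a$ is the image of some $b\in\mathcal{F}[1_\mathcal{F}, f^*c]$ under the coprojection indexed by a suitable $(c,u)$, which is exactly the relation $f^*(u)\circ b = a$. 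This produces the required morphism $(c,b)\to(E,a)$ from a representable global element, and \cref{initial subcategory lemma} concludes.

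The main obstacle is the cardinal bookkeeping: one must ensure a single $\mu$ serves both roles, providing a small generating site of $\mu$-presentables for $\mathcal{E}$ and making $1_\mathcal{F}$ itself $\mu$-presentable in $\mathcal{F}$, and must verify that the functor whose commutation with the colimit is actually needed, namely $\mathcal{F}[1_\mathcal{F}, f^*(-)]$, does commute with the canonical $\mu$-filtered colimit. The point that makes this painless is that $f^*$, being a left adjoint, preserves \emph{all} colimits, so no preservation of $\mu$-presentable objects by $f^*$ is required; only the $\mu$-presentability of $1_\mathcal{F}$ is doing the work on the $\mathcal{F}$ side.
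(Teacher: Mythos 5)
Your proposal is correct and follows essentially the same route as the paper: choosing a single cardinal $\mu$ making $1_\mathcal{F}$ $\mu$-compact and $\mathcal{E}$ locally $\mu$-presentable, taking $\mathcal{C}=\mathcal{E}_\mu$ with the restricted canonical topology, factoring any global element $a : 1_\mathcal{F} \rightarrow f^*E$ through some $f^*(c)$ via the $\mu$-filtered canonical colimit (using that $f^*$ preserves all colimits and $\mathcal{F}[1_\mathcal{F},-]$ preserves $\mu$-filtered ones), and concluding by the initial subcategory lemma. Your explicit display of the colimit of hom-sets and the remark that no preservation of $\mu$-presentables by $f^*$ is needed merely make explicit what the paper leaves implicit.
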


\begin{proof}
For $ f : \mathcal{F} \rightarrow \mathcal{E}$, $ \mathcal{F}$ is in particular locally presentable, and hence, there exists some cardinal $\kappa$ such that $ 1_\mathcal{F}$ is $ \kappa$-compact. On the other side, as $\mathcal{E}$ is also locally presentable, there exists some cardinal $ \lambda $ such that $ \mathcal{E}$ is locally $\lambda$-presentable; hence, it suffices to take $\mu\geq \kappa,\lambda$, so that we simultaneously have $ 1_\mathcal{F}$ is $\mu$-compact and the subcategory $\mathcal{E}_\mu$ of $\mu$-compact objects generates $ \mathcal{E}$ under $\mu$-filtered colimits. Then, there exists a Grothendieck topology $ J_\mu$ on $ \mathcal{E}_\mu$ such that $(\mathcal{E}_\mu, J_\mu)$ is a small site of definition for $ \mathcal{E}$. In fact, $ J_\mu$ is the restriction of the canonical topology of $\mathcal{E}$ to $\mu$-compact objects. \\

But now, for any $F$ in $\mathcal{E}$, we have a $ \mu$-filtered colimit $ F \simeq \colim \, \iota_\mu \downarrow F $, where $ \iota_\mu : \mathcal{E}_\mu \hookrightarrow \mathcal{E}$ is the dense inclusion of $\mu$-compact objects. As $ 1_\mathcal{F}$ is $ \mu$-compact while $ f^* $ preserves colimits, any global element $ a : 1_\mathcal{F} \rightarrow f^*F$ lifts as follows
\[\begin{tikzcd}
	{1_\mathcal{F}} && {f^*(F)} \\
	& {f^*(K)}
	\arrow["a", from=1-1, to=1-3]
	\arrow["b"', dashed, from=1-1, to=2-2]
	\arrow["{f^*(x)}"', from=2-2, to=1-3]
\end{tikzcd}\]
for some $ x : K \rightarrow F$ with $K$ a $\mu$-compact object in $\mathcal{E}$. This exactly means that any global element $ (F,a)$ admits a morphism $ x : (K, b) \rightarrow (F,a)$ with $ K$ $\mu$-compact. But now, since $f^*$ was the inverse image part of a geometric morphism, the category of global element $ 1_\mathcal{F}\downarrow f^*$ is cofiltered. Then, by \cref{initial subcategory lemma}, $ 1_\mathcal{F} \downarrow f^* \iota_\mu$ is itself cofiltered and initial in $1_\mathcal{F} \downarrow f^*$. \end{proof}


\begin{corollary}\label{large bilimit exists}
For any geometric morphism $ f : \mathcal{F} \rightarrow \mathcal{E}$, the large cofiltered bilimit $ \bilim_{1_\mathcal{F} \downarrow f^*} \mathcal{E}/E$ exists in the 2-category of Grothendieck topoi.
\end{corollary}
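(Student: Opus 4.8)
The plan is to transfer the large bilimit to a small one using the initiality produced in \cref{cofinal cat of representable elements}, reducing to a setting where small cofiltered bilimits of topoi are already known to exist. Write $J$ for the small full subcategory $1_\mathcal{F} \downarrow f^*\iota_\mu$ of $1_\mathcal{F}\downarrow f^*$ provided by that lemma; it is cofiltered and initial. Restricting the diagram $(E,a) \mapsto \mathcal{E}/E$ (with transition morphisms $\mathcal{E}/u$) along the inclusion $J \hookrightarrow 1_\mathcal{F}\downarrow f^*$ yields a small cofiltered diagram of etale topoi over $\mathcal{E}$, whose bilimit $\bilim_{J}\mathcal{E}/E$ exists by the site-theoretic construction recalled in \cref{site for bilimit topos}. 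It then remains to check that this small bilimit also computes the bilimit of the full large diagram.

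This cofinality step is the only point that is not purely formal, and I would resolve it inside the pseudoslice $\GTop/\mathcal{E}$, where etale morphisms are discrete (\cref{Equivalence over E between etale fact of f and element}). For any object $g:\mathcal{G}\to\mathcal{E}$ the hom-categories $\GTop/\mathcal{E}[g,\pi_E] \simeq \mathcal{G}[1_\mathcal{G}, g^*E]$ are mere sets, so the diagram of hom-categories indexed by $1_\mathcal{F}\downarrow f^*$ is $\Set$-valued and its bilimit is an ordinary cofiltered limit of sets. Because $J$ is initial in the $1$-categorical sense of \cref{initial functors}, classical cofinality gives $\lim_{1_\mathcal{F}\downarrow f^*}\mathcal{G}[1_\mathcal{G}, g^*E] \cong \lim_{J}\mathcal{G}[1_\mathcal{G}, g^*E]$, so $\bilim_J \pi_E$ satisfies the universal property of the bilimit of the large diagram in $\GTop/\mathcal{E}$. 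This sidesteps any genuinely $2$-dimensional cofinality issue, since the reduction to limits of sets makes the ordinary connectedness-and-nonemptiness criterion suffice.

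Finally, since cofiltered bilimits are connected, they are preserved by the slice projection $\GTop/\mathcal{E}\to\GTop$, so the domain $\bilim_{J}\mathcal{E}/E$ of the slice bilimit is simultaneously the bilimit $\bilim_{1_\mathcal{F}\downarrow f^*}\mathcal{E}/E$ taken in $\GTop$. The main obstacle is thus entirely concentrated in the cofinality verification; the existence of the small cofiltered bilimit and the passage to the large index are otherwise immediate from the cited results.
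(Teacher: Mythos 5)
Your proposal is correct and follows essentially the same route as the paper: invoke \cref{cofinal cat of representable elements} to obtain the small initial cofiltered subcategory, observe that the small-indexed bilimit exists by \cref{site for bilimit topos}, and conclude that it computes the large bilimit. The only difference is that you spell out the cofinality step the paper treats as immediate, and your justification via discreteness of etale morphisms in $\GTop/\mathcal{E}$ -- reducing the bilimit of hom-categories to an ordinary cofiltered limit of sets, where $1$-categorical initiality suffices -- is exactly the mechanism the paper itself uses in \cref{proetale are discrete}, so it is a sound completion rather than a divergence.
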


\begin{proof}
    From \cref{cofinal cat of representable elements}, we know that there always exists a small cofinal category of elements of the restriction of the inverse image of $f$ to a convenient small site $ (\mathcal{C},J)$ of presentation. Then we are ensured to have an equivalence of categories 
    \[ \underset{1_\mathcal{F} \downarrow f^*}\bilim  \;\mathcal{E}/E \simeq \underset{1_\mathcal{F} \downarrow f^*\hirayo}\bilim  \; \mathcal{E}/\hirayo_C  \]
    where the later cofiltered bilimit is ensured to exist as it is small indexed. 
\end{proof}

\subsection{The factorization theorem}
 
We are now reaching the central result of this paper, establishing the promised factorization theorem:

\begin{theorem}\label{terminally connected-proetale factorization}
Any geometric morphism factorizes in an essentially unique way as a terminally connected geometric morphism followed by a pro-etale geometric morphism.
\end{theorem}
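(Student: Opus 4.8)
The plan is to build the factorization by hand, putting the pro-etale part on the right and then checking that the residual left part is terminally connected; essential uniqueness is then formal, via the bi-orthogonality established earlier.

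\textbf{Construction of the factorization.} Given $f : \mathcal{F} \rightarrow \mathcal{E}$, I would set $\mathcal{G} = \bilim_{(E,a) \in 1_\mathcal{F}\downarrow f^*} \mathcal{E}/E$, which exists in $\GTop$ by \cref{large bilimit exists} (the large indexing category being replaced by a small initial subcategory via \cref{cofinal cat of representable elements}), and take the right-hand map to be $p_f = \bilim \pi_E : \mathcal{G} \rightarrow \mathcal{E}$, which is pro-etale by definition. By \cref{etale morphisms and global elements} every object $(E,a)$ of $1_\mathcal{F}\downarrow f^*$ names a factorization $a : \mathcal{F} \rightarrow \mathcal{E}/E$ of $f$ over $\mathcal{E}$, and a morphism $u : (E,a) \rightarrow (E',a')$, i.e. a map $u : E \rightarrow E'$ with $f^*(u)\circ a = a'$, produces a canonical invertible $2$-cell $\mathcal{E}/u \circ a \simeq a'$ compatible with the transition $2$-cells $\alpha_u$ of the diagram. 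Hence these factorizations assemble into a pseudocone over the diagram with apex $\mathcal{F}$, and the universal property of the bilimit in $\GTop/\mathcal{E}$ furnishes a canonical $l_f : \mathcal{F} \rightarrow \mathcal{G}$ over $\mathcal{E}$ together with an invertible $2$-cell $p_f \circ l_f \simeq f$. This is the desired factorization, with $p_f$ pro-etale.

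\textbf{Terminal connectedness of the left part.} The crux is to show that $l_f$ is terminally connected, i.e. that the canonical comparison $\Gamma_\mathcal{G} \rightarrow \Gamma_\mathcal{F}\circ(l_f)^*$ sending $\overline{a}$ to $(l_f)^*\overline{a}$ (using $1_\mathcal{F}\simeq (l_f)^*1_\mathcal{G}$) is a natural isomorphism. Since $\mathcal{G}$ is generated by the objects $p_{(E,a)}^*X$ for $X$ in a generating site of $\mathcal{E}/E$, and since $(l_f)^* p_{(E,a)}^* \simeq a^*$ by construction, it suffices to establish the bijection on such generators. For a generator $p_{(E,a)}^*(h : D \rightarrow E)$ the explicit description of $a^*$ in \cref{etale morphisms and global elements} gives $(l_f)^*p_{(E,a)}^*(h) \simeq 1_\mathcal{F}\times_{f^*E} f^*D$, whose global elements are exactly the global elements $c : 1_\mathcal{F}\rightarrow f^*D$ with $f^*(h)\circ c = a$; equivalently, the morphisms $(D,c)\rightarrow(E,a)$ of $1_\mathcal{F}\downarrow f^*$ lying over $h$. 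I would then match these, via the site presentation of the bilimit from \cref{cofiltered bilimit of etale geometric morphisms} and the representation of global elements in the pseudocolimit from \cref{presentation of global element in pseudocolim}, with the global elements $1_\mathcal{G}\rightarrow p_{(E,a)}^*(h)$ of $\mathcal{G}$.

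\textbf{Why the matching is bijective, and the main obstacle.} All the substance lies in this last identification, and it is here that cofilteredness and initiality of $1_\mathcal{F}\downarrow f^*$ do the work. For surjectivity, a global element $c$ downstairs exhibits $(D,c)$ as a genuine object of the indexing category, so the element is literally one of the coordinates used to assemble $\mathcal{G}$ and thus already lifts to a global element of $p_{(E,a)}^*(h)$ in the bilimit. For injectivity, two lifts of the same $c$ are compared, after the right calculus of fractions recalled in \cref{cofiltered bilimit of etale geometric morphisms}, by a span in the cofiltered category, and the equalization axiom of cofilteredness forces them to agree -- exactly as in the initiality argument for $\widetilde{p}$. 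I expect this site-level identification of global elements in the bilimit, together with its naturality, to be the main obstacle: every comparison must be checked to be compatible with the transition $2$-cells, so that the isomorphism is genuinely natural in $G$ and propagates correctly from generators to all of $\mathcal{G}$.

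\textbf{Essential uniqueness.} Finally, essential uniqueness is formal. Terminally connected morphisms are left bi-orthogonal to etale morphisms, hence, a right class being closed under bilimits, to all pro-etale morphisms; this is precisely the lemma that pro-etale geometric morphisms are right bi-orthogonal to terminally connected ones. Thus, given any other factorization $f \simeq r\circ l$ with $l$ terminally connected and $r$ pro-etale, the diagonalization property applied to the comparison pseudosquare between the two factorizations produces an equivalence between the two intermediate topoi, unique up to unique invertible $2$-cell and compatible with both legs, exactly as in the general remark on bifactorization systems. This yields the essentially unique factorization asserted.
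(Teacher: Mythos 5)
Your construction and your uniqueness argument coincide with the paper's: the same bilimit $\bilim_{1_\mathcal{F}\downarrow f^*}\mathcal{E}/E$ (existing by \cref{large bilimit exists}), the same induced map $l_f$, and essential uniqueness obtained formally from bi-orthogonality, pro-etale morphisms lying in the right class because right classes are closed under bilimits. The genuine gap is in the middle step, precisely where you write that ``it suffices to establish the bijection on such generators''. Terminal connectedness demands unique lifting of global elements of \emph{all} objects of $\mathcal{G}$, and the two functors your comparison map relates, $\mathcal{G}[1_\mathcal{G},-]$ and $\mathcal{F}[1_\mathcal{F},l_f^*(-)]$, do not preserve colimits; hence an isomorphism on representables does not propagate to colimits of representables. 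Concretely, a global element $b : 1_\mathcal{F}\rightarrow l_f^*X$ with $X\simeq \colim_{I_X}\hirayo_{[(E_i,a_i,h_i)]_\sim}$ need not factor through any colimit inclusion, since $1_\mathcal{F}$ is not compact. The paper itself flags exactly this pitfall in the first warning of the (unlabelled) remark opening the subsection on site-theoretic aspects: lifting global elements at the level of a generating site does \emph{not} in general imply terminal connectedness of the induced geometric morphism, and its positive criteria (\cref{test of lift on site}, \cref{testing terminal connectedness with local sites}) need extra hypotheses --- $\mu$-compactness of both terminal objects, or a local site --- which you have not secured for $\mathcal{G}$.

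The paper's proof avoids this reduction altogether. Using pullback-stability of colimits in a topos, it computes $l_f^*X$ for an \emph{arbitrary} $X$ as a single pullback: the fiber of $f^*\colim_{I_X}h_i$ along $\colim_{I_X}a_i$, whose lower-left corner is $\colim_{I_X}1_\mathcal{F}$ rather than $1_\mathcal{F}$. A global element $b : 1_\mathcal{F}\rightarrow l_f^*X$ then yields, by the universal property of that one pullback, a morphism of global elements of $f^*$ from $(\colim_{I_X}D_i,\pi_2 b)$ to $(\colim_{I_X}E_i,(\colim_{I_X}a_i)\pi_1 b)$, which is again a morphism of the indexing site because the argument is run over the \emph{large} pseudocolimit site indexed by all of $1_\mathcal{F}\downarrow f^*$; this is where largeness is genuinely used, whereas your small initial subcategory from \cref{cofinal cat of representable elements} serves only to guarantee existence of the bilimit and is not closed under such colimit-assembled elements. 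Your generator-level matching (surjectivity because $(D,c)$ is literally an object of the index; injectivity via the fraction calculus and cofilteredness) is sound as far as it goes, but to close the argument you would have either to reproduce the paper's colimit-of-pullbacks computation, or to run a compactness argument in the style of \cref{test of lift on site} with a cardinal $\mu$ making $1_\mathcal{F}$ and $1_\mathcal{G}$ $\mu$-compact while also handling the non-filtered colimits gluing the generators --- substantially more than what is written.
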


\begin{proof}
This is the main proof of this paper and deserves a quick overview of our strategy. After recalling the correspondence of etale factorizations with global elements of the inverse image, we propose a pro-etale candidate on the right, constructed as a cofiltered bilimit indexed by such global elements. Then we present it through a canonical pseudocolimit of sites, and describe global elements in this bilimit topos. The bilimit property induces a universal map on the left: we first discuss its action on objects of the pseudocolimit site, and then how it extends to arbitrary objects in the associated bilimit topos; we finally show it lifts uniquely global elements, ensuring the associated geometric morphism to be terminally connected. In a sense, the key argument is that this factorization ``displays" all possible global elements of the inverse image of the given geometric morphism, so that the residual left part must reflect them faithfully. \\

Let be $ f : \mathcal{F} \rightarrow \mathcal{E}$ be a geometric morphism. We saw at \cref{etale morphisms and global elements} that we have an equivalence of categories
\[  1_\mathcal{F} \downarrow f^* \simeq f \downarrow \textbf{Et}/\mathcal{F}  \] 

We are going to consider the following large bilimit $ \bilim_{1_\mathcal{F} \downarrow f^*} \mathcal{E}/E$ indexed over all possible global elements $a: 1_\mathcal{F} \rightarrow f^*E$ without restriction to any site of presentation for $\mathcal{E}$. From \cref{large bilimit exists}, we know this bilimit to be correctly defined. As $f^*$ is flat, $1_\mathcal{F} \downarrow f^*$ is cofiltered: hence the bilimit morphism $ \pi_f : \bilim_{1_\mathcal{F} \downarrow f^*} \mathcal{E}/E \rightarrow \mathcal{E}$ is pro-etale. Moreover, we know from \cref{Dubuc formula with the canonical site} that it is presented as 
\[  \underset{1_\mathcal{F} \downarrow f^*}\bilim  \;\mathcal{E}/E \simeq \Sh(\underset{1_\mathcal{F}\downarrow f^*}{\pscolim} \; \mathcal{E}/E, \bigvee_{1_\mathcal{F}\downarrow f^*} q_{(E,a)} (J_E)) \]
where the $ q_{(E,a)} : \mathcal{E}/E \rightarrow \pscolim_{1_\mathcal{F} \downarrow f^*} \mathcal{E}/E$ are the pseudocolimit inclusions.\\

On the other side, the correspondance above between global elements and factorizations of $f$ yields a cone $(a : f \rightarrow \pi_E)_{(E,a) \in 1_\mathcal{F} \downarrow f^*}$ in $\GTop/\mathcal{E}$, which induces uniquely a factorization 
\[\begin{tikzcd}
	{\mathcal{F}} && {\mathcal{E}} \\
	& {\underset{1_\mathcal{F} \downarrow f^*}\bilim  \;\mathcal{E}/E}
	\arrow[""{name=0, anchor=center, inner sep=0}, "f", from=1-1, to=1-3]
	\arrow["{l_f}"', from=1-1, to=2-2]
	\arrow["{\pi_f}"', from=2-2, to=1-3]
	\arrow["\simeq"{description}, draw=none, from=0, to=2-2]
\end{tikzcd}\]
together at each element $ (E,a)$ an invertible 2-cell, provided by the inversal property of the bilimit, factorizing the name of the element $a$ through the composite of $l_f$ and the pro-etale part:
\[\begin{tikzcd}
	{\mathcal{F}} && {\underset{1_\mathcal{F} \downarrow f^*}\bilim  \;\mathcal{E}/E} \\
	& {\mathcal{E}/E}
	\arrow[""{name=0, anchor=center, inner sep=0}, "{l_f}", from=1-1, to=1-3]
	\arrow["a"', from=1-1, to=2-2]
	\arrow["{p_{(E,a)}}", from=1-3, to=2-2]
	\arrow["\simeq"{description}, draw=none, from=0, to=2-2]
\end{tikzcd}\]
 


Now we must prove $ l_f$ to be terminally connected. Let us first give a word about its action at the level of the pseudocolimit site of presentation of the bilimit. Recall that the pseudocolimit $ \pscolim_{1_\mathcal{F} \downarrow f^*} \mathcal{E}/E$, as a filtered pseudocolimit of lex-categories, is computed in $\Cat$ as the localization of the oplax colimit at cartesian morphisms: in other words, its objects are equivalence classes of triples $ (E,a, h)$ with $(E,a)$ a global element of $f^*$ and $h : D \rightarrow E$ an object of $\mathcal{E}/E$. Moreover, the restriction of $l_f^*$ to this site can be computed as applying the corresponding $a^*$ to an object of the site presented by a triple $(E,a,h)$which returns a pullback
\[\begin{tikzcd}
	{l_f^*\hirayo_{[(E,a,h)]_\sim}} & {f^*D} \\
	{1_\mathcal{F}} & {f^*{E}}
	\arrow[from=1-1, to=1-2]
	\arrow["{a^*h}"', from=1-1, to=2-1]
	\arrow["\lrcorner"{anchor=center, pos=0.125}, draw=none, from=1-1, to=2-2]
	\arrow["{f^*h}", from=1-2, to=2-2]
	\arrow["a"', from=2-1, to=2-2]
\end{tikzcd}\]
which is, as explained at \cref{etale morphisms and global elements}, the pullback of $a$ and $f^*h$ over $f^*E$. Similarly, for a morphism in the oplax colimit $ (u,h) : (E_1,a_1,h_1) \rightarrow (E_2,a_2,h_2)$ corresponding to a square \[\begin{tikzcd}
	{D_1} & {D_2} \\
	{E_1} & {E_2}
	\arrow["{h_2}", from=1-2, to=2-2]
	\arrow["{h_1}"', from=1-1, to=2-1]
	\arrow["u"', from=2-1, to=2-2]
	\arrow[from=1-1, to=1-2]
\end{tikzcd}\] 
the value of $ l_f^* \hirayo_{[(u,h)]_\sim}$ is given by morphism induced from the universal property of the pullback. This expression is well defined in the sense that it does not depend on the choice of the representant. Indeed, two triples $(E_1,a_1,h_1)$ and $(E_2,a_2,h_2)$ are identified in the pseudocolimit if and only if they are related by a cartesian morphism $ (u, \id_{u^*h_2}) : (E_1,a_1,h_1) \rightarrow (E_2,a_2,h_2)$, that is, if one has a pullback
\[\begin{tikzcd}
	{D_1} & {D_2} \\
	{E_1} & {E_2}
	\arrow["{h_2}", from=1-2, to=2-2]
	\arrow["{h_1}"', from=1-1, to=2-1]
	\arrow["u"', from=2-1, to=2-2]
	\arrow[from=1-1, to=1-2]
	\arrow["\lrcorner"{anchor=center, pos=0.125}, draw=none, from=1-1, to=2-2]
\end{tikzcd}\]
where $u$ moreover provides a morphism of elements $ (E_1,a_1) \rightarrow (E_2,a_2)$. This ensures, by cancellation of pullbacks, that in the following diagram the left face of the prism is also a pullback, ensuring in turn that $ l_f^*\hirayo_{[(E_1,a_1,h_1)]_\sim} \simeq l_f^*\hirayo_{[(E_2,a_2,h_2)]_\sim}$
\[\begin{tikzcd}
	&& {f^*D_1} \\
	{a_2^*f^*D_2} && {} & {f^*D_2} \\
	&& {f^*E_1} \\
	{1_\mathcal{F}} &&& {f^*E_2}
	\arrow[""{name=0, anchor=center, inner sep=0}, "{f^*h_2}", from=2-4, to=4-4]
	\arrow["{f^*u}"{description}, from=3-3, to=4-4]
	\arrow[from=1-3, to=2-4]
	\arrow[from=2-1, to=1-3]
	\arrow["{a^*_2f^*h_2}"{description}, from=2-1, to=4-1]
	\arrow["{a_1}"{description}, from=4-1, to=3-3]
	\arrow["{a_2}"{description}, from=4-1, to=4-4]
	\arrow["\lrcorner"{anchor=center, pos=0.125}, draw=none, from=2-1, to=4-4]
	\arrow[from=2-1, to=2-4]
	\arrow["{f^*h_1}"', shorten <=-2pt, from=2-3, to=3-3]
	\arrow[no head, shorten >=2pt, from=1-3, to=2-3]
	\arrow["\lrcorner"{anchor=center, pos=0.125}, draw=none, from=1-3, to=0]
\end{tikzcd}\]

Before deducing from those observations that $ l_f^*$ is terminally connected, that is, lifts uniquely global elements, we should first give a word on what global elements in the pseudocolimit are -- we shall see that those coming from the pseudocolimit shall be enough to lift global element of arbitrary object in the bilimit topos. The terminal object of $\oplaxcolim_{1_\mathcal{F} \downarrow f^* \mathcal{E}/E}\mathcal{E}/E$ is the pair $(1_\mathcal{E}, \id_{1_\mathcal{F}})$ and the terminal map of any $(E,a,h)$ has as underlying morphism of elements the following triangle together with the terminal square as second component
\[\begin{tikzcd}
	& {f^*(E)} \\
	{1_\mathcal{F}} && {f^*(1_\mathcal{E})}
	\arrow["{\id_{1_\mathcal{F}}}"', Rightarrow, no head, from=2-1, to=2-3]
	\arrow["a", from=2-1, to=1-2]
	\arrow["{f^*(!_E)}", from=1-2, to=2-3]
\end{tikzcd} \hskip1cm
\begin{tikzcd}
	D & {1_\mathcal{E}} \\
	E & {1_\mathcal{E}}
	\arrow["{!_E}"', from=2-1, to=2-2]
	\arrow[Rightarrow, no head, from=1-2, to=2-2]
	\arrow["h"', from=1-1, to=2-1]
	\arrow["{!_D}", from=1-1, to=1-2]
\end{tikzcd}\]

Now, recall that cartesian morphisms in the oplax colimit enjoy a right calculus of fraction, so that a morphism $1_{\pscolim_{1_\mathcal{F} \downarrow f^*} \mathcal{E}/E} \rightarrow [(E,a,h)]_\sim $ in the localization $ \pscolim_{1_\mathcal{F} \downarrow f^*} \mathcal{E}/E$ can be presented as a span 
\[\begin{tikzcd}
	{(D,b, \id_D)} && {(E,a,h)} \\
	{(1_\mathcal{E},\id_{1_\mathcal{F}}, \id_{1_\mathcal{E}})}
	\arrow["{(u,h)}", from=1-1, to=1-3]
	\arrow["{(!_{D}, !_D)}"', from=1-1, to=2-1]
\end{tikzcd}\]
where the vertical leg is cartesian, which expresses the fact that any object of the form $ (D,b, \id_D)$ provides a representing object for the terminal object as pullbacks send identity maps on identity maps. In other words, any arrow $(u,h)$ as above shall provide a global element of $[(E,a,h)]_\sim$ in the pseudocolimit, whatever $ (D,b)$ was chosen as.

We can prove now $ l_f$ to be terminally connected. The inverse image part of $l_f : \mathcal{F} \rightarrow \bilim_{1_\mathcal{F} \downarrow f^*} \mathcal{E}/E$ is induced from the pullback functor $ l_f^* : \pscolim_{1_\mathcal{F}\downarrow f^*} \mathcal{E}/E \rightarrow \mathcal{F}$ through the left Kan extension. Also, as $l_f^*$ preserves colimit and any object in $ \Sh(\pscolim_{1_\mathcal{F}\downarrow f^*} \mathcal{E}/E, \bigvee_{1_\mathcal{F}\downarrow f^*} q_{(E,a, h)} (J_E))$ is a colimit of objects from the presentation site, the action of $l_f^*$ on arbitrary objects can be computed as a colimit of inverse images of basic objects. Concretely, if $ X \simeq \colim_{I_X} \hirayo_{[(E_i, a_i, h_i)]_\sim}$ in $ \bilim_{1_\mathcal{F} \downarrow f^*} \mathcal{E}/E $, then as $f^*$ preserves colimits, we have 
\[ l_f^*X \simeq \underset{I_X}\colim \; l_f^*\hirayo_{[(E_i, a_i, h_i)]_\sim}  \]
But each $ l_f^*\hirayo_{[(E_i, a_i, h_i)]_\sim}$ is computed as the pullback  
\[\begin{tikzcd}
	{l_f^*\hirayo_{[(E_i, a_i, h_i)]_\sim}} & {f^*D_i} \\
	{1_\mathcal{F}} & {f^*E_i}
	\arrow[from=1-1, to=2-1]
	\arrow[from=1-1, to=1-2]
	\arrow["{f^*h_i}", from=1-2, to=2-2]
	\arrow["{a_i}"', from=2-1, to=2-2]
	\arrow["\lrcorner"{anchor=center, pos=0.125}, draw=none, from=1-1, to=2-2]
\end{tikzcd}\]
As colimits are pullback stable in Grothendieck topoi, the following square is a pullback
\[\begin{tikzcd}
	{\underset{I_X}\colim \; l_f^*\hirayo_{[(E_i, a_i, h_i)]_\sim}} & {\underset{I_X}\colim \; f^*D_i } \\
	{\underset{I_X}\colim \; 1_\mathcal{F}} & {\underset{I_X}\colim \; f^*E_i}
	\arrow["{\underset{I_X}\colim \; a_i^*f^*h_i}"', from=1-1, to=2-1]
	\arrow[from=1-1, to=1-2]
	\arrow["{\underset{I_X}\colim \; f^*h_i}", from=1-2, to=2-2]
	\arrow["{\underset{I_X}\colim \; a_i}"', from=2-1, to=2-2]
	\arrow["\lrcorner"{anchor=center, pos=0.125}, draw=none, from=1-1, to=2-2]
\end{tikzcd}\]
As $f^*$ preserves colimits, this pullback expresses nothing but the fact that $l_f^*$ behaves on objects as it does on basic elements, that is, by taking fibers:
\[\begin{tikzcd}
	{l_f^*X} & {f^*\underset{I_X}\colim \; D_i} \\
	{\underset{I_X}\colim \; 1_\mathcal{F}} & {f^*\underset{I_X}\colim \; E_i}
	\arrow["{\pi_1}"', from=1-1, to=2-1]
	\arrow["\pi_2", from=1-1, to=1-2]
	\arrow["{f^*\underset{I_X}\colim \; h_i}", from=1-2, to=2-2]
	\arrow["{\underset{I_X}\colim \; a_i}"', from=2-1, to=2-2]
	\arrow["\lrcorner"{anchor=center, pos=0.125}, draw=none, from=1-1, to=2-2]
\end{tikzcd}\]

But then, by the universal property of the pullback, a global element $b : 1_\mathcal{F} \rightarrow l_f^*X$ of $l_f^*$ induces a morphism between global elements of $f^*$: 
\[\begin{tikzcd}
	&& {f^*\underset{I_X}\colim \; D_i } \\
	{1_\mathcal{F}} && {f^*\underset{I_X}\colim \; E_i}
	\arrow["{f^*\underset{I_X}\colim \; h_i}", from=1-3, to=2-3]
	\arrow["{\pi_2b}", from=2-1, to=1-3]
	\arrow["{(\underset{I_X}\colim \; a_i)\pi_1b}"', from=2-1, to=2-3]
\end{tikzcd}\]

which defines itself a morphism in the oplax colimit  
\[\begin{tikzcd}[sep=large]
	{(\underset{I_X}\colim \; D_i, \pi_2b, \id_{\underset{I_X}\colim \; D_i})} && {(\underset{I_X}\colim \;E_i, (\underset{I_X}\colim\; a_i)\pi_1b, \underset{I_X}\colim \; h_i)}
	\arrow["{(\underset{I_X}\colim\; h_i, \id_{\underset{I_X}\colim \; D_i})}", from=1-1, to=1-3]
\end{tikzcd}\]
The codomain of this morphism is conveniently sent to $ 1_\mathcal{F}$ by $l_f^*$ as seen in the diagram
\[\begin{tikzcd}
	{1_\mathcal{F}} & {f^*\underset{I_X}\colim \; D_i} \\
	{1_\mathcal{F}} & {f^*\underset{I_X}\colim \; D_i}
	\arrow["{\pi_2b}", from=2-1, to=2-2]
	\arrow[Rightarrow, no head, from=1-2, to=2-2]
	\arrow[Rightarrow, no head, from=1-1, to=2-1]
	\arrow["{\pi_2b}", from=1-1, to=1-2]
	\arrow["\lrcorner"{anchor=center, pos=0.125}, draw=none, from=1-1, to=2-2]
\end{tikzcd}\]
while $b$ is recovered as the morphism induced between the image as depicted below
\[\begin{tikzcd}[row sep=large]
	{1_\mathcal{F}} & {l_f^*X} \\
	{1_\mathcal{F}} & {f^*\underset{I_X}\colim \; D_i} & {f^*\underset{I_X}\colim \; D_i} \\
	&& {f^*\underset{I_X}\colim \;E_i}
	\arrow["{\pi_2b}", from=2-1, to=2-2]
	\arrow[Rightarrow, no head, from=1-1, to=2-1]
	\arrow["{f^*\underset{I_X}\colim\; h_i}"{description}, from=2-2, to=3-3]
	\arrow["{f^*\underset{I_X}\colim \; h_i}", from=2-3, to=3-3]
	\arrow["{(\underset{I_X}\colim\; a_i)\pi_1b}"'{pos=0.3}, curve={height=18pt}, from=2-1, to=3-3]
	\arrow[from=1-2, to=2-2]
	\arrow[from=1-2, to=2-3]
	\arrow["\lrcorner"{anchor=center, pos=0.125}, draw=none, from=1-2, to=3-3]
	\arrow["b", from=1-1, to=1-2]
\end{tikzcd}\]
In other words, $l_f^*$ lifts uniquely global elements of arbitrary objects, and $l_f$ is hence terminally connected. As this was proven without specification of a site of presentation for $ \mathcal{E}$, the factorization is ensured not to depend on a specific presentation.
\end{proof}

\subsection{Laxness conditions}\label{laxness conditions}

In general, for a given bifactorization system, the middle terms of the factorizations of the domain and codomain 1-cells of a non-invertible globular 2-cell may not be canonically related in any way: some factorization systems indeed lack such ``lax functoriality". However, it happens that the (terminally-connected, pro-etale) factorization system does enjoy such a structure.\\ 

Let be a globular 2-cell in $\GTop$ as below
\[\begin{tikzcd}
	{\mathcal{F}} && {\mathcal{E}}
	\arrow[""{name=0, anchor=center, inner sep=0}, "f", curve={height=-12pt}, from=1-1, to=1-3]
	\arrow[""{name=1, anchor=center, inner sep=0}, "g"', curve={height=12pt}, from=1-1, to=1-3]
	\arrow["\phi", shorten <=3pt, shorten >=3pt, Rightarrow, from=0, to=1]
\end{tikzcd}\]
It corresponds to a natural transformation $ \phi^\flat : f^* \Rightarrow g^*$. At the level of global elements, this natural transformation induces a functor 
\[\begin{tikzcd}
	{1_\mathcal{F} \downarrow f^*} & {1_\mathcal{F}\downarrow g^*}
	\arrow["{\phi^*}", from=1-1, to=1-2]
\end{tikzcd}\]
sending a global element $ a : 1 \rightarrow f^*E$ to the composite 
\[\begin{tikzcd}
	{1_\mathcal{F}} & {f^*E} \\
	& {g^*E}
	\arrow["a", from=1-1, to=1-2]
	\arrow["{\phi^\flat_E}", from=1-2, to=2-2]
	\arrow["{\phi^\flat_Ea}"', from=1-1, to=2-2]
\end{tikzcd}\]
This induces a 2-cell between the names of the corresponding elements
\[\begin{tikzcd}
	{\mathcal{F}} && {\mathcal{E}/E}
	\arrow[""{name=0, anchor=center, inner sep=0}, "a", bend left=20, start anchor=40, from=1-1, to=1-3]
	\arrow[""{name=1, anchor=center, inner sep=0}, "{\phi^*a}"', bend right=20, start anchor=-40, from=1-1, to=1-3]
	\arrow["{\phi^a}", shorten <=3pt, shorten >=3pt, Rightarrow, from=0, to=1]
\end{tikzcd}\]
whose inverse image component at an object $ h : D \rightarrow E$ in $\mathcal{E}/E$ is given by is the comparison map between the corresponding fibers 
\[\begin{tikzcd}
	{(\phi^\flat_Ea)^*g^*D} \\
	& {a^*f^*D} & {f^*D} & {g^*D} \\
	& {1_\mathcal{F}} & {f^*E} & {g^*E}
	\arrow["a"', from=3-2, to=3-3]
	\arrow["{\phi^\flat_E}"', from=3-3, to=3-4]
	\arrow["{g^*h}", from=2-4, to=3-4]
	\arrow["{f^*h}"{description}, from=2-3, to=3-3]
	\arrow["{\phi^\flat_D}", from=2-3, to=2-4]
	\arrow["{a^*f^*h}"{description}, from=2-2, to=3-2]
	\arrow[from=2-2, to=2-3]
	\arrow["\lrcorner"{anchor=center, pos=0.125}, draw=none, from=2-2, to=3-3]
	\arrow["{(\phi^\flat_Ea)^*g^*h}"', curve={height=12pt}, from=1-1, to=3-2]
	\arrow[curve={height=-18pt}, from=1-1, to=2-4]
	\arrow[dashed, from=2-2, to=1-1]
\end{tikzcd}\]
This 2-cell actually provides a decomposition of $\phi$ as the following pasting 
\[\begin{tikzcd}
	{\mathcal{F}} && {\mathcal{E}}
	\arrow[""{name=0, anchor=center, inner sep=0}, "f", curve={height=-12pt}, from=1-1, to=1-3]
	\arrow[""{name=1, anchor=center, inner sep=0}, "g"', curve={height=12pt}, from=1-1, to=1-3]
	\arrow["\phi", shorten <=3pt, shorten >=3pt, Rightarrow, from=0, to=1]
\end{tikzcd} =\begin{tikzcd}
	{\mathcal{F}} && {\mathcal{E}/E} & {\mathcal{E}}
	\arrow[""{name=0, anchor=center, inner sep=0}, "a"{description}, curve={height=-12pt}, from=1-1, to=1-3]
	\arrow[""{name=1, anchor=center, inner sep=0}, "{\phi^*a}"{description}, curve={height=12pt}, from=1-1, to=1-3]
	\arrow[from=1-3, to=1-4]
	\arrow[""{name=2, anchor=center, inner sep=0}, "f", curve={height=-24pt}, from=1-1, to=1-4]
	\arrow[""{name=3, anchor=center, inner sep=0}, "g"', curve={height=24pt}, from=1-1, to=1-4]
	\arrow["{\phi^a}", shorten <=3pt, shorten >=3pt, Rightarrow, from=0, to=1]
	\arrow["\simeq"{description}, draw=none, from=2, to=1-3]
	\arrow["\simeq"{description}, draw=none, from=3, to=1-3]
\end{tikzcd}\]

Let us see how the pseudolimit formula for the comprehensive factorization interacts with those data. On the pro-etale side, the functor $ \phi^*$ defines a transformation of pseudocones
\[\begin{tikzcd}
	{1_\mathcal{F} \downarrow f^*} && {1_\mathcal{F}\downarrow g^*} \\
	& {\GTop/\mathcal{E}}
	\arrow[from=1-1, to=2-2]
	\arrow[from=1-3, to=2-2]
	\arrow[""{name=0, anchor=center, inner sep=0}, "{\phi^*}", from=1-1, to=1-3]
	\arrow["\simeq"{description}, draw=none, from=0, to=2-2]
\end{tikzcd}\]
inducing a geometric morphism between the pseudolimits 
\[\begin{tikzcd}
	{\underset{(E,a) \in 1_\mathcal{F} \downarrow f^*}\bilim \; \mathcal{E}/E} && {\underset{(E,b) \in 1_\mathcal{F} \downarrow g^*}\bilim \; \mathcal{E}/E} \\
	& {\mathcal{E}}
	\arrow["{\pi_{g}}", from=1-3, to=2-2]
	\arrow["{\pi_f}"', from=1-1, to=2-2]
	\arrow[""{name=0, anchor=center, inner sep=0}, "\widetilde\phi"', from=1-3, to=1-1]
	\arrow["\simeq"{description}, draw=none, from=0, to=2-2]
\end{tikzcd}\]
where the morphism $ \widetilde{\phi} $ is induced from the morphism between the (large) pseudocolimits of sites
\[\begin{tikzcd}
	{\underset{(E,a) \in 1_\mathcal{F} \downarrow f^*}\pscolim \; \mathcal{E}/E} && {\underset{(E,b) \in 1_\mathcal{F} \downarrow g^*}\pscolim \; \mathcal{E}/E}
	\arrow["\overline\phi", from=1-1, to=1-3]
\end{tikzcd}\]
sending an object represented by a triple $((E,a), h) $ to the object represented by the triple $ (E, \phi^\flat_Ea), h)$. \\

On the terminally connected side, the data of the $ (\phi^a)_{a \in 1_\mathcal{F} \downarrow f^*}$, as defined above from the pullback property, induce together with $\overline{\phi}$ a canonical 2-cell
\[\begin{tikzcd}
	& {\mathcal{F}} \\
	{\underset{(E,a) \in 1_\mathcal{F} \downarrow f^*}\pscolim \; \mathcal{E}/E} && {\underset{(E,b) \in 1_\mathcal{F} \downarrow g^*}\pscolim \; \mathcal{E}/E}
	\arrow["\overline\phi"', from=2-1, to=2-3]
	\arrow[""{name=0, anchor=center, inner sep=0}, "{\langle b^*\rangle_{(E,b) \in 1_\mathcal{F} \downarrow g^* } }"', from=2-3, to=1-2]
	\arrow[""{name=1, anchor=center, inner sep=0}, "{\langle a^*\rangle_{(E,a) \in 1_\mathcal{F} \downarrow f^* } }", from=2-1, to=1-2]
	\arrow["{(\phi^a)_{(E,b) \in 1_\mathcal{F} \downarrow g^* } }", shift right=4, shorten <=8pt, shorten >=8pt, Rightarrow, from=1, to=0]
\end{tikzcd}\]
whose associated geometric transformation can be notated $\lambda_\phi$. Combining those data provides us with a canonical decomposition of the globular 2-cell $ \phi$:
\[\begin{tikzcd}
	& {\underset{(E,a) \in 1_\mathcal{F} \downarrow f^*}\bilim \; \mathcal{E}/E} \\
	{\mathcal{F}} && {\mathcal{E}} \\
	& {\underset{(E,b) \in 1_\mathcal{F} \downarrow g^*}\bilim \; \mathcal{E}/E}
	\arrow[""{name=0, anchor=center, inner sep=0}, "\widetilde\phi"{description}, from=3-2, to=1-2]
	\arrow[""{name=1, anchor=center, inner sep=0}, "{t_g}"', end anchor=162, from=2-1, to=3-2]
	\arrow[""{name=2, anchor=center, inner sep=0}, "{t_f}", from=2-1, to=1-2]
	\arrow["{\pi_g}"', start anchor=20, from=3-2, to=2-3]
	\arrow["{\pi_f}", from=1-2, to=2-3]
	\arrow["{\lambda_\phi}", shift left=2, shorten <=4pt, shorten >=4pt, Rightarrow, from=2, to=1]
	\arrow["\simeq"{description}, draw=none, from=0, to=2-3]
\end{tikzcd}\]

Hence, we can say that the (terminally connected, pro-etale) factorization system enjoys \emph{oplax factorization of globular cells}, a notion which shall be developed in a future work. As a consequence, we can deduce a stronger orthogonality property of the terminally connected morphismes left to pro-etale: 

\begin{proposition}
    For any pair $ t$, $p$ with $ t$ terminally connected and $p$ pro-etale, any oplax-square as below
\[\begin{tikzcd}
	{\mathcal{G}} & {\mathcal{H}} \\
	{\mathcal{F}} & {\mathcal{E}}
	\arrow[""{name=0, anchor=center, inner sep=0}, "t"', from=1-1, to=2-1]
	\arrow["g", from=1-1, to=1-2]
	\arrow[""{name=1, pos=0.525, inner sep=0}, "p", from=1-2, to=2-2]
	\arrow["f"', from=2-1, to=2-2]
	\arrow["\phi"', shorten <=6pt, shorten >=6pt, Rightarrow, from=1, to=0]
\end{tikzcd}\]
admits a canonical decomposition of the following form
\[\begin{tikzcd}[sep=large]
	{\mathcal{G}} & {\mathcal{H}} \\
	{\mathcal{F}} & {\mathcal{E}}
	\arrow[""{name=0, pos=0.51, inner sep=0}, "t"', from=1-1, to=2-1]
	\arrow["g", from=1-1, to=1-2]
	\arrow[""{name=1, pos=0.525, inner sep=0}, "p", from=1-2, to=2-2]
	\arrow["f"', from=2-1, to=2-2]
	\arrow[""{name=2, anchor=center, inner sep=0}, "{h_\phi}"{description}, from=2-1, to=1-2]
	\arrow["{\lambda_\phi}"', shift right=2, shorten <=3pt, Rightarrow, from=2, to=0]
	\arrow["{\rho_\phi \atop \simeq}", shift left=2, shorten <=3pt, shorten >=3pt, draw=none, from=1, to=2]
\end{tikzcd}\]
with an invertible 2-cell on the right.
\end{proposition}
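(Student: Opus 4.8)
The plan is to build the diagonal $h_\phi : \mathcal{F} \to \mathcal{H}$ by hand, exploiting the two ends of the square: since $p$ is pro-etale, \cref{canonical presentation of proetale} lets me present it as $\mathcal{H} \simeq \bilim_{(E,b) \in 1_\mathcal{H} \downarrow p^*} \mathcal{E}/E$ with $p \simeq \bilim \pi_E$, so a map into $\mathcal{H}$ over $\mathcal{E}$ is the same as a pseudocone of factorizations; and since $t$ is terminally connected, $t^*$ lifts global elements uniquely. Reading $\phi$ as the globular cell $\phi : pg \Rightarrow ft$ with inverse image component $\phi^\flat : g^* p^* \Rightarrow t^* f^*$, the job is to manufacture the pseudocone from $\phi$, $g$ and the lifting property of $t$.

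First, for each object $(E,b)$ of $1_\mathcal{H} \downarrow p^*$, i.e. each global element $b : 1_\mathcal{H} \to p^* E$, I would form the global element $\phi^\flat_E \circ g^*(b) : 1_\mathcal{G} \to t^* f^* E$ (using $g^* 1_\mathcal{H} \simeq 1_\mathcal{G}$) and then use terminal connectedness of $t$ to produce the unique lift $\bar b : 1_\mathcal{F} \to f^* E$ with $t^*(\bar b) = \phi^\flat_E g^*(b)$. By \cref{Equivalence over E between etale fact of f and element} each $\bar b$ names a factorization $\ulcorner \bar b \urcorner : \mathcal{F} \to \mathcal{E}/E$ of $f$ through $\pi_E$. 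To see these assemble into a pseudocone over $\mathcal{E}$, note that the homcategories $\GTop/\mathcal{E}[\mathcal{F},\mathcal{E}/E]\simeq \mathcal{F}[1_\mathcal{F},f^*E]$ are discrete (\cref{proetale are discrete}), so a pseudocone collapses to a compatible family of global elements; thus it suffices to check that a morphism $u : (E,b)\to (E',b')$ (an arrow $u : E\to E'$ with $p^*(u)b=b'$) gives $f^*(u)\bar b = \bar b'$. Applying $t^*$ and using naturality of $\phi^\flat$ at $u$, $t^*(f^*(u)\bar b) = t^*f^*(u)\cdot \phi^\flat_E g^*b = \phi^\flat_{E'}\,g^*p^*(u)\cdot g^*b = \phi^\flat_{E'}g^*b' = t^*(\bar b')$, whence $f^*(u)\bar b = \bar b'$ by uniqueness of lifts. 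Equivalently $(E,b)\mapsto (E,\bar b)$ is a functor $1_\mathcal{H}\downarrow p^* \to 1_\mathcal{F}\downarrow f^*$ over $\mathcal{E}$; the family of names is a pseudocone with apex $\mathcal{F}$, inducing $h_\phi$ together with the invertible $\rho_\phi : p h_\phi \simeq f$.

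Next I would construct $\lambda_\phi : g \Rightarrow h_\phi t$. By the bilimit universal property $\GTop[\mathcal{G},\mathcal{H}] \simeq \bilim_{(E,b)}\GTop[\mathcal{G},\mathcal{E}/E]$, it is enough to give a compatible family of 2-cells $p_{(E,b)}g \Rightarrow p_{(E,b)}h_\phi t$. Here $p_{(E,b)}g \simeq \ulcorner g^*b\urcorner$, while $p_{(E,b)}h_\phi t \simeq \ulcorner \bar b\urcorner\, t \simeq \ulcorner t^*\bar b\urcorner = \ulcorner \phi^\flat_E g^*b\urcorner$, so the required component is precisely the comparison $\phi^{g^*b}$ between the names of $g^*b$ and $\phi^\flat_E(g^*b)$ built from the pullback property in \cref{laxness conditions}. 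Naturality of these comparisons in $(E,b)$ (again from naturality of $\phi^\flat$ together with functoriality of the lift) glues them into $\lambda_\phi$. The decomposition is then closed by checking that the pasting of $p\lambda_\phi : pg \Rightarrow p h_\phi t$ followed by $\rho_\phi t : p h_\phi t \simeq ft$ equals $\phi$; since 2-cells into a bilimit are detected on the projections $p_{(E,b)}$, this reduces to the identity already recorded in \cref{laxness conditions} expressing $\phi$ as the pasting of its comparison cells $\phi^a$ with the defining equivalences. Canonicity of the whole decomposition is automatic, as each $\bar b$, hence $h_\phi$, $\rho_\phi$ and $\lambda_\phi$, is uniquely determined.

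The main obstacle is the coherence bookkeeping in the two assembly steps: showing that the lifted elements $\{\bar b\}$ and the comparison cells $\{\phi^{g^*b}\}$ are genuinely compatible with all the transition data of the canonical bilimit, so that they define an actual morphism $h_\phi$ and 2-cell $\lambda_\phi$ rather than a mere componentwise gadget. Two features keep this tractable and are what I would lean on: the discreteness of the homcategories over $\mathcal{E}$ (\cref{proetale are discrete}), which turns every pseudocone into an ordinary cone and kills higher coherences; and the genuine functoriality of the terminally-connected lift, i.e. the natural isomorphism $\mathcal{F}[1_\mathcal{F}, f^*(-)] \simeq \mathcal{G}[1_\mathcal{G}, t^* f^*(-)]$, which together with the naturality of $\phi^\flat$ reduces each compatibility condition to a routine diagram chase.
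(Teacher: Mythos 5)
Your proof is correct in substance, but it takes a genuinely different route from the paper's. The paper's proof is a two-step formal reduction: it reads $\phi$ as a globular cell $pg \Rightarrow ft$, observes that the (terminally connected, pro-etale) factorizations of the composites are already at hand ($ft \simeq \pi_f \circ (t_f t)$ since terminally connected morphisms compose, and $pg \simeq (p\,\pi_g) \circ t_g$ since pro-etale morphisms compose), and then simply invokes the oplax factorization of globular cells established in \cref{laxness conditions}; the diagonal is $h_\phi = \pi_g\,\widetilde{\phi}\,t_f$ and the whole decomposition is obtained by pasting. You instead build the lax diagonal by hand, in the style of the classical proof of lax orthogonality for the comprehensive factorization in $\Cat$: you use the canonical presentation of $p$ from \cref{canonical presentation of proetale}, lift each global element $b : 1_\mathcal{H} \rightarrow p^*E$ through $t$ via $\phi^\flat_E g^*b$, verify compatibility by naturality of $\phi^\flat$ plus uniqueness of lifts, and assemble $h_\phi$, $\rho_\phi$ and $\lambda_\phi$ from the universal property of the bilimit and the comparison cells $\phi^a$ of \cref{laxness conditions}. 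Your element-level computation $t^*(f^*(u)\bar b) = \phi^\flat_{E'}g^*b' = t^*(\bar b')$ is exactly right, and your use of discreteness of the slice homcategories to collapse the pseudocone coherences is sound. What each approach buys: the paper's argument is shorter and makes clear that the proposition is a formal consequence of the oplax functoriality of the factorization (at the price of leaving implicit the identification of the factorizations of the composites); yours yields an explicit description of $h_\phi$ (on elements, $b \mapsto \bar b$) and exhibits directly that unique lifting of global elements and generation by global elements are precisely the properties doing the work. One phrasing to tighten: in the final verification you say ``2-cells into a bilimit are detected on the projections,'' but the pasting $(\rho_\phi * t)\circ(p * \lambda_\phi)$ is a 2-cell $pg \Rightarrow ft$ between morphisms into $\mathcal{E}$, not into $\mathcal{H}$; the correct justification is that the bilimit hom-equivalence is fully faithful on 2-cells, and since $\pi_E\, p_{(E,b)} \simeq p$ over $\mathcal{E}$, whiskering $\lambda_\phi$ with $p$ agrees (up to the coherence isomorphisms) with whiskering its component $\phi^{g^*b}$ with $\pi_E$, which recovers $\phi$ by the single-element decomposition identity of \cref{laxness conditions}. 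This is a repair of wording, not of mathematics.
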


\begin{proof}
    Take the oplax factorization of $\phi$ seen as a globular cell $ pg \Rightarrow ft$; as precomposing with a terminally connected geometric morphism (respectively, post-composing with a pro-etale geometric morphism) does not modify the factorization, we get a decomposition of $\phi$ as the following pasting 
\[\begin{tikzcd}[sep=small]
	& {\underset{1_\mathcal{G} \downarrow g^*}\bilim \, \mathcal{H}/H} & {\mathcal{H}} \\
	{\mathcal{G}} &&& {\mathcal{E}} \\
	& {\mathcal{F}} & {\underset{1_\mathcal{F} \downarrow f^*}\bilim \, \mathcal{E}/E}
	\arrow["t"', from=2-1, to=3-2]
	\arrow["p", from=1-3, to=2-4]
	\arrow["{t_f}"', from=3-2, to=3-3]
	\arrow["{t_g}", from=2-1, to=1-2]
	\arrow["{\pi_g}", from=1-2, to=1-3]
	\arrow["{\pi_f}"', from=3-3, to=2-4]
	\arrow[""{name=0, anchor=center, inner sep=0}, "{\widetilde{\phi}}"{description}, from=3-3, to=1-2]
	\arrow["\simeq"{description}, draw=none, from=1-3, to=3-3]
	\arrow["{\lambda_\phi}"', shorten <=19pt, shorten >=19pt, Rightarrow, from=0, to=2-1]
\end{tikzcd}\]
\end{proof}

\begin{remark}
It is not a coincidence that the other main example of 2-dimensional factorization system enjoying such a lax orthogonality property is the comprehensive factorization system on $\Cat$. We are discussing below some conjectures about this similarity. Such laxness property also often comes with additional stability condition along bicomma and bicocomma squares:
\end{remark}

\begin{proposition}\label{terco are stable along bicocomma}
Terminally connected geometric morphisms are stable left to bicocomma: that is, in a bicocomma square as below in $\GTop$
\[\begin{tikzcd}
	{\mathcal{F}} & {\mathcal{G}} \\
	{\mathcal{E}} & {f \uparrow t}
	\arrow[""{name=0, anchor=center, inner sep=0}, "t"', from=1-1, to=2-1]
	\arrow["f", from=1-1, to=1-2]
	\arrow[""{name=1, anchor=center, inner sep=0}, "{p_1}", from=1-2, to=2-2]
	\arrow["{p_2}"', from=2-1, to=2-2]
	\arrow["{\phi_{f,t}}"', shorten <=6pt, shorten >=6pt, Rightarrow, from=1, to=0]
\end{tikzcd}\]
if $ t$ is terminally connected, then so is $p_1$. 
\end{proposition}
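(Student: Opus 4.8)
The plan is to make the bicocomma topos completely explicit and then reduce terminal connectedness of $p_1$ to that of $t$ by a one-line lifting argument. First I would identify the underlying category of $f \uparrow t$. Since $\GTop$ is identified with the $2$-category of topoi and their inverse image functors, with $1$-cells reversed but $2$-cells preserved (this is the paper's convention $\phi \mapsto \phi^\flat : f^* \Rightarrow g^*$), the bicocomma, being a lax colimit of the span $\mathcal{E} \xleftarrow{t} \mathcal{F} \xrightarrow{f} \mathcal{G}$, corresponds to the comma object (lax limit) of the cospan of inverse images $\mathcal{G} \xrightarrow{f^*} \mathcal{F} \xleftarrow{t^*} \mathcal{E}$. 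Concretely, $f \uparrow t$ has as underlying category the comma $f^* \downarrow t^*$, whose objects are triples $(E,G,\gamma)$ with $\gamma : f^*G \to t^*E$ in $\mathcal{F}$; the inverse image $p_1^*$ is the projection $(E,G,\gamma) \mapsto G$ and $p_2^*$ the projection to $E$. This matches the orientation of $\phi_{f,t} : p_1 f \Rightarrow p_2 t$, whose transpose $f^* p_1^* \Rightarrow t^* p_2^*$ has component precisely $\gamma$. To see that this comma category is a Grothendieck topos I would invoke Artin gluing: transposing $\gamma$ across $f^* \dashv f_*$ exhibits $f^* \downarrow t^*$ as the gluing $\mathrm{Gl}(f_* t^*)$ of the left exact functor $f_* t^* : \mathcal{E} \to \mathcal{G}$, with $p_1$ and $p_2$ the two canonical geometric morphisms of the gluing.

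The main obstacle is exactly this first step: fixing the correct variance of the cocomma and verifying that the projection to $\mathcal{G}$ really is the inverse image of $p_1$. The orientation is not cosmetic, for the opposite comma $t^* \downarrow f^*$ would make $p_1$ \emph{fail} to be terminally connected; so I would sanity-check it on the test case $\mathcal{F} = \mathcal{E} = \mathcal{G} = \Set$ with $t = f = \mathrm{id}$, where $f \uparrow t$ is the Sierpiński topos $\Set^{\rightarrow}$ and $p_1$ comes out terminally connected only for the gluing $\gamma : f^*G \to t^*E$. I would also keep track of the canonical isomorphisms $t^* 1_\mathcal{E} \simeq 1_\mathcal{F} \simeq f^* 1_\mathcal{G}$, so that the terminal object of $f \uparrow t$ is $(1_\mathcal{E}, 1_\mathcal{G}, \mathrm{id}_{1_\mathcal{F}})$, the structure map being the identity of $1_\mathcal{F}$.

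Once the description is in place the argument is immediate. A global element of $(E,G,\gamma)$ is a morphism $(1_\mathcal{E}, 1_\mathcal{G}, \mathrm{id}) \to (E,G,\gamma)$ in the comma category, that is, a pair $(a,b)$ with $a : 1_\mathcal{E} \to E$ and $b : 1_\mathcal{G} \to G$ subject to $t^* a = \gamma \circ f^* b$; the canonical comparison sends it to $p_1^*$ of it, namely to $b$. I would then show that $(a,b) \mapsto b$ is a bijection onto $\mathcal{G}[1_\mathcal{G}, G]$: given $b$, the composite $\gamma \circ f^* b : 1_\mathcal{F} \to t^* E$ is a global element of $t^*E$, and since $t$ is terminally connected there is a unique $a : 1_\mathcal{E} \to E$ with $t^* a = \gamma \circ f^* b$, producing a unique admissible pair over $b$. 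This is exactly the statement that $p_1^*$ lifts global elements uniquely, and the functoriality of the unique lift for $t$ (recorded in the remark following the definition of terminal connectedness) makes the comparison natural in $(E,G,\gamma)$, yielding $\mathcal{G}[1_\mathcal{G}, p_1^*(-)] \simeq (f \uparrow t)[1_{f \uparrow t}, -]$ and hence $p_1$ terminally connected. Finally I would note that this is the expected behaviour of a left class under lax cobase change, in harmony with the oplax orthogonality established just above.
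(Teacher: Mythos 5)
Your proposal is correct and follows essentially the same route as the paper's proof: identify the underlying category of $f \uparrow t$ with the comma $f^* \downarrow t^*$ of inverse images (same orientation, objects $\gamma : f^*G \to t^*E$, with $p_1^*$ the projection to $\mathcal{G}$ and terminal object $(1_{\mathcal{G}},1_{\mathcal{E}},f^*1_{\mathcal{G}} \simeq t^*1_{\mathcal{E}})$), then lift a global element $b : 1_{\mathcal{G}} \to G$ uniquely to a pair by applying terminal connectedness of $t$ to $\gamma \circ f^*b : 1_{\mathcal{F}} \to t^*E$, with naturality from the functoriality of the lift. Your Artin-gluing justification that $f^* \downarrow t^* \simeq \mathrm{Gl}(f_* t^*)$ is a topos, and the Sierpi\'nski sanity check of the variance, are welcome extra care for facts the paper simply recalls without proof.
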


\begin{proof}
    Recall that the bicocomma square $f\uparrow t$ in $\GTop$ has as underlying category the bicomma square $ f^*\uparrow t^* $ in $\Cat$ and is hence equiped with a universal 2-cell
\[\begin{tikzcd}
	{\mathcal{F}} & {\mathcal{G}} \\
	{\mathcal{E}} & {f^* \downarrow t^*}
	\arrow["{f^*}"', from=1-2, to=1-1]
	\arrow[""{name=0, anchor=center, inner sep=0}, "{p_1^*}"', from=2-2, to=1-2]
	\arrow["{p_2^*}", from=2-2, to=2-1]
	\arrow[""{name=1, anchor=center, inner sep=0}, "{t^*}", from=2-1, to=1-1]
	\arrow["{\phi^\flat_{f,t}}"', shorten <=7pt, shorten >=7pt, Rightarrow, from=0, to=1]
\end{tikzcd}\]
where the category $ f^*\downarrow t^*$ has as objects triples $ (G,E,u)$ with $ u : f^*G \rightarrow t^*E $ in $\mathcal{F}$, with the projection $p_1^*(G,E,u) = G$. The terminal object in $f^*\downarrow t^*$ is the triple $(1_\mathcal{G}, 1_{\mathcal{E}}, ! : f^*(1_\mathcal{G}) \simeq t^*(1\mathcal{E}))$; a global element in $ f^* \downarrow t^*$ of some $(G,E,u)$ is the same as a choice of $(a,b)$ inscribed in a triangle
\[\begin{tikzcd}
	& {f^*G} \\
	{1_\mathcal{F}} & {t^*E}
	\arrow["{f^*a}", from=2-1, to=1-2]
	\arrow["u", from=1-2, to=2-2]
	\arrow["{t^*b}"', from=2-1, to=2-2]
\end{tikzcd}\]
Now, a global element $a : 1_\mathcal{G} \rightarrow p^*(G, E, u) $ simply is an element $ a : 1_\mathcal{G} \rightarrow G $, and the composite $ uf^*a a : 1_\mathcal{F} \rightarrow t^*E$ defines a global element of $t^*$, which lifts uniquely to a global element $ b : 1_\mathcal{E} \rightarrow E$ for $ t$ is terminally connected. Hence, our global element for $p_1^*$ lifts uniquely as this $(a, b) $ in $f^*\downarrow t^*$; one can check this choice to be moreover natural in $a$. 
\end{proof}

We are seeing also in section 5 that terminally connected geometric morphisms also enjoy a dual bicomma stability property, but this is not directly related to the lax orthogonality property. 

\section{Properties of pro-etale geometric morphisms}

Pro-etale geometric morphisms have been known for some time, though not much beyond their definition; few information is available on them, and they are still lacking an intrinsic characterization. Here, we would like to provide a first systematic investigation of their properties. 

\subsection{Pro-adjoint aspects}

 We can already guess that pro-etale geometric morphisms, generalizing etaleness out of the locally connected world, shall lack the further left adjoint to the inverse image part of etale geometric morphisms, which expresses the fact that their connected components do not form anymore a set, but rather a prodiscrete space. 

\begin{proposition}
A geometric morphism is etale if and only if it is pro-etale and essential.
\end{proposition}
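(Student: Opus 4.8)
The statement splits into two implications, of which only one is substantial. The plan is to dispatch the easy direction first: if $f$ is etale, then it is essential because every etale geometric morphism $\pi_E : \mathcal{E}/E \to \mathcal{E}$ is locally connected, hence a fortiori essential, the extra left adjoint being postcomposition with the terminal map $!_E : E \to 1_\mathcal{E}$ (as recalled earlier); and $f$ is pro-etale because the terminal category $\mathbf{1}$ is cofiltered, so that an etale morphism is the degenerate cofiltered bilimit of the one-object diagram it constitutes. This requires no work beyond unwinding definitions.

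For the converse, suppose $f : \mathcal{F} \to \mathcal{E}$ is both pro-etale and essential, with $f_! \dashv f^*$. The key idea, already flagged in the introduction, is that essentiality endows the canonical indexing category with an initial object. First I would invoke \cref{canonical presentation of proetale} to replace $f$, up to equivalence over $\mathcal{E}$, by the pro-etale morphism $\bilim_{(E,a) \in 1_\mathcal{F} \downarrow f^*} \pi_E$ indexed by its own category of global elements. Then I would observe that the adjunction $f_! \dashv f^*$ induces an isomorphism of categories $1_\mathcal{F} \downarrow f^* \cong f_!(1_\mathcal{F}) \downarrow \mathcal{E}$, sending a global element $a : 1_\mathcal{F} \to f^*E$ to its transpose $\overline{a} : f_!(1_\mathcal{F}) \to E$ and a morphism of global elements to the evident morphism under $f_!(1_\mathcal{F})$. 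Under this identification the coslice $f_!(1_\mathcal{F}) \downarrow \mathcal{E}$ has an initial object, namely $(f_!(1_\mathcal{F}), \mathrm{id})$, which corresponds back to the object $(f_!(1_\mathcal{F}), \eta_{1_\mathcal{F}})$ cut out by the unit.

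It then remains to reduce the bilimit to its value at this initial object. The inclusion of the one-object subcategory on $(f_!(1_\mathcal{F}), \eta_{1_\mathcal{F}})$ is an initial functor in the sense of \cref{initial functors}: for every element $(E,a)$ the relevant comma category reduces to the unique morphism out of the initial object, hence is nonempty and connected. Since initial functors leave limits unchanged, the bilimit collapses to $\mathcal{E}/f_!(1_\mathcal{F})$ and the projection $\pi_f$ becomes $\pi_{f_!(1_\mathcal{F})}$, exhibiting $f$ as etale. The one point deserving care, which I expect to be the main (if mild) obstacle, is that the cited cofinality statement is phrased for ordinary limits whereas we need it for bilimits indexed by a $1$-category; I would justify this either by appealing to the corresponding bicategorical cofinality principle, or, more self-containedly, by checking the universal property directly: a pseudocone over the diagram is determined up to coherent equivalence by its component at $(f_!(1_\mathcal{F}), \eta_{1_\mathcal{F}})$, since every other component is recovered by whiskering with the essentially unique transition $\mathcal{E}/f_!(1_\mathcal{F}) \to \mathcal{E}/E$ and all coherence $2$-cells are then forced, so that $\mathcal{E}/f_!(1_\mathcal{F})$ already enjoys the universal property of $\bilim_{(E,a)} \mathcal{E}/E$.
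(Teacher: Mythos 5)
Your proof is correct and takes essentially the same route as the paper: the paper likewise observes that essentiality makes the unit $\eta_{1_\mathcal{F}} : 1_\mathcal{F} \rightarrow f^*f_!1_\mathcal{F}$ an initial object of $1_\mathcal{F} \downarrow f^*$ and then collapses the canonical presentation of \cref{canonical presentation of proetale} to the etale morphism over $f_!(1_\mathcal{F})$. Your extra steps --- the easy direction via the one-object cofiltered diagram, and the verification that the bilimit genuinely reduces at an initial object --- merely make explicit what the paper leaves implicit.
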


\begin{proof}
Suppose that $ p : \mathcal{F} \rightarrow \mathcal{E}$ is pro-etale and essential. Then $f^*$ has a further left adjoint $f_!$ so that the category of global elements $ 1_\mathcal{F} \downarrow f^*$ has an initial object $ \eta : 1_\mathcal{F} \rightarrow f^*f_!1_\mathcal{F}$. But we saw at \cref{canonical presentation of proetale} that a pro-etale was always decomposable as the cofiltered bilimit over $ 1_\mathcal{F}\downarrow p^*$, which by initialness reduces to the etale geometric morphism over $ f_!(1_\mathcal{F})$, hence a geometric equivalence over $\mathcal{E}$
\[\begin{tikzcd}
	{\mathcal{F}} && {\mathcal{E}/f_!1_\mathcal{F}} \\
	& {\mathcal{E}}
	\arrow["\simeq", from=1-1, to=1-3]
	\arrow[""{name=0, anchor=center, inner sep=0}, "p"', from=1-1, to=2-2]
	\arrow[""{name=1, anchor=center, inner sep=0}, "{\pi_{f_!1_\mathcal{F}}}", from=1-3, to=2-2]
	\arrow["\simeq"', draw=none, from=1, to=0]
\end{tikzcd}\]
\end{proof}

\begin{proposition}
Any pro-etale geometric morphism is localic. 
\end{proposition}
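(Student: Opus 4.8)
The plan is to deduce this entirely from the closure properties of the right class of a bifactorization system, avoiding any site computation. Recall that the (hyperconnected, localic) factorization is one of the bifactorization systems on $\GTop$ recorded earlier, so that the localic geometric morphisms constitute the right class $\mathcal{R}$ of a bi-orthogonality structure; recall also that each etale geometric morphism $\pi_E : \mathcal{E}/E \to \mathcal{E}$ is localic, corresponding to a discrete internal locale of $\mathcal{E}$. Hence the strategy is simply to observe that a pro-etale geometric morphism, being by definition a cofiltered bilimit of such localic morphisms, must again lie in $\mathcal{R}$, exactly as in the proof that pro-etale morphisms are right bi-orthogonal to terminally connected ones.

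First I would invoke the closure property stating that $\mathcal{R}$ is closed under weighted bilimits computed in $\ps[2,\GTop]$. A cofiltered bilimit is a conical, hence weighted (with terminal weight), bilimit, so this applies. The only real point to settle is that the cofiltered bilimit defining a pro-etale morphism, which is taken in the pseudoslice $\GTop/\mathcal{E}$, genuinely coincides with the bilimit of the same diagram regarded in $\ps[2,\GTop]$.

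To see this, present the defining diagram $\mathbb{E} : I \to \mathcal{E}$ as a diagram $I \to \ps[2,\GTop]$ sending $i$ to the object $\pi_{E_i}$ and each $u : i \to j$ to the pseudosquare whose bottom edge is $\id_\mathcal{E}$ and whose top edge is $\mathcal{E}/u$, filled by the canonical invertible $2$-cell $\alpha_u$. Bilimits in the $2$-functor $2$-category $\ps[2,\GTop]$ are computed separately on domains and codomains: the codomain part is the bilimit of the constant diagram at $\mathcal{E}$, which is equivalent to $\mathcal{E}$ because the indexing category $I$ is cofiltered, hence connected; the domain part is $\bilim_{i \in I} \mathcal{E}/E_i$; and the induced morphism is precisely $\bilim_{i \in I} \pi_{E_i}$, that is, the pro-etale morphism itself. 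This is the same identification already used when observing that the slice projection $\GTop/\mathcal{E} \to \GTop$ preserves cofiltered bilimits.

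Combining these two observations, the pro-etale morphism is exhibited as a bilimit in $\ps[2,\GTop]$ of objects of the right class $\mathcal{R}$, and therefore lies in $\mathcal{R}$; that is, it is localic. The main obstacle, such as it is, is entirely the bookkeeping of the previous paragraph, namely checking that the connected bilimit of the constant codomain collapses to $\mathcal{E}$ and that the pointwise computation of bilimits in $\ps[2,\GTop]$ returns the expected pro-etale morphism. A less economical alternative, should one prefer a hands-on verification, would be to use the explicit presentation $\Sh(\pscolim_{i} \mathcal{C}/C_i, \bigvee_i \iota_i(J_{C_i}))$ of the domain and check directly that every object is a subquotient of one pulled back from $\mathcal{E}$; but the abstract route is cleaner and requires nothing beyond the closure properties already recorded.
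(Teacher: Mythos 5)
Your proposal is correct and follows essentially the same route as the paper, whose proof is exactly the two-line observation that etale morphisms are localic and that localic morphisms, as the right class of the (hyperconnected, localic) bi-orthogonality structure, are closed under cofiltered bilimits. The only difference is that you spell out the bookkeeping the paper leaves implicit --- presenting the diagram in $\ps[2,\GTop]$ with constant codomain and using connectedness of $I$ to collapse that codomain to $\mathcal{E}$ --- which is a worthwhile clarification but not a different argument.
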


\begin{proof}
Etale geometric morphisms are localic, corresponding to the discrete internal locales at objects. But now localic geometric morphisms are closed under cofiltered bilimits for they are a right class in an orthogonality structure.
\end{proof}


\begin{remark}
Hence, any pro-etale geometric morphism over $\mathcal{E}$ defines uniquely an internal locale in $\Loc[\mathcal{E}]$, which is the cofiltered limit of the corresponding discrete internal locales - computed in $\Loc[\mathcal{E}]$. In other words, any pro-etale geometric morphism is the localic geometric morphism at a pro-discrete internal locale. Recall that the category of internal locales in $\mathcal{E}$ is equivalent to the category of localic geometric morphisms over $\mathcal{E}$, and moreover that $ \Loc/\mathcal{E} \hookrightarrow \GTop/\mathcal{E} $ is reflective, with the left adjoint given by the localic reflection: hence the functor sending an internal locale $ L$ to $ \Sh_\mathcal{E}(L) \rightarrow \mathcal{E}$ preserves limits, in particular cofiltered limits. Hence, for a pro-etale geometric morphism given by a cofiltered diagram $ E : I \rightarrow \mathcal{E}$, we have 
\[  \underset{i \in I}{\bilim} \, \mathcal{E}/E_i \simeq \Sh_{\mathcal{E}}(\underset{i \in I}{\lim} \, E_i)  \]
where $ \lim_{i \in I}E_i$ is computed in the category of internal locales $ \Loc[\mathcal{E}]$: any pro-etale geometric morphism is the localic morphism associated to a pro-discrete internal locale. 
\end{remark}

\begin{remark}
We saw at \cref{proetale are discrete} that pro-etale geometric morphisms over $ \mathcal{E}$ are still discrete as objects of the 2-category $\GTop/\mathcal{E}$, in the sense that their representables return homcategories that are equivalent to sets (rather set themselves). In a sense, this definition of discreteness is a bit coarse and would rather correspond to being \emph{groupoidal} from the point of view of the strict slice, which explains the discrepancy with the fact hat the corresponding internal locales are no more discrete. 
\end{remark}

As we see, several properties of pro-etales geometric morphisms are reformulation of properties of etale geometric morphisms in the absence of the further left adjoint to the inverse image. However, although this further left adjoint does not exist globally, it is still present in a hidden way, at the level of the \emph{pro-completion}. There is, since \cite{tholen_1984}, a general theory of generalized adjunctions where a left adjoint only exists at the level of a free completion for some shape of diagrams. In particular is investigated the example of \emph{right pro-adjoints}. 

\begin{definition}\label{free procompletion}
    Let $\mathcal{A}$ be a category. A \emph{pro-object} in $ \mathcal{A}$ is a functor $X : I \rightarrow \mathcal{A}$ with $ I$ cofiltered. A morphism of pro-objects $ (I,X) \rightarrow (J,Y)$ is defined as follows; first, choose the data of a reindexing map $f: J \rightarrow I$ together with a transformation 
\[\begin{tikzcd}
	I \\
	& {\mathcal{A}} \\
	J
	\arrow["f", from=3-1, to=1-1]
	\arrow[""{name=0, anchor=center, inner sep=0}, "X", from=1-1, to=2-2]
	\arrow[""{name=1, anchor=center, inner sep=0}, "Y"', from=3-1, to=2-2]
	\arrow["\phi"', shorten <=4pt, shorten >=4pt, Rightarrow, from=0, to=1]
\end{tikzcd}\]
which is not strictly speaking natural but natural up to the following relation: for any $ d : j \rightarrow j'$ in $J$, there exists a span $u : i \rightarrow f(j)$, $u' : i \rightarrow f(j')$ such that we have a commutation 
\[\begin{tikzcd}[sep=small]
	& {X(f(j))} & {Y(j)} \\
	{X(i)} \\
	& {X(f(j'))} & {Y(j')}
	\arrow["{X(u')}"', from=2-1, to=3-2]
	\arrow["{X(u)}", from=2-1, to=1-2]
	\arrow["{\phi_j}", from=1-2, to=1-3]
	\arrow["{\phi_{j'}}"', from=3-2, to=3-3]
	\arrow["{Y(d)}", from=1-3, to=3-3]
\end{tikzcd}\]
Then, identify any two choices $ (f,\phi)$, $(f', \phi')$ if for any $j$ in $J $, there exists a span $ d:i \rightarrow f(j)$, $d': i \rightarrow f'(j)$ and making the following diagram to commute
\[\begin{tikzcd}[sep=small]
	& {X(f(j))} \\
	{X(i)} && {Y(j)} \\
	& {X(f'(j))}
	\arrow["{\phi_j}", from=1-2, to=2-3]
	\arrow["{\phi_{j'}}"', from=3-2, to=2-3]
	\arrow["{X(d)}", from=2-1, to=1-2]
	\arrow["{X(d')}"', from=2-1, to=3-2]
\end{tikzcd}\]
Finally, a morphism $ X \rightarrow Y$ in $\Pro(\mathcal{A})$ is a class for this relation. In the following, we denote as $ \Pro(\mathcal{A})$ the category of pro-objects and morphisms of pro-objects between them. For any locally small category $ \mathcal{A}$, there is an full and faithful functor $ \iota_\mathcal{A} : \mathcal{A} \hookrightarrow \Pro(\mathcal{A})$ sending $ A$ on the singleton pro-object $ A : * \rightarrow \mathcal{A}$. 
\end{definition}

\begin{remark}
The category $ \Pro(\mathcal{A})$ can also be described as the full subcategory of $ [\mathcal{A}, \Set]^{\op}$ whose objects are those functors that are filtered colimits of corepresentables. This amounts to their category of elements being cofiltered. 
\end{remark}

\begin{remark}\label{terminal proobject}
If $ \mathcal{A}$ has a terminal object, then so does $\Pro(\mathcal{A})$, and one can check it coincides with the image $ \iota_A(1_\mathcal{A})$, with the terminal map of a pro-object being given by the class of any such triangle as below:
\[\begin{tikzcd}[sep=large]
	I \\
	{*} & {\mathcal{A}}
	\arrow[""{name=0, anchor=center, inner sep=0}, "X", from=1-1, to=2-2]
	\arrow["i", from=2-1, to=1-1]
	\arrow[""{name=1, anchor=center, inner sep=0}, "{1_\mathcal{A}}"', from=2-1, to=2-2]
	\arrow["{!_{X(i)}}"', shorten <=2pt, shorten >=2pt, Rightarrow, from=0, to=1]
\end{tikzcd}\]
Indeed one can show that any two choice of $i$ in $I$ are identified through the relation above.
\end{remark}

\begin{definition}
    A functor $ f : \mathcal{A} \rightarrow \mathcal{B}$ is said to be \emph{right pro-adjoint} if it admits a relative left adjoint $ l$ along the embedding $ \iota_\mathcal{A} $
\[\begin{tikzcd}
	{\Pro(\mathcal{A})} \\
	{\mathcal{A}} & {\mathcal{B}}
	\arrow["{\iota_\mathcal{A}}", hook, from=2-1, to=1-1]
	\arrow["f"', from=2-1, to=2-2]
	\arrow[""{name=0, anchor=center, inner sep=0}, "l"', from=2-2, to=1-1]
	\arrow["\epsilon"', shorten <=3pt, Rightarrow, from=0, to=2-1]
\end{tikzcd}\]
which means that for any $ A$ in $\mathcal{A}$ and $ B$ in $\mathcal{B}$ we have a natural isomorphism 
\[  \mathcal{B}[B, f(A)] \simeq \Pro(\mathcal{A})[l(B), \iota_\mathcal{A}(A)]  \]
\end{definition}

It is known that any left exact functor is right pro-adjoint: this is a consequence that its associated nerve is a filtered colimit of representable, so that its category of elements is cofiltered. As the consequence we have the following known fact:

\begin{proposition}
Let $ f : \mathcal{F} \rightarrow \mathcal{E}$ be any geometric morphism. Then $f^*$ is right pro-adjoint. 
\end{proposition}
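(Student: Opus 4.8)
The plan is to exhibit, for each object $F$ of $\mathcal{F}$, the pro-object $l(F)$ whose underlying diagram is the category of elements of $f^*$ at $F$. Since $f^*$ is the inverse image part of a geometric morphism, it is left exact; hence the composite functor $\mathcal{F}[F, f^*(-)] : \mathcal{E} \rightarrow \Set$, being $f^*$ followed by the limit-preserving representable $\mathcal{F}[F,-]$, preserves finite limits and is thus flat. Its category of elements is precisely $F \downarrow f^*$, which is therefore cofiltered --- this is the content of the Example recalled above. The projection $F \downarrow f^* \rightarrow \mathcal{E}$, $(E,a) \mapsto E$, is then the candidate pro-object $l(F)$.

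The main obstacle is a size issue: the category $F \downarrow f^*$ is large, whereas a genuine pro-object must be indexed by a small cofiltered category. I would resolve this exactly as in \cref{cofinal cat of representable elements}, whose proof uses only that the source object is $\mu$-compact for some cardinal $\mu$; since $\mathcal{F}$ is locally presentable, every object $F$ enjoys this property, so the same argument produces a small site $(\mathcal{C},J)$ for which $F \downarrow f^*\hirayo$ is cofiltered and initial in $F \downarrow f^*$. Restricting the projection along this small initial subcategory presents $\mathcal{F}[F, f^*(-)]$ as a small filtered colimit of corepresentables, so $l(F)$ is a bona fide object of $\Pro(\mathcal{E})$, functorial in $F$ by precomposition.

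It then remains to verify the defining isomorphism. Using the description of $\Pro(\mathcal{E})$ as the full subcategory of $[\mathcal{E}, \Set]^{\op}$ spanned by the functors with cofiltered category of elements, the pro-object $l(F)$ corresponds to $\mathcal{F}[F, f^*(-)]$ and $\iota_\mathcal{E}(E)$ to the corepresentable $\mathcal{E}[E,-]$. A morphism $l(F) \rightarrow \iota_\mathcal{E}(E)$ in $\Pro(\mathcal{E})$ is thus a natural transformation $\mathcal{E}[E,-] \Rightarrow \mathcal{F}[F, f^*(-)]$, and by the Yoneda lemma
\[ \Pro(\mathcal{E})[l(F), \iota_\mathcal{E}(E)] \simeq \mathcal{F}[F, f^*E]. \]
This isomorphism is natural in $E$ by Yoneda naturality and in $F$ through the functoriality of $l$; it exhibits $l$ as a relative left adjoint of $f^*$ along $\iota_\mathcal{E}$, which is precisely the assertion that $f^*$ is right pro-adjoint. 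I expect the only delicate point to be the accessibility step guaranteeing smallness of the indexing diagram; the remaining verifications are formal consequences of left-exactness and Yoneda.
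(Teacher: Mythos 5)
Your proof is correct and follows essentially the same route as the paper: you take the same candidate pro-adjoint sending $F$ to the projection $F \downarrow f^* \rightarrow \mathcal{E}$, obtain cofilteredness from left-exactness (flatness) of $f^*$, and the Yoneda computation of $\Pro(\mathcal{E})[l(F), \iota_\mathcal{E}(E)] \simeq \mathcal{F}[F, f^*E]$ is exactly the verification the paper leaves implicit behind ``one can check''. Your extra care about smallness of the indexing category, handled by extending the accessibility argument of the paper's own lemma on initial subcategories of elements from $1_\mathcal{F}$ to an arbitrary ($\mu$-compact) object $F$, is a legitimate refinement of a point the paper's proof silently glosses over.
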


\begin{proof}
One can check that the relative left adjoint $ f_!$ sends $ F$ to the projection $ \pi_F : F \downarrow f^* \rightarrow \mathcal{E}$ sending $a: F \rightarrow f^*E$ to $E$. For $ f^*$ is lex, hence representably flat, any such $F \downarrow f^*$ is cofiltered, and this is a pro-object. Similarly it sends $ u: F_1 \rightarrow F_2$ to (the class of) the strictly commutative triangle
\[\begin{tikzcd}
	{F_1 \downarrow f^*} \\
	& {\mathcal{C}} \\
	{F_2 \downarrow f^*}
	\arrow[""{name=0, anchor=center, inner sep=0}, "{\pi_{F_1}}", from=1-1, to=2-2]
	\arrow["{u\downarrow f^*}", from=3-1, to=1-1]
	\arrow[""{name=1, anchor=center, inner sep=0}, "{\pi_{F_2}}"', from=3-1, to=2-2]
	\arrow["{=}"', draw=none, from=0, to=1]
\end{tikzcd}\]
\end{proof}

\begin{remark}
Remark that for an essential geometric morphism, each comma category $ F\downarrow f^*$ has an initial object $\eta_F :F \rightarrow f^*f_!F$, and the relative left adjoint actually factorizes through $\mathcal{E}$ as a global left adjoint. 
\end{remark}

Some properties of etale geometric morphisms conveyed by their essential image part generalize to pro-etale geometric morphism as a property of the left pro-adjoint. For instance, recall from \cite{lurie2009higher}[Proposition 6.3.5.11] that the essential image $ E_!$ of an etale morphism at any object $E$ is faithful. By a manipulation of adjoints, we equivalently have that its inverse image $E^*$ is \emph{coseparating}, which means that two parallel arrows $ a,a' : g \rightrightarrows h$ are equals in a slice topos $\mathcal{E}/E$ if and only if they are jointly coequalized by all arrows of the form $h \rightarrow E_i^*E$.

\begin{proposition}
    If $f : \mathcal{F} \rightarrow \mathcal{E}$ is pro-etale, then the left pro-adjoint $ f_!$ is faithful.
\end{proposition}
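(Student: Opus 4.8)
The plan is to mirror the argument that makes the essential image $E_!$ of an etale morphism faithful (there it is merely the forgetful functor $\mathcal{E}/E\to\mathcal{E}$), transported to the level of the pro-completion. The key tool is the pro-adjunction $\mathcal{F}[F,f^*E]\simeq \Pro(\mathcal{E})[f_!(F),\iota_\mathcal{E}(E)]$. First I would reduce faithfulness of $f_!$ to a coseparation property of the family $\{f^*E\}_{E\in\mathcal{E}}$. Given a parallel pair $u,u'\colon F_1\rightrightarrows F_2$, the morphisms $f_!(u),f_!(u')\colon f_!(F_1)\to f_!(F_2)$ of pro-objects coincide exactly when they are equalized after postcomposition with every arrow $f_!(F_2)\to\iota_\mathcal{E}(E)$: indeed, using the description of $\Pro(\mathcal{E})$ as a full subcategory of $[\mathcal{E},\Set]^{\op}$, a morphism of pro-objects is determined by the induced natural transformation of the associated functors $\mathcal{E}\to\Set$, whose components are evaluated precisely by such arrows, so the representables $\iota_\mathcal{E}(E)$ are cogenerating in $\Pro(\mathcal{E})$. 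Transporting this along the pro-adjunction and using its naturality in the first variable, one gets $f_!(u)=f_!(u')$ if and only if $au=au'$ for every $a\colon F_2\to f^*E$. Hence $f_!$ is faithful exactly when the objects $f^*E$ form a coseparating family in $\mathcal{F}$.

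Next I would establish this coseparation by generalizing the etale situation. For an etale morphism $\pi_E$ the family $\{E^*E'\}$ coseparates $\mathcal{E}/E$ because every object $h\colon D\to E$ embeds into $E^*D$ via $\langle h,\id_D\rangle$. Using the canonical presentation $f\simeq\bilim_{(E,a)\in 1_\mathcal{F}\downarrow f^*}\pi_E$ with projections $p_{(E,a)}\colon\mathcal{F}\to\mathcal{E}/E$, the domain $\mathcal{F}$ is generated by the objects $p_{(E,a)}^*h$ for $h\colon D\to E$ in the slices. Since $p_{(E,a)}^*$ is lex it preserves the slice embedding, so $p_{(E,a)}^*h$ embeds monically into $p_{(E,a)}^*E^*D\simeq f^*D$ (using $\pi_E p_{(E,a)}\simeq f$); equivalently, each such generator is the fibre $a^*f^*h$ obtained by pulling $f^*h$ back along the split monomorphism $a\colon 1_\mathcal{F}\to f^*E$, and is therefore a subobject of $f^*D$. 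Noting moreover that $\{f^*E\}$ is stable under finite products ($f^*$ being lex, $f^*E_1\times f^*E_2\simeq f^*(E_1\times E_2)$), the task reduces to upgrading these embeddings of generators into a monomorphism of an arbitrary object of $\mathcal{F}$ into a product of objects $f^*E$.

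The main obstacle is exactly this last upgrade. Coseparation asks for enough maps \emph{out of} an arbitrary object $F_2$, whereas localic generation (recall that pro-etale morphisms are localic) only supplies monomorphisms out of the generators — data of the opposite variance, so the implication is not formal. In the etale case this gap never appears, because there every object, not merely the generators, embeds into a single $E^*D$; for a genuine cofiltered bilimit one must instead assemble the fibrewise embeddings across the whole diagram. I expect the heart of the proof to lie in this gluing step: showing, via the cofilteredness of $1_\mathcal{F}\downarrow f^*$ together with stability of monomorphisms and coproducts in a topos, that the embeddings $a^*f^*h\hookrightarrow f^*D$ can be organized into a jointly monic family $F_2\to f^*E$, equivalently a monomorphism of $F_2$ into a product of $f^*E$'s. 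This is the point at which the pro-etale hypothesis has to enter in an essential, non-variance-trivial way, and I would treat it as the delicate core of the argument.
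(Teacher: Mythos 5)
Your opening reduction is correct, and it is in fact the pro-level transport of the very equivalence the paper itself invokes for etale morphisms (faithfulness of $E_!$ $\Leftrightarrow$ coseparation by $E^*$): since a morphism of pro-objects is determined by postcomposition with all maps into corepresentables $\iota_\mathcal{E}(E)$, naturality of the pro-adjunction $\mathcal{F}[F,f^*E]\simeq \Pro(\mathcal{E})[f_!(F),\iota_\mathcal{E}(E)]$ does give that $f_!$ is faithful if and only if the family $\{f^*E\}_{E\in\mathcal{E}}$ coseparates $\mathcal{F}$. But the proposal then stops exactly where a proof is required. The step you yourself flag as ``the delicate core'' --- assembling the monomorphisms $a^*f^*h\hookrightarrow f^*D$ on generators into a jointly monic family out of an \emph{arbitrary} object $F_2$ --- is left as an expectation, with no mechanism proposed for how cofilteredness of $1_\mathcal{F}\downarrow f^*$ would produce it. As it stands nothing in your argument excludes a parallel pair $u\neq u'\colon F_1\rightrightarrows F_2$ coequalized by every map $F_2\to f^*E$, so this is a genuine gap, and your variance diagnosis correctly identifies why it is not formal.

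The paper's proof shows the global coseparation you are after can be sidestepped rather than established. It tests faithfulness on parallel pairs between objects of the presenting site $\pscolim_{I^{\op}}\mathcal{E}/E_i$ (invoking that these generate $\mathcal{F}$ --- so your reformulation is actually the full-strength statement, of which the paper verifies the site-level instance). The two moves you are missing are: first, by cofilteredness of $I$ a parallel pair $[(d,a)]_\sim,[(d',a')]_\sim$ can be reindexed along some $d''$ with $dd''=d'd''$ so that, without loss of generality, $d=d'$ and the pair becomes an honest parallel pair $a,a'$ \emph{inside a single slice}; second, since $f_!$ of a site object is the comma projection $[(i,h)]_\sim\downarrow f^*\to\mathcal{E}$, equality $f_![(d,a)]_\sim=f_![(d,a')]_\sim$ says the two precomposition functors agree, and restricting attention to those objects of the comma arising from arrows $b\colon h\to E_i^*E$ within the slice yields $ba=ba'$ for all such $b$; the known etale fact that the objects $E_i^*E$ coseparate the slice topos then forces $a=a'$. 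In other words, the variance obstruction dissolves because the codomain of the tested pair is itself a site object living in one slice, where the etale coseparation applies verbatim --- no gluing of fibrewise embeddings across the cofiltered diagram is needed. To rescue your route you would have to prove coseparation of $\{f^*E\}$ in all of $\mathcal{F}$ directly, which is essentially the proposition itself; the paper's localization to a slice is what makes the argument go through.
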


\begin{proof}
As $ f$ is pro-etale, it is up to an equivalence of the form $ \mathcal{F} \simeq  \bilim_I \mathcal{E}/E_i \rightarrow \mathcal{E}$ for some cofiltered diagram $I \rightarrow \mathcal{E}$, with the inverse image $f^*$ sending an object $E$ to the object represented by any triple of the form $ (i, E_i^*E)$ in the pseudocolimit $ \pscolim_{I^\op} \mathcal{E}/E_i$. As objects of the pseudocolimit generate $\mathcal{F}$, it is sufficient to give an expression of the restriction of the left pro-adjoint $ f_!$ to the pseudocolimit, and to test faithfulness at this level. Let be a parallel pair $[(d,a)]_\sim,[(d',a')]_\sim : [(j,g)]_\sim \rightrightarrows [(i,h)]_\sim$ in $\pscolim_{I^\op} \mathcal{E}/E_i$, conssiting of a parallel pair $ d,d' : j \rightarrow i$ in $I$ and a span over $E_j$
\[\begin{tikzcd}
	& {u_d^*D} \\
	B && {u_{d'}^*D} \\
	& {E_j}
	\arrow["{u_d^*h}"{description, pos=0.3}, from=1-2, to=3-2]
	\arrow["a", from=2-1, to=1-2]
	\arrow["{a'}"{description, pos=0.7}, from=2-1, to=2-3]
	\arrow["g"', from=2-1, to=3-2]
	\arrow["{u_{d'}^*D}", from=2-3, to=3-2]
\end{tikzcd}\]
For $I$ is cofiltered, there exists an arrow $ d'' : k \rightarrow j$ such that $ d d'' = d'd''$. Then we end up with a parallel pair over $E_k$
\[\begin{tikzcd}
	{u_{d''}^*B} && {u_{dd''}^*D \simeq u_{d'd''}^*D} \\
	& {E_k}
	\arrow["{u_{d''}^*a}", shift left, from=1-1, to=1-3]
	\arrow["{u_{d''}^*a'}"', shift right, from=1-1, to=1-3]
	\arrow["{u_{d''}^*g}"', from=1-1, to=2-2]
	\arrow["{u_{dd''}^*D \simeq u_{d'd''}^*D}", from=1-3, to=2-2]
\end{tikzcd}\]
such that $ [(d,a)]_\sim = [(dd'', u_{dd''}^*a)]_\sim$ and $ [(d',a')]_\sim = [(d'd'', u_{d'd''}^*a')]_\sim$. In other words, it is always possible to find representing objects such that the parallel pair is supported by a same arrow in $ I$, so that it corresponds to a parallel pair in the slice over the domain object. Hence, we can assume without loss of generality that $d=d'$, so that $ u_d^*h = u_{d'}^*h$ in $\mathcal{E}/E_j$. 

Suppose now that $ f_![(d,a)]_\sim = f_![(d,a')]_\sim$, which means that the functors 
\[\begin{tikzcd}
	{[(i,h)]_\sim \downarrow f^*} && {[(j,g)]_\sim \downarrow f^*}
	\arrow["{[(d,a)]_\sim\downarrow f^*}", shift left, from=1-1, to=1-3]
	\arrow["{[(d,a')]_\sim\downarrow f^*}"', shift right, from=1-1, to=1-3]
\end{tikzcd}\]
are the same; objects of $[(i,h)]_\sim \downarrow f^*$ are span that can be chosen of the form
\[\begin{tikzcd}
	{(l, e^*h)} & {(l, E_{l}^*E)} \\
	{(i,h)}
	\arrow["{(1_l, b)}", from=1-1, to=1-2]
	\arrow["{(e,1_{e^*D}) }"', from=1-1, to=2-1]
\end{tikzcd}\]
Amongst them are those which arise more directly in $ \mathcal{E}/E_i$ itself from arrows $ b : h \rightarrow E_i^*E $. So the condition that $[(d,a)]_\sim\downarrow f^* = [(d,a')]_\sim\downarrow f^*$ implies in particular that those two functors agree on all such elements, which means that in $\mathcal{E}/E_i$ any composite $ ba$ and $ba'$ as below are equal as arrows $ u_dg \rightarrow E_i^*E$
\[\begin{tikzcd}
	B & D & {E_i\times E} \\
	{E_j} & {E_i}
	\arrow["a", shift left, from=1-1, to=1-2]
	\arrow["{a'}"', shift right, from=1-1, to=1-2]
	\arrow["g"', from=1-1, to=2-1]
	\arrow["b", from=1-2, to=1-3]
	\arrow["h"{description}, from=1-2, to=2-2]
	\arrow["{E_i^*E}", from=1-3, to=2-2]
	\arrow["{u_d}"', from=2-1, to=2-2]
\end{tikzcd}\]
But then this forces $ a=a'$ as the objects of the form $ E_i^*E$ for $E$ are a coseparator of $\mathcal{E}/E_i$. Hence, $(d,a) = (d',a')$, and faithfulness of the left pro-adjoint $f_!$. \end{proof}

 \subsection{Intrinsic characterization}
 
Let us now try to give a more intrinsic characterization of pro-etale geometric morphisms.

\begin{division}
 From the orthogonal factorization theorem, we know that a geometric morphism $f$ is pro-etale if and only if its terminally connected part $ t_f$ is a geometric equivalence, which amounts to the inverse image part $t_f^*$ to being an equivalence of categories, equivalently, that it is essentially surjective on objects and fully faithful.

But we saw in the proof of \cref{terminally connected-proetale factorization} that $t_f^*$ could be computed in an explicit way. First, for an object in the pseudocolimit $\pscolim_{1_\mathcal{F}\downarrow f^*} \mathcal{E}/E$ presented as the equivalence class of some triple $(E,a,h)$ with $ a:1\mathcal{F} \rightarrow f^*E$ and $ h : D \rightarrow E$, we had $ t_f^*([(E,a,h)]_\sim = a^*f^*D$ and composition of pullbacks ensures that this expression does not depend on the choice of representing objects in the equivalence class. Then, for an object $X$ in the topos of sheaves over this pseudocolimit of sites, constructed as a colimit of representables $ X \simeq \colim_{i \in I} \hirayo_{[(E_i,a_i,h_i)]_\sim}$ presented by a choice of family of global elements $ (a_i : 1_\mathcal{E} \rightarrow f^*E_i$ and arrows $ h_i : D_i \rightarrow E_i$, we had the formula
$ t_f^*(X) = \colim_{i \in I} a_i^* f^*D_i $.

Hence, asking for $ t_f^*$ to be an equivalence amounts to asking the following conditions:\begin{itemize}
    \item for any object $F$ in $\mathcal{F}$, there is a diagram $ (E_i, a_i, h_i)_{i \in I}$ with $ a_i : 1_\mathcal{F} \rightarrow f^*E_i$ and $ h_i : D_i \rightarrow E_i$, with the transition morphism at a $d : i \rightarrow j$ consisting in a pair $ (u_d, v_d) : h_i \rightarrow h_j$ forming a commutative square with the first component defining a morphism of elements $ u_d : a_i \rightarrow a_j$, such that one has a colimit decomposition into the corresponding fibers: 
\[ F \simeq \underset{i \in I}{\colim} \; a_i ^*f^*D_i \]
    \item for any morphism $ u : F \rightarrow F'$, there is a diagram of morphisms $ (u_i,v_i) : (E_i, a_i, h_i) \rightarrow (E'_i, a'_i, h'_i)$ such that $u$ is obtained as the colimit in the arrow category of the induced maps $ (u_i,v_i) : a_i^*f^*D_i \rightarrow (a_i')^*f^*D_i'$;
    \item moreover, the faithfulness condition says that two morphisms of diagrams that beget the same morphism between their colimit must actually be equals.
    \end{itemize} 
We shall say in this case that $f^*$ \emph{generates $ \mathcal{F}$ under fibers of global elements}.

\end{division}

\begin{proposition}\label{characterization of pro-etales}
    A geometric morphism is pro-etale if and only if its inverse image generates its domain under fibers of global elements.
\end{proposition}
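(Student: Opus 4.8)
The plan is to reduce the statement to the explicit computation of the terminally connected part already carried out in the proof of \cref{terminally connected-proetale factorization}. By the bifactorization structure, a geometric morphism lies in the right class precisely when its left part is a geometric equivalence; concretely, $f$ is pro-etale if and only if the terminally connected factor $t_f : \mathcal{F} \rightarrow \bilim_{1_\mathcal{F} \downarrow f^*} \mathcal{E}/E$ is an equivalence, which in turn amounts to $t_f^*$ being an equivalence of categories. So the first step is simply to invoke this equivalence-detecting criterion, recalling that a functor is an equivalence of categories exactly when it is essentially surjective on objects and fully faithful. Each implication in the chain being an ``if and only if'', the biconditional propagates.

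Next, I would recall the explicit formula for $t_f^*$ established during the factorization proof. On a basic object of the presenting pseudocolimit site, represented by a triple $(E,a,h)$ with $a : 1_\mathcal{F} \rightarrow f^*E$ and $h : D \rightarrow E$, the functor returns the fiber $t_f^*([(E,a,h)]_\sim) \simeq a^*f^*D$, independently of the chosen representative by cancellation of pullbacks; and since $t_f^*$ preserves colimits while every object of the bilimit topos is a colimit of such representables, one has $t_f^*(\colim_i \hirayo_{[(E_i,a_i,h_i)]_\sim}) \simeq \colim_i a_i^*f^*D_i$.

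Now I would translate the two halves of ``equivalence'' into the generation condition, clause by clause, exactly following the preceding discussion. Essential surjectivity of $t_f^*$ says precisely that every $F$ in $\mathcal{F}$ arises as a colimit $\colim_i a_i^*f^*D_i$ of fibers of global elements, indexed by a suitable diagram in $1_\mathcal{F} \downarrow f^*$; this is the first clause of the definition. Fullness says that every $u : F \rightarrow F'$ is induced by a morphism of such diagrams, that is, by a compatible family $(u_i,v_i)$ of maps of fibers, which is the second clause; and faithfulness says exactly that two morphisms of diagrams inducing the same map on colimits must coincide, the third clause. Hence ``$t_f^*$ is an equivalence'' is, term for term, the assertion that $f^*$ generates $\mathcal{F}$ under fibers of global elements, and the two characterizations agree.

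The one point requiring genuine care --- and the step I expect to be the main obstacle --- is the bookkeeping that makes the colimit formula for $t_f^*$ compatible with the right calculus of fractions in the pseudocolimit site, so that essential surjectivity, fullness and faithfulness can be tested on honest diagrams of fibers rather than on mere equivalence classes of spans. One must check that any morphism of colimit presentations can be refined to an actual morphism of indexing diagrams --- using cofilteredness of $1_\mathcal{F} \downarrow f^*$ and the small initial subcategory furnished by \cref{cofinal cat of representable elements} --- and that the faithfulness clause detects equality of fiber-diagram morphisms and not merely equality up to the fraction relation. Once this compatibility is secured, the remaining verifications are routine given the explicit description of $t_f^*$.
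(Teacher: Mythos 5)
Your reduction and your forward direction coincide with the paper's: pro-etaleness is equivalent to the terminally connected part $t_f$ being an equivalence (by essential uniqueness of the bifactorization, or by \cref{canonical presentation of proetale}), and the explicit formula for $t_f^*$ from the proof of \cref{terminally connected-proetale factorization} then shows that a pro-etale morphism generates its domain under fibers of global elements. The genuine gap is in the converse, where you claim the biconditional ``propagates'' clause by clause. The three clauses of the generation condition quantify \emph{existentially} over colimit presentations: given sheaves $X, X'$ on the pseudocolimit site and a morphism $w : t_f^*X \rightarrow t_f^*X'$ in $\mathcal{F}$, the second clause only produces a morphism of diagrams $\widetilde{w} : Y \rightarrow Y'$ between \emph{some} presentations with $t_f^*Y \simeq t_f^*X$ and $t_f^*Y' \simeq t_f^*X'$; to conclude fullness of $t_f^*$ you must lift these comparison isomorphisms to isomorphisms $X \simeq Y$ and $X' \simeq Y'$ in the bilimit topos --- which is itself an instance of the fullness you are trying to establish. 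The same re-presentation problem afflicts the faithfulness clause. Moreover, the obstacle you flag as the main one (fractions versus honest diagrams) is real but secondary --- it is disposed of by cartesian refinement and the invariance of fibers under cartesian morphisms, already checked in the factorization proof --- and the tool you propose for it, the small initial subcategory of \cref{cofinal cat of representable elements}, does nothing for the presentation-independence problem, which is where the actual work lies.

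The paper sidesteps this entirely by proving the converse through the universal property rather than through $t_f^*$: it verifies $\GTop/\mathcal{E}[g,f] \simeq \bilim_{(E,a) \in 1_\mathcal{F}\downarrow f^*} \GTop/\mathcal{E}[g,\pi_E]$ for an arbitrary test morphism $g : \mathcal{G} \rightarrow \mathcal{E}$, constructing the mediating inverse image by $h^*F \simeq \colim_{i \in I}\, b_{(E_i,a_i)}^{\,*}\, g^*D_i$ from a chosen fiber decomposition of $F$, and then proving well-definedness by refining any two decompositions $\colim_{i} a_i^*f^*D_i \simeq \colim_{j} b_j^*f^*H_j$ to the common decomposition with terms $(a_i,b_j)^*f^*(D_i \times H_j)$, using left exactness of $f^*$ (to form the paired elements $(a_i,b_j) : 1_\mathcal{F} \rightarrow f^*(E_i \times E_j)$) and pullback-stability of colimits; lex-ness, cocontinuity, and the uniqueness clause (which is cheap because etale morphisms are discrete objects, so no 2-cell condition needs checking) complete the argument. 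If you wish to salvage your route, you would have to prove this common-refinement lemma anyway and use it to show that any two fiber presentations of the same object of $\mathcal{F}$ become isomorphic in the bilimit topos compatibly with $t_f^*$; at that point you will have essentially reproduced the paper's computation in a less convenient form.
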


\begin{proof}
From what precedes any pro-etale geometric morphisms generates its domain under fibers of global elements of its inverse image. Let us prove the reverse condition. Suppose that $ f : \mathcal{F} \rightarrow \mathcal{E}$ satisfies this property: we must then decompose $f$ as a cofiltered bilimit indexed by its diagram of global elements. In fact, we can foretell that one can choose as cofiltered indexing diagram the category of global elements $ 1_\mathcal{F} \downarrow f^*$. We have to prove that $ f$ decomposes in $\GTop/\mathcal{E}$ as the colfiltered bilimit $ \lim_{1_\mathcal{F} \downarrow f^*} \pi_E$, or in other words, that for any other geometric morphism $ g : \mathcal{G} \rightarrow \mathcal{E}$ one has an equivalence of homcategories 
    \[  \GTop/\mathcal{E}[g, f] \simeq \underset{(E,a) \in 1_\mathcal{F} \downarrow f^*}{\bilim} \GTop/\mathcal{E}[g, \pi_E] \]
where the latter bilimits corresponds to families of global elements of $(b_{(E,a)} : 1_\mathcal{G} \rightarrow g^*E)_{(E,a) \in 1_\mathcal{F} \downarrow f^*}$ indexed by global elements of $f^*$.
In one direction, it is always true that a geometric morphism $ h : \mathcal{G} \rightarrow \mathcal{F}$ with $fh \simeq g$ defines such a cone of global element $(\ulcorner a \urcorner h)_{(E,a) \in 1_\mathcal{F} \downarrow f^*}$ by postcomposition of $h$ with the names of the corresponding element of $f^*$. 

The converse direction makes use of the characterization. Suppose we have such a family $(b_{(E,a)} : 1_\mathcal{G} \rightarrow g^*E)_{(E,a) \in 1_\mathcal{F} \downarrow f^*}$ of global elements of $g^*$. We want to define a geometric morphism $ h : \mathcal{G} \rightarrow \mathcal{F}$ together with a natural transformation $ g \simeq fh$ such that each composite $ \ulcorner a \urcorner h : g \rightarrow \pi_{E}$ coincides with the name of the element $b_{(E,a)}$. Such a geometric morphism can be constructed through its inverse image as follows. As we suppose that $f$ generates $\mathcal{F}$ under fibers of global elements, we can choose for any $ F$ in $\mathcal{F}$ a colimit decomposition as above in term of fibers $ a_i^* f^*D_i$ where $ a_i : 1_\mathcal{F} \rightarrow f^*E_i$ are a certain family of global elements of $f^*$ and $ u_i : D_i \rightarrow E_i$ a family of morphisms in $\mathcal{E}$. Each of those global elements corresponds to a global element $ b_{(E_i, a_i)} : 1_\mathcal{G} \rightarrow g^*E_i$ in $\mathcal{G}$ given by the cone. Then, define $h^*F$ as the colimit of the corresponding fibers 
\[ h^*F \simeq \underset{i \in I}{\colim} \; b_{(E_i,a_i)} ^*g^*D_i \]
with $ b_{(E_i,a_i)} ^*g^*D_i$ being the fibers of $ g^*(u_i)$ along $b_{(E_i,a_i)}$. 

This construction is well defined, that is, does not depend on the choice of the colimit decomposition: suppose one has two such decompositions $F \simeq \colim_{i \in I} a_i^*f^*D_i \simeq \colim_{j\in J} b_j^*f^*H_j $ with respectively $ u_i : D_i \rightarrow E_i$, $v_ j : H_j \rightarrow E_j$ the maps we take fibers of. As the cone of global elements of $g$ ranges over all possible global elements of $f^*$ both the $a_i$ and the $b_j$ have corresponding global elements of $g^*$ in $\mathcal{G}$. We are going to prove the two decomposition of $F$ can be refined by a common one whose image by $h^*$ must be equal to both images. For each pair $(i,j) \in I \times J$, one can take the global element $(a_i, b_j) : 1_\mathcal{F} \rightarrow f^*(E_i \times E_j)$ induced by the universal property of the product for is preserved by $f^*$; by composition and cancellation of pullbacks, we know that the morphism $ u_i \times v_j : D_i \times H_j \rightarrow E_i \times E_j$ is the pullback over $ E_i \times E_j$ of the pullbacks of $ u_i$ and $v_j$ along the corresponding projections; then take the following fiber
\[\begin{tikzcd}
	{(a_i,b_j)^*f^*(D_i \times H_j)} & {f^*(D_i \times H_j)} \\
	{1_\mathcal{F}} & {f^*(E_i \times E_j)}
	\arrow["{(a_i, b_j)}"', from=2-1, to=2-2]
	\arrow["{f^*(u_i \times v_j)}", from=1-2, to=2-2]
	\arrow["{(a_i,b_j)^*f^*(u_i \times v_j)}"', from=1-1, to=2-1]
	\arrow[from=1-1, to=1-2]
	\arrow["\lrcorner"{anchor=center, pos=0.125}, draw=none, from=1-1, to=2-2]
\end{tikzcd}\]
But stability of colimits ensures that this later exactly is the fiber of the inclusions respective to the two altenrative colimit decompositions at the pair $ (i,j)$
\[\begin{tikzcd}
	{(a_i,b_j)^*f^*(D_i \times H_j)} && {b_j^*f^*H_j} \\
	{a_i^*f^*D_i} & {\underset{i \in I}{\colim} \; a_i ^*f^*D_i} & {\underset{j \in J}{\colim} \; b_j ^*f^*H_j}
	\arrow[from=1-1, to=2-1]
	\arrow[from=1-1, to=1-3]
	\arrow["{q'j}", from=1-3, to=2-3]
	\arrow["{q_i}"', from=2-1, to=2-2]
	\arrow[Rightarrow, no head, from=2-2, to=2-3]
	\arrow["\lrcorner"{anchor=center, pos=0.125}, draw=none, from=1-1, to=2-2]
\end{tikzcd}\]
so that one has a new colimit decomposition of $F$ as a two steps colimit
\[ F \simeq \underset{(i,j) \in I \times J}{\colim}  \; (a_i,b_j)^*f^*(D_i \times H_j)  \]
But in fact, would one now fix either $i$ or $j$ and allow to range over the other variable, stability of colimits gives a decomposition

\[ a_i^*f^*D_i \simeq  \underset{j \in J}{\colim} \; (a_i,b_j)^*f^*(D_i \times H_j) \hskip1cm b_j^*f^*H_j \simeq  \underset{i \in I}{\colim} \; (a_i,b_j)^*f^*(D_i \times H_j)  \]
Now for each $(i,j) \in I \times J$ both $a_i$ and $b_j$ correspond to global elements $\overline{a_i} : 1_\mathcal{G} \rightarrow g^*E_i$ and $\overline{b_j} : 1_\mathcal{G} \rightarrow g^*E_j$, and so does $ (a_i,b_i)$ which defines an element $ \overline{(a_i,b_i)} : 1_\mathcal{G} \rightarrow g^*(E_i \times E_j)$ in a way that is compatible with the projections by naturality of the cone. Hence, one can take at each such pair $ (i, j)$ the following fiber
\[\begin{tikzcd}
	{\overline{(a_i,b_j)}^*g^*(D_i \times H_j)} & {g^*(D_i \times H_j)} \\
	{1_\mathcal{G}} & {g^*(E_i \times E_j)}
	\arrow[from=1-1, to=2-1]
	\arrow[from=1-1, to=1-2]
	\arrow["{\overline{(a_i,b_j)}}"', from=2-1, to=2-2]
	\arrow["{g^*(u_i \times v_j)}", from=1-2, to=2-2]
	\arrow["\lrcorner"{anchor=center, pos=0.125}, draw=none, from=1-1, to=2-2]
\end{tikzcd}\]
Then commutation of colimits ensures that the value of $h^*$ computed on both colimit decomposition must agree because of this common refinement
\[ \underset{i \in I}{\colim} \; \overline{a_i}^*g^*D_i \simeq \underset{(i,j) \in I \times J}{\colim} \; \overline{(a_i,b_j)}^*g^*(D_i \times H_j) \simeq \underset{j \in J}{\colim} \; \overline{b_j}^*g^*H_j
\]

We let the reader convince himself of the functoriality of this construction, which relies on the universal property of colimit. 
We also have a natural equivalence $ h^*f^* \simeq g^*$, for in the case where $ F$ is simply of the form $ f^*E$, the colimit decomposition condenses into a single object and one simply take $ h^*(f^*E) = g^*E$ according to our construction.

Also commutation of colimits ensures that the functor $ h^* : \mathcal{F} \rightarrow \mathcal{G}$ thus defined is cocontinuous. It is also lex: on one hand, it preserves the terminal object as $ h^*(1_\mathcal{F}) \simeq h^*f^*(1_\mathcal{E}) \simeq g^*1_\mathcal{E}$, so it suffices to show it preserves pullbacks, but this uses a similar argument of fibers decomposition and stability of colimits. Hence, we do have a geometric morphism $ h : \mathcal{G} \rightarrow \mathcal{F}$ which is morevoer part of a morphism $ g \rightarrow f$ in the pseudoslice $\GTop/\mathcal{E}$. 

Also, one recovers the cone $(\overline{a})_{(E,a) \in 1_\mathcal{F} \downarrow f^*}$ as $ \overline{a} \simeq h^*(a)$ by functoriality of $h^*$ applied to each element $a : 1_\mathcal{F} \rightarrow f^*E$, combining the previous calculation of $h^*f^*$ with the calculation at the terminal object. 

Finally, let us check the uniqueness part of the universal property of the bilimit. In one sense, two cones inducing the same $h$ must consist of the same global elements given by $h^*$. Dually suppose one has two $h,h'$ inducing the same cone, that is, agreeing on the global elements. As $\mathcal{F}$ is generated under global elements, the colimit decomposition as fibers ensures that $h^*$ and $h'^*$ actually coincide on every objects. To conclude, let us observe that there is no 2-dimensional part to check in the universal property of the bilimit as etale morphisms are discrete, so that there is no morphism of cone to test the universal property at. \end{proof}

\subsection{Opfibrational aspects}


Pro-etale geometric morphisms have an interesting intrinsic property inside of the 2-category of topoi, which manifests also at the level of their points: they are \emph{discrete opfibrations}. It is easy to show that etale morphisms are opfibrations, and as we shall see below, this property is inherited by bilimits of such morphisms; moreover we saw at \cref{proetale are discrete} that pro-etale geometric morphisms are discrete in the 2-dimensional sense.  This suggests a relation with the \emph{comprehensive factorization system} on $\Cat$ introduced by \cite{street2010comprehensive}, or more exactly, is dual version (\emph{initial functor}, \emph{discrete opfibration}).\\ 

We already met initial functors at \cref{initial functors}; on the other hand, the definition of discrete opfibrations in $\Cat$ is rather standard; let us emphasize that, as often with right classes in $\Cat$, discrete opfibrations admit a representable analog in any 2-category:

\begin{definition}
    Let $\mathcal{K}$ be a 2-category; a morphism $ f : C \rightarrow D$ is an \emph{opfibration} in $\mathcal{K}$ if for any $ B$ in $\mathcal{K}$, the corresponding functor $ \mathcal{K}[B, f] : \mathcal{K}[B, C] \rightarrow \mathcal{K}[B, D]$ is an opfibration in $\Cat$. Moreover, we shall say that such a representable opfibration is \emph{discrete} if it is a discrete object in $\mathcal{K}/D$.
\end{definition}

\begin{proposition}\label{proetale are dopfib}
    Pro-etale geometric morphisms are discrete opfibrations in $\GTop$. 
\end{proposition}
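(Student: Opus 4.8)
The plan is to separate the two conditions in the definition of a discrete opfibration. The condition that $p$ be a \emph{discrete object} in $\GTop/\mathcal{E}$ is exactly \cref{proetale are discrete}, so the only new content is to show that $p$ is an opfibration in the representable sense, i.e.\ that for every topos $B$ the functor $\GTop[B,p] : \GTop[B,\mathcal{F}] \to \GTop[B,\mathcal{E}]$ is an opfibration in $\Cat$ — I will in fact show it is a discrete opfibration. Writing $p \simeq \bilim_{i \in I}\pi_{E_i}$ for a cofiltered diagram $\mathbb{E} : I \to \mathcal{E}$ and using that the representable $2$-functor $\GTop[B,-]$ preserves bilimits, I would first record the equivalence $\GTop[B,\mathcal{F}] \simeq \bilim_{i\in I}\GTop[B,\mathcal{E}/E_i]$, compatibly with the projections down to $\GTop[B,\mathcal{E}]$. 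This reduces the claim to two facts: that each $\GTop[B,\pi_{E_i}]$ is a discrete opfibration, and that a cofiltered bilimit of discrete opfibrations over a fixed base is again one.

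For the étale base case, I would use the correspondence of \cref{etale morphisms and global elements}: a geometric morphism $B \to \mathcal{E}/E$ amounts to a pair $(g, b)$ with $g : B \to \mathcal{E}$ and $b : 1_B \to g^*E$ a global element, so that $\GTop[B,\mathcal{E}/E]$ is the category of elements of the functor $P_E : \GTop[B,\mathcal{E}] \to \Set$ sending $g$ to $\mathcal{B}[1_B, g^*E]$ and a $2$-cell $\phi : g \Rightarrow g'$ to postcomposition by $\phi^\flat_E$ (this is covariantly functorial since $\phi^\flat : g^* \Rightarrow g'^*$). The projection $\GTop[B,\pi_E]$ is exactly the forgetful functor of this category of elements, hence a discrete opfibration: given $(g,b)$ and $\phi : g \Rightarrow g'$, the unique lift has codomain $(g', \phi^\flat_E b)$. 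This is the representable incarnation of the functor $\phi^*$ already met in \cref{laxness conditions}.

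For the limit step I would invoke that discrete opfibrations over a fixed base $D$ form a category $\textbf{DOpf}(D) \simeq [D,\Set]$ in which limits are computed pointwise; since $\Set$ has cofiltered limits, $\textbf{DOpf}(D)$ is closed under them, and the inclusion $\textbf{DOpf}(D)\hookrightarrow \Cat$ carries these to the corresponding cofiltered bilimits. Applied to the $P_{E_i}$ this presents $\GTop[B,p]$ as the discrete opfibration classified by $g \mapsto \lim_{i\in I}\mathcal{B}[1_B, g^*E_i]$. Concretely, an object over $g$ is a coherent family $(b_i : 1_B \to g^*E_i)_i$ — the data already described for morphisms into a bilimit of étale morphisms — and the cocartesian lift of $\phi : g \Rightarrow g'$ is obtained by pushing the whole family forward to $(\phi^\flat_{E_i} b_i)_i$; coherence of the new family is immediate from naturality of $\phi^\flat$ together with coherence of $(b_i)_i$, and uniqueness follows since each component is forced to be the unique lift in the discrete opfibration $\GTop[B,\pi_{E_i}]$. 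Together with \cref{proetale are discrete} this yields the claim.

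The delicate point — and the step I expect to be the main obstacle — is the identification in the limit step: one must check that the cofiltered bilimit in $\Cat$, whose transition data are the \emph{invertible} $2$-cells $p_d$ of the bilimit cone rather than strict equalities, genuinely agrees with the discrete opfibration classified by the pointwise limit $\lim_{i} P_{E_i}$, and correspondingly that the pushed-forward family $(\phi^\flat_{E_i} b_i)_i$ remains coherent up to those transition isomorphisms. I would isolate this as a lemma asserting that $\textbf{DOpf}(D) \hookrightarrow \Cat$ preserves cofiltered bilimits — true because discrete opfibrations are rigid, so their pseudolimits coincide with the strict (pointwise) limits in $[D,\Set]$ — after which everything else is the routine bookkeeping indicated above.
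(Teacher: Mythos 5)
Your proposal is correct and follows essentially the same route as the paper's own proof: the paper likewise handles the \'etale base case by exhibiting the opcartesian lift of a $2$-cell $\phi$ at a global element $a$ as the composite $\phi^\flat_E a$, then treats the pro-\'etale case by pushing a coherent family $(a_i : 1 \rightarrow f^*E_i)_{i \in I}$ forward componentwise along the $\phi^\flat_{E_i}$, with coherence supplied by the naturality squares of $\phi^\flat$ at the transition morphisms $u_d$, and finally cites \cref{proetale are discrete} for discreteness. Your extra packaging — identifying $\GTop[B,\mathcal{E}/E]$ with the category of elements of $g \mapsto \mathcal{B}[1_B,g^*E]$ and isolating a lemma that discrete opfibrations over a fixed base, being equivalent to $\Set$-valued functors with only identity vertical cells, are closed under cofiltered bilimits — is a more structured rendering of bookkeeping the paper performs by hand, not a different argument.
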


\begin{proof}
 First, observe that etale geometric morphisms are representable discrete opfibrations: for any 2-cell as below 
\[\begin{tikzcd}
	&& {\mathcal{E}/E} \\
	{\mathcal{F}} && {\mathcal{E}}
	\arrow[""{name=0, anchor=center, inner sep=0}, "f"{description}, from=2-1, to=2-3]
	\arrow["{\pi_E}", from=1-3, to=2-3]
	\arrow[""{name=1, anchor=center, inner sep=0}, "a", from=2-1, to=1-3]
	\arrow[""{name=2, anchor=center, inner sep=0}, "g"', curve={height=18pt}, from=2-1, to=2-3]
	\arrow["\phi", shorten <=2pt, shorten >=2pt, Rightarrow, from=0, to=2]
\end{tikzcd}\]
the opcartesian lift of $\phi $ is provided by the morphism of global element defined as the composite $ \phi^\flat_{E} a : 1_\mathcal{F} \rightarrow g^*E$. Similarly, for a cofiltered diagram $ \mathbb{E} : I \rightarrow \mathcal{E}$, if one replaces in the 2-cell above the etale morphism by a pro-etale morphism, then $a$ defines a cofiltered family of global elements $ a_i : 1_\mathcal{F} \rightarrow f^*E_i$ and it suffices to compose each $ a_i$ with the corresponding component $ \phi^\flat_{E_i}$ to have a cartesian lift -- functoriality being provided by pasting each morphism of element at $ d$ with the corresponding naturality square of $ \phi^\flat$ at $u_d$. Moreover, we saw that pro-etale morphisms over $\mathcal{E}$ are discrete in $\GTop/\mathcal{E}$. 
\end{proof}

This property is preserved by the functor of points $ \pt : \GTop \rightarrow \Set$. Recall that a point of the etale topos at $E$ is the same as a pair $(x,a)$ with $x$ a point of $\mathcal{E}$ together with $a$ an element of the fiber $ x^*E$. In fact, each object $E$ defines an evaluation functor $\ev_E : \pt(\mathcal{E}) \rightarrow \Set$ sending a point $x$ to its fiber at $E$, which exhibits $\pt(\pi_E)$ as the discrete opfibration 
\[\begin{tikzcd}
	{\pt(\mathcal{E}/E) \simeq \displaystyle\int \ev_E } & {\pt(\mathcal{E})}
	\arrow["{d_E}", from=1-1, to=1-2]
\end{tikzcd}\]

The functor $ \pt : \GTop \rightarrow \Cat$ preserves bilimits for it is representable: hence for a pro-etale geometric morphism for a diagram $ (\mathcal{E}/E_i)_{i \in I}$ we have
\[ \pt(\underset{i \in I}{\bilim} \; \mathcal{E}/E_i) \simeq \underset{i \in I}{\bilim} \; \pt(\mathcal{E}/E_i) \]

This exactly means that a point of the pro-etale topos is the same as a triples of families $((x_i, \alpha_d)_{i \in I, d \in I^2}, (a_i)_{i\in I}$ with $(x_i, \alpha_d)_{i \in I, d \in I^2}$ a families of points $ x_i : \Set \rightarrow \mathcal{E}$ equiped with descent isomorphisms $\alpha_d : x_i \simeq x_j$ for each $d:i \rightarrow j$ in $I$ satisfying the cocycles identities, together with elements of the corresponding fibers $a_i \in x_i^*E_i$ satisfying the descent property that for any $d : i \rightarrow j$, with $u_d : E_i \rightarrow E_j$ the corresponding morphism in $\mathcal{E}$, the image of $a_i$ through either member of the following naturality square is equal to $a_j$:
\[\begin{tikzcd}
	{x_i^*(E_i)} & {x_i^*(E_j)} \\
	{x_j^*(E_i)} & {x_j^*(E_j)}
	\arrow["{x_i^*(u_d)}", from=1-1, to=1-2]
	\arrow["{\alpha^d_{E_j} \atop \simeq}", from=1-2, to=2-2]
	\arrow["{\alpha^d_{E_i} \atop \simeq}"', from=1-1, to=2-1]
	\arrow["{x_j^*(u_d)}"', from=2-1, to=2-2]
\end{tikzcd}\]
Those observations can be summed into the following:

\begin{proposition}
The functor $ \pt$ sends pro-etale geometric morphisms to discrete opfibrations. 
\end{proposition}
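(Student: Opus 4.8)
The plan is to assemble the two ingredients already laid out immediately before the statement: that $\pt$ preserves bilimits because it is representable, and that each $\pt(\pi_{E_i})$ is a discrete opfibration whose total category and projection admit the explicit descent description given above. Write $p \simeq \bilim_{i \in I} \pi_{E_i}$ for a cofiltered diagram $\mathbb{E} : I \rightarrow \mathcal{E}$. Since $\pt$ preserves the bilimit and since the domain projection $\GTop/\mathcal{E} \rightarrow \GTop$ preserves connected, hence cofiltered, bilimits, the functor $\pt(p)$ is the cofiltered bilimit of the family $(\pt(\pi_{E_i}))_{i \in I}$ computed in the pseudoslice $\Cat/\pt(\mathcal{E})$.

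First I would verify directly from the descent description that $\pt(p)$ is a discrete opfibration. A point of the domain lying over a point $x$ of $\mathcal{E}$ is precisely a compatible family $(a_i)_{i \in I}$ with $a_i \in x^*(E_i)$, that is, an element of the cofiltered limit $\lim_{i \in I} x^*(E_i)$ in $\Set$, and $\pt(p)$ sends such a point to $x$; in particular each fibre is a set, so $\pt(p)$ has discrete fibres. For the opcartesian lifts, given a point $(x, (a_i)_{i \in I})$ and a morphism of points $\phi : x \Rightarrow x'$, corresponding to a natural transformation $\phi^\flat : x^* \Rightarrow x'^*$, the family $(\phi^\flat_{E_i}(a_i))_{i \in I}$ is again compatible — this is exactly the naturality square displayed above at each transition $u_d$ — and hence defines a point $(x', (\phi^\flat_{E_i}(a_i))_{i \in I})$ together with a morphism lifting $\phi$. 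This lift is the pointwise assembly of the opcartesian lifts in each $\pt(\pi_{E_i})$, and its uniqueness follows from the uniqueness of each component, guaranteed since each $\pt(\pi_{E_i})$ is a discrete opfibration.

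More structurally, one may instead invoke that discrete opfibrations form the right class of the comprehensive factorization system on $\Cat$, hence are closed under cofiltered bilimits in $\Cat/\pt(\mathcal{E})$; combined with the preservation of these bilimits by $\pt$ and with \cref{proetale are dopfib} applied fibrewise to each $\pi_{E_i}$, this yields the claim at once. The main obstacle — essentially the only point requiring care — is to confirm that the cofiltered bilimit of the $\pt(\pi_{E_i})$ is genuinely computed over the fixed base $\pt(\mathcal{E})$, so that the closure of the right class (or the componentwise gluing of lifts) applies; this is exactly what the preservation of connected bilimits by the slice projection secures, and it is what makes the descent computation of fibres as $\lim_{i \in I} x^*(E_i)$ legitimate.
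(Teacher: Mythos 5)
Your overall route is the same as the paper's: $\pt$ preserves bilimits because it is representable, each $\pt(\pi_{E_i}) \simeq \int \ev_{E_i} \rightarrow \pt(\mathcal{E})$ is a discrete opfibration, and one assembles these data over the cofiltered bilimit. But there is a genuine misstep in your description of that bilimit, and it is exactly the point the paper is careful about in the remark immediately following the proposition. The bilimit $\underset{i \in I}{\bilim}\, \pt(\mathcal{E}/E_i)$ in $\Cat$ is a pseudolimit, not a strict limit: an object is not a single point $x$ of $\mathcal{E}$ together with a compatible family $(a_i)_{i \in I}$ with $a_i \in x^*(E_i)$, but a coherent family of points $(x_i)_{i \in I}$ equipped with descent isomorphisms $\alpha_d : x_i \simeq x_j$ satisfying cocycle identities, together with elements $a_i \in x_i^*E_i$ compatible along the naturality squares. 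Consequently your claim that the fibre of $\pt(p)$ over $x$ is the set $\lim_{i \in I} x^*(E_i)$ --- ``in particular each fibre is a set'' --- is false as stated: since the points $x_i$ are all isomorphic but with no distinguished identification, the fibre is a groupoid whose connected components are such coherent families, hence only \emph{equivalent} to a discrete category. This is precisely why the paper adopts the relaxed notion of discrete opfibration (fibres equivalent to discrete categories rather than discrete on the nose), and it also means the opcartesian lift you construct componentwise is only essentially unique, not strictly unique.

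The same issue affects your ``more structural'' alternative: the right class of the comprehensive factorization system on $\Cat$ consists of strict discrete opfibrations, and its closure property concerns strict limits in the slice; a cofiltered \emph{bilimit} of discrete opfibrations over $\pt(\mathcal{E})$ is only a discrete opfibration up to equivalence, again in the relaxed sense, so invoking that closure does not yield the statement without the same caveat. Your argument is repairable: replace the strict-limit description of points by the pseudolimit one (coherent families with descent isomorphisms), and conclude that fibres are equivalent to sets rather than equal to them. With that correction your proof becomes the paper's argument; as written, it asserts a strict discreteness that the bilimit construction does not deliver.
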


\begin{remark}
Beware here that we use the relaxed notion of discrete opfibration where we require fibers to be actually \emph{equivalent} to discrete categories than discrete themselves. In fact, as the points $x_i$ in such a coherent family as above are all isomorphic, there is no distinguished choice of a point, only of the descent isomorphisms. Hence, the fibers of a point of $\mathcal{E}$ does not just contains a discrete family of elements of its fibers at the $E_i$, but actually form a groupoid whose connected components are coherent families of points isomorphic to it, together with elements of their fibers. This is because we allowed pro-etale geometric morphisms to be \emph{bilimits} of etale morphisms, which in $\Cat$ begets pseudolimits, rather than strict limits. 
\end{remark}

\section{Properties of terminally connected geometric morphisms}

Dually, we should give a few additional properties about terminally connected geometric morphisms. In this sections, we first give miscelaneous properties and characterizations, either from the point of view of the inverse image or its left pro-adjoint. After some site theoretic criteria, we turn to their stability properties, in particular to special laxness properties completing those established in \cref{laxness conditions}, and conlude one some conjectures about their relation with initial functors.

\subsection{General properties}

Terminally connected geometric morphisms are those which are connected from the point of view of the terminal object; hence, whenever the terminal object is generating, this is enough to ensure connectedness: as $ \Set$ precisely is the only topos having this property, this means that there is no distinction between connected topos and terminally connected topos:

\begin{proposition}\label{terco over set implies connected}
For any Grothendieck topos $\mathcal{E}$, the terminal geometric morphisms $ \gamma_\mathcal{E} : \mathcal{E} \rightarrow \Set$ is terminally connected if and only if it is connected.
\end{proposition}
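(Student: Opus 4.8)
The forward implication is already on record: the discussion just before the statement notes that any connected geometric morphism is terminally connected, and this applies in particular to $\gamma_\mathcal{E}$. So the only work is the converse, and the plan is to exploit the one feature that distinguishes $\Set$ among all topoi: its terminal object is a generator, in the sense that every set is the coproduct $S \simeq \coprod_{s \in S} 1_{\Set}$ of copies of $1_{\Set}$. The idea is that terminal connectedness controls $\gamma_\mathcal{E}^*$ on global elements, and this generation property should let us bootstrap that control to full faithfulness of $\gamma_\mathcal{E}^*$ on all hom-sets, which is exactly connectedness.

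Concretely, I would first record that $\gamma_\mathcal{E}^*$, being a left adjoint, preserves coproducts, and being the inverse image of a geometric morphism it is lex, hence $\gamma_\mathcal{E}^*(1_{\Set}) \simeq 1_\mathcal{E}$; combining these gives $\gamma_\mathcal{E}^*(S) \simeq \coprod_{s \in S} 1_\mathcal{E}$. Then for any two sets $S, T$ the universal property of the coproduct together with the terminal-connectedness isomorphism yields
\[ \mathcal{E}[\gamma_\mathcal{E}^*(S), \gamma_\mathcal{E}^*(T)] \simeq \prod_{s \in S} \mathcal{E}[1_\mathcal{E}, \gamma_\mathcal{E}^*(T)] \simeq \prod_{s \in S} T = \Set[S, T], \]
where the first equivalence is the universal property of $\coprod_{s\in S} 1_\mathcal{E}$ and the second is terminal connectedness applied summand-wise. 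This shows the two hom-sets are in bijection, but not yet that the bijection is induced by $\gamma_\mathcal{E}^*$ itself.

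The remaining point — and the only genuine, if mild, obstacle — is the coherence check that this chain of bijections is inverse to the action of $\gamma_\mathcal{E}^*$ on arrows, i.e. that the composite $\Set[S,T] \xrightarrow{\gamma_\mathcal{E}^*} \mathcal{E}[\gamma_\mathcal{E}^*S, \gamma_\mathcal{E}^*T] \simeq \prod_{s\in S} T$ is the identity. For $f : S \to T$ I would compute $\gamma_\mathcal{E}^*(f)$ precomposed with the $s$-th coproduct injection $\iota_s : 1_\mathcal{E} \to \gamma_\mathcal{E}^*(S)$: since $\gamma_\mathcal{E}^*$ preserves coproduct injections, this equals $\iota_{f(s)} = \gamma_\mathcal{E}^*(f(s)) : 1_\mathcal{E} \to \gamma_\mathcal{E}^*(T)$. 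Applying naturality of the terminal-connectedness isomorphism to the element $f(s) : 1_{\Set} \to T$ (evaluated at $\mathrm{id}_{1_\mathcal{E}} \in \mathcal{E}[1_\mathcal{E}, \gamma_\mathcal{E}^*(1_{\Set})]$) shows that the global element $\iota_{f(s)}$ corresponds under the isomorphism to $f(s) \in T$. Hence the composite sends $f$ to the family $(f(s))_{s \in S}$, which is $f$ itself, so $\gamma_\mathcal{E}^*$ is fully faithful and $\gamma_\mathcal{E}$ is connected. I expect the naturality bookkeeping in this last step to be the only place requiring care, since everything else is a direct consequence of $\gamma_\mathcal{E}^*$ preserving the relevant (co)limits.
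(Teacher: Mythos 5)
Your proof is correct, and it reaches the conclusion by a genuinely different route from the paper's. The paper's proof is a two-line adjunction argument: terminal connectedness literally says that $\Gamma_\mathcal{E}\gamma_\mathcal{E}^* = \mathcal{E}[1_\mathcal{E}, \gamma_\mathcal{E}^*(-)]$ is naturally isomorphic to $\Set[1,-] \simeq \mathrm{id}_{\Set}$, and this natural isomorphism is inverse to the unit of the adjunction $\gamma_\mathcal{E}^* \dashv \Gamma_\mathcal{E}$; an invertible unit forces the left adjoint $\gamma_\mathcal{E}^*$ to be fully faithful, which is connectedness. You never invoke the unit criterion: instead you use the other face of the fact that $1$ generates $\Set$, namely the decomposition $S \simeq \coprod_{s \in S} 1$, to compute $\mathcal{E}[\gamma_\mathcal{E}^*S, \gamma_\mathcal{E}^*T]$ injection by injection and then check by hand that the resulting bijection is the action of $\gamma_\mathcal{E}^*$ on arrows. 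Interestingly, your final naturality computation --- that $\gamma_\mathcal{E}^*(f)\circ\iota_s$ corresponds to $f(s)$ under the lifting isomorphism, applied at the element $\mathrm{id}_{1_\mathcal{E}}$ --- is essentially the verification, left implicit in the paper, that the terminal-connectedness isomorphism really is inverse to the adjunction unit (this is the content of \cref{relation with units}, which the paper's proof tacitly relies on when it asserts the isomorphism ``is an inverse of the unit''). So the trade-off is: the paper's version buys brevity and formality by working entirely at the level of the adjunction, while your version is longer but more elementary and makes explicit a compatibility that the paper asserts without proof.
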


\begin{proof}
    $\gamma_\mathcal{E}$ is terminally connected if one has a natural isomorphism $ \mathcal{E}[1_\mathcal{E}, \gamma_\mathcal{E}^*] \simeq \Set[1, -]$ but for $ 1$ is generating in $\Set$, the later is the identity functor of $ \Set$, while $\mathcal{E}[1_\mathcal{E}, \gamma_\mathcal{E}^*] = \Gamma_\mathcal{E} \gamma_\mathcal{E}^*$: hence the natural isomorphism witnessed by terminal connectedness provides a natural isomorphism $ \Gamma_\mathcal{E} \gamma_\mathcal{E}^* \simeq \id_\Set$ which is an inverse of the unit of the $ \gamma_\mathcal{E} \dashv \Gamma_\mathcal{E}$ adjunction: this forces $ \gamma^*_\mathcal{E}$ to be fully faithful, hence $ \mathcal{E}$ to be connected. 
\end{proof}

It is known that a locally connected morphism $f$ is connected if and only if the global left adjoint $ f_!$ preserves the terminal object. However, in the case of an essential geometric morphism, this condition only amounts to being a terminally connected morphism, and indeed, the latter were originally defined as such in \cite{caramello2020denseness}[4.7]. In absence of the global left adjoint, we can still nevertheless say the following:

\begin{proposition}
    Let be $f : \mathcal{F} \rightarrow \mathcal{E}$ a geometric morphism. Then $ f$ is terminally connected iff $f_! : \mathcal{F} \rightarrow \Pro(\mathcal{E})$ preserves the terminal object. 
\end{proposition}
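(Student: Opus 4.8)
The plan is to reduce the statement to an identification of $f_!(1_\mathcal{F})$ with the terminal pro-object, and then to recognize that identification as literally the defining condition of terminal connectedness, read through the pro-adjunction.

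First I would unwind what ``$f_!$ preserves the terminal object'' means. Since $1_\mathcal{F}$ is terminal in $\mathcal{F}$ and $f_!$ is the relative left adjoint to $f^*$ exhibited above — sending $1_\mathcal{F}$ to the pro-object $(1_\mathcal{F}\downarrow f^*, \pi_{1_\mathcal{F}})$ — the condition is exactly that this pro-object be terminal in $\Pro(\mathcal{E})$. By \cref{terminal proobject}, the terminal pro-object is $\iota_\mathcal{E}(1_\mathcal{E})$, so the claim to prove becomes: $f$ is terminally connected if and only if $f_!(1_\mathcal{F}) \simeq \iota_\mathcal{E}(1_\mathcal{E})$ in $\Pro(\mathcal{E})$.

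Next I would compute both sides through the presheaf description of $\Pro(\mathcal{E})$. Recall (the remark following \cref{free procompletion}) that $\Pro(\mathcal{E})$ embeds fully faithfully into $[\mathcal{E}, \Set]^{\op}$, sending a pro-object to the filtered colimit of corepresentables it computes; under it $\iota_\mathcal{E}(E')$ goes to the corepresentable $\mathcal{E}[E', -]$, so that for any pro-object $A$ with attached presheaf $\hat{A}$ one has, by the Yoneda lemma inside $[\mathcal{E},\Set]^{\op}$, a natural isomorphism $\Pro(\mathcal{E})[A, \iota_\mathcal{E}(E)] \simeq \hat{A}(E)$. Feeding in the pro-adjunction isomorphism $\mathcal{F}[1_\mathcal{F}, f^*(E)] \simeq \Pro(\mathcal{E})[f_!(1_\mathcal{F}), \iota_\mathcal{E}(E)]$ identifies the presheaf attached to $f_!(1_\mathcal{F})$ with the nerve $\mathcal{F}[1_\mathcal{F}, f^*(-)]$ — which indeed lands in $\Pro(\mathcal{E})$ precisely because $f^*$ is flat, so that $1_\mathcal{F}\downarrow f^*$ is cofiltered. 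Likewise the presheaf attached to $\iota_\mathcal{E}(1_\mathcal{E})$ is $\mathcal{E}[1_\mathcal{E}, -]$.

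Finally, since $\Pro(\mathcal{E}) \hookrightarrow [\mathcal{E},\Set]^{\op}$ is fully faithful, it reflects isomorphisms, so $f_!(1_\mathcal{F}) \simeq \iota_\mathcal{E}(1_\mathcal{E})$ holds if and only if their attached presheaves are naturally isomorphic, that is, if and only if
\[ \mathcal{F}[1_\mathcal{F}, f^*(-)] \simeq \mathcal{E}[1_\mathcal{E}, -], \]
which is exactly the definition of $f$ being terminally connected; both implications then follow at once. The one delicate point I expect is the bookkeeping of the third paragraph: checking that the isomorphism $\Pro(\mathcal{E})[A, \iota_\mathcal{E}(-)] \simeq \hat{A}$ is genuinely natural and compatible with the pro-adjunction isomorphism, so that the resulting composite is the natural isomorphism asserted by terminal connectedness rather than merely a pointwise one. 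Everything else is formal.
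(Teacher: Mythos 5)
Your proof is correct and takes essentially the same route as the paper's: both rest on the pro-adjunction isomorphism $\mathcal{F}[1_\mathcal{F}, f^*(-)] \simeq \Pro(\mathcal{E})[f_!(1_\mathcal{F}), \iota_\mathcal{E}(-)]$ together with the identification of homs into $\iota_\mathcal{E}(E)$ with values of the attached presheaf, so that terminal connectedness becomes exactly the statement that $f_!(1_\mathcal{F})$ and the terminal pro-object $\iota_\mathcal{E}(1_\mathcal{E})$ agree under the embedding into $[\mathcal{E},\Set]^{\op}$. If anything, your uniform treatment of both implications via full faithfulness of that embedding makes precise the paper's converse direction, which is argued there by the looser remark that $\Pro(\mathcal{E})$ is ``cogenerated from $\mathcal{E}$ by cofiltered limits''.
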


\begin{proof}
Recall from \cref{free procompletion} that there is a fully faithful embedding $ \iota_\mathcal{E} :\mathcal{E} \hookrightarrow \Pro(\mathcal{E})$, which sends in particular $1_\mathcal{E}$ to the terminal pro-object as stated in \cref{terminal proobject}. For any object $E$ in $\mathcal{E}$ we hence have an equivalence
\[ \Pro(\mathcal{E}) [ 1_{\Pro(\mathcal{E})}, \iota_\mathcal{E}(E)] \simeq \mathcal{E}[1_\mathcal{E}, E] \]
Hence, terminal connectedness of $f$ whenever $ f_!(1_\mathcal{F}) = 1_{\Pro(\mathcal{E})}$ as then
\[  \mathcal{F}[1_\mathcal{F}, f^*E] \simeq \Pro(\mathcal{E}) [ 1_{\Pro(\mathcal{E})}, \iota_\mathcal{E}(E)] \]
Conversely, if $f$ is terminally connected, then $f_!1_\mathcal{F}$ and $1_{\Pro(\mathcal{E})}$ are identified left to any object of the form $\iota_{E}$. But $ \Pro(\mathcal{E})$ is cogenerated from $\mathcal{E}$ by cofiltered limits: hence we must have $ f_!1_\mathcal{F} \simeq 1_{\Pro(\mathcal{E})}$
\end{proof}

We can also give a dual of \cref{characterization of pro-etales}: as well as pro-etale geometric morphisms are those whose codomain is generated under colimits from fibers of global elements, terminally connected are those whose inverse image is closed under such colimits of fibers:

\begin{proposition}
    A geometric morphism $f : \mathcal{F} \rightarrow \mathcal{E}$ is terminally connected if and only if any any colimit of fibers is actually in the image of $f^*$, and any arrow induced by such colimits from a morphism of diagram with transition morphism in the image of $t_f^*$ is also in the image of $f^*$. 
\end{proposition}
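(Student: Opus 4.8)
The plan is to run the argument dual to \cref{characterization of pro-etales} through the factorization $f \simeq \pi_f t_f$ of \cref{terminally connected-proetale factorization}, reducing everything to a property of the pro-etale part. The first step is the soft equivalence: $f$ is terminally connected if and only if $\pi_f$ is a geometric equivalence. Indeed $\id_\mathcal{E}$ is etale at $1_\mathcal{E}$, hence pro-etale, so if $f$ is terminally connected then $f \simeq \id_\mathcal{E} \circ f$ is a (terminally connected, pro-etale) factorization and the essential uniqueness of \cref{terminally connected-proetale factorization} forces $\pi_f$ to be an equivalence; conversely, if $\pi_f$ is an equivalence then $f \simeq t_f$ lies in the left class, which is closed under invertible $2$-cells. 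It then remains to translate ``$\pi_f^*$ is an equivalence'' into the stated closure condition.

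The dictionary is furnished by the explicit description of $t_f^*$ recalled just above: writing $\mathcal{F}_{\mathrm{fib}}$ for the class of colimits of fibers of global elements, one has that $\mathcal{F}_{\mathrm{fib}}$ is precisely the essential image of $t_f^*$, since $t_f^*$ sends $[(E,a,h)]_\sim$ to the fiber $a^* f^* D$ and a general $X \simeq \colim_i \hirayo_{[(E_i,a_i,h_i)]_\sim}$ to $\colim_i a_i^* f^* D_i$. As $f^* = t_f^* \pi_f^*$, the essential image of $f^*$ is always contained in $\mathcal{F}_{\mathrm{fib}}$, and likewise for the morphisms arising from morphisms of such diagrams. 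This settles the forward implication at once: if $\pi_f^*$ is an equivalence it is essentially surjective and full, so $\operatorname{im} f^* = \operatorname{im}(t_f^* \pi_f^*) = \operatorname{im} t_f^* = \mathcal{F}_{\mathrm{fib}}$, every arrow of $\mathcal{F}_{\mathrm{fib}}$ coming from a diagram morphism is $t_f^*$ of an arrow of the bilimit topos, hence of the form $t_f^* \pi_f^*(\gamma) = f^*(\gamma)$ by fullness, and both clauses hold.

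The content lies in the converse, which I expect to be the main obstacle. Granting the two clauses, one has $\operatorname{im} f^* = \mathcal{F}_{\mathrm{fib}} = \operatorname{im} t_f^*$ on objects and on the relevant arrows, and I must deduce that $\pi_f^*$ is an equivalence. The delicate point is that $t_f^*$, being the inverse image of a merely terminally connected morphism, is in general neither faithful nor conservative, so one cannot simply cancel it in $t_f^* \pi_f^* E \simeq t_f^* B$ to obtain $\pi_f^* E \simeq B$; the clauses yield essential surjectivity and fullness of $\pi_f^*$ rather transparently, but faithfulness (equivalently, the \emph{uniqueness} of the lifts) is the genuinely subtle part. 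I would resolve this not abstractly but at the level of the presenting site: an arbitrary object $B$ of $\bilim_{1_\mathcal{F}\downarrow f^*}\mathcal{E}/E$ is a colimit of representables $\hirayo_{[(E_i,a_i,h_i)]_\sim}$, and the closure hypothesis, read through the right calculus of fractions on $\pscolim_{1_\mathcal{F}\downarrow f^*}\mathcal{E}/E$, should let me realise the index data as the image under $\pi_f^*$ of a single object of $\mathcal{E}$, exhibiting $B \simeq \pi_f^* E$; the morphism clause does the analogous job for arrows and, by the essential uniqueness of fraction representatives, also yields faithfulness. Equivalently --- and this is the cleanest packaging --- the reduction of the first paragraph shows it suffices to verify that $f$ lifts global elements uniquely, i.e. that $(1_\mathcal{E}, \iota)$ with $\iota : 1_\mathcal{F} \simeq f^* 1_\mathcal{E}$ is initial in $1_\mathcal{F}\downarrow f^*$, and the closure condition is exactly what feeds into producing, for each $a : 1_\mathcal{F} \to f^* E$, a unique antecedent $\bar a : 1_\mathcal{E} \to E$ with $f^* \bar a \simeq a$.
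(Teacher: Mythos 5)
Your opening reduction (``$f$ is terminally connected iff $\pi_f$ is an equivalence'') and your forward implication are both correct, but they take a genuinely different route from the paper's. The paper argues the forward direction directly and more economically: given a colimit of fibers $F \simeq \colim_{i \in I} a_i^* f^* D_i$, terminal connectedness lifts each $a_i$ uniquely to $\overline{a_i} : 1_\mathcal{E} \rightarrow E_i$, and since $f^*$ preserves pullbacks and colimits one gets $F \simeq \colim_{i \in I} f^*(\overline{a_i}^* D_i) \simeq f^*\bigl(\colim_{i \in I} \overline{a_i}^* D_i\bigr)$; no appeal to \cref{terminally connected-proetale factorization}, nor to the identification of $\operatorname{im} t_f^*$ with colimits of fibers, is needed. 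Your version buys symmetry with \cref{characterization of pro-etales} at the cost of importing the whole factorization theorem. Note also that the paper's own proof stops after this direction: it begins with ``In one direction'' and never returns to the converse.

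The genuine gap is in your converse, and it is visible inside your own text: you assert that the two clauses yield essential surjectivity and fullness of $\pi_f^*$ ``rather transparently'' and that only faithfulness is delicate, but essential surjectivity already requires precisely the cancellation of $t_f^*$ that you correctly rule out two lines later. The clauses only give, for an object $B$ of the bilimit topos, some $E$ with $t_f^* B \cong f^* E \cong t_f^* \pi_f^* E$ --- an isomorphism downstairs in $\mathcal{F}$ --- and a terminally connected inverse image only controls morphisms out of $1_\mathcal{F}$, so it may identify non-isomorphic objects; nothing lets you promote this to $B \cong \pi_f^* E$ upstairs, and your site-level fraction argument is never actually carried out. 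Your final repackaging (reduce to unique lifting of global elements) is the right target, but the one construction that would make the hypotheses bite is missing: given $a : 1_\mathcal{F} \rightarrow f^* E$, you must realise $a$ itself as an arrow induced on fibers by a morphism of diagrams --- for instance the diagonal $\Delta_E : \mathrm{id}_E \rightarrow p_1$ in $\mathcal{E}/E$, where $p_1 : E \times E \rightarrow E$ is the projection, whose fibers at $a$ are $a^* f^* \mathrm{id}_E \simeq 1_\mathcal{F}$ and $a^* f^* p_1 \simeq f^* E$ and whose induced arrow is exactly $a$ --- so that the arrow clause hands you $\overline{a}$ with $f^* \overline{a} = a$; even then, uniqueness and naturality of the lift require a faithfulness-type clause as in the third item of the discussion preceding \cref{characterization of pro-etales}, which the proposition as stated does not clearly supply. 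As written, your converse is a plan rather than a proof, with its one explicit claim pointing in a direction that cannot work.
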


\begin{proof}
In one direction, take a global element $ a : 1_\mathcal{F} \rightarrow f^*E$: t
Let $F $ decompose as a colimit $F \simeq \colim_{i \in I} a_i^*f^*D_i$ with $ h_i : D_i \rightarrow E_i$ in $\mathcal{E}$. Then if $f$ is terminally connected, the $a_i$ arise uniquely from global elements $\overline{a_i}$ in $\mathcal{E}$, and we could then compute already the colimit of the corresponding fibers in $\mathcal{E}$ before applying $f^*$ for it preserves both colimits and pullbacks:
\begin{align*}
     F &\simeq \underset{i \in I}{\colim}\; a_i^*f^*D_i \\ 
       &\simeq \underset{i \in I}{\colim}\; f^*\overline{a_i}^*D_i \\ 
       &\simeq f^*\underset{i \in I}{\colim}\; \overline{a_i}^*D_i
\end{align*}
Hence, $F$ lies in the essential image of $f^*$. 
\end{proof}

Some properties of connected morphisms relative to coproducts and cardinals partially generalize. First, recall that $f$ being connected amounts for all the units $\eta_E$ to being a natural isomorphisms; in terminal connectedness, this is only ``globally true" in the sense that $ \Gamma_\mathcal{E}$ naturally inverts the unit $ \eta$, which can be rephrased as:

\begin{proposition}\label{1 is orthogonal}
    A geometric morphism $ f : \mathcal{F} \rightarrow \mathcal{E}$ is terminally connected iff $1_\mathcal{E}$ is $\eta$-orthogonal. 
\end{proposition}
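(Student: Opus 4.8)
The plan is to show that both conditions are, after a single transposition, literally the same assertion about the global sections functor $\Gamma_\mathcal{E} = \mathcal{E}[1_\mathcal{E},-]$. First I would pin down the meaning of ``$1_\mathcal{E}$ is $\eta$-orthogonal'': as the class of morphisms at hand is the unit $\eta : \id_\mathcal{E} \Rightarrow f_*f^*$ of the adjunction $f^* \dashv f_*$, this should mean that $\Gamma_\mathcal{E}$ inverts every component, i.e. that for each object $E$ of $\mathcal{E}$ the map
\[ \mathcal{E}[1_\mathcal{E},\eta_E] : \mathcal{E}[1_\mathcal{E}, E] \longrightarrow \mathcal{E}[1_\mathcal{E}, f_*f^*E] \]
is a bijection. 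This is exactly the ``$\Gamma_\mathcal{E}$ naturally inverts $\eta$'' reformulation alluded to before the definition of terminal connectedness, so the whole content is to match this with the defining natural isomorphism $\mathcal{F}[1_\mathcal{F}, f^*(-)] \simeq \mathcal{E}[1_\mathcal{E}, -]$.

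The key step is the transposition computation. Since $f^*$ is lex it preserves the terminal object, giving a canonical iso $f^*1_\mathcal{E}\simeq 1_\mathcal{F}$, and the adjunction then yields a natural bijection
\[ \mathcal{F}[1_\mathcal{F}, f^*E] \;\simeq\; \mathcal{F}[f^*1_\mathcal{E}, f^*E] \;\simeq\; \mathcal{E}[1_\mathcal{E}, f_*f^*E]. \]
Under it, the comparison map of terminal connectedness sends a global element $b : 1_\mathcal{E}\to E$ to the transpose of $f^*(b)$, which by the triangle identity and naturality of $\eta$ is
\[ f_*f^*(b)\circ \eta_{1_\mathcal{E}} \;=\; \eta_E\circ b \;=\; \mathcal{E}[1_\mathcal{E},\eta_E](b). \]
Thus, componentwise and naturally in $E$, the map $b\mapsto f^*(b)$ defining terminal connectedness corresponds precisely to $\mathcal{E}[1_\mathcal{E},\eta_{(-)}]$; this identification is what \cref{relation with units} already records through its two unit triangles.

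From here both directions follow at once: $\mathcal{E}[1_\mathcal{E},\eta_E]$ is a bijection for every $E$ if and only if the comparison map $\mathcal{E}[1_\mathcal{E},E]\to\mathcal{F}[1_\mathcal{F},f^*E]$ is, and a natural transformation that is a pointwise bijection is automatically a natural isomorphism, so these pointwise bijections assemble into the natural iso witnessing terminal connectedness. The only genuine subtlety I would be careful about is not the triangle identity itself, which is routine, but ensuring that the natural isomorphism occurring in the \emph{definition} of terminal connectedness is canonically the $\eta$-induced one rather than some unspecified iso, and that the terminal-object coherence iso $f^*1_\mathcal{E}\simeq 1_\mathcal{F}$ is carried along consistently; this is exactly where the earlier unit-triangle remark is invoked, and it is the single place the argument could go wrong if one were sloppy about which comparison map is meant.
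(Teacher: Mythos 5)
Your proposal is correct and takes essentially the same route as the paper: the paper's proof likewise passes through the adjunction bijection $\mathcal{F}[1_\mathcal{F},f^*E]\simeq\mathcal{E}[1_\mathcal{E},f_*f^*E]$ (using $f^*1_\mathcal{E}\simeq 1_\mathcal{F}$) and invokes the unit triangles of \cref{relation with units} to match lifts along $f^*$ with factorizations through $\eta_E$, merely phrasing it as an element-wise two-direction argument where you identify the two comparison maps in one transposition. Your explicit care about which natural isomorphism witnesses terminal connectedness is a sound refinement of the same argument, not a departure from it.
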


\begin{proof}
   Any $ a : 1_\mathcal{E} \rightarrow f_*f^*E$ corresponds to a unique $ \overline{a} : 1_\mathcal{F} \rightarrow f^*E$ with $ a = \eta_{E} f_*(a)$, but if one can lift uniquely $\overline{a}$ to some $b : 1_\mathcal{E} \rightarrow E$ then automatically from \cref{relation with units} we are done. Conversely, if one has orthogonality, for any $a : 1_\mathcal{F} \rightarrow f^*E$ the induced $ f_*(a)$ lifts uniquely along $\eta_E$ and provides the desired lift. 
\end{proof}

\begin{remark}\label{lift of families} 
Another easy observation is that, though a terminally connected does not necessarily lift generalized elements in general, it lifts at least those indexed by locally constant sheaves (those of that are $\Set$-indexed coproducts of the terminal objects):  for $ X = \coprod_{i \in I} 1_\mathcal{F}$: then one has $ [X, f^*] \simeq [\coprod_{i \in I} 1_\mathcal{E}, -]$. In other words, one can lift uniquely $I$-indexed families of global elements of a same object, and in a natural way. 
\end{remark}

In particular, terminally connected morphisms fit in a pattern of connectedness involving full faithfulness of extensions or restrictions of the inverse image part relative to different classes of internal locales, from the more general to the more discrete: 

\begin{proposition}
    For a geometric morphism $f : \mathcal{F} \rightarrow \mathcal{E}$:
    \begin{itemize}
        \item $f$ is hyperconnected iff $ f^* : \Frm(\mathcal{E}) \rightarrow \Frm(\mathcal{F})$ is fully faithful 
        \item $f$ is connected iff $ f^* : \disc\Frm(\mathcal{E}) \rightarrow \disc\Frm(\mathcal{F})$ is fully faithful
        \item if $f$ is terminally connected then $ f^* : \Card(\mathcal{E}) \rightarrow \Card(\mathcal{F})$ is fully faithful; 
        \item if $f$ is pure then $ f^* : \fin\Card(\mathcal{E}) \rightarrow \fin\Card(\mathcal{F})$ is fully faithful. 
    \end{itemize}
\end{proposition}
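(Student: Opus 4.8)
The plan is to treat the four clauses uniformly: each left class (\emph{hyperconnected}, \emph{connected}, \emph{terminally connected}, \emph{pure}) is the left part of a (bi)factorization system, and each category on the right is the category of ``geometric objects'' classifying the corresponding right class (\emph{localic}, \emph{etale}, and the two cardinal-flavoured variants). Before anything I would pin down the four functors and check they are well defined. The point to keep in mind is that the functor written $f^*$ is \emph{not}, on frames, the naive underlying-object inverse image: since $f^*$ preserves finite limits and all colimits it does send internal frames to internal frames, but it does not preserve power objects, hence does not preserve internal (indexed) joins; the correct functor $\Frm(\mathcal E)\to\Frm(\mathcal F)$ is the one obtained by pullback of the associated localic morphisms along $f$ under the equivalence $\Frm(\mathcal E)\simeq\Loc(\mathcal E)\simeq\{\text{localic morphisms over }\mathcal E\}$. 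On discrete locales this functor is conjugate, via the identification of discrete internal locales with objects recalled earlier and the étale base change $f^*\pi_E\simeq\pi_{f^*E}$ of \cref{etale morphisms and global elements}, to the plain object-level $f^*:\mathcal E\to\mathcal F$; and $\Card$, $\fin\Card$ sit inside $\disc\Frm\simeq\mathcal E$ as the (finite) copowers of $1$, on which $f^*$ acts by the object-level inverse image because it preserves $1$ and coproducts.

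With this in place the two middle clauses are quick. For \emph{connected}: under the conjugation above, full faithfulness of $f^*:\disc\Frm(\mathcal E)\to\disc\Frm(\mathcal F)$ is literally full faithfulness of $f^*:\mathcal E\to\mathcal F$, which is the definition of connectedness; the only thing to verify is that $f^*$ preserves discreteness of locales, i.e. that the pullback of an étale morphism is étale, which is the base change of \cref{etale morphisms and global elements}. For \emph{terminally connected} I would read the statement directly off \cref{lift of families}. Taking there $X=\coprod_{i\in I}1_\mathcal F$ and $Y=\coprod_{j\in J}1_\mathcal E\in\Card(\mathcal E)$, and using $f^*Y\simeq\coprod_{j\in J}1_\mathcal F$, the natural isomorphism
\[ \mathcal F\Big[\coprod_{i\in I}1_\mathcal F,\;f^*Y\Big]\;\simeq\;\mathcal E\Big[\coprod_{i\in I}1_\mathcal E,\;Y\Big] \]
is exactly the bijection witnessing that $f^*:\Card(\mathcal E)\to\Card(\mathcal F)$ is full and faithful, the isomorphism being induced by $f^*$ by construction of the lift. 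The \emph{pure} clause I would run in perfect parallel, replacing arbitrary copowers of $1$ by finite ones: the defining lifting property of the left class of the (pure, entire) factorization (\cref{glosis on fact of geommorph}) provides unique lifts of the finite-cardinal data, which is precisely full faithfulness of $f^*$ on $\fin\Card$. This is the lightest case and is essentially definitional once the description of pure morphisms is unwound.

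The genuine work, and the step I expect to be the main obstacle, is the \emph{hyperconnected} clause, because it is the only one that cannot be reduced to copowers of $1$ and genuinely uses internal-locale theory (this is also where the power-object subtlety above bites). I would argue through the (hyperconnected, localic) factorization and the orthogonality hyperconnected $\perp$ localic. For the forward direction: if $f$ is hyperconnected then, since the left class is stable under pullback and orthogonal to localic morphisms, for localic $\ell:\mathcal L\to\mathcal E$, $\ell':\mathcal L'\to\mathcal E$ the pullback $\mathcal F\times_\mathcal E\mathcal L\to\mathcal L$ is again hyperconnected, and orthogonality turns maps $\mathcal F\times_\mathcal E\mathcal L\to\mathcal L'$ over $\mathcal E$ into maps $\mathcal L\to\mathcal L'$ over $\mathcal E$; combined with the pullback adjunction this exhibits the localic-pullback functor as full and faithful. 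For the converse: I would factor $f\simeq\ell_f h_f$ with $h_f$ hyperconnected and $\ell_f$ localic, observe that $h_f$ provides a section of the pullback $f^*\ell_f$ over $\mathcal F$, and use full faithfulness (with the terminal internal locale pulling back to the terminal one) to transport this section to a section of $\ell_f$ over $\mathcal E$ and then upgrade it to an equivalence, so that the localic part of $f$ is trivial and $f$ is hyperconnected. The delicate points I anticipate are precisely this last upgrade from a section to an equivalence, and the fact that full faithfulness (rather than an equivalence) of the localic-pullback functor must be shown to suffice.
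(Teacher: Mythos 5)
Your handling of the last three clauses matches the paper's proof in substance: the paper settles the connected clause with the one-line observation that discrete internal frames are exactly objects of the topos (your conjugation through the etale base change of \cref{etale morphisms and global elements} just makes explicit what that identification uses), and for the terminally connected clause it performs exactly your computation, reading off \cref{lift of families} the chain $\mathcal{F}[f^*\coprod_\alpha 1_\mathcal{E}, f^*\coprod_\beta 1_\mathcal{E}] \simeq \mathcal{F}[\coprod_\alpha 1_\mathcal{F}, f^*\coprod_\beta 1_\mathcal{E}] \simeq \mathcal{E}[\coprod_\alpha 1_\mathcal{E}, \coprod_\beta 1_\mathcal{E}]$, with ``the same idea'' for finite cardinals in the pure case. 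One caution there: pure morphisms are not \emph{defined} by a lifting property, and deriving unique lifts of finite-cardinal data from orthogonality to entire morphisms would itself require a classification of maps into entire morphisms which you do not supply; what actually makes the argument run is that purity gives the natural isomorphism $\mathcal{E}[1_\mathcal{E}, n] \simeq \mathcal{F}[1_\mathcal{F}, f^*n]$ for finite cardinals $n$, after which the coproduct manipulation is identical. Your preliminary clarification that the functor on frames must be pullback of localic morphisms (rather than the object-level $f^*$, which does not preserve internal joins) is correct and a genuine improvement in precision over the paper, which is silent on this.

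The real divergence is the hyperconnected clause, which the paper dispatches by citing \cite{elephant}[Proposition C2.4.15], whereas you attempt a self-contained proof. Your forward direction is sound, granted the (standard, true) pullback-stability of hyperconnected morphisms. But the converse stalls exactly where you flag it, and the hole is not merely ``delicate'' but structural: from full faithfulness you obtain a section $s$ of $\ell_f$ whose image $f^\sharp(s)$ is the graph-section $\sigma$ of the pullback projection induced by $h_f$. Full faithfulness lets you \emph{reflect} isomorphisms, so you could conclude $s$ is an equivalence if $\sigma$ were one — but $\sigma$ is an equivalence precisely when $f^\sharp \ell_f$ is trivial, and $\ell_f^\sharp\ell_f$ is trivial only when $\ell_f$ is already essentially an embedding; so this route assumes what is to be proved, and no amount of hom-set bookkeeping on $s$ alone will close it. A repair inside your framework: the internal Sierpi\'nski locale $\mathbb{S}_\mathcal{E}$ classifies opens and is stable under pullback, $f^\sharp\mathbb{S}_\mathcal{E} \simeq \mathbb{S}_\mathcal{F}$; applying full faithfulness to homs from discrete internal locales into $\mathbb{S}$ yields natural isomorphisms $\mathrm{Sub}_\mathcal{E}(E) \simeq \mathrm{Sub}_\mathcal{F}(f^*E)$ for all $E$, hence $\Omega_\mathcal{E} \xrightarrow{\;\sim\;} f_*\Omega_\mathcal{F}$ by Yoneda, which is a standard characterization of hyperconnectedness. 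Alternatively, do as the paper does and cite C2.4.15.
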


\begin{proof}
    The first item is \cite{elephant}[Proposition C2.4.15]; the second is immediate as discrete frames are exactly objects of the topos. For the third item, we saw at \cref{lift of families} that terminally connected geometric morphisms lift uniquely set-indexed families of global elements, which applies in particular to cardinals: if $ \alpha$ and $ \beta$ are two cardinals, with corresponding cardinals in $\mathcal{E}$ being given as the coproduct $ \coprod_\alpha 1_\mathcal{E}$ and $ \coprod_\beta 1_\mathcal{E}$, we have
    \begin{align*}
        \mathcal{F}[f^*\coprod_\alpha 1_\mathcal{E}, f^*\coprod_\beta 1_\mathcal{E}] &\simeq \mathcal{F}[\coprod_\alpha 1_\mathcal{F}, f^*\coprod_\beta 1_\mathcal{E}] \\
        &\simeq \mathcal{E}[\coprod_\alpha 1_\mathcal{E}, \coprod_\beta 1_\mathcal{E}]
    \end{align*}
    The same idea applies to the pure case, but for finite cardinals. 
\end{proof}

We also have a dual property, which states that the direct image $ f_*$ of a terminally connected morphism ``globally" preserves cardinals:

\begin{proposition}
    If $f$ is terminally connected then for any $\alpha$ we have $ \Gamma_\mathcal{E}f_*\coprod_{\alpha} 1_\mathcal{F} \simeq \Gamma_\mathcal{E} \coprod_{\alpha} 1_\mathcal{F}$.
\end{proposition}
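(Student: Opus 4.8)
The plan is to reduce both sides to hom-sets out of the terminal object and then let terminal connectedness do the work. First I would record the purely formal identity $\Gamma_\mathcal{E} f_* \simeq \Gamma_\mathcal{F}$. Indeed, $\Gamma_\mathcal{E}$ is the direct image of the terminal geometric morphism $\gamma_\mathcal{E} : \mathcal{E} \rightarrow \Set$, and since $f^*$ is lex one has $f^*1_\mathcal{E} \simeq 1_\mathcal{F}$; so for every object $F$ of $\mathcal{F}$ the adjunction $f^* \dashv f_*$ gives
\[ \Gamma_\mathcal{E} f_* F = \mathcal{E}[1_\mathcal{E}, f_* F] \simeq \mathcal{F}[f^*1_\mathcal{E}, F] \simeq \mathcal{F}[1_\mathcal{F}, F] = \Gamma_\mathcal{F} F . \]
Evaluated at $F = \coprod_\alpha 1_\mathcal{F}$ this rewrites the left-hand side as $\mathcal{F}[1_\mathcal{F}, \coprod_\alpha 1_\mathcal{F}]$.

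Next I would use that $f^*$, being the inverse image of a geometric morphism, preserves colimits and the terminal object, so that $\coprod_\alpha 1_\mathcal{F} \simeq \coprod_\alpha f^*1_\mathcal{E} \simeq f^*\coprod_\alpha 1_\mathcal{E}$; hence the left-hand side is $\mathcal{F}[1_\mathcal{F}, f^*\coprod_\alpha 1_\mathcal{E}]$. At this point the defining natural isomorphism of terminal connectedness $\mathcal{F}[1_\mathcal{F}, f^*(-)] \simeq \mathcal{E}[1_\mathcal{E}, -]$, evaluated at the object $\coprod_\alpha 1_\mathcal{E}$, yields
\[ \mathcal{F}[1_\mathcal{F}, f^*\coprod_\alpha 1_\mathcal{E}] \simeq \mathcal{E}[1_\mathcal{E}, \coprod_\alpha 1_\mathcal{E}] = \Gamma_\mathcal{E}\coprod_\alpha 1_\mathcal{E}, \]
which is the asserted identity (the cardinal on the right being the one built from $1_\mathcal{E}$, matching $f^*\coprod_\alpha 1_\mathcal{E} \simeq \coprod_\alpha 1_\mathcal{F}$). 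One may equally read the last step off \cref{lift of families}, which records exactly that a terminally connected morphism lifts set-indexed families of global elements of a single object.

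The argument is essentially formal, so I do not expect a genuine obstacle; the only thing requiring care is the bookkeeping of which topos each global-sections functor lives over, namely the identification $\Gamma_\mathcal{E} f_* \simeq \Gamma_\mathcal{F}$ and the fact that the comparison is with the cardinal assembled from $1_\mathcal{E}$ rather than from $1_\mathcal{F}$. Should naturality in $\alpha$ be wanted, it is inherited directly from the naturality of the terminal-connectedness isomorphism together with the functoriality of $f^*$ on the coproduct inclusions.
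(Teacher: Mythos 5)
Your proof is correct and is essentially the paper's own argument spelled out: the paper's one-line proof invokes the natural isomorphism $\Gamma_\mathcal{F}f^* \simeq \Gamma_\mathcal{E}$ (terminal connectedness) together with the implicit standard identity $\Gamma_\mathcal{E}f_* \simeq \Gamma_\mathcal{F}$, which is exactly your chain $\Gamma_\mathcal{E}f_*\coprod_\alpha 1_\mathcal{F} \simeq \Gamma_\mathcal{F}\coprod_\alpha 1_\mathcal{F} \simeq \Gamma_\mathcal{F}f^*\coprod_\alpha 1_\mathcal{E} \simeq \Gamma_\mathcal{E}\coprod_\alpha 1_\mathcal{E}$. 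You also correctly read the right-hand side of the statement as the cardinal built from $1_\mathcal{E}$ (the paper's $\coprod_\alpha 1_\mathcal{F}$ there is a typo), so nothing is missing.
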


\begin{proof}
This simply relies on the natural isomorphism $ \Gamma_\mathcal{F}f^* \simeq \Gamma_\mathcal{E}$.
\end{proof}

\subsection{Site-theoretic aspects}

Let us now give a few site-theoretic criteria to test terminal connectedness. We first discuss corresponding properties for morphisms of sites, and then for comorphisms of sites. 

\begin{remark}
We should first emphasize that things are not as well behaved as one might think. Given a morphism of site $ f : (\mathcal{C},J) \rightarrow (\mathcal{D}, K) $ with terminal object, one could restrict the lifting condition of global elements to those ones arising from those sites: that is, that any global element $1_\mathcal{D} \rightarrow  f^*C$ should come from a unique global element $ 1_\mathcal{C} \rightarrow C$ in $\mathcal{C}$ in a natural way. This rises a double warning:\begin{itemize}
    \item asking $ f$ to lift uniquely global elements may not be sufficient in general to ensure that the associated geometric morphism $\Sh(f) : \Sh(\mathcal{D}, K) \rightarrow \Sh(\mathcal{C},J)$ is terminally connected. Indeed, although the sites generate the associated topoi through colimit, it may happen that the terminal object is too ``coarse" in the topos to retrieve global elements of arbitrary objects constructed as colimits of representables. Indeed, take a global element $ a : 1_\mathcal{\Sh(\mathcal{D},K)} \rightarrow \Sh( f)^*(F) $ with  $F = \colim_{i \in I} \mathfrak{a}_J \hirayo_{C_i}$ its colimit decomposition: even though $ \Sh(f)^*(\colim_{i \in I} \mathfrak{a}_J \hirayo_{C_i}) \simeq \colim_{i \in I} \mathfrak{a}_K \hirayo_{f(C_i)}$, using lifting of elements by $ f$ would first require to lift $a$ through some member of this colimit, which is not possible in all generality. We saw at \cref{cofinal cat of representable elements} that it is possible to find, for a given geometric morphism, suitable small sites on which testing terminal connectedness was possible, but this involved special sites where representable generated objects under filtered colimits for some cardinal against which the terminal object was compact. 
    \item conversely, terminal connectedness does not imply always the possibility to lift global elements inside of the given site: if one has $a : 1_{\mathcal{D}} \rightarrow f(C) $ in $\mathcal{D}$, then one can lift uniquely the corresponding element $ \mathfrak{a}_J \hirayo_a : 1_{\Sh(\mathcal{D},K)} \rightarrow \Sh(f)^*\mathfrak{a}_J \hirayo_C$ to some $ b : 1_{\Sh(\mathcal{D},K)} \rightarrow \mathfrak{a}_J \hirayo_C$ in $\Sh(\mathcal{C},J)$, but in absence of subcanonicity, such a global element may not arise from the site. However, this is exactly the only hindrance to do so: 
\end{itemize} 
\end{remark}

\begin{proposition}\label{terco implies lift in subcanonicity}
Let $ f : (\mathcal{C},J) \rightarrow (\mathcal{D}, K) $ be a morphism of site between site with terminal object such that $ \Sh(f)$ is terminally connected. If both $ (\mathcal{C},J)$ and $(\mathcal{D},K)$ are subcanonical, then $f$ lifts uniquely global elements from $\mathcal{D}$ to $\mathcal{C}$. 
\end{proposition}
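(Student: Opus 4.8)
The plan is to reduce the claim to the defining natural isomorphism of terminal connectedness, using subcanonicity to transport everything between the sites and their sheaf topoi along the Yoneda embeddings.

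First I would record what subcanonicity buys us. Since $(\mathcal{C},J)$ is subcanonical, the Yoneda embedding $\hirayo : \mathcal{C} \hookrightarrow \Sh(\mathcal{C},J)$ is fully faithful and, limits in sheaves being computed as in presheaves, it preserves all limits present in $\mathcal{C}$; in particular it sends $1_\mathcal{C}$ to the terminal sheaf and induces a bijection $\mathcal{C}[1_\mathcal{C}, C] \simeq \Sh(\mathcal{C},J)[1_{\Sh(\mathcal{C},J)}, \hirayo_C]$, natural in $C$. The same holds for $(\mathcal{D},K)$. Moreover, because $f$ is a morphism of sites, the inverse image $\Sh(f)^*$ is the colimit-preserving extension of the Yoneda embedding of $\mathcal{D}$ composed with $f$ and sheafified; under subcanonicity the sheafifications are superfluous, so $\Sh(f)^*\hirayo_C \simeq \hirayo_{f(C)}$ naturally in $C$. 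Since inverse images preserve the terminal object and $\hirayo_{\mathcal{C}}(1_\mathcal{C})$, $\hirayo_{\mathcal{D}}(1_\mathcal{D})$ are the respective terminal sheaves, full faithfulness forces $f(1_\mathcal{C}) \simeq 1_\mathcal{D}$.

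Next I would feed the representable $\hirayo_C$ into the natural isomorphism witnessing terminal connectedness of $\Sh(f)$, namely $\Sh(\mathcal{D},K)[1, \Sh(f)^*(-)] \simeq \Sh(\mathcal{C},J)[1, -]$. Evaluated at $\hirayo_C$ and combined with $\Sh(f)^*\hirayo_C \simeq \hirayo_{f(C)}$ and the two Yoneda bijections above, this yields a bijection $\mathcal{D}[1_\mathcal{D}, f(C)] \simeq \mathcal{C}[1_\mathcal{C}, C]$, natural in $C$.

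The one point requiring care — and the main obstacle — is to verify that this transported bijection is genuinely the lifting along $f$, i.e. that it sends $\overline{a} : 1_\mathcal{C} \to C$ to $f(\overline{a}) : 1_\mathcal{D} \simeq f(1_\mathcal{C}) \to f(C)$, so that it exhibits each $a : 1_\mathcal{D} \to f(C)$ as $f$ of a unique $\overline{a}$. This follows by unwinding the terminal-connectedness isomorphism as the canonical comparison $\overline{a} \mapsto \Sh(f)^*(\hirayo_{\overline{a}})$ precomposed with the structural iso $1 \simeq \Sh(f)^*1$; since $\Sh(f)^*$ acts on representables as the Yoneda embedding of $\mathcal{D}$ applied after $f$, one has $\Sh(f)^*(\hirayo_{\overline{a}}) \simeq \hirayo_{f(\overline{a})}$, and Yoneda fullness identifies this with $f(\overline{a})$. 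Naturality in $C$ then delivers the functoriality of the lifts, completing the argument.
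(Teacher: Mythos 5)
Your proposal is correct and takes essentially the same route as the paper: both use subcanonicity to identify global elements in the sites with global elements of representables in the sheaf topoi, combine this with $\Sh(f)^*\hirayo_C \simeq \hirayo_{f(C)}$, and then apply the natural isomorphism $\Sh(\mathcal{D},K)[1, \Sh(f)^*(-)] \simeq \Sh(\mathcal{C},J)[1,-]$ defining terminal connectedness. The only difference is your final paragraph explicitly checking that the resulting bijection $\mathcal{D}[1_\mathcal{D}, f(C)] \simeq \mathcal{C}[1_\mathcal{C}, C]$ is indeed implemented by applying $f$, a compatibility the paper leaves implicit in its chain of isomorphisms.
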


\begin{proof}
    In the case where both sites are subcanonical, both $ \mathfrak{a}_J\hirayo$ and $ \mathfrak{a}_K\hirayo$ are fully faithful, and assuming $\Sh(f)$ to be terminally connected yields the following isomorphism in each $C$ of $\mathcal{C}$:
\begin{align*}
    \mathcal{D}[1_\mathcal{D}, f(C)] &\simeq \Sh(\mathcal{D},K)[ 1_{ \Sh(\mathcal{D},K)}, \Sh(f)^*(\hirayo_C)] \\
    &\simeq \Sh(\mathcal{C},J)[ 1_{ \Sh(\mathcal{C},J)},\hirayo_C]\\
    &\simeq \mathcal{C}[1_\mathcal{C}, C]
\end{align*}
\end{proof}

For the converse problem of deducing terminal connectedness from lifting of global elements from a site, we know at least two cases where it holds. The first one is cardinal-sensitive, and is actually close in mind of the argument we used at \cref{cofinal cat of representable elements}:

\begin{lemma}\label{test of lift on site}
Let $ f : \mathcal{F} \rightarrow \mathcal{E}$ be a geometric morphism; if one has a cardinal $\mu$ and a small standard site $(\mathcal{C},J)$ for $\mathcal{E}$ such that $ \mathcal{C}$ generates $\mathcal{E}$ under $ \mu$-filtered colimits and both $ 1_\mathcal{F}$ and $ 1_\mathcal{E}$ are $\mu$-compact, then $ f^*$ is terminally connected if and only if one has a natural isomorphism $ \mathcal{F}[1_\mathcal{F}, f^* \hirayo] \simeq \mathcal{C}[1_\mathcal{C}, -]$.   
\end{lemma}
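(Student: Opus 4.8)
The plan is to reformulate the statement in terms of the two $\Set$-valued functors $P := \mathcal{F}[1_\mathcal{F}, f^*(-)]$ and $Q := \mathcal{E}[1_\mathcal{E}, -]$ on $\mathcal{E}$, since $f$ is terminally connected precisely when there is a natural isomorphism $P \simeq Q$. The first — and decisive — observation is that both $P$ and $Q$ preserve $\mu$-filtered colimits: indeed $f^*$ is cocontinuous, so $P$ factors as $\mathcal{F}[1_\mathcal{F}, -]\circ f^*$ and preserves $\mu$-filtered colimits as soon as $1_\mathcal{F}$ is $\mu$-compact, while $Q$ preserves them because $1_\mathcal{E}$ is $\mu$-compact. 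This is exactly where the cardinal hypotheses are used, and it is what allows us to reduce everything to the site $\mathcal{C}$.

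For the forward direction I would simply restrict a given isomorphism $P \simeq Q$ along the (sheafified) Yoneda embedding $\iota : \mathcal{C} \to \mathcal{E}$, $C \mapsto \hirayo_C$, obtaining $\mathcal{F}[1_\mathcal{F}, f^*\hirayo_C] \simeq \mathcal{E}[1_\mathcal{E}, \hirayo_C]$; composing with the identification $\mathcal{E}[1_\mathcal{E}, \hirayo_C]\simeq \mathcal{C}[1_\mathcal{C}, C]$ — which holds because the site is standard (for a subcanonical site this is just the Yoneda lemma, $1_\mathcal{E}$ being the representable at the terminal object of $\mathcal{C}$) — yields the desired natural isomorphism on the site. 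This direction uses no compactness.

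The converse is the substance. Here I would use the hypothesis that $\mathcal{C}$ generates $\mathcal{E}$ under $\mu$-filtered colimits, i.e. that each $E$ is the $\mu$-filtered colimit $E \simeq \colim_{\iota \downarrow E}\hirayo_C$ of the representables mapping into it. Combined with the $\mu$-accessibility established above, this identifies $P$ and $Q$ with the left Kan extensions of their own restrictions along $\iota$: the pointwise formula gives $\mathrm{Lan}_\iota(P\iota)(E) = \colim_{\iota\downarrow E} P\hirayo_C \simeq P(\colim_{\iota\downarrow E}\hirayo_C) \simeq P(E)$, and likewise for $Q$. By the universal property of the left Kan extension, restriction along $\iota$ is then a bijection $\mathrm{Nat}(P,Q) \simeq \mathrm{Nat}(P\iota, Q\iota)$. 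The hypothesis provides $P\iota \simeq \mathcal{C}[1_\mathcal{C}, -]$, and the standard-site identification again gives $Q\iota \simeq \mathcal{C}[1_\mathcal{C}, -]$; composing, $P\iota \simeq Q\iota$, which lifts uniquely back to a natural isomorphism $P \simeq Q$. Hence $f$ is terminally connected.

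The main obstacle is not any single computation but the passage from a pointwise agreement on representables to a genuinely natural isomorphism over all of $\mathcal{E}$: it is tempting to decompose each $f^*E$ as a colimit of fibres and compare term by term, but naturality in $E$ then has to be checked by hand. Routing the argument through the Kan-extension characterisation of $\mu$-accessible functors sidesteps this and is the cleanest way to guarantee coherence. The one place the site structure genuinely intervenes is the identification $\mathcal{E}[1_\mathcal{E}, \hirayo_C]\simeq \mathcal{C}[1_\mathcal{C}, C]$, which is why standardness — concretely, subcanonicity together with a terminal object in $\mathcal{C}$ — is assumed; without it the target $\mathcal{C}[1_\mathcal{C}, -]$ of the hypothesised isomorphism would not compute global sections in $\mathcal{E}$, and the equivalence with terminal connectedness would break.
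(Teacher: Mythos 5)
Your proof is correct and takes essentially the same route as the paper: the paper's proof is exactly the pointwise form of your Kan-extension argument, namely the chain $\mathcal{F}[1_\mathcal{F}, f^*F] \simeq \colim_{\mathcal{C}\downarrow F}\,\mathcal{F}[1_\mathcal{F}, f^*C] \simeq \colim_{\mathcal{C}\downarrow F}\,\mathcal{E}[1_\mathcal{E}, C] \simeq \mathcal{E}[1_\mathcal{E}, F]$, using the $\mu$-compactness of $1_\mathcal{F}$ and $1_\mathcal{E}$ and the subcanonical identification $\mathcal{E}[1_\mathcal{E},\hirayo_C]\simeq\mathcal{C}[1_\mathcal{C},C]$ exactly as you do. Your packaging via $P \simeq \mathrm{Lan}_\iota(P\iota)$ and $Q \simeq \mathrm{Lan}_\iota(Q\iota)$ merely makes explicit the naturality in $F$ that the paper leaves implicit.
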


\begin{proof}
If one has a site $(\mathcal{C},J)$ with all the conditions above, then one has a sequence of natural isomorphisms 
\begin{align*}
    \mathcal{F}[1_\mathcal{F}, f^*F] &\simeq \underset{\mathcal{C}\downarrow F}{\colim} \, \mathcal{F}[1_\mathcal{F},f^*C] \\
    &\simeq \underset{\mathcal{C}\downarrow F}{\colim} \, \mathcal{E}[1_\mathcal{E},C] \\
    &\simeq \mathcal{E}[1_\mathcal{E},F]
\end{align*}
\end{proof}

\begin{remark}\label{Sufficient site}
Beware that here the condition that $ 1_\mathcal{E}$ is also $ \mu$-compact is crucial. However, we can always choose the site $(C,J)$ in such a way that this additional condition is satisfied. Take directly $ \mu$ sufficiently large to ensure that $ \mathcal{E}$ is locally $\mu$-presentable and both $ 1_\mathcal{E}, 1_\mathcal{F}$ are $ \mu$-compact, and then take the closure of $ \mathcal{E}_\mu$ under finite limits in $\mathcal{E}$. 
\end{remark}

Even better is the case where the terminal object is infinitesimal - for accessibility conditions are no longer required. Recall that a site $(\mathcal{D},K)$ is said to be \emph{local} if the terminal object is $K$-indecomposable -- that is, the only covering sieve for $1_\mathcal{D}$ is the maximal sieve, see \cite{elephant}[C3.6.3 (d)]. Then for such a site, $\Sh(\mathcal{D},K)$ is a local topos. 

\begin{lemma}\label{testing terminal connectedness with local sites}
Let $ f: (\mathcal{C}, J) \rightarrow (\mathcal{D},K)$ be a morphism of sites between small lex sites with $ (\mathcal{D}, K)$ a local site. If $f$ lifts global elements then $\Sh(f) : \Sh(\mathcal{D},J) \rightarrow \Sh(\mathcal{C},J)$ is terminally connected. Moreover, if $ (\mathcal{C},J)$ and $ (\mathcal{D},K)$ both are subcanonical, the converse also holds.  
\end{lemma}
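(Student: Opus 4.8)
The converse, under subcanonicity, is nothing but \cref{terco implies lift in subcanonicity}, so the plan is to concentrate on the forward implication. Writing $\mathcal{F} = \Sh(\mathcal{D},K)$ and $\mathcal{E} = \Sh(\mathcal{C},J)$, terminal connectedness of $\Sh(f)$ amounts to a natural isomorphism $\Gamma_\mathcal{F}\Sh(f)^* \simeq \Gamma_\mathcal{E}$ of functors $\mathcal{E} \to \Set$. The first observation I would record is that, on any site with terminal object, $\Gamma$ is computed by evaluation at the terminal object: the terminal presheaf is $\hirayo_{1_\mathcal{D}}$ (it is a singleton-valued presheaf, hence already a sheaf, so $1_\mathcal{F} = \hirayo_{1_\mathcal{D}}$), whence $\Gamma_\mathcal{F}(G) = \Sh[\hirayo_{1_\mathcal{D}}, G] = G(1_\mathcal{D})$ by Yoneda, and likewise $\Gamma_\mathcal{E}(F) = F(1_\mathcal{C})$.

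The crucial input, and the one place where locality is used, concerns how sheafification on $(\mathcal{D},K)$ interacts with evaluation at $1_\mathcal{D}$. Since $(\mathcal{D},K)$ is local, the only $K$-covering sieve of $1_\mathcal{D}$ is the maximal one; a matching family for the maximal sieve is just an element of the presheaf, so the plus-construction is the identity at $1_\mathcal{D}$, and therefore $(\mathfrak{a}_K P)(1_\mathcal{D}) = P(1_\mathcal{D})$ for every presheaf $P$. Two consequences follow: first $\Gamma_\mathcal{F}(\mathfrak{a}_K P) = P(1_\mathcal{D})$; second, because colimits of sheaves are sheafifications of pointwise presheaf colimits, $\Gamma_\mathcal{F} = \mathrm{ev}_{1_\mathcal{D}} \circ \iota$ commutes with all small colimits. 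I expect this step, rather than any of the surrounding bookkeeping, to be the genuine obstacle: it is exactly what replaces the accessibility/compactness hypotheses of \cref{test of lift on site}, and it is why the result holds with no cardinal conditions.

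With this in hand I would fix a sheaf $F$ on $\mathcal{C}$ and take its tautological presentation $F \simeq \underset{i}{\colim}\, \mathfrak{a}_J\hirayo_{C_i}$ over the category of elements of $\iota F$. Using that $\Sh(f)^*$ preserves colimits and sends $\mathfrak{a}_J\hirayo_{C}$ to $\mathfrak{a}_K\hirayo_{f(C)}$, then the two consequences of locality, then the lifting hypothesis $\mathcal{D}[1_\mathcal{D}, f(-)] \simeq \mathcal{C}[1_\mathcal{C}, -]$, and finally density in presheaves, one obtains
\begin{align*}
\Gamma_\mathcal{F}\,\Sh(f)^*F &\simeq \Gamma_\mathcal{F}\big(\underset{i}{\colim}\; \mathfrak{a}_K\hirayo_{f(C_i)}\big) \simeq \underset{i}{\colim}\; \Gamma_\mathcal{F}\,\mathfrak{a}_K\hirayo_{f(C_i)} \\
&\simeq \underset{i}{\colim}\; \hirayo_{f(C_i)}(1_\mathcal{D}) = \underset{i}{\colim}\; \mathcal{D}[1_\mathcal{D}, f(C_i)] \\
&\simeq \underset{i}{\colim}\; \mathcal{C}[1_\mathcal{C}, C_i] \simeq F(1_\mathcal{C}) = \Gamma_\mathcal{E}F ,
\end{align*}
where the final isomorphism is density for $\iota F \simeq \underset{i}{\colim}\, \hirayo_{C_i}$ evaluated at $1_\mathcal{C}$ (presheaf colimits being pointwise, this needs no locality on the $\mathcal{C}$ side). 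Each isomorphism is natural in $F$ — the decomposition is functorial, and the lifting step is natural by assumption — so composing them yields $\Gamma_\mathcal{F}\Sh(f)^* \simeq \Gamma_\mathcal{E}$, i.e.\ $\Sh(f)$ is terminally connected. This completes the forward direction, and as noted the subcanonical converse is \cref{terco implies lift in subcanonicity}.
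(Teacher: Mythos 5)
Your proof is correct, but it takes a genuinely different route from the paper's. The paper argues element-by-element: given a global element $a : 1_{\Sh(\mathcal{D},K)} \rightarrow \Sh(f)^*E$, it first uses localness to see that $1_{\Sh(\mathcal{D},K)}$ is tiny, so that $a$ factors (essentially uniquely) through a single $\Sh(f)^*\mathfrak{a}_J\hirayo_C \simeq \mathfrak{a}_K\hirayo_{f(C)}$; it then confronts the non-subcanonicity of $K$ head-on, describing a map $1_{\Sh(\mathcal{D},K)} \rightarrow \mathfrak{a}_K\hirayo_{f(C)}$ as induced by a $K$-cover $(u_i : D_i \rightarrow 1_\mathcal{D})$ with a compatible cocone $(v_i : D_i \rightarrow f(C))$, and uses localness a second time to split the cover ($1_\mathcal{D} = u_i w$), producing an honest arrow $v_i w : 1_\mathcal{D} \rightarrow f(C)$ in $\mathcal{D}$ to which the lifting hypothesis applies. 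You instead prove the natural isomorphism $\Gamma_\mathcal{F}\Sh(f)^* \simeq \Gamma_\mathcal{E}$ globally: your key observation — that the only covering sieve of $1_\mathcal{D}$ being maximal makes the plus-construction, hence $\mathfrak{a}_K$, invisible at $1_\mathcal{D}$, so that $\Gamma_\mathcal{F} = \mathrm{ev}_{1_\mathcal{D}}$ is cocontinuous — fuses the paper's two uses of localness (tininess of $1$ and cover-splitting) into one clean site-level statement, after which the tautological density colimit $F \simeq \colim_i \mathfrak{a}_J\hirayo_{C_i}$ reduces everything to representables, where the hypothesis applies; your remark that the $\mathcal{C}$-side comparison only needs pointwise presheaf colimits (not cocontinuity of $\Gamma_\mathcal{E}$) correctly handles the asymmetry. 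What each approach buys: yours is shorter, avoids any explicit manipulation of matching families for non-subcanonical $K$, and makes transparent why no cardinality hypotheses (as in \cref{test of lift on site}) are needed; the paper's is more constructive, exhibiting the lift of each global element concretely, which makes the uniqueness claim visible by hand. One point you pass over quickly, though at the same level of rigor as the paper's own arguments of this kind (e.g.\ \cref{test of lift on site}): terminal connectedness asks that the \emph{canonical} comparison $\Gamma_\mathcal{E} \rightarrow \Gamma_\mathcal{F}\Sh(f)^*$ be invertible, so one should note that each isomorphism in your chain is the canonical one (your lifting step is by hypothesis, and $\Sh(f)^*\mathfrak{a}_J\hirayo \simeq \mathfrak{a}_K\hirayo f$ is the standard comparison), so that the composite is indeed the canonical map; this is routine to check and does not affect the argument.
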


\begin{proof}
Suppose that $f$ lifts global elements, so that we have a natural isomorphism $ \mathcal{D}[1_\mathcal{D}, f] \simeq \mathcal{C}[1_\mathcal{C}, -]$. We prove this suffices to lift also global elements of $\Sh(f)^*$ that arise in $\Sh(\mathcal{D},K)$. Let be $a : 1_{\Sh(\mathcal{D},K)} \rightarrow \Sh(f)^*E$ with $E$ a object of $\Sh(\mathcal{C},J)$. Since $ \mathcal{C}$ generates $\mathcal{E}$ under colimits and $ 1_{\Sh(\mathcal{D},K)}$ is tiny by localness of ${\Sh(\mathcal{D},K)}$, there exists a factorization of the form
\[\begin{tikzcd}
	{1_{\Sh(\mathcal{D},K)}} & {\Sh(f)^*E} \\
	& {\Sh(f)^*\mathfrak{a}_J\hirayo_C}
	\arrow["a", from=1-1, to=1-2]
	\arrow["b"', from=1-1, to=2-2]
	\arrow["{\Sh(f)^*(x)}"', from=2-2, to=1-2]
\end{tikzcd}\]
which is moreover essentially unique in the sense that for any two such lifts are identified with $a$ in the colimit $ \colim_{(C,a) \in \mathfrak{a}_J\hirayo\downarrow E} \Sh(\mathcal{D},K)[1_{\Sh(\mathcal{D},K)}, \Sh(f)^*\hirayo_C] $.

As $ \Sh(f)^*$ restricts as $f$ from $\mathcal{C}$ to $\mathcal{D}$, one has $ \Sh(f)^*\mathfrak{a}_J\hirayo_C \simeq \mathfrak{a}_K \hirayo_{f(C)}$ 
It suffices then to prove one can lift uniquely global elements of the form $b: 1_{\Sh(\mathcal{D},K)} \rightarrow \mathfrak{a}_K \hirayo_{f(C)} $. Beware that such maps do not necessarily come from $ \mathcal{D}$ for the topology $ K$ is not supposed to be subcanonical so $\mathfrak{a}_K\hirayo$ may not be fully faithful. However, since $K$ covers are sent to canonical cover in $\Sh(\mathcal{D},K)$, and the terminal object comes from the terminal object of the site $ 1_{\Sh(\mathcal{D},K)} \simeq \mathfrak{a}_K\hirayo_{1_\mathcal{D}}$, we know that any such global element $b$ is induced through the universal property of colimits by the image of a $K$-cover $ (u_i : D_i \rightarrow 1_{\mathcal{D}})_{i \in I}$ together with a cocone $ (v_i : D_i \rightarrow f(C))_{i \in I}$ such that for any $i$ in $I$ one has
\[\begin{tikzcd}
	{\mathfrak{a}_K\hirayo_{D_i}} \\
	{1_{\Sh(\mathcal{D},K)}} & {\mathfrak{a}_K\hirayo_{f(C)}}
	\arrow["b"', from=2-1, to=2-2]
	\arrow["{\mathfrak{a}_K\hirayo_{u_i}}"', from=1-1, to=2-1]
	\arrow["{\mathfrak{a}_K\hirayo_{v_i}}", from=1-1, to=2-2]
\end{tikzcd}\]
But by localness of the site (equivalently, by localness of $\Sh(\mathcal{D},K)$), the identity of $1_{\mathcal{D}} $ factorizes through some $ u_i$ as $ 1_{\mathcal{D}} = u_iw $ in $\mathcal{D}$. This factorization is essentially unique in the sense that for any other factorization through $ 1_{\mathcal{D}}= u_{i'}w'$ then $ w,w'$ factorize through the pullback $D_i \times_D D_{i'} $ and are hence identified in the corresponding colimit in $\Sh(\mathcal{D},K)$. Hence, we get an element $ v_i w : 1_{\mathcal{D}} \rightarrow f(C)$ in $\mathcal{D}$, which comes uniquely from an element $ \overline{c}:  1_{\mathcal{C}} \rightarrow C$ in $\mathcal{C}$. For $ w $ is a section of $ u_i$, we have that $f(c) = b$. 
\end{proof}

One may ask dually what kind of \emph{comorphisms of sites} induce terminally connected geometric morphisms: this question was answered for the case of essential terminally connected geometric in \cite{caramello2020denseness} in term of an notion of \emph{cofinality relative to a Grothendieck topology}. 

\begin{definition}
A functor $ f: A \rightarrow \mathcal{C}$ is \emph{$J$-cofinal} relative to a Grothendieck topology $J$ on $\mathcal{C}$ if it satisfies the following two conditions:\begin{itemize}
    \item for any $c$ in $\mathcal{C}$, there is a $J$-cover $(u_i : C_i \rightarrow c)_{i \in I}$ and for each $i \in I$ an object $a_i$ of $A$ and an arrow $ v_i : c_i \rightarrow f(a_i)$;
    \item for any span $u: c \rightarrow f(a)$, $u': c \rightarrow f(a')$ in $\mathcal{C}$ there is a $J$-cover $(u_i : C_i \rightarrow c)_{i \in I}$ such that for each $i \in I$ the composites $ uu_i$ and $ u'u_i$ belong to the same connected component of $c_i \downarrow f$.   
\end{itemize} 
\end{definition}

\begin{proposition}[{\cite{caramello2020denseness}[Proposition 4.64]}]
Let $ f: (\mathcal{C}, J) \rightarrow (\mathcal{D},K)$ be a continuous comorphism of small generated sites. Then the induced geometric morphism $C_f : \Sh(\mathcal{C}, J) \rightarrow \Sh(\mathcal{D},K)$ is terminally connected if and only if $f$ is $ K$-cofinal.
\end{proposition}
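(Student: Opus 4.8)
The plan is to reduce terminal connectedness of $C_f$ to a combinatorial statement about the functor $f$, and then match that statement, clause by clause, with the definition of $K$-cofinality. Recall that for a comorphism of sites the inverse image is computed as restriction along $f$ followed by sheafification, so that for $G \in \Sh(\mathcal{D},K)$ one has $C_f^*(G) \simeq \mathfrak{a}_J\big(C \mapsto G(f(C))\big)$, while the global sections of a sheaf are compatible families, $\Gamma_{\Sh(\mathcal{D},K)}(G) \simeq \lim_{\mathcal{D}^\op} G$. Since $C_f^*$ is left exact it preserves the terminal object, so there is a canonical comparison
\[ \theta_G : \Gamma_{\Sh(\mathcal{D},K)}(G) \longrightarrow \Gamma_{\Sh(\mathcal{C},J)}(C_f^* G) \]
sending a compatible family $(s_D)_{D}$ to the class of its restriction $(s_{f(C)})_{C}$ in the sheafification of $C \mapsto G(f(C))$; this is exactly the unit-induced comparison of \cref{relation with units}. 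By the definition of terminally connected morphisms, $C_f$ is terminally connected precisely when $\theta_G$ is a bijection, naturally in $G$, so the whole proof amounts to characterizing this.

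Stripped of the two topologies, $\theta_G$ would be an isomorphism for all $G$ exactly when precomposition with $f$ leaves the limit $\lim_{\mathcal{D}^\op}(-)$ unchanged, that is when $f^\op$ is initial in the sense of \cref{initial functors}: concretely, when each comma category $D \downarrow f$ of arrows $D \to f(C)$ is nonempty and connected. The topologies turn this into a \emph{local} statement, and the essential point is that, by continuity of the comorphism, every $J$-cover needed on the domain arises by pulling back a $K$-cover of an object of the form $f(C)$; this is why only the topology $K$ survives in the criterion. I would therefore organize the equivalence so that \emph{$K$-local nonemptiness} of the categories $D \downarrow f$ corresponds to surjectivity of $\theta$ and their \emph{$K$-local connectedness} to its injectivity.

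For the direction \emph{$K$-cofinal $\Rightarrow$ terminally connected} I follow the lifting-of-global-elements strategy of \cref{testing terminal connectedness with local sites}, but carried out for all $G$ rather than only representables. A global section of $C_f^* G$ is presented $J$-locally by elements of $G(f(C))$; the first cofinality clause provides, for each $D$, a $K$-cover $(D_i \to D)_i$ together with maps $v_i : D_i \to f(C_i)$, and pulling the local data back along the $v_i$ and gluing by the sheaf condition on $G$ produces a section $s_D \in G(D)$, giving surjectivity of $\theta$. The second clause, asserting that any two maps $D \to f(C)$, $D \to f(C')$ become connected in the comma categories after a $K$-cover, guarantees that $s_D$ is independent of all the choices and that two global sections with the same restriction along $f$ already coincide; this yields well-definedness of the lift and injectivity of $\theta$. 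Continuity of $f$ is used throughout to ensure that the glued families are genuine $J$-local sections. For the converse I would test the bijection on representable sheaves $G = \mathfrak{a}_K\hirayo_D$: surjectivity of $\theta_{\mathfrak{a}_K\hirayo_D}$ applied to the tautological local section forces a $K$-cover of $D$ with legs into images $f(C_i)$ (the first bullet), while injectivity applied to a span $D \to f(C)$, $D \to f(C')$ forces the required $K$-local connectedness (the second bullet).

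The step I expect to be the main obstacle is the interaction of the two sheafifications. Global sections of $C_f^*G$ are only $J$-\emph{locally} defined matching families, so they cannot be read off directly from the values of $G$, and one must carefully track how a $K$-cover on $\mathcal{D}$ yields, through continuity of the comorphism, the $J$-cover witnessing a given local section; this is the precise mechanism by which $J$ disappears from the final criterion. Equivalently, since $\Gamma$ does not commute with the colimits presenting an arbitrary $G$ as a colimit of representables, the treatment of general $G$ in the sufficiency direction must proceed through the sheaf condition rather than by a formal cocontinuity argument, and it is exactly in this bookkeeping that the two clauses of $K$-cofinality are consumed.
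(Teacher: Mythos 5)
Your converse direction contains the genuine gap. You propose to test the comparison $\theta$ on representables $G = \mathfrak{a}_K\hirayo_D$ and to apply surjectivity to ``the tautological local section'' --- but no such section exists. Terminal connectedness constrains only \emph{global} elements $1 \rightarrow C_f^*G$, and a global element of $C_f^*\mathfrak{a}_K\hirayo_D \simeq \mathfrak{a}_J\big(\mathcal{D}(f(-),D)\big)$ is a compatible family of ($J$-locally given) arrows $f(C) \rightarrow D$, of which there is no canonical choice; worse, if one existed, surjectivity of $\theta$ would produce a global section of $\mathfrak{a}_K\hirayo_D$, i.e.\ a $K$-local point of $D$, which $K$-cofinality certainly does not assert (its first clause gives maps \emph{from} a cover of $D$ into the image of $f$, not sections over $D$). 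The correct test objects are not representables. Since $f$ is a \emph{continuous} comorphism, $C_f$ is essential (this is the content of \cite{caramello2020denseness}[Theorem 4.20], quoted in the paper) with $(C_f)_! \simeq \mathfrak{a}_K \circ \mathrm{Lan}_f$, and $\mathrm{Lan}_f(1)$ is the presheaf $D \mapsto \pi_0(D \downarrow f)$ of connected components of the comma categories. By the paper's proposition that an essential geometric morphism is terminally connected iff $f_!(1) \simeq 1$ --- where the genuinely tautological global element is the unit $\eta_1 : 1 \rightarrow C_f^*(C_f)_!1$, cf.\ \cref{relation with units} --- terminal connectedness is equivalent to $\mathfrak{a}_K\big(\pi_0(-\downarrow f)\big) \simeq 1$, and local surjectivity and local injectivity of the map $\pi_0(-\downarrow f) \rightarrow 1$ are verbatim the two clauses of $K$-cofinality. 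This is the intended proof, and it makes both directions a two-line computation; your converse as written does not recover either clause.

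In the forward direction your strategy is workable but you invoke continuity with the wrong content: the claim that ``every $J$-cover needed on the domain arises by pulling back a $K$-cover of an object of the form $f(C)$'' is not the definition of continuity and is precisely the unjustified step your gluing needs when you extract local data from the sheafification $\mathfrak{a}_J(G \circ f)$. What continuity actually gives is that $G \circ f$ is already a $J$-sheaf whenever $G$ is a $K$-sheaf, so $C_f^*G = G \circ f$ with no sheafification at all, and a global element of $C_f^*G$ is an honest compatible family $(t_C \in G(f(C)))_C$. With that correction, your gluing does go through: clause 1 produces for each $D$ a $K$-cover $(u_i : D_i \rightarrow D)$ with legs $v_i : D_i \rightarrow f(C_i)$ and candidate sections $G(v_i)(t_{C_i})$; clause 2, applied to spans obtained from arbitrary maps into pullback pairs, together with separatedness of $G$, shows these match and that the glued $s_D$ is independent of choices (injectivity of $\theta$ in fact needs only clause 1 plus separatedness). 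The reduction of terminal connectedness to the canonical unit-induced comparison $\theta$ being invertible is itself fine, by \cref{1 is orthogonal}. So: the sufficiency half is a correct, if longer, substitute for the computation $(C_f)_!1 = \mathfrak{a}_K\big(\pi_0(-\downarrow f)\big)$ once continuity is used correctly, but the necessity half needs to be redone along the lines above.
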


\begin{remark}
    This result characterizes comorphism presentations of \emph{essential} terminally connected geometric morphisms, as continuous comorphisms are those which induce an essential geometric morphism, see \cite{caramello2020denseness}[theorem 4.20].
\end{remark}

\subsection{Stability properties}

Now, we turn to stability properties of terminally connected geometric morphism. Although terminally connected geometric morphisms are not expected to be stable under pullback, there is another kind of stability they do satisfy, which is also commonly encountered amongst other classes of geometric morphisms. We say that a squares in $\GTop$ as below
\[\begin{tikzcd}
	{\mathcal{H}} & {\mathcal{F}} \\
	{\mathcal{G}} & {\mathcal{E}}
	\arrow["q", from=1-1, to=1-2]
	\arrow["t", from=1-2, to=2-2]
	\arrow["p"', from=1-1, to=2-1]
	\arrow["f"', from=2-1, to=2-2]
\end{tikzcd}\]
satisfies the \emph{(right) Beck-Chevalley condition} if the canonical transformation $ t^*f_* \rightarrow q_*p^*$ is a natural isomorphism. 

\begin{lemma}
    Let be a square as above satisfying the Beck-Chevalley condition; then $p$ is terminally connected whenever $t$ is. 
\end{lemma}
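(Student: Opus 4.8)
The plan is to rephrase terminal connectedness in terms of the global sections functor and then to splice together the Beck--Chevalley isomorphism with the hypothesis on $t$. Recall from the remark following the definition of terminal connectedness that a geometric morphism $g \colon \mathcal{A} \to \mathcal{B}$ is terminally connected precisely when $\Gamma_\mathcal{A} g^* \simeq \Gamma_\mathcal{B}$, where $\Gamma_\mathcal{A} = (\gamma_\mathcal{A})_*$ is the direct image of the terminal geometric morphism $\gamma_\mathcal{A} \colon \mathcal{A} \to \Set$ (indeed $\gamma_\mathcal{A}^*(1) \simeq 1_\mathcal{A}$, so $(\gamma_\mathcal{A})_*(-) = \mathcal{A}[1_\mathcal{A},-]$). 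In these terms the hypothesis on $t \colon \mathcal{F} \to \mathcal{E}$ reads $(\gamma_\mathcal{F})_*\, t^* \simeq (\gamma_\mathcal{E})_*$, and what must be produced is a natural isomorphism $(\gamma_\mathcal{H})_*\, p^* \simeq (\gamma_\mathcal{G})_*$.

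First I would exploit the (bi)terminality of $\Set$ in $\GTop$: the terminal morphisms are unique up to equivalence, so the commutativity $tq \simeq fp$ of the square upgrades to coherent equivalences $\gamma_\mathcal{H} \simeq \gamma_\mathcal{F}\, q$, $\gamma_\mathcal{G} \simeq \gamma_\mathcal{E}\, f$ (and $\gamma_\mathcal{F} \simeq \gamma_\mathcal{E}\, t$). Passing to direct images, which compose covariantly, gives $(\gamma_\mathcal{H})_* \simeq (\gamma_\mathcal{F})_*\, q_*$ and $(\gamma_\mathcal{G})_* \simeq (\gamma_\mathcal{E})_*\, f_*$. The heart of the argument is then a short chain of natural isomorphisms of functors $\mathcal{G} \to \Set$:
\[ (\gamma_\mathcal{H})_*\, p^* \;\simeq\; (\gamma_\mathcal{F})_*\, q_*\, p^* \;\simeq\; (\gamma_\mathcal{F})_*\, t^*\, f_* \;\simeq\; (\gamma_\mathcal{E})_*\, f_* \;\simeq\; (\gamma_\mathcal{G})_*, \]
where the outer isomorphisms are the terminal-morphism identities just recorded, the second is the Beck--Chevalley isomorphism $q_*\, p^* \simeq t^*\, f_*$ supplied by hypothesis, and the third is terminal connectedness of $t$ whiskered on the right by $f_*$. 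This exhibits $\Gamma_\mathcal{H}\, p^* \simeq \Gamma_\mathcal{G}$, so $p^*$ lifts global elements and $p$ is terminally connected.

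The step I expect to require the most care is not any single isomorphism but their coherence: one must check that the composite natural isomorphism is the \emph{canonical} comparison defining terminal connectedness of $p$ (sending $\overline{a}$ to $p^*(\overline{a})$), rather than merely an abstract isomorphism. I would verify this by evaluating the whole chain on an arbitrary global element $a \colon 1_\mathcal{G} \to G$ and confirming it is carried to $p^*(a) \colon 1_\mathcal{H} \to p^*G$; this amounts to tracking the mate construction underlying the Beck--Chevalley transformation (the mate of $p^* f^* \simeq q^* t^*$) against the coherence isomorphisms between the terminal geometric morphisms and the square's filler $tq \simeq fp$. Granting this bookkeeping, the inverse image of $p$ lifts global elements uniquely and naturally, which is exactly the assertion.

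Note finally that no essentiality of $t$ or $p$ is needed, so the argument applies to arbitrary geometric morphisms; the only inputs are the functoriality of direct images, the terminality of $\Set$, and the Beck--Chevalley hypothesis, each of which is already available.
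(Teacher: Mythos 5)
Your proof is correct and is essentially the paper's own argument: the paper runs the very same chain pointwise, transposing a global element $a \colon 1_\mathcal{H} \rightarrow p^*G$ along $q^* \dashv q_*$ (via $1_\mathcal{H} \simeq q^*1_\mathcal{F}$), applying the Beck--Chevalley isomorphism $q_*p^* \simeq t^*f_*$ at $G$, lifting along $t$ by terminal connectedness, and transposing back along $f^* \dashv f_*$ --- which is precisely your composite $\Gamma_\mathcal{H}\,p^* \simeq \Gamma_\mathcal{F}\,q_*\,p^* \simeq \Gamma_\mathcal{F}\,t^*\,f_* \simeq \Gamma_\mathcal{E}\,f_* \simeq \Gamma_\mathcal{G}$ evaluated objectwise. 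The coherence bookkeeping you defer at the end is in fact automatic: since $\Gamma_\mathcal{G} = \mathcal{G}[1_\mathcal{G},-]$ is representable and $\mathcal{H}[1_\mathcal{H}, p^*1_\mathcal{G}]$ is a singleton, the Yoneda lemma shows the canonical comparison is the \emph{unique} natural transformation $\Gamma_\mathcal{G} \Rightarrow \Gamma_\mathcal{H}\,p^*$, so the existence of any natural isomorphism already forces the canonical one to be invertible.
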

\begin{proof}
    Take a global element $ a : 1_\mathcal{H} \rightarrow p^*G $: for $ 1_\mathcal{H} \simeq q^*1_\mathcal{F}$, this global element corresponds uniquely by adjunction to a global element $ a' : 1_\mathcal{F} \rightarrow q_*p^*G$, but the later is canonically isomorphic to $t^*f_*G$: for $t$ is terminally connected, $a'$ lifts uniquely to a global element $ a'': 1_\mathcal{E} \rightarrow f_*G$, whence again throuhg adjunction a global element $ a''': f^*1_\mathcal{E} \simeq 1_\mathcal{F} \rightarrow G$.  
\end{proof}
From this observation, we can improve a bit the stability condition along pullbacks. There is an important class of geometric morphisms known as \emph{tidy}, those $f : \mathcal{F} \rightarrow \mathcal{E} $ whose direct image $f_* $ preserves filtered $\mathcal{E}$-indexed colimits. We choose not to say more about this definition -- an explanation of which can be found at \cite{elephant}[Definition C3.4.2] ; let us rather recall the following property, which is \cite{elephant}[Theorem 3.4.7]: take a square of geometric morphisms as above, such that $f$ is tidy ; if the square is a pullback square, then not only $q$ is also tidy, but then the square satisfies the Beck-Chevalley condition. This property, conjointly with the previous lemma, immediately yields the following:

\begin{proposition}
    Terminally connected geometric morphisms are stable under pullback along tidy geometric morphisms. 
\end{proposition}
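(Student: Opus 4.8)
The plan is to derive this proposition as a direct consequence of the preceding lemma on Beck-Chevalley squares together with the recalled pullback-stability of tidiness. Concretely, I would start from a bipullback square of the displayed shape in which $f$ is tidy and $t$ is terminally connected, so that $p$ is precisely the pullback of $t$ along $f$, and aim to conclude that $p$ is terminally connected.

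The first step is to observe that tidiness of $f$ is exactly what is needed to force the Beck-Chevalley condition on this square. Indeed, by \cite{elephant}[Theorem C3.4.7], whenever $f$ is tidy and the square is a pullback, the projection $q$ is again tidy and, crucially, the canonical comparison $t^*f_* \to q_*p^*$ is an isomorphism. Thus the square satisfies the (right) Beck-Chevalley condition with no further hypotheses, the tidiness of $f$ being precisely the ingredient that upgrades the mere pullback property into the stronger Beck-Chevalley property.

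The second step is then to feed this into the previous lemma, which asserts that in any square satisfying the Beck-Chevalley condition the left leg $p$ inherits terminal connectedness from the right leg $t$. Since $t$ is terminally connected by assumption, the lemma immediately delivers terminal connectedness of $p$, completing the argument.

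I do not expect a genuine obstacle here: the entire weight of the statement rests in the two cited inputs -- the tidy pullback theorem and the Beck-Chevalley lemma -- and the proposition is simply their concatenation. The only point deserving a moment of care is matching the direction of the Beck-Chevalley transformation $t^*f_* \to q_*p^*$ appearing in the lemma with the one guaranteed by the tidy pullback theorem; since both are the same canonical comparison for the same square, they coincide, and no reconciliation is needed.
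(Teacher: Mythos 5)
Your proposal is correct and follows exactly the paper's argument: the paper likewise obtains the proposition as the immediate concatenation of \cite{elephant}[Theorem C3.4.7] (a pullback square along a tidy morphism satisfies the Beck-Chevalley condition) with the preceding lemma transferring terminal connectedness across Beck-Chevalley squares. There is nothing to add; your remark about matching the direction of the canonical comparison $t^*f_* \to q_*p^*$ is also consistent with the paper's setup.
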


This is not the only stability property of terminally connected geometric morphisms. Not only, as we saw at \cref{terco are stable along bicocomma}, terminally connected are stable left to bicocomma squares, a property that is related to the lax orthogonality of the comprehensive factorization of geometric morphism, but in fact, they also enjoy a dual property relative to bicomma squares:

\begin{proposition}\label{terco are stable left to bicomma}
Terminally connected geometric morphisms are stable left to bicomma squares: that is, in a bicomma square as below in $\GTop$
\[\begin{tikzcd}
	{t \downarrow f} & {\mathcal{F}} \\
	{\mathcal{G}} & {\mathcal{E}}
	\arrow[""{name=0, pos=0.5, inner sep=0}, "t", from=1-2, to=2-2]
	\arrow["f"', from=2-1, to=2-2]
	\arrow[""{name=1, pos=0.45, inner sep=0}, "{p_1}"', from=1-1, to=2-1]
	\arrow["{p_2}", from=1-1, to=1-2]
	\arrow["{\phi_{t,f}}"', shorten <=6pt, shorten >=6pt, Rightarrow, from=0, to=1]
\end{tikzcd}\]
then $ p_1$ is terminally connected whenever $t $ is.
\end{proposition}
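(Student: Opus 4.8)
The plan is to reduce the statement to the Beck--Chevalley Lemma proved just above, by showing that the bicomma square of topoi satisfies the (right) Beck--Chevalley condition. Writing $\mathcal{H} = t \downarrow f$, I want to produce a natural isomorphism $t^* f_* \simeq (p_2)_* (p_1)^*$ of functors $\mathcal{G} \to \mathcal{F}$. Once this is in hand the conclusion is immediate: the proof of the preceding Lemma uses only the canonical isomorphism $1_\mathcal{H} \simeq (p_2)^* 1_\mathcal{F}$ (valid since inverse images preserve the terminal object) together with the Beck--Chevalley isomorphism and the formal adjunctions $f^* \dashv f_*$, $(p_2)^* \dashv (p_2)_*$; it never invokes the strict or pseudo commutativity of the square. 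Hence it applies unchanged to the lax bicomma square and shows that $p_1$ lifts global elements uniquely whenever $t$ does, so that $p_1$ is terminally connected.

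The heart of the argument is therefore the Beck--Chevalley isomorphism, which I would establish through a site presentation of the bicomma topos. Dually to the description of the bicocomma topos used in \cref{terco are stable along bicocomma} (whose underlying category was the comma $f^* \downarrow t^*$), the bilimit $t \downarrow f$ is presented as sheaves on the cocomma (collage) of the inverse image functors $t^* : \mathcal{E} \to \mathcal{F}$ and $f^* : \mathcal{E} \to \mathcal{G}$, the two inclusions being the inverse images $(p_2)^*$ and $(p_1)^*$ of the projections and the connecting family being the inverse image of the universal $2$-cell $\phi_{t,f}$. In this presentation $(p_2)^* A$ and $(p_1)^* G$ are the images of representables, and the cross-hom from the $\mathcal{F}$-side to the $\mathcal{G}$-side is computed by the collage coend
\[ \mathcal{H}[(p_2)^* A, (p_1)^* G] \;\simeq\; \int^{E \in \mathcal{E}} \mathcal{F}(A, t^* E) \times \mathcal{G}(f^* E, G). \]
Rewriting $\mathcal{G}(f^* E, G) \simeq \mathcal{E}(E, f_* G)$ via $f^* \dashv f_*$ and applying the co-Yoneda lemma in the variable $E$, this coend collapses to $\mathcal{F}(A, t^* f_* G)$, naturally in $A$ and $G$. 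As $A$ ranges over a generating family of $\mathcal{F}$ this exhibits $(p_2)_* (p_1)^* G \simeq t^* f_* G$, and one checks that the resulting isomorphism is indeed the canonical Beck--Chevalley mate.

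I expect the main obstacle to be the passage from the collage of sites to the bicomma topos itself: the coend formula for cross-homs is transparent at the level of the generating lex sites, but one must verify that sheafification for the induced topology does not alter the relevant hom-sets, i.e. that $\mathcal{H}[(p_2)^* A, (p_1)^* G]$ is genuinely computed by the site-level coend. This is exactly the kind of subtlety flagged in \cref{cofinal cat of representable elements} and in the later site-theoretic remarks, where global and generalized elements in a sheaf topos need not be visible on an arbitrary site; note moreover that, in contrast with the bicocomma case, a bilimit of topoi corresponds to a \emph{pseudocolimit} of sites and so genuinely requires sheafification. I would control this by choosing the presentation carefully (subcanonical lex sites, with $(p_1)^* G$ a sheaf-representable against which the colimit defining the coend is preserved), so that the co-Yoneda reduction descends from the site to the topos. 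The residual checks --- that $1_\mathcal{H} \simeq (p_2)^* 1_\mathcal{F}$, that the exhibited isomorphism coincides with the canonical mate, and the final appeal to the Beck--Chevalley Lemma --- are then routine.
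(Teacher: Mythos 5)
Your framing is sound on two points the paper itself relies on: the Beck--Chevalley lemma preceding this proposition uses nothing about the square beyond the canonical isomorphism $t^*f_* \simeq (p_2)_*(p_1)^*$ together with $1 \simeq (p_2)^*1_\mathcal{F}$ and the adjunctions, so it does apply verbatim to the lax bicomma square (with the canonical transformation built as the mate of the inverse image of $\phi_{t,f}$); and the bicomma topos is indeed presented by sheaves on the collage of $t^*$ and $f^*$ --- the very cocomma site the paper constructs --- whose cross-homs from the $\mathcal{F}$-side to the $\mathcal{G}$-side are given by your coend $\int^{E}\mathcal{F}(A,t^*E)\times\mathcal{G}(f^*E,G)$, and your co-Yoneda collapse of it to $\mathcal{F}(A,t^*f_*G)$ is correct at the level of the collage category. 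The full Beck--Chevalley condition for bicomma squares of Grothendieck topoi is moreover a true theorem (due to Pitts), so your route can be closed by citing it.

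As a self-contained argument, however, there is a genuine gap, and it sits exactly where you flagged it: the passage of the coend from the site to the topos. The site is not the bare collage but its lex completion (the bicocomma is computed in $\Lex$), equipped with the topology generated by the images of the canonical covers of $\mathcal{F}$ and of $\mathcal{G}$; closure under pullback then produces covering sieves of an object $A$ on the $\mathcal{F}$-side obtained by pulling back $\mathcal{G}$-covers along cross-morphisms, and these pullbacks are precisely the new mixed objects created by the lex completion. The sheaf condition for the cross-hom presheaf along such sieves --- which is what ``sheafification does not alter the relevant hom-sets'' amounts to --- is not secured by ``choosing the presentation carefully''; it is the actual content of Pitts' theorem and is harder than the proposition being proved (note also that subcanonical small sites cannot be arranged as casually as you suggest, since $t^*$ and $f^*$ need not restrict to chosen small sites). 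What rescues the argument --- and is exactly how the paper proceeds --- is that the Beck--Chevalley lemma only ever uses the single instance $A=1_\mathcal{F}$ of your coend: in the lex cocomma the component $\psi_{1_\mathcal{E}}$ becomes the identity of the terminal object, so a global element of $q_1(G)$ is either an element of $G$ in $\mathcal{G}$ or a composite $1 \rightarrow t^*E \xrightarrow{\psi_E} f^*E \rightarrow G$, and terminal connectedness of $t$ applied to the middle leg, together with naturality of $\psi$, produces the lift directly --- your coend manipulation specialized to the terminal object, with no direct-image comparison needed. So either weaken your Beck--Chevalley input to its global-sections instance (at which point your proof essentially becomes the paper's), or invoke Pitts' theorem for the full condition; also keep the check that your isomorphism is the canonical mate, since terminal connectedness demands invertibility of the canonical comparison $\mathcal{G}[1_\mathcal{G},-]\rightarrow \mathcal{H}[1_\mathcal{H},p_1^*(-)]$, not merely the existence of some natural bijection.
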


\begin{proof}
The bicomma can be presented as the sheaf topos over the small generated site obtained as the bicocomma in $\Lex$ of the canonical site (where $ J_{t,f}$ is the topology induced by bicocomma inclusions):
\[\begin{tikzcd}
	{(\mathcal{E}, J_\can)} & {(\mathcal{F}, J_\can)} \\
	{(\mathcal{G}, J_\can)} & {(t^* \uparrow f^*, J_{t,f})}
	\arrow[""{name=0, anchor=center, inner sep=0}, "f^*"', from=1-1, to=2-1]
	\arrow["t^*", from=1-1, to=1-2]
	\arrow["q_1"', from=2-1, to=2-2]
	\arrow["q_2", ""{name=1, anchor=center, inner sep=0}, from=1-2, to=2-2]
	\arrow["{\psi_{t^*,f^*}}"', shorten <=10pt, shorten >=10pt, Rightarrow, from=1, to=0]
\end{tikzcd}\]
By a pseudomonadicity argument, this bicolimit in $\Lex$ is actually the free cartesian category over the cocomma $t^* \uparrow f^*$ computed in $\Cat$; but the latter is the category defined as follows:\begin{itemize}
    \item its objects are the disjoint union of the objects of $\mathcal{F}$ and the objects of $\mathcal{G}$
    \item it has as arrows the arrows of $\mathcal{F}$ between objects of $\mathcal{F}$, the arrows of $\mathcal{G}$ between the objects of $\mathcal{G}$, and for each object of $\mathcal{E}$, one has a formal arrow $ \psi_E : t^*(E) \rightarrow f^*E$;
    \item then one adds all composites of the form 
\[\begin{tikzcd}
	F & {t^*E} & {f^*E} & G
	\arrow["a", from=1-1, to=1-2]
	\arrow["{\psi_E}", from=1-2, to=1-3]
	\arrow["b", from=1-3, to=1-4]
\end{tikzcd}\]
\item then one quotients by enforcing the composites $ ( \psi_E\id_{t^*E} $ and $ \id_{f^*E}\psi_E$ to be both equal to $ \psi_E$, and identifying any two composites $ (a, E, b)$ and $(a', E', b')$ that are related by a zigzag. 
\end{itemize}
At this point, the component of $\psi$ at the terminal object $1_\mathcal{E}$ cannot be enforced to be an isomorphism. However, it becomes terminal in the free lex completion (which is the bicocomma in the 2-category of lex categories), as the bicocomma inclusions have to preserve limits and in particular the terminal object on which they must agree: hence $ \psi_{1_\mathcal{E}}$ can be taken as the identity of the terminal object in $t^* \uparrow f^*$. 

Then we prove that the cocomma inclusion $q_1$ lifts global elements. This inclusion simply sends $ G$ to the object $ G$. From the previous consideration on the terminal object in the bicocomma, we know that global elements of $ q_1$ in the bicocomma may arise in two ways: either they are of the form $b : 1_\mathcal{G} \rightarrow G$, but as those come from $\mathcal{G}$ they are trivially lifted; either they arise as composite of the form
\[\begin{tikzcd}
	{1_\mathcal{F}} & {t^*E} & {f^*E} & G
	\arrow["a", from=1-1, to=1-2]
	\arrow["{\psi_E}", from=1-2, to=1-3]
	\arrow["b", from=1-3, to=1-4]
\end{tikzcd}\]
But in this case, for $t$ is terminally connected, there exists a unique global element $\overline{a} : 1_\mathcal{E} \rightarrow E$ with $a=t^*a$ in $\mathcal{E}$, but now the naturality of $\psi$ at $\overline{a}$ 
\[\begin{tikzcd}
	{1_\mathcal{F}} & {1_\mathcal{G}} \\
	{t^*E} & {f^*E}
	\arrow["a"', from=1-1, to=2-1]
	\arrow["{\psi_E}"', from=2-1, to=2-2]
	\arrow["{f^*\overline{a}}", from=1-2, to=2-2]
	\arrow["{\psi_{1_\mathcal{E}}}", Rightarrow, no head, from=1-1, to=1-2]
\end{tikzcd}\]
ensures that the composite $ bf^*a : 1_\mathcal{G} \rightarrow G$ is a convenient lift for the global element above.
Hence, we proved $ q_1$ to lift uniquely global elements; but this is the inverse image part of the bicomma projection $ p_1$, which is hence terminally connected. 
\end{proof}

\begin{remark}
This latter result leads us to the concluding remark of this work and a conjecture about the relation between our factorization system and the comprehensive factorization. In general, 2-dimensional factorization systems with comma-stable left classes are not that common: the only other instance we know in $\Cat$ is precisely the comprehensive factorization system (initial, discrete opfibration) ! Bicomma stability of terminally connected geometric morphisms remarkably mirrors the fact, first observed in lemma 0.6 of \cite{Shulmanlab}, that initial functors are themselves left stable under comma squares: that is, for a comma square as below in $\Cat$
\[\begin{tikzcd}
	{i \downarrow f} & {C} \\
	{B} & {D}
	\arrow[""{name=0, pos=0.5, inner sep=0}, "i", from=1-2, to=2-2]
	\arrow["f"', from=2-1, to=2-2]
	\arrow[""{name=1, pos=0.45, inner sep=0}, "{p_1}"', from=1-1, to=2-1]
	\arrow["{p_2}", from=1-1, to=1-2]
	\arrow["{\phi_{i,f}}"', shorten <=6pt, shorten >=6pt, Rightarrow, from=0, to=1]
\end{tikzcd}\]
then $ p_1$ is initial whenever $i $ is. It is not difficult to convince oneself of this fact: recall indeed that an object of the comma is a triple $ (c,b, u : i(c) \rightarrow f(b))$; if $i$ is initial, then for any $b$ in $B$, the category $ i \downarrow f(b)$ is non empty and connected. Hence, there is $ u : i(c) \rightarrow f(b)$ in $D$ from which $ p_1(c,b,u) = b$ ensures that $ p_1 \downarrow b$ is non empty (and even surjective on objects). Similarly, for $ i$ is initial, any two such $ (c, b, u) $ and $ (c',b, u')$ on the same $b$ are related by a zigzag in $C$. Hence, $p_1$ is initial.  
\end{remark}

\begin{remark}
At first sight, this suggests that terminally connected morphisms are initial in some internal sense. However it is not clear which version of initialness would be suited -- the reader may convince himself that the different possible notion of representable initialness are not very relevant. Left classes in factorization systems are indeed often more difficult to characterize.

Moreover, whatever initialness property they would fulfill, it would not be properly conveyed to ordinary initialness by the functor of points. We saw that pro-etale morphisms are sent to discrete opfibrations. However, it is not true that terminally connected morphisms induce initial functors between categories of points: take for instance the case of an essential geometric morphism $f$ and its terminally connected, etale factorization through $ \mathcal{E}/f_!1_\mathcal{F}$ corresponding to the unit $ \eta^!_{1_\mathcal{F}} : 1_\mathcal{F} \rightarrow f^*f_! 1_\mathcal{F}$ seen as a global element. A point of $\mathcal{E}/f_!1_\mathcal{F} $ is a pair $ (x,a)$ with $x$ a point of $\mathcal{E}$ and $a \in x^*(f_!1_\mathcal{F})$ an element of the fiber of $x^*$ at $f_!1_\mathcal{F}$. The functor $ \pt(t_f)$, where $t_f$ is the terminally connected part of the factorization, sends a point $y$ of $ \mathcal{F}$ to the pair $(\pt(f)(y), y^*(\eta^!_{1_\mathcal{F}}))$, where $\eta^!_{1_\mathcal{F}} : y^*1_\mathcal{F} \simeq 1_\mathcal{S} \rightarrow y^*f^*f_!1_\mathcal{F}$ is the image of the unit along $y^*$. As $\pt(\pi_{f_!1_\mathcal{F}})$ is a discrete fibration, all morphisms in $\pt(\mathcal{E}/f_!1_\mathcal{F}) $ are cartesian lifts of underlying morphisms between points in $\pt(\mathcal{E})$. Hence, if a point $ x$ of $\mathcal{E}$ has empty comma $\pt(f) \downarrow x$, then so is $ \pt(t_f) \downarrow x$, and the induced functor $ \pt(t_f)$ needs not be initial, though $t_f$ is terminally connected. 
\end{remark}

\begin{remark}
As disappointing as the previous remark may sound, we nevertheless conjecture a subtler relation between those two classes. We saw at \cref{terco are stable left to bicomma} that terminally connected geometric morphisms are stable left to bicomma squares in $\GTop$. Now if $f$ is terminally connected, for terminally connected are stable left to bicomma squares by \cref{terco are stable left to bicomma}, the bicomma topos $f \downarrow x$ at any point $x : \Set \rightarrow \mathcal{E}$ is terminally connected over $\Set$, hence a connected topos by \cref{terco over set implies connected}. For this is true at any point, this means in some sense that terminally connected geometric morphisms are ``topologically initial" in the sense that their initialness, though not witnessed by points nor in an internal way, is still manifest as a topological property of the bicomma object. We hope such a statement will be made more precise in a future work relying on a convenient notion of ``topological internalization".


\end{remark}

\section*{Acknowledgements}

The second author is grateful to Morgan Rogers for insightful discussions that helped to fix an early version of the proof of the main theorem, as well as several exchanges about the properties of pro-etale geometric morphisms. He is also grateful to Mathieu Anel for several discussions that were seminal in the formation of the most early versions of this work.

\printbibliography

\vspace{1cm}

\textsc{Olivia Caramello} 

\vspace{0.2cm}
{\small \textsc{Dipartimento di Scienza e Alta Tecnologia, Universit\`a degli Studi dell'Insubria, via Valleggio 11, 22100 Como, Italy.}\\
	\emph{E-mail address:} \texttt{olivia.caramello@uninsubria.it}}

\vspace{0.2cm}

{\small \textsc{Istituto Grothendieck,
		Corso Statuto 24, 12084 Mondovì, Italy.}\\
	\emph{E-mail address:} \texttt{olivia.caramello@igrothendieck.org}}

\vspace{0.6cm}

\textsc{Axel Osmond} 

\vspace{0.2cm}
{\small \textsc{Istituto Grothendieck,
		Corso Statuto 24, 12084 Mondovì, Italy, and Paris, France.}\\
	\emph{E-mail address:} \texttt{axelosmond@orange.fr}}

\end{document}